\documentclass[11pt]{article}
\usepackage{amsmath, amssymb, theorem, latexsym, epsfig, graphics, eufrak}
\usepackage{subfigure}
\numberwithin{equation}{section}

\theoremstyle{plain}
\theorembodyfont{\itshape}
\newtheorem{theorem}{Theorem}[section]
\newtheorem{proposition}[theorem]{Proposition}
\newtheorem{lemma}[theorem]{Lemma}
\newtheorem{corollary}[theorem]{Corollary}
                                                                                
\theorembodyfont{\rmfamily}
\newtheorem{definition}[theorem]{Definition}

\newtheorem{example}[theorem]{Example}
\newtheorem{remark}[theorem]{Remark}

\newtheorem{problem}[theorem]{Problem}
\newtheorem{conjecture}[theorem]{Conjecture}
\newenvironment{proof}{{\noindent \textbf{Proof}\,\,}}{\hspace*{\fill}$\Box$\medskip}
\def\la{\lambda}
\def\mcd{\mathcal D}
\def\mod{\operatorname{mod}}
\def\cc{\mathbb C}
\def\oc{\overline\cc}
\def\End{\operatorname{End}}
\def\rr{\mathbb R}
\def\ba{K}
\def\bb{R}
\def\zz{\mathbb Z}
\def\La{\Lambda}
\def\cp{\mathbb{CP}}
\def\diag{\operatorname{diag}}
\def\sign{\operatorname{sign}}
\def\mcl{\mathcal L}
\def\rp{\mathbb{RP}}

\def\re{\operatorname{Re}}

\def\mcx{\mathcal X}
\def\mcg{\mathcal G}
\def\mch{\mathcal H}

\def\mcho{\mch^1_{0,\infty}}
\def\mchon{\mch^{1,0}_{0,\infty}}

\def\nn{\mathbb N}
\def\mca{\mathcal A}
\def\mcb{\mathcal B}

\def\mcc{\mathcal C}
\def\mcp{\mathcal P}
\def\wh#1{\widehat#1}

\def\var{\varepsilon}
\def\mcr{\mathcal R}
\def\mca{\mathcal A}
\def\mcb{\mathcal B}

\def\wt#1{\widetilde#1}
\newcommand\ld{\left(\begin{matrix} \frac12 & 0\\ 0 & 0\end{matrix}\right)}

\def\tt{\mathbb T}
\def\gl{\operatorname{GL}}
\def\sl{\operatorname{SL}}
\def\psl{\operatorname{PSL}}
\def\bbf{\mathcal F}
\def\bj{\mathbf J}
\def\bjr{\mathbf J^N(\rr_+)}
\def\bjjr{\bj(\rr_+)}
\def\bbj{\bjr}

\title{On families of constrictions in model of overdamped Josephson junction 
and Painlev\'e 3 equation}
\author{Y.Bibilo\thanks{University of Toronto, Mississauga, Canada},  
A.A.Glutsyuk\thanks{CNRS, France (UMR 5669 (UMPA, ENS de Lyon) and UMI 2615 (ISC J.-V.Poncelet)). 
Email:
aglutsyu@ens-lyon.fr} 
\thanks{HSE University, Moscow, Russia}
\thanks{Kharkevich Institute for Information Transmission Problems, Moscow, Russia}
 \thanks{Supported by  RSF grant 18-41-05003.}}
\begin{document}
\maketitle
\vspace{-0.5cm}
\begin{abstract} The tunneling effect predicted by B.Josephson (Nobel 
Prize, 1973) concerns the   Josephson junction: two superconductors 
separated by a  narrow dielectric. It states existence of a supercurrent through it and equations governing it. 
The {\it overdamped Josephson junction} 
is modeled by  a family of differential equations on 2-torus depending on 3
 parameters:  $B$ (abscissa), $A$ (ordinate), 
$\omega$ (frequency). We study its 
{\it rotation number} $\rho(B,A;\omega)$ 
as a function of  $(B,A)$ with fixed $\omega$. 
The {\it phase-lock areas} are the  level sets $L_r:=\{\rho=r\}$ with non-empty 
interiors; they exist  for $r\in\zz$ (Buchstaber, Karpov, Tertychnyi). 
Each $L_r$ is an infinite chain  of  domains going vertically to infinity  
 and separated by points. Those separating points for which $A\neq0$ are called {\it constrictions}. 
We show that: 1) {\it all the constrictions in $L_r$ lie on the axis $\{ B=\omega r\}$}; 
2) {\it each constriction} is {\it positive}: this means that some its punctured neighborhood on the  
axis $\{ B=\omega r\}$ lies in $\operatorname{Int}(L_r)$. These results confirm experiments by physicists (1970ths) and two mathematical conjectures. 
We first prove deformability of each constriction   to another one, with 
arbitrarily small $\omega$ and the same $\ell:=\frac B\omega$, using equivalent description of model by  linear systems of differential 
equations on $\oc$ (Buchstaber, Karpov, Tertychnyi) and studying their   isomonodromic deformations  
described by   
Painlev\'e 3 equations\footnote{Analytic deformability of each constriction  
 to  constrictions of the same type, $\rho$, $\ell$ and arbitrarily small $\omega$ 
is a joint result of the authors. Non-existence of ghost constrictions with a given $\ell$ for every $\omega$ small enough 
is a result of 
the second author (A.A.Glutsyuk).}. Then  
non-existence of {\it ghost} constrictions (i.e., constrictions either with $\rho\neq\ell=\frac B\omega$, or of non-positive type) with a given $\ell$ for small $\omega$ is proved 
by  slow-fast methods. 
\end{abstract}

\tableofcontents

\section{Introduction}
\subsection{Model of Josephson junction: a brief survey and main results}
The {\it Josephson effect} is a tunnelling effect in superconductivity predicted 
theoretically by B.Josephson in 1962 \cite{josephson} (Nobel Prize in physics, 
1973) and confirmed experimentally by P.W.Anderson and J.M.Rowell in 1963 
\cite{ar}.  It concerns the so-called {\it Josephson junction:} 
a system of two superconductors separated by a very narrow dielectric 
fiber. The Josephson effect is the existence of a supercurrent crossing the junction  
(provided that the dielectric fiber is narrow enough), described by equations 
discovered by Josephson. 

The model of the so-called {\it overdamped Josephson junction},  
see \cite{stewart, mcc,  lev,  schmidt}, \cite[p. 306]{bar}, \cite[pp. 337--340]{lich}, 
\cite[p.193]{lich-rus}, \cite[p. 88]{likh-ulr} is described by the family of nonlinear differential equations
 \begin{equation}\frac{d\phi}{dt}=-\sin \phi + B + A \cos\omega t, \ \omega>0, \ B\geq0.\label{jos}\end{equation}
 Here $\phi$ is the  difference of phases (arguments) of the complex-valued 
 wave functions describing the quantum mechanic 
 states of the two superconductors. Its derivative is 
 equal to the voltage up to known constant factor. 
  
Equations (\ref{jos}) also arise in several models in physics, mechanics and geometry, e.g.,  
in  planimeters, see  \cite{Foote, foott}. 
Here $\omega$ is a fixed constant, and $(B,A)$ are the parameters. The variable 
and parameter changes 
\begin{equation}\tau:=\omega t, \ \theta:=\phi+\frac{\pi}2, \ \ell:=\frac B\omega, \ \mu:=\frac A{2\omega},\label{elmu}\end{equation}
 transform (\ref{jos}) to a 
non-autonomous ordinary differential equation on the two-torus $\mathbb T^2=S^1\times S^1$ with coordinates 
$(\theta,\tau)\in\rr^2\slash2\pi\zz^2$: 
\begin{equation} \frac{d\theta}{d\tau}=\frac{\cos\theta}{\omega} + \ell + 2\mu \cos \tau.\label{jostor}\end{equation}
The graphs of its solutions are the orbits of the vector field 
\begin{equation}\begin{cases} & \dot\theta=\frac{\cos\theta}{\omega} + \ell + 2\mu \cos \tau\\
& \dot \tau=1\end{cases}\label{josvec}\end{equation}
on $\mathbb T^2$. The {\it rotation number} of its flow, see \cite[p. 104]{arn},  is a function $\rho(B,A)$ of parameters\footnote{There is a misprint, 
missing $2\pi$ in the denominator, in analogous formulas in previous papers of the 
second author (A.A.Glutsyuk) with co-authors: \cite[formula (2.2)]{4}, \cite[the formula after (1.16)]{bg2}.}:
$$\rho(B,A;\omega)=\lim_{k\to+\infty}\frac{\theta(2\pi k)}{2\pi k}.$$
Here $\theta(\tau)$ is a general $\rr$-valued solution of the first equation in (\ref{josvec}) 
whose parameter is the initial condition 
for $\tau=0$. Recall that the rotation number is independent on the choice of the initial condition, see \cite[p.104]{arn}. 
 The parameter $B$ is called {\it abscissa,} and $A$ is called the {\it ordinate.} 
 Recall the following well-known definition. 

\begin{definition} \label{defasl} (cf. \cite[definition 1.1]{4}) The {\it $r$-th phase-lock area} is the level set 
$$L_r=L_r(\omega)=\{\rho(B,A)=r\}\subset\rr^2,$$ 
provided that it has a non-empty interior. 
\end{definition}

\begin{remark}{\bf: phase-lock areas and Arnold tongues.} H.Poincar\'e introduced the rotation number of a circle 
diffeomorphism. The rotation number of the flow  of the field (\ref{josvec}) on $\mathbb T^2$ equals 
(modulo $\zz$) the rotation number of the circle diffeomorphism given by its time $2\pi$ flow mapping 
restricted to the cross-section $S^1_{\theta}\times\{0\}$. In Arnold family of circle diffeomorphisms 
$x\mapsto x+b+a\sin x$, $x\in S^1=\rr\slash2\pi\zz$ the behavior of its phase-lock areas for small $a$ demonstrates the tongues 
effect discovered by V.I. Arnold \cite[p. 110]{arn}. That is why the phase-lock areas became 
 ``Arnold tongues'', see  \cite[definition 1.1]{4}. 
\end{remark}

Recall that the rotation number has physics meaning of the mean voltage over a long time interval up to known constant factor. 

Relation of phase-lock effect in model (\ref{jos}) to dynamical systems on torus 
was discovered in \cite{bkt1}. In   physics papers earlier than \cite{bkt1} 
the phase-lock area effect dealt with convergence of differences 
$\phi(t+(n+1)T)-\phi(t+nT)$, 
 where $\phi(t)$ is a solution of (\ref{jos}) and 
$T=\frac{2\pi}{\omega}$. This effect was defined there as a phenomenon of convergence 
of the above differences to $2\pi k$, $k\in\zz$, on an open subset of the 
parameter space. It was observed in \cite{bkt1} that  their convergence is equivalent to 
the statement that the rotation number of the corresponding dynamical system 
(\ref{josvec}) is equal to $k$.

Some figures of the phase-lock areas of family (\ref{josvec}) are presented 
in physics books \cite[p. 339, fig. 11.4]{lich}, \cite[p. 193, fig. 11.4]{lich-rus}, 
\cite[p. 88, fig. 5.2]{likh-ulr}. See also  figures of the phase-lock areas below.

The phase-lock areas of 
 family (\ref{josvec}) were studied by V.M.Buchstaber, O.V.Karpov, S.I.Tertychnyi et al, see \cite{bg}--\cite{bt3}, 
\cite{LSh2009, IRF, krs, RK}, \cite{4}--\cite{gn19}, \cite{tert, tert2} and references therein.  The  following  statements are known results proved mathematically:

1) Phase-lock areas exist only for integer rotation number  values 
({\it quantization effect} observed and proved in \cite{buch2}, later also proved 
in  \cite{LSh2009, IRF}).

2) The boundary of the $r$-th phase-lock area 
 consists of two analytic curves, which are the graphs of two
functions $B=G_{r,\alpha}(A)$, $\alpha=0,\pi$, (see \cite{buch1}; this fact was later explained by A.V.Klimenko via symmetry, see \cite{RK}).

3)  The latter functions have Bessel asymptotics
\begin{equation}\begin{cases} G_{r,0}(s)=r\omega-J_r(-\frac s\omega)+O(\frac{\ln |s|}s) \\ 
G_{r,\pi}(s)=r\omega+J_r(-\frac s\omega)+O(\frac{\ln |s|}s)\end{cases}, \text{ as } s\to\infty\label{bessas}\end{equation}
 (observed and proved on physics level in ~\cite{shap}, see also \cite[p. 338]{lich},
 \cite[section 11.1]{bar}, ~\cite{buch2006}; proved mathematically in ~\cite{RK}).
 
 4) Each phase-lock area is a garland  of infinitely many bounded domains going to infinity in the vertical direction. 
 In this chain each two subsequent domains are separated by one point. This was  
 proved in \cite{RK} using the 
 above statement 3). Those  separation 
 points that lie on the horizontal $B$-axis, namely $A=0$, were calculated explicitly,  and 
 we call them the {\it growth points}, see \cite[corollary 3]{buch1}. The other separation points, which  lie outside the horizontal $B$-axis, are called the  {\it constrictions}. 
 
 5)  For every $r\in\zz$ the $r$-th phase-lock area is symmetric to the $-r$-th one with respect to the vertical 
 $A$-axis.
  
6) Every phase-lock area is symmetric with respect to the horizontal $B$-axis. See Figures 1--3 below.
 
 \begin{figure}[ht]
  \begin{center}
   \epsfig{file=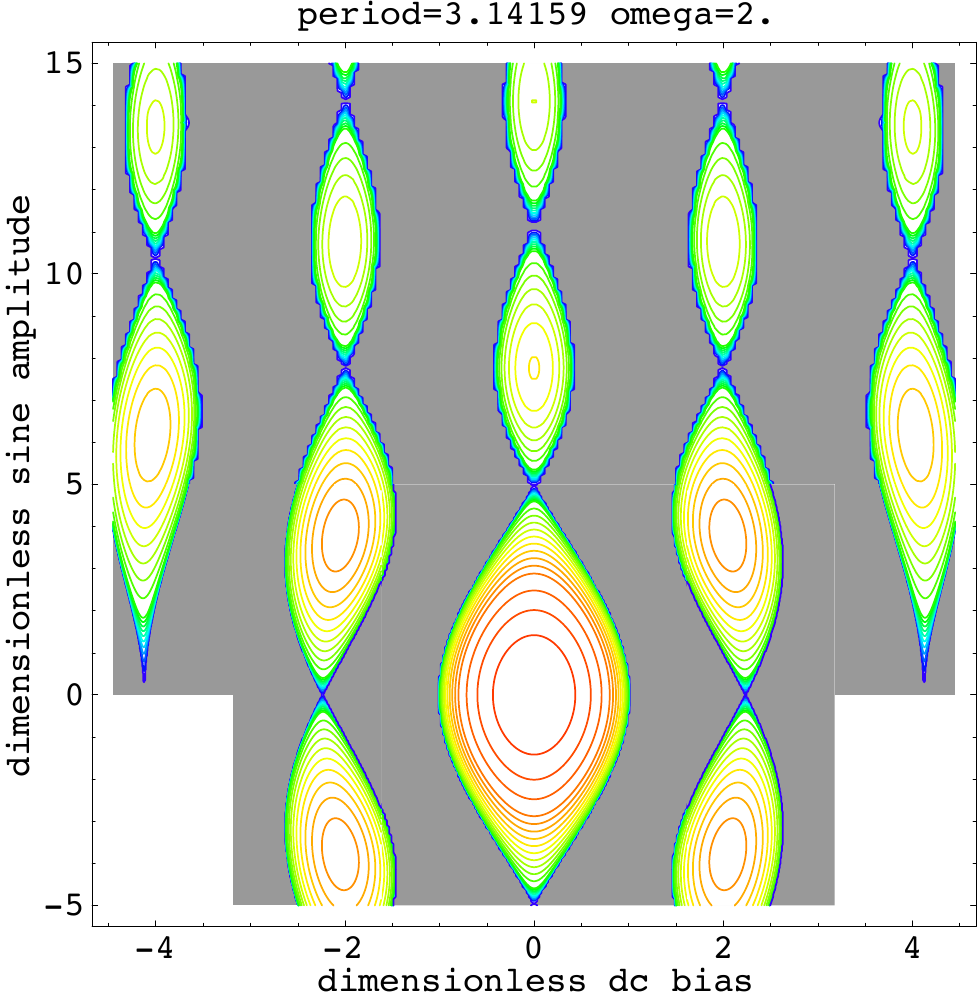}
    \caption{Phase-lock areas and their constrictions for $\omega=2$. The abscissa is $B$, the ordinate is $A$. Figure 
    taken from paper \cite[fig. 1a)]{bg2} with authors' permission.}
  \end{center}
\end{figure} 

 
 \begin{figure}[ht]
  \begin{center}
   \epsfig{file=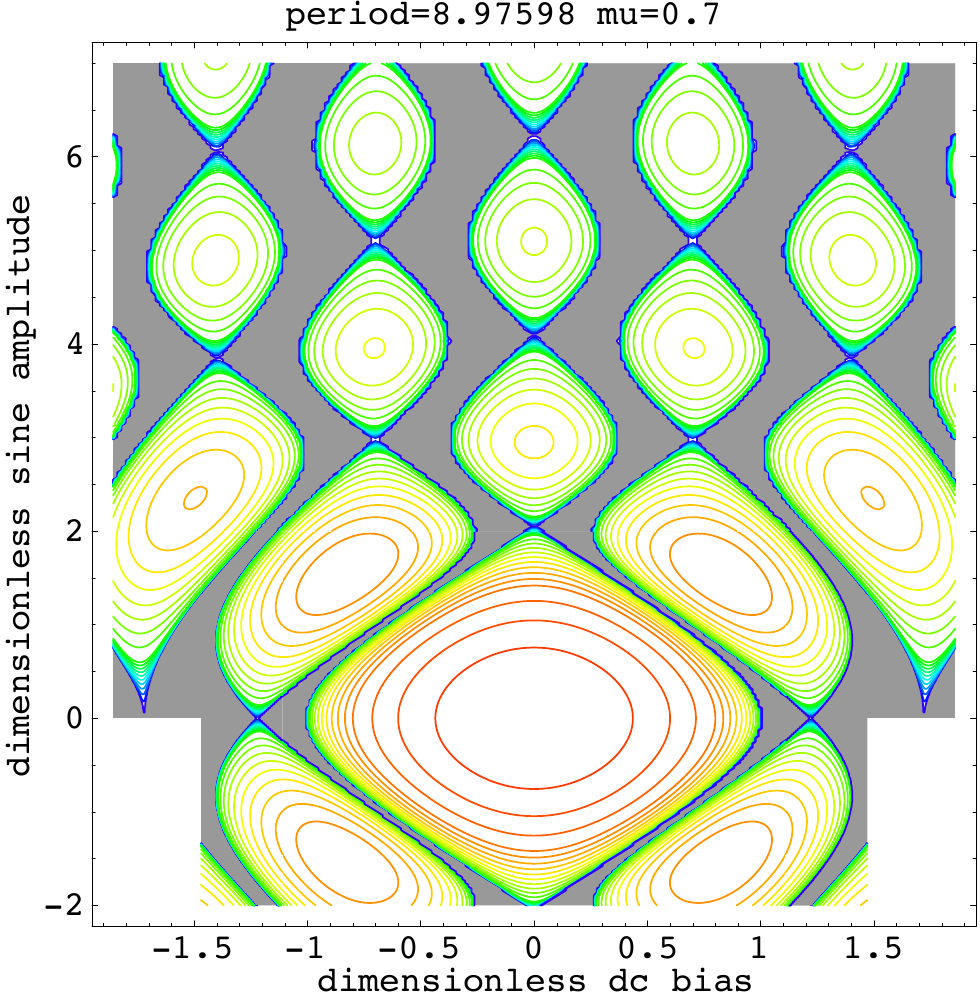}
    \caption{Phase-lock areas and their constrictions  for $\omega=0.7$. Figure taken from papers \cite[fig. 1c)]{bg2}, 
    \cite[p. 331]{bt1} with authors' permission.}
    \end{center}
\end{figure} 


\begin{figure}[ht]
  \begin{center}
   \epsfig{file=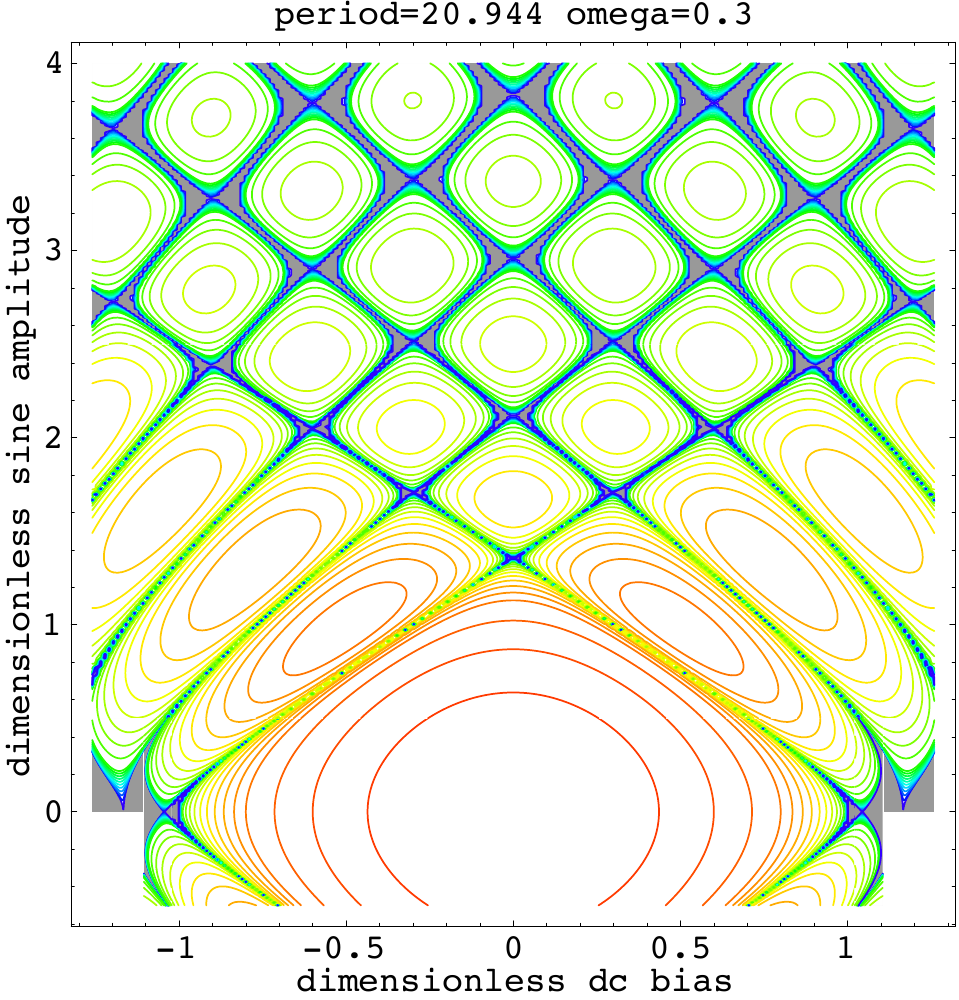}
    \caption{Phase-lock areas and their constrictions  for $\omega=0.3$. Figure 
    taken from paper \cite[fig. 1e)]{bg2} with authors' permission.}
     \end{center}
\end{figure}

\begin{definition} For every $r\in\zz$ and $\omega>0$ we consider the vertical line  
$$\Lambda_r=\{ B=\omega r\}\subset\rr^2_{(B,A)}$$
and we will call it the {\it axis} of the phase-lock area $L_r$. 
\end{definition}

The figures for the phase-lock areas obtained  experimentally are given in the physics books  
on Josephson effect, see \cite[p. 193, fig. 11.4]{lich-rus}, 
\cite[p. 88, fig. 5.2]{likh-ulr},  \cite[p. 339, fig. 11.4]{lich} (which refers  to physics paper 
\cite{ls}). They had shown that in each 
phase-lock area $L_r$ all the constrictions should lie on the same vertical line. 
No mathematical proof was presented there. Numerical illustrations which one can find 
 in the paper \cite{buch2} by V.M.Buchstaber, O.V.Karpov, S.I.Tertychnyi have shown the same effect and that the line containing the constrictions of  the area $L_r$ should coincide with its axis $\La_r$. 
This constriction alignment phenomena was stated as an experimental fact and conjecture 
in \cite[experimental fact A]{4}.

The main results of the paper are the two following theorems. The first theorem 
 confirms the above constriction alignment phenomena. The second one 
 confirms another, positivity property of constrictions that can  be also seen in 
 the figures from physics books mentioned in the above paragraph.  

\begin{theorem} \label{thal} For every $r\in\zz$ and every $\omega>0$ 
all the constrictions of the 
phase-lock area $L_r$ lie in its axis $\La_r$.
\end{theorem}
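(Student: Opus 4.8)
The plan is to reduce Theorem \ref{thal} to a statement about small $\omega$ and settle that regime by slow-fast analysis. Observe first that a constriction of $L_r$ lies on the axis $\La_r$ if and only if its abscissa satisfies $B=\omega r$, that is, $\ell:=\frac{B}{\omega}=r=\rho$. Thus Theorem \ref{thal} is equivalent to the assertion that no constriction of $L_r$ has $\rho\neq\ell$. I would attach to each constriction a \emph{type}, namely the pair $(\rho,\ell)$ together with the value of $\omega$, and establish two facts: (i) the data $(\rho,\ell)$ is preserved under a deformation that drives $\omega$ to $0$; and (ii) for all sufficiently small $\omega$ there is no constriction with $\rho\neq\ell$. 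Together these yield Theorem \ref{thal} by contradiction.

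For step (i) I would invoke the Buchstaber–Karpov–Tertychnyi correspondence rewriting family (\ref{josvec}) as a family of linear systems of differential equations on $\oc$ with an irregular singularity, under which the rotation number and the phase-lock data are encoded in the monodromy/Stokes data of the linear system, a constriction corresponding to a distinguished degeneracy of that data (triviality of the monodromy along the relevant loop). The advantage of the linear picture is that this data is an analytic invariant, hence constant along isomonodromic deformations. Varying $\omega$ while holding $\ell=\frac{B}{\omega}$ fixed induces precisely such a deformation, whose isomonodromy condition is the Painlev\'e 3 equation; continuing the constriction along this Painlev\'e flow keeps $\rho$ and $\ell$ unchanged while letting $\omega$ become arbitrarily small. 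I expect the delicate point to be showing that the continuation does not run into a movable pole of the Painlev\'e transcendent before $\omega$ is small, so that the constriction genuinely survives all the way down; this non-collision control is the main obstacle of the whole argument.

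For step (ii) I would analyze (\ref{josvec}) as a slow-fast system in the limit $\omega\to0$. Then $\frac{\cos\theta}{\omega}$ dominates, so $\theta$ is the fast variable, with the slow curve $\{\cos\theta=0\}$ splitting into an attracting branch near $\theta=\frac{\pi}{2}$ and a repelling branch near $\theta=-\frac{\pi}{2}$; the reduced motion along the attracting branch controls the leading asymptotics of the rotation number and of the time-$2\pi$ Poincar\'e map as $\omega\to0$. Tracking how the two boundary curves $B=G_{r,0}(A)$ and $B=G_{r,\pi}(A)$ are generated by this reduced dynamics, one shows that for $\omega$ small enough their only off-axis intersection points occur at $B=\omega r$; equivalently, a constriction with $\rho=r\neq\ell$ cannot exist. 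The role of the Bessel asymptotics (\ref{bessas}) is to pin down the behaviour for large $|A|$, where the slow-fast estimates must be matched with the known large-$s$ expansion.

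Finally, assembling the two steps: if some constriction of $L_r$ had $B_0\neq\omega r$, then it would be of type $\rho=r$, $\ell_0=\frac{B_0}{\omega}\neq r$, and by step (i) there would exist a constriction of the same type $\rho=r\neq\ell_0$ for arbitrarily small $\omega$, contradicting step (ii). Hence every constriction of $L_r$ satisfies $B=\omega r$ and lies on $\La_r$.
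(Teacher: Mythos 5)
Your two-step plan is essentially the paper's own strategy: Theorem \ref{thal} is obtained from Theorem \ref{noghost}, which is proved by combining deformability of every constriction (with $\rho$ and $\ell$ preserved) to arbitrarily small $\omega$ along the isomonodromic foliation governed by Painlev\'e 3 (Theorem \ref{famcons0}) with the absence of ghost constrictions on $\La_\ell$ for small $\omega$, established by slow-fast methods (Theorem \ref{noq}). Two details of your sketch differ from what actually carries the argument: in step (i) the issue is not avoiding movable poles --- constrictions \emph{are} exactly the simple poles with residue $1$ of the Painlev\'e transcendent (Lemma \ref{lempol}), which is what gives transversality of $Jos$ to the isomonodromic foliation; the real work is showing that $Constr_\ell$ is a one-dimensional analytic manifold (via the transition cross-ratio and a Riemann--Hilbert-type vector-bundle argument) on each of whose components $\omega$ is unbounded from below. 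In step (ii) the slow curve is not $\{\cos\theta=0\}$: in the only regime where a ghost constriction could survive one has $A$ close to $1$, so $A\cos\tau$ contributes at the same order $\omega^{-1}$ as $\cos\theta/\omega$ and the slow curve is $\{\cos\theta+B+A\cos\tau=0\}$, degenerating to a square as $\omega\to0$; the paper treats this via a Comparison/Monotonicity Lemma for Poincar\'e maps, and handles large $A$ by spectral-curve asymptotics rather than by matching with the Bessel expansion.
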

\begin{remark} \label{remint} 
It was proved in \cite[theorem 1.2]{4} that for every $r\in\zz$ 
 the constrictions in $L_r$ have abscissas $B=\ell\omega$, $\ell\in\zz$, 
 $\ell\equiv r(\mod 2)$, $\ell\in[0,r]$. For further results and discussion of 
  the constriction alignment conjecture see \cite[section 5]{bg2} and \cite{4, g18, gn19}. 
 \end{remark} 

\begin{definition} \cite[p.329]{g18}  A constriction $(B_0,A_0)$ is said to be {\it positive,} if the corresponding 
germ of interior of phase-lock area contains the germ of punctured vertical line interval: 
that is, if  there exists a punctured 
neighborhood $U=U(A_0)\subset\rr$ such that the punctured interval $B_0\times (U\setminus\{A_0\})\subset B_0\times\rr$ lies entirely in the interior of the corresponding  
phase-lock area. A constriction is called {\it negative,} if the above punctured interval can be chosen to lie in the complement to the union of the phase-lock areas. Otherwise it is called {\it neutral}. See Fig. 4.
\end{definition}

\begin{theorem} \label{thpos}\footnote{The main results of the paper (Theorems \ref{thal} and \ref{thpos}) with a sketch of proof were announced in \cite{bibgl}.} All the constrictions are positive.
\end{theorem}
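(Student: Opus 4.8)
The plan is to prove Theorems~\ref{thal} and \ref{thpos} together by a single reduction-to-small-$\omega$ scheme. Call a constriction a \emph{ghost} if either $\rho\neq\ell$ (where $\ell=\frac B\omega$) or it is non-positive, i.e.\ neutral or negative. Both theorems are then equivalent to the assertion that ghost constrictions do not exist. I would organize the argument around two facts: (i) a \emph{deformability} statement, that every constriction can be deformed analytically, keeping $\rho$, $\ell$ and its type (positive, negative or neutral) fixed, into a constriction with the same $\ell$ but arbitrarily small $\omega$; and (ii) a \emph{small-}$\omega$ statement, that for each fixed $\ell$ there is an $\omega_0>0$ such that for $0<\omega<\omega_0$ every constriction with that value of $\ell$ satisfies $\rho=\ell$ and is positive. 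Granting (i) and (ii), a ghost at any $\omega$ would deform to a ghost at arbitrarily small $\omega$, contradicting (ii); hence no ghost exists. This simultaneously yields Theorem~\ref{thal} (a constriction in $L_r$ has $\rho=r$, and $\rho=\ell$ forces $\frac B\omega=r$, i.e.\ the point lies on $\La_r$) and Theorem~\ref{thpos} (all constrictions are positive).

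For (i) I would use the equivalent description of the family \eqref{jostor} by linear systems of differential equations on $\oc$ due to Buchstaber, Karpov and Tertychnyi. In this dictionary a point of parameter space is encoded by the monodromy/Stokes data of such a system, and a constriction corresponds to a special degenerate configuration of these data (the two boundary branches $G_{r,0}$ and $G_{r,\pi}$ from \eqref{bessas} meeting). The isomonodromic deformations of the linear system are governed by a Painlev\'e~3 equation, and along an isomonodromic family the monodromy data---hence the ``constriction'' character of the point---are preserved while $\omega$ is allowed to vary. I would exhibit an isomonodromic path along which $\ell=\frac B\omega$ stays fixed and $\omega$ decreases to $0$, and then argue that the type is preserved: the type is a discrete invariant of the local geometry of the phase-lock area along the axis, so it is locally constant, hence constant, along any connected analytic deformation through genuine (non-degenerating) constrictions.

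For (ii) I would pass to the slow--fast regime $\omega\to0$. Writing \eqref{josvec} with the fast variable $\theta$, the term $\frac{\cos\theta}\omega$ dominates, so away from the curves $\{\cos\theta=0\}$ the motion of $\theta$ is very fast, and a typical trajectory tracks the slow manifolds located near $\theta=\pm\frac\pi2$. Matching the fast transitions with the slow passages near these turning points yields an asymptotic expansion, as $\omega\to0$, of the time-$2\pi$ Poincar\'e map and thus of the rotation number and of the boundary functions $G_{r,\alpha}$. From this expansion I would read off that, for each fixed $\ell$ and $\omega$ small, the only separation points off the $B$-axis occur where $\rho=\ell$, and that there the two boundary branches cross the axis $\La_r$ transversally in the configuration that places the two adjacent domains on opposite vertical sides---i.e.\ the constriction is positive. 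This rules out both kinds of ghost and establishes (ii).

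The main obstacle, I expect, is step (i): one must guarantee that the isomonodromic deformation carrying $\omega$ to $0$ stays within the family of \emph{honest} constrictions, i.e.\ that the associated Painlev\'e~3 transcendent does not meet a movable pole and that the special monodromy configuration persists all along the path, so that the deformed points remain genuine constrictions and the discreteness argument for invariance of the type applies without interruption. A secondary technical difficulty in step (ii) is the matched-asymptotic analysis at the turning points $\cos\theta=0$, where the slow--fast approximation degenerates and careful boundary-layer estimates are needed to control the sign of the second-order terms that decide positivity.
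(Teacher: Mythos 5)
Your two-step architecture (reduce to small $\omega$ by an analytic deformation preserving $\ell$, $\rho$ and the type, then kill ghosts at small $\omega$ by slow--fast analysis) is exactly the paper's strategy, and your reformulation via ``ghost constrictions'' matches Theorem~\ref{noghost}. But there is a genuine gap at the heart of your step (i). You propose to ``exhibit an isomonodromic path along which $\ell$ stays fixed and $\omega$ decreases to $0$'' and to use invariance of the monodromy data along that path to keep the point a constriction. This cannot work as stated: the family $Jos$ of Josephson-type systems (\ref{tty}) is \emph{transversal} to the isomonodromic foliation (the paper's Key Lemma~\ref{lcross}, equivalent to the fact that Josephson systems correspond to simple poles with residue $1$ of the Painlev\'e~3 transcendent), so an isomonodromic leaf through a constriction leaves the Josephson family instantly, and its points no longer correspond to any parameters $(B,A;\omega)$ of the model. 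The deformation that actually connects constrictions is \emph{transverse} to the leaves: one must first show that the set $\Sigma_\ell$ of trivial-monodromy systems with given $\ell$ is a two-dimensional analytic submanifold on which the transition cross-ratio $\mcr$ is a submersion constant along leaves (the paper's Key Theorem~\ref{crossub}, proved by a Riemann--Hilbert-type realization obtained by gluing holomorphic vector bundles, \`a la Bolibruch), and then identify $Constr_\ell$ with the transversal intersection $Jos\cap\Sigma_\ell$, a curve parametrized by $\mcr$. Nothing in your sketch produces this ambient two-dimensional manifold or a coordinate along the constriction curve.

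Two further steps you leave essentially unaddressed are substantial. First, even once $Constr_\ell$ is known to be a one-dimensional manifold, you must prove that $\omega$ is unbounded from below on each connected component; the paper does this by combining the Klimenko--Romaskevich Bessel asymptotics (to bound $\mu$), a variational-equation computation excluding accumulation of $Constr_\ell$ at the origin of the $(\mu,\eta)$-plane, and an analysis of possible limit points on the coordinate axes. Your worry about movable poles of the Painlev\'e transcendent is misplaced: the residue-$1$ poles are precisely where a leaf crosses $Jos$, i.e.\ they \emph{are} the constrictions, not an obstruction. Second, constancy of the type along a component is not mere local constancy of a discrete invariant: under perturbation a single boundary point on the axis can a priori split into several, and one must exclude that any of them is a generalized simple intersection; the paper does this via the characterization of such points by polynomial solutions of the conjugate Heun equation (\ref{heun2}) together with the theorem that no constriction can correspond to such an equation. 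Finally, your step (ii) is thinner than the paper's: the large-ordinate regime is handled there through the asymptotics of the spectral curve at infinity, and the small-ordinate regime through a Monotonicity Lemma comparing Poincar\'e maps of two slow--fast families, rather than by a single matched-asymptotics expansion from which positivity could simply be ``read off''.
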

\begin{figure}
\begin{center}
\epsfig{file=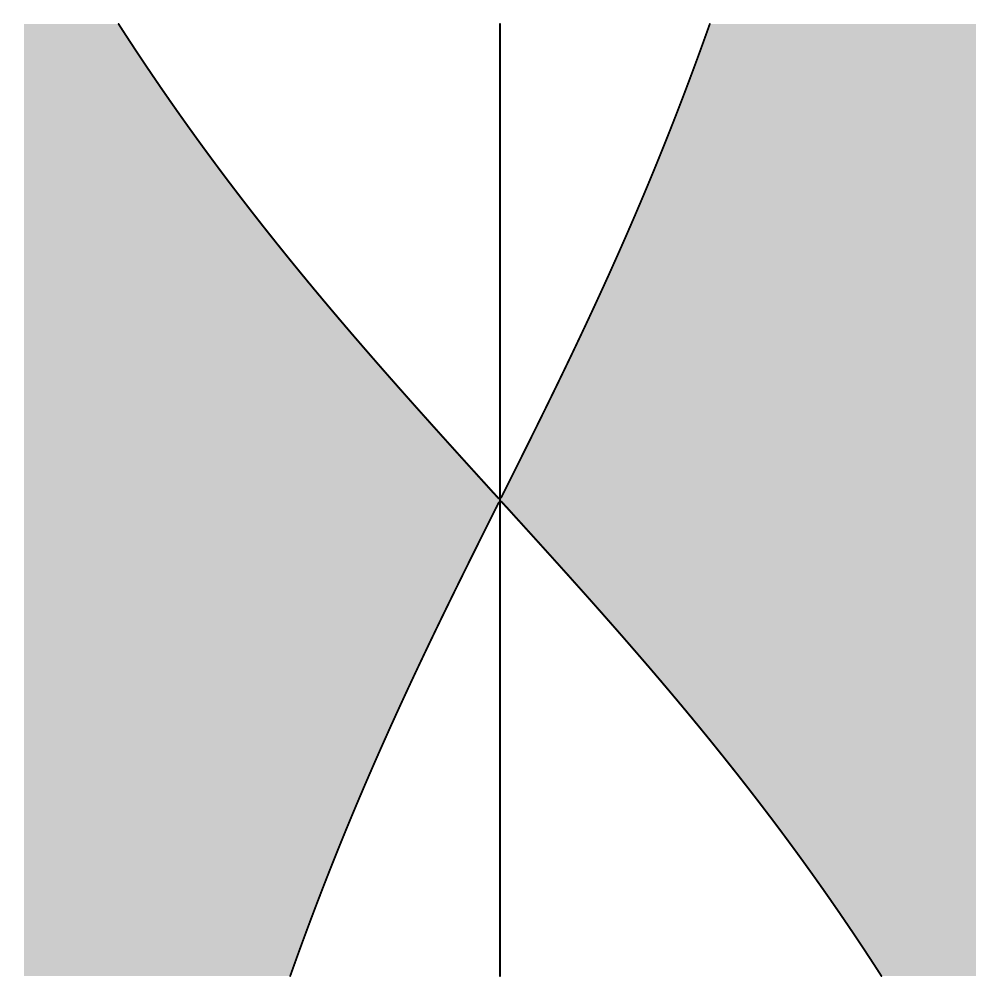, width=0.2\textwidth}
\hspace{2em}
\epsfig{file=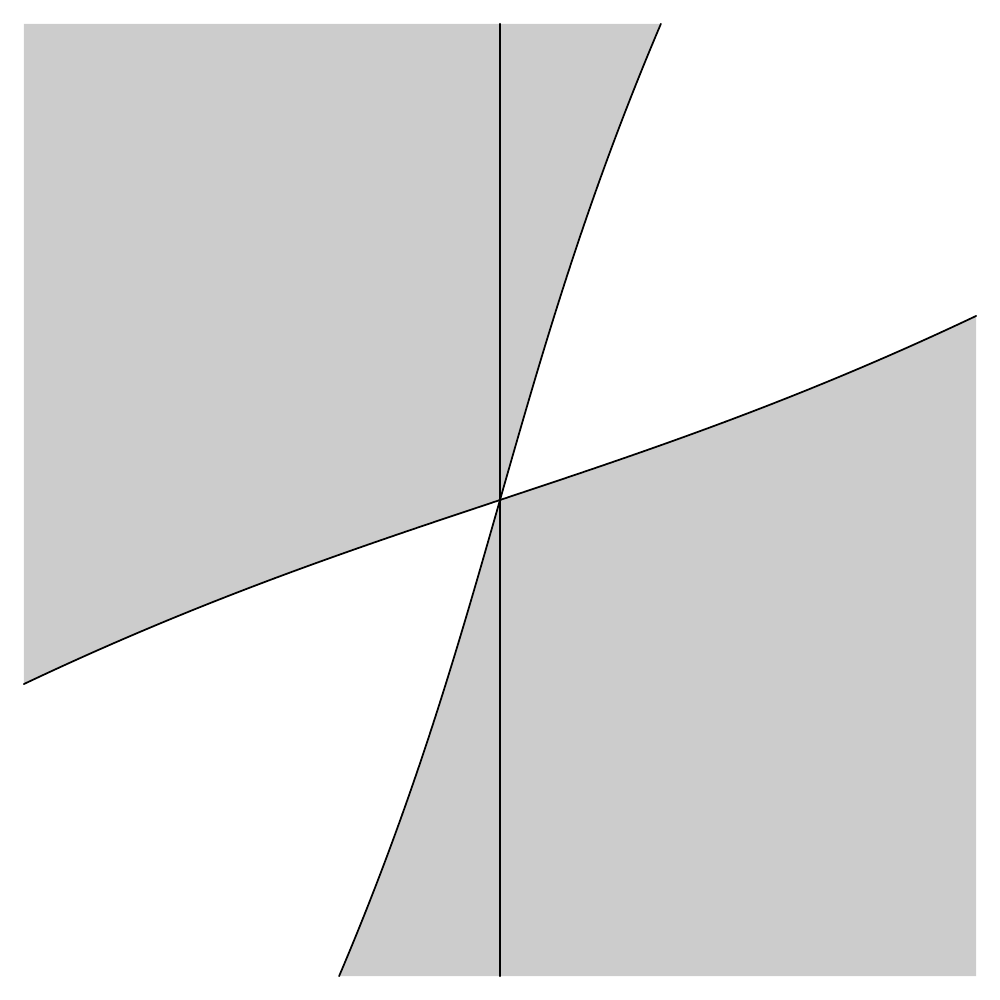, width=0.2\textwidth}
\hspace{2em}
\epsfig{file=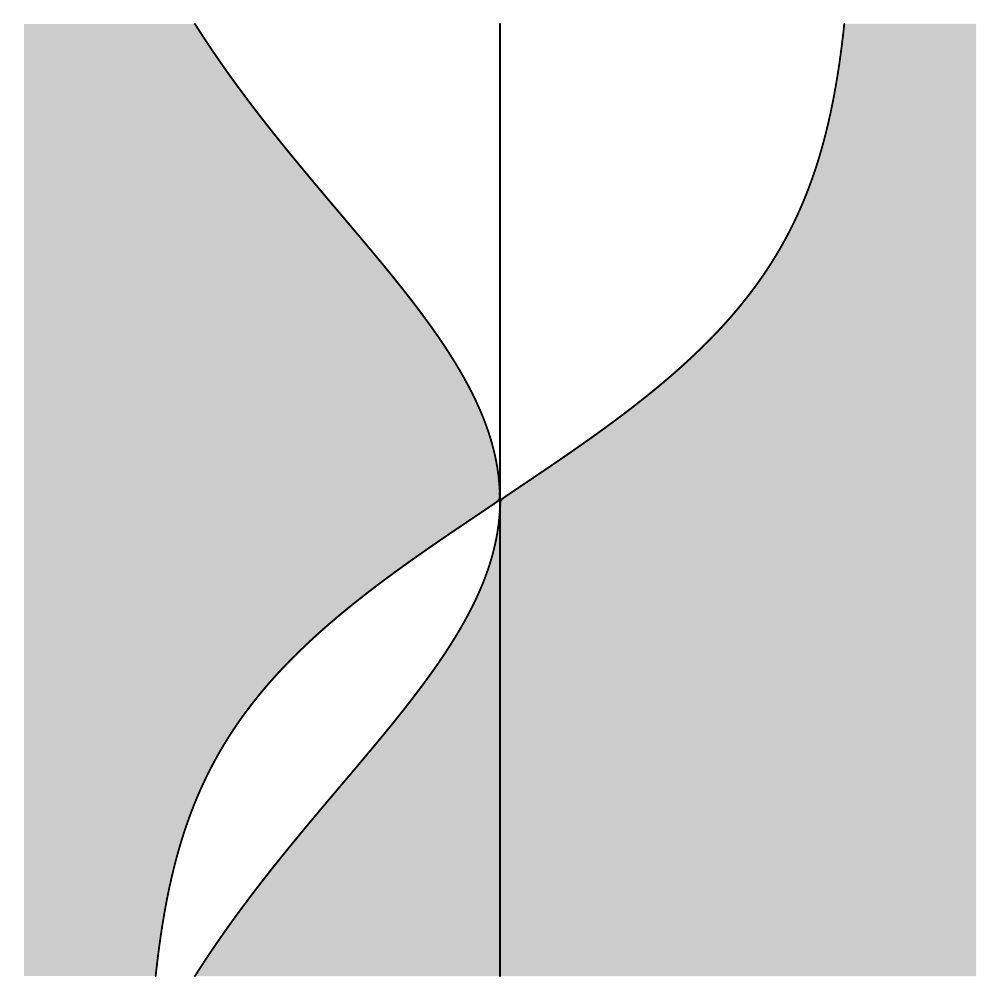, width=0.2\textwidth}
\caption{Positive, negative and neutral constrictions (figure made by S.I.Tertychnyi, included to the paper with his permission). It is proved that negative and neutral constrictions do not exist.}
\end{center}
\end{figure}

\begin{remark} \label{posneg} It was shown in \cite[theorem 1.8]{g18} that each constriction is either 
positive, or negative: there are no neutral constrictions. Positivity of constrictions was stated  there as 
\cite[conjecture  1.13]{g18}. 
 It was also shown in \cite{g18} that Theorem \ref{thpos}  would imply Theorem \ref{thal}.
\end{remark}

\begin{definition} A {\it ghost constriction} is a constriction $(B,A;\omega)$ 
in model of Josephson junction for which either $\ell:=\frac B{\omega}$ is 
different from the corresponding rotation number $\rho(B,A;\omega)$, or the constriction is 
not of positive type. (Note that $\ell\in\zz$ for each constriction, 
see Remark \ref{remint}.) 
\end{definition}

Theorems \ref{thal} and \ref{thpos} taken together are equivalent to the following theorem.

\begin{theorem} \label{noghost} There are no ghost constrictions in the model
of overdamped Josephson junction.
\end{theorem}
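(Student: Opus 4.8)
The plan is to deduce Theorem \ref{noghost} directly from Theorems \ref{thal} and \ref{thpos} by unwinding the definition of a ghost constriction; all the analytic content sits in those two theorems, so what remains is a purely logical verification. First I would recall that every constriction $(B,A;\omega)$ is a separation point of some phase-lock area $L_r$ and therefore lies on the level set $\{\rho=r\}$, so by continuity of the rotation number its value at the constriction is $\rho(B,A;\omega)=r$. By Remark \ref{remint} the abscissa of any constriction has the form $B=\ell\omega$ with $\ell\in\zz$, so the quantity $\ell=\frac B\omega$ entering the definition is a well-defined integer attached to the constriction.

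Next I would match the two disjunctive clauses in the definition of a ghost constriction against the conclusions of the two main theorems. A constriction in $L_r$ satisfies $\ell=\rho$ if and only if $\frac B\omega=r$, that is $B=\omega r$, which is exactly the statement that it lies on the axis $\La_r=\{B=\omega r\}$. Hence the first clause ``$\ell\neq\rho$'' can hold for some constriction if and only if Theorem \ref{thal} fails; since Theorem \ref{thal} asserts that every constriction in $L_r$ lies on $\La_r$, no constriction satisfies $\ell\neq\rho$. The second clause, ``the constriction is not of positive type'', is precisely the negation of Theorem \ref{thpos}; since that theorem asserts positivity of every constriction, the second clause fails for every constriction as well.

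Combining these observations, a constriction is a ghost exactly when at least one of the two clauses holds, and neither clause can hold for any constriction. Therefore there are no ghost constrictions, which is the assertion of Theorem \ref{noghost}.

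The only point I would check with care — and the sole nontrivial link in the chain — is the identification of the rotation number at a constriction with the index $r$ of the area containing it, so that the condition $\ell=\rho$ translates faithfully into membership in the axis $\La_r$; once this is in place the deduction is formal. In this sense there is no genuine obstacle in proving Theorem \ref{noghost} itself: the entire difficulty has been displaced into establishing Theorems \ref{thal} and \ref{thpos}, whose proofs (via isomonodromic deformations governed by the Painlev\'e 3 equation, and via slow-fast analysis for small $\omega$) constitute the real work of the paper.
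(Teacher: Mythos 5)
Your deduction is formally valid as one direction of an equivalence, but as a proof of Theorem \ref{noghost} in this paper it is circular. The paper never establishes Theorems \ref{thal} and \ref{thpos} independently: its logical order is exactly the reverse of yours. Theorems \ref{famcons0} and \ref{noq} are proved first (the former via isomonodromic deformations and the Painlev\'e 3 transversality argument, the latter via slow-fast analysis), Theorem \ref{noghost} is then deduced from them in Subsection 5.6, and only at that point do Theorems \ref{thal} and \ref{thpos} follow as corollaries (the paper's proof ends with ``The proof of Theorem \ref{noghost}, and hence, Theorems \ref{thal} and \ref{thpos} is complete''). The sentence in the introduction stating that Theorems \ref{thal} and \ref{thpos} taken together are \emph{equivalent} to Theorem \ref{noghost} is a remark on logical equivalence, not a licence to use the two main theorems as inputs; the only previously known case is Theorem \ref{thal} for $\omega\geq1$ from \cite{4}, which does not suffice. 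So the premises you invoke are precisely what the theorem under discussion is needed to prove, and your argument supplies no independent content.

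The actual proof you would need to reproduce runs as follows: supposing a ghost constriction $(B,A;\omega)$ exists, one has $\ell=\frac B\omega\in\zz\setminus\{0\}$, and the connected component of $Constr_\ell$ through the corresponding point $(\mu,\eta)=(\frac A{2\omega},\omega^{-1})$ is, by Theorem \ref{famcons0}, a one-dimensional analytic curve on which $\eta$ is unbounded from above and on which the rotation number and the type (positive or negative) are constant; hence every point of that component is again a ghost constriction with the same $\ell$, and some of them have arbitrarily small $\omega$. This contradicts Theorem \ref{noq}, which rules out ghost constrictions in $\La_\ell$ for all sufficiently small $\omega$. Your write-up correctly identifies that the difficulty lies elsewhere than in the final logical step, but it locates that difficulty in the wrong place: it is not in proving Theorems \ref{thal} and \ref{thpos} first, but in proving Theorems \ref{famcons0} and \ref{noq}, from which Theorem \ref{noghost} (and only then the two main theorems) follows.
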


Proof of Theorem \ref{noghost} is sketched in the next subsection, 
where the plan of the paper is presented. It is  
 based on the following characterization of constrictions.

\begin{proposition} \label{propoinc} \cite[proposition 2.2]{4} 
Consider the period $2\pi$ flow map $h^{2\pi}$ of system (\ref{josvec}) 
acting on the transversal coordinate $\theta$-circle $\{\tau=0\}$. A point $(B,A;\omega)$ 
is a constriction, if and only if $\omega,A\neq0$ and $h^{2\pi}=Id$. 
\end{proposition}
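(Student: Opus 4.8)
The plan is to exploit the projective (Riccati) nature of the equation and to classify the return map as a real Möbius transformation. First I would reduce $h^{2\pi}$ to an element of $\psl_2(\rr)$. Writing $w=\tan(\theta/2)$ and using $\cos\theta=(1-w^2)/(1+w^2)$, the first equation of (\ref{josvec}) becomes the Riccati equation
\[
\dot w=\tfrac12\Big(\tfrac{1-w^2}{\omega}+(\ell+2\mu\cos\tau)(1+w^2)\Big),
\]
with real, $2\pi$-periodic coefficients. Hence its flow consists of real projective transformations of the $\theta$-circle $S^1\cong\rp^1$, and the period map $h^{2\pi}$ is an orientation-preserving element of $\psl_2(\rr)$. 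Every such map has exactly one of four types: elliptic (no fixed point), hyperbolic (two fixed points, with reciprocal multipliers $\lambda,\lambda^{-1}$, $\lambda\ne1$), parabolic (a single fixed point, multiplier $1$), or the identity.

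Next I would match this classification with the phase-lock picture. The fractional part of $\rho$ is the rotation number of $h^{2\pi}$, which vanishes precisely when $h^{2\pi}$ has a fixed point; thus $(B,A)\in\overline{L_r}$ for some $r$ iff $h^{2\pi}$ is hyperbolic, parabolic, or the identity. Hyperbolicity is an open, structurally stable condition, so it characterizes $\operatorname{Int}L_r$, while $\partial L_r$ consists of parabolic maps together with the exceptional identity locus. By known statement 2), $\partial L_r$ is the union of the two analytic graphs $B=G_{r,0}(A)$ and $B=G_{r,\pi}(A)$, which are the two branches of the parabolic locus, distinguished by the position of the parabolic fixed point (equivalently by $\alpha=0,\pi$).

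For the nontrivial direction (constriction $\Rightarrow h^{2\pi}=\mathrm{Id}$) I would argue as follows. A constriction is a separation point of the garland, hence a point where two components of $\operatorname{Int}L_r$ meet; by the boundary structure this is exactly a crossing of the two branches $G_{r,0}$ and $G_{r,\pi}$. Approaching it along $G_{r,0}$ gives parabolic maps with fixed point $p_0$, and along $G_{r,\pi}$ parabolic maps with fixed point $p_\pi$; since the two branches carry parabolic fixed points at distinct positions, $p_0\ne p_\pi$. By continuity the limiting map $h^{2\pi}$ then fixes both $p_0$ and $p_\pi$ with multiplier $1$ at each. But a Möbius map with two distinct fixed points has reciprocal multipliers there, so multiplier $1$ at both forces $h^{2\pi}=\mathrm{Id}$. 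As a constriction has $A\ne0$ by definition and $\omega\ne0$ always, this yields the asserted conditions. Conversely, if $\omega,A\ne0$ and $h^{2\pi}=\mathrm{Id}$, then every orbit is $2\pi$-periodic, so $\rho\in\zz$ and the point lies in some $L_r$; but the identity is not structurally stable in $\psl_2(\rr)$, since arbitrarily small perturbations produce both elliptic maps (for which $\rho$ leaves $\zz$ and the point exits $\bigcup_r L_r$) and hyperbolic maps (which stay in $\operatorname{Int}L_r$). Hence the point lies on $\partial L_r$ as a pinch where two hyperbolic components touch, i.e.\ a separation point; as $A\ne0$ it is a constriction rather than a growth point. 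Consistently, on $\{A=0\}$ the equation is autonomous with non-vanishing field, so its period map is never the identity.

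I expect the main obstacle to be the crux of the forward direction: rigorously identifying the garland's separation points with crossings of the two boundary branches and, above all, verifying that the two limiting parabolic fixed points $p_0,p_\pi$ remain distinct in the limit, so that the limit is the full identity rather than a single degenerate parabolic map. This distinctness is precisely what rules out a non-identity parabolic ``saddle'' of the trace function, and it must be extracted from the explicit boundary structure in 2)--4) together with the Klimenko symmetry relating the branches $\alpha=0$ and $\alpha=\pi$.
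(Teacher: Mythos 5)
The paper does not actually prove this proposition; it is quoted from \cite[proposition 2.2]{4}, and your M\"obius-classification strategy (the Poincar\'e map lies in $\psl_2(\rr)$; hyperbolic at interior points, parabolic on the boundary, identity at separation points) is the standard route and essentially the one used there. Two steps, however, are genuinely unsound as written. First, in the converse direction you argue that ``the identity is not structurally stable in $\psl_2(\rr)$, since arbitrarily small perturbations produce elliptic maps''; but you need such perturbations to be realized \emph{inside the two-parameter family} $(B,A)\mapsto h^{2\pi}_{B,A}$, and nothing you have said excludes, say, the trace function having a degenerate minimum at $2$ at that point, in which case every nearby map would still have a fixed point and the point would be interior to $L_r$. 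The correct repair is monotonicity of the lifted Poincar\'e map in $B$: $\wt h_{B,A}(\theta)$ is strictly increasing in $B$ for each fixed $\theta$, so for each $A$ and $r$ there is a \emph{unique} $B$ with $\wt h(0)=2\pi r$ (namely $G_{r,0}(A)$) and a unique $B$ with $\wt h(\pi)=\pi+2\pi r$ (namely $G_{r,\pi}(A)$); hence $h^{2\pi}=Id$ forces $B=G_{r,0}(A)=G_{r,\pi}(A)$, which by the known result 2) places the point on $\partial L_r$, at a pinch of the region between the two graphs, i.e.\ at a separation point. This same identification (separation points $=$ common points of the two boundary graphs, via the slice description $L_r\cap\{A=a\}=[\min_\alpha G_{r,\alpha}(a),\max_\alpha G_{r,\alpha}(a)]$ and analyticity of the $G_{r,\alpha}$) is exactly the step you defer in the forward direction; note also that your worry about the limiting fixed points $p_0,p_\pi$ staying distinct is vacuous, since by the very definition of the two branches these fixed points are pinned at $\theta=0$ and $\theta=\pi$.

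Second, your closing consistency check is false: at $A=0$, $B=\sqrt{r^2\omega^2+1}$ the field $\frac{\cos\theta+B}\omega$ is autonomous and nonvanishing, and \emph{every} orbit is $\frac{2\pi}{r}$-periodic, so $h^{2\pi}=Id$ there as well --- these are precisely the growth points. The hypothesis $A\neq0$ in the proposition is needed exactly to exclude them (a nonvanishing autonomous field on the circle certainly can have identity time-$2\pi$ map), not because the identity cannot occur on the abscissa axis.
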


Some applications of Theorems \ref{thpos} and \ref{thal} and open problems will be discussed in Section 6.

\subsection{Method of proof of 
Theorem \ref{noghost}.  Plan of the paper}

We prove Theorem \ref{noghost} in two steps given by the two following theorems. 
To state them, let us introduce the following notation. We set $\eta:=\omega^{-1}$. 
 For every fixed $\ell\in\zz$ we consider the set 
$$Constr_\ell=\{ (\mu,\eta)\in\rr_+^2 \ | \ (B,A;\omega)=(\ell\eta^{-1},2\mu\eta^{-1};\eta^{-1})  \text{ is a constriction}\}.$$

\begin{theorem} \label{famcons0} For every $\ell\in\zz$ the subset $Constr_{\ell}\subset\rr_+^2$ 
is a regular one-dimensional analytic submanifold   in $\rr_+^2$. The restriction of the 
coordinate $\eta$ to each its connected component is unbounded from above 
(i.e., $\omega$ is unbounded from below). The  rotation number and the 
type of constriction (positive or negative, see Remark \ref{posneg})  are constant on 
each component. 
\end{theorem}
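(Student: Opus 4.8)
The plan is to characterize $Constr_\ell$ through Proposition~\ref{propoinc} and then to recognize it as a locus of trivial monodromy that is swept out by isomonodromic (Painlev\'e~3) deformations. First I fix $\ell\in\zz$ and regard the period map $h^{2\pi}$ of (\ref{josvec}) as an element of $\psl_2(\rr)$: lifting the first equation of (\ref{josvec}) to the Riccati equation on $\rp^1$ makes its time-$2\pi$ map a M\"obius transformation depending real-analytically on $(\mu,\eta)$. The field (\ref{josvec}) is invariant under $(\theta,\tau)\mapsto(-\theta,-\tau)$, and a direct computation with this involution gives the reversibility relation $N\,h^{2\pi}\,N=(h^{2\pi})^{-1}$, where $N\colon\theta\mapsto-\theta$ is the reflection fixing $\theta\in\{0,\pi\}$. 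In the frame in which $N=\operatorname{diag}(1,-1)$ the matrix of $h^{2\pi}$ has equal diagonal entries, so by Proposition~\ref{propoinc} the condition $h^{2\pi}=\operatorname{Id}$ is equivalent to the vanishing of its two off-diagonal entries, i.e. to two real-analytic equations $F_1(\mu,\eta)=F_2(\mu,\eta)=0$ together with $\mu,\eta>0$. Thus $Constr_\ell$ is a real-analytic subset of $\rr_+^2$ cut out by two equations, and the content of the theorem is that these two equations are not independent.

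The mechanism that makes them dependent is isomonodromy. Passing to the equivalent linear system on $\oc$ of Buchstaber--Karpov--Tertychnyi, the equality $h^{2\pi}=\operatorname{Id}$ translates into \emph{triviality of the monodromy} of that system. Triviality of the monodromy is preserved along the isomonodromic deformations of the system, and at fixed $\ell$ the deformation with respect to $\eta=\omega^{-1}$ is governed by a Painlev\'e~3 equation. Consequently, through every constriction there passes an analytic deformation curve along which the monodromy stays trivial, so the common kernel of $dF_1,dF_2$ is nonzero on $Constr_\ell$: the Jacobian $\partial(F_1,F_2)/\partial(\mu,\eta)$ is everywhere degenerate there. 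To obtain that $Constr_\ell$ is a \emph{regular} one-dimensional analytic submanifold I must upgrade this to rank exactly one, i.e. show that the isomonodromic deformation has nonvanishing speed (the Painlev\'e flow does not vanish on $Constr_\ell$). Non-degeneracy then lets the implicit function theorem present $Constr_\ell$ locally as an analytic graph $\mu=\mu(\eta)$ or $\eta=\eta(\mu)$, and the deformation curves exhaust $Constr_\ell$ because any constriction lies on the trajectory issuing from it.

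The unboundedness of $\eta$ is where the real work lies, and I expect it to be the main obstacle. I would continue the isomonodromic trajectory through a given constriction in the direction of increasing $\eta$ (decreasing $\omega$) and show that it can be prolonged to all sufficiently large $\eta$, so that $\eta$ is unbounded from above on the component. The danger is that the Painlev\'e~3 transcendent controlling the deformation develops a pole, or that two singular points of the linear system collide or a resonance appears, or that the trajectory escapes $\rr_+^2$ (with $\mu\to0$, $\mu\to\infty$, or $\eta$ accumulating at a finite value) before reaching $\eta=\infty$. The key is an a priori bound guaranteeing that none of this occurs: since the monodromy stays trivial along the whole trajectory there are no Stokes jumps, and this rigidity, combined with the known behaviour of the relevant Painlev\'e~3 solutions as $\omega\to0$, should confine the trajectory to $\rr_+^2$ and force its continuation to arbitrarily small $\omega$. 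Establishing this continuation is precisely the analytic deformability of each constriction to one with arbitrarily small $\omega$ and the same $\ell$ announced in the abstract, and is the heart of the argument.

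Finally, constancy of the two invariants on each connected component is comparatively soft. The rotation number $\rho$ is continuous in $(B,A;\omega)$, and at a constriction $h^{2\pi}=\operatorname{Id}$ forces $\rho\in\zz$; a continuous integer-valued function on a connected set is constant, so $\rho$ is constant on each component of $Constr_\ell$. For the type, recall from Remark~\ref{posneg} (\cite{g18}) that every constriction is positive or negative and never neutral. As one moves along a component the germ of the phase-lock area at the constriction varies analytically, so a switch from positive to negative type would have to pass through a neutral constriction; since none exist, the type cannot change, and being a discrete invariant on a connected set it is constant on each component. This completes the plan.
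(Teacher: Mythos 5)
Your high-level framing (constrictions $=$ trivial monodromy, controlled by isomonodromic/Painlev\'e~3 deformations) matches the paper's, but the central mechanism is misapplied, and this is a genuine gap. You claim that through every constriction there passes an isomonodromic curve \emph{inside the $(\mu,\eta)$-plane}, forcing $dF_1,dF_2$ to be dependent there. This is false: the isomonodromic deformation of a Josephson-type system immediately leaves the Josephson family. Indeed, the paper's Key Lemma (Lemma \ref{lcross}) is precisely that $Jos$ is \emph{transversal} to the isomonodromic foliation of the four-dimensional space $\bbj$ --- Josephson systems correspond to the order-one, residue-one \emph{poles} of the Painlev\'e~3 transcendent, which are isolated along a leaf. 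So no leaf lies in $Constr_\ell$, and your degeneracy argument for the Jacobian of $(F_1,F_2)$ collapses. What is actually needed, and what the paper supplies, is (i) that the full trivial-monodromy locus $\Sigma_\ell$ in the larger space $\bbj$ is a smooth \emph{two}-dimensional submanifold foliated by isomonodromic leaves, with the transition cross-ratio $\mcr$ as a coordinate transverse to the leaves --- this is the Key Theorem \ref{crossub}, whose proof is a Riemann--Hilbert-type realization argument via gluing holomorphic vector bundles and Bolibruch's stability-of-trivial-bundles lemma, an ingredient entirely absent from your proposal; and (ii) the transversality above, which then exhibits $Constr_\ell=Jos\cap\Sigma_\ell$ as a one-manifold on which $\mcr$ is a local coordinate.

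The second gap is the unboundedness of $\eta$, which you correctly identify as the heart of the matter but do not prove. Your plan --- prolong the isomonodromic trajectory toward large $\eta$ --- cannot work as stated, because the deformation parameter is $\tau=2\mu$ (not $\eta$) and the trajectory leaves $Jos$ after the initial instant; the natural parameter along a component of $Constr_\ell$ is the cross-ratio $\mcr$, not $\omega$. The paper argues by contradiction: if $\eta$ were bounded on a component, the Klimenko--Romaskevich Bessel asymptotics of $\partial L_\ell$ force $\mu$ to be bounded too, so the non-compact component must accumulate on the coordinate axes; accumulation at $(0,0)$ is excluded by an explicit Taylor expansion of the Poincar\'e map (Lemma \ref{noco0}), at $\{\mu=0\}$ by a rotation-number comparison with the growth point, and at $\{\eta=0\}$ because there $\mcr\to0$ while its limit was assumed nonzero. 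None of this is in your sketch. Finally, your constancy-of-type argument assumes the type varies ``analytically'' and can only change through a neutral constriction; but the sets of positive and negative constrictions are not obviously open in $Constr_\ell$ --- under perturbation a single point of $\La_\ell\cap\partial L_\rho$ can split into several, and one must rule out that a constriction is a limit of generalized simple intersections. The paper does this via the Heun-equation polynomial-solution criterion (Proposition \ref{nopint}), which your argument would also need.
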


\begin{theorem} \label{noq} For every $\ell\in\zz$ there are no ghost 
constrictions in the axis 
 $\La_\ell:=\{ B=\omega\ell\}$ whenever $\omega>0$ is small enough (dependently on 
 $\ell$).
 \end{theorem}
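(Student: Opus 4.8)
The plan is to set $\eta:=\omega^{-1}$ and to treat equation (\ref{jostor}), i.e.\ $\frac{d\theta}{d\tau}=\frac1\omega(\cos\theta+B+A\cos\tau)$ with $B=\omega\ell$, as a slow--fast system with fast variable $\theta$ in the limit $\eta\to+\infty$, using Proposition \ref{propoinc} (constriction $\Leftrightarrow h^{2\pi}=\mathrm{Id}$, $A\neq0$) as the working characterization. The fast equilibria are the zeros of $\cos\theta$, so the slow manifold is $\{\cos\theta=-(B+A\cos\tau)\}$; it is present exactly where $|A\cos\tau+B|<1$ (an attracting branch near $\theta\equiv\frac\pi2$ and a repelling one near $\theta\equiv-\frac\pi2$) and disappears where $|A\cos\tau+B|>1$, forcing $\theta$ to wind. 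Over one period this gives positive winding near $\tau=0$ (where $A\cos\tau+B>1$) and negative winding near $\tau=\pi$ (where $A\cos\tau+B<-1$). A preliminary observation is that if $A<1-O(\omega)$ the slow manifold never disappears, the orbit is captured by its attracting branch, and $h^{2\pi}$ is an exponentially strong contraction, hence $\neq\mathrm{Id}$; so every constriction on $\La_\ell$ lies in the regime $A>1-O(\omega)$, which I analyse.

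For the rotation number I would integrate the equation over one period of a (periodic, since $h^{2\pi}=\mathrm{Id}$) orbit to obtain the exact identity
\begin{equation}
2\pi\rho=\int_0^{2\pi}\frac{d\theta}{d\tau}\,d\tau=\frac1\omega\int_0^{2\pi}\cos\theta\,d\tau+2\pi\ell,\qquad\text{i.e.}\qquad \rho=\ell+\frac{1}{2\pi\omega}\int_0^{2\pi}\cos\theta\,d\tau,
\label{rotform}
\end{equation}
using $B=\omega\ell$ and $\int_0^{2\pi}\cos\tau\,d\tau=0$. Equivalently $\rho$ equals the net winding $N_+-N_-$, and the slow--fast asymptotics give $N_\pm=\frac1{2\pi\omega}\int_{\{\pm(A\cos\tau+B)>1\}}\sqrt{(A\cos\tau+B)^2-1}\,d\tau$. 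Expanding to first order in $B=\omega\ell$ the turning-point boundary terms drop (the integrand vanishes there), leaving $N_+-N_-=\frac{\ell}{\pi}\int_{\{A\cos u>1\}}\frac{A\cos u}{\sqrt{A^2\cos^2u-1}}\,du+O(\omega)$; the substitution $w=A\sin u$ evaluates the integral to $\pi$ for every $A>1$, so $N_+-N_-=\ell+o(1)$. Since $\rho\in\zz$ at a constriction, this forces $\rho=\ell$ for all small $\omega$, eliminating the ghosts with $\rho\neq\ell$.

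Positivity is the heart of the matter, and here I would first exploit the reversing symmetry $R:(\theta,\tau)\mapsto(-\theta,-\tau)$ of (\ref{jostor}) (valid since $\cos$ is even), which conjugates the forward flow to the backward one. On the cross-section $\{\tau=0\}$ it yields $\sigma\, h^{2\pi}\,\sigma=(h^{2\pi})^{-1}$ with $\sigma:\theta\mapsto-\theta$, i.e.\ $\sigma=\diag(1,-1)$ in the $\sl_2(\rr)$ realization of the projective return map. Differentiating this at a constriction, where $h^{2\pi}=\mathrm{Id}$, the derivative $\dot h:=\partial_A h^{2\pi}\big|_{A_0}\in\mathfrak{sl}_2$ anticommutes with $\mathrm{Ad}\,\sigma$, hence is off-diagonal, $\dot h=\begin{pmatrix}0&Q\\ R&0\end{pmatrix}$. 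Then $\tr h^{2\pi}(A)=2\cosh\!\big((A-A_0)\sqrt{QR}\big)+\cdots$ when $QR>0$ and $=2\cos\!\big((A-A_0)\sqrt{-QR}\big)+\cdots$ when $QR<0$, so the constriction is \emph{positive} (a punctured vertical interval of hyperbolic, hence phase-locked, return maps) iff $\det\dot h=-QR<0$, and \emph{negative} iff $\det\dot h>0$. As \cite[theorem 1.8]{g18} excludes the neutral case $\det\dot h=0$, it remains to prove $\det\dot h<0$.

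To fix this sign I would use the first-variation (Melnikov) formula $\dot h=\frac\eta2\int_0^{2\pi}\cos\tau\,Y(\tau)^{-1}J\,Y(\tau)\,d\tau$, where $Y$ is the fundamental matrix of the linearized $\sl_2(\rr)$ (Riccati) system at the constriction with $Y(2\pi)=Y(0)=I$, $J=\begin{pmatrix}0&1\\-1&0\end{pmatrix}$, and $\partial_A M=\frac{\eta\cos\tau}{2}J$; the symmetry $Y(-\tau)=\sigma Y(\tau)\sigma$ re-confirms that the diagonal part of $\dot h$ vanishes. The main obstacle is that $Y$ solves a singularly perturbed system whose monodromy is an exponentially strong contraction for generic $A$ and collapses to the identity only at the fine-tuned constriction values; thus the sign of $\det\dot h$ is exponentially sensitive and its determination genuinely requires WKB/turning-point (Stokes) connection analysis across the boundaries between the oscillatory ($|A\cos\tau+B|>1$) and exponential ($|A\cos\tau+B|<1$) regions, tracking the subdominant solution to exponential precision. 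Establishing that this connection analysis always yields $\det\dot h<0$ is the step I expect to be hardest; once it is in place, combining $\rho=\ell$ with positivity gives the absence of ghost constrictions on $\La_\ell$ for small $\omega$.
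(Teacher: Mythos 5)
Your proposal takes a genuinely different route from the paper, and both of its load-bearing steps have gaps. First, the rotation-number step. The exact identity $\rho=\ell+\frac{1}{2\pi\omega}\int_0^{2\pi}\cos\theta\,d\tau$ is fine, but the conclusion $N_+-N_-=\ell+o(1)$ does not follow from the adiabatic formulas, which approximate the true winding numbers only up to $O(1)$ errors. Concretely, along a slow arc one has $\cos\theta(\tau)=\cos\theta_0(\tau)+\omega\,\theta_0'(\tau)+O(\omega^2)$ on the slow manifold, and the term $\omega\theta_0'$ integrates to a boundary contribution $\omega[\theta_0]$ which, after division by $2\pi\omega$, is a half-integer-sized $O(1)$ correction to $\rho$ (the turning-point, Maslov-type, corrections are of the same order). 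Since the integer $\rho$ must be pinned down to precision better than $1$, exactly these $O(1)$ corrections have to be computed and shown to cancel; the proposal does not address them. Second, and more seriously, the positivity half is not proved at all: your reduction of positivity to the sign condition $\det\dot h<0$ on the $A$-derivative of the return map via the reversing symmetry is correct and consistent with \cite{g18}, but you explicitly leave the sign determination --- which you yourself identify as requiring an exponentially precise Stokes/turning-point connection analysis --- as an open step. Since ``ghost'' includes non-positive constrictions, the theorem is not established.

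It is worth noting how the paper sidesteps both difficulties. It splits the semiaxis $\La_\ell\cap\{A>0\}$ into three ranges of $A$. For $A\le 1-\ell\omega$ there are no constrictions because such points lie in $L_0$ (\cite[proposition 5.22]{bg2}). For $A\ge A_{\frac12}=1+(\ell-\frac12)\omega$ the whole ray lies in $L_\ell$ (Lemma \ref{quehigh}, via the ray result of \cite[theorem 1.12]{g18} combined with the spectral-curve asymptotics $A(\mcp_\ell)=1+(\ell-1)\omega+o(\omega)$), so every constriction there automatically has $\rho=\ell$ and is positive --- no sign computation is needed. In the remaining middle range the paper shows there are no constrictions at all: the Comparison and Monotonicity Lemmas give a strict pointwise inequality between the lifted Poincar\'e maps of two ordinate families $A_{\alpha_1}>A_{\alpha_2}$, and since the higher ordinate lies in $L_\ell$ while any constriction on $\La_\ell$ must have all orbits periodic with rotation number at least $\ell$ (\cite[theorem 1.2 and proposition 2.2]{4}), the lower ordinate cannot carry a constriction. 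That monotonicity argument needs only coarse flowbox-disjointness control of the slow-fast dynamics, not the $o(1)$-precision winding count or the exponentially sensitive sign on which your approach depends.
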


Theorem \ref{noq} will be proved in Section 5 by methods of the theory of 
slow-fast families of dynamical systems. Theorem \ref{noghost} immediately follows from Theorems \ref{famcons0} and \ref{noq}, see the Subsection 5.6.

The proof of  
Theorem \ref{famcons0} is sketched below. It is based on the following 
equivalent description of model of Josephson junction 
by a family of two-dimensional linear systems of differential equations on the Riemann sphere, 
see \cite{bkt1, buch2, bt1, Foote, LSh2009, IRF}, \cite[subsection 3.2]{bg}. 
The variable change 
$$z=e^{i\tau}=e^{i\omega t}, \ \ \Phi=e^{i\theta}=ie^{i\phi}$$
transforms  equation (\ref{jostor}) on the function $\theta(\tau)$  
to the Riccati equation 
\begin{equation}\frac{d\Phi}{dz}=z^{-2}((\ell z+\mu(z^2+1))\Phi+\frac z{2\omega}(\Phi^2+1)).\label{ric}\end{equation}
Recall that $\ell=\frac B\omega$, $\mu=\frac A{2\omega}$, see (\ref{elmu}). 
Equation (\ref{ric}) is the projectivization of the two-dimensional linear system 
\begin{equation} Y'=\left(\frac{\diag(-\mu,0)}{z^2}+\frac{\mcb}z+\diag(-\mu,0)\right)Y, \ \ \ 
\mcb=\left(\begin{matrix} -\ell &-\frac1{2\omega}\\ \frac1{2\omega} & 0\end{matrix}\right), 
\label{tty}\end{equation}
in the following sense: a function $\Phi(z)$ is a solution of (\ref{ric}), if and only if 
$\Phi(z)=\frac vu(z)$, where the vector function $Y(z)=(u(z),v(z))$ is a solution of 
system (\ref{tty}). For $\mu>0$ system (\ref{tty}) has two irregular 
nonresonant singular points at $0$ and at $\infty$. Its {\it monodromy operator} 
acts on the space $\cc^2$ of germs of its solutions at a given point $z_0\in\cc^*$ 
by analytic extension along a counterclockwise circuit around zero. 
\begin{remark} The variable change $E(z):=e^{\mu z}v(z)$ transforms the 
family of systems (\ref{tty}) to the following family of {\it special double confluent Heun equations,} see \cite{tert2}, \cite{bt0}--\cite{bt4}: 
  \begin{equation} z^2E''+((\ell+1)z+\mu(1-z^2))E'+(\lambda-\mu(\ell+1)z)E=0, \ 
  \la:=\frac1{4\omega^2}-\mu^2.\label{heun}\end{equation}
We will also deal with the so-called {\it conjugate} Heun equation obtained from 
(\ref{heun}) by change of sign at $\ell$: 
  \begin{equation} z^2E''+((-\ell+1)z+\mu(1-z^2))E'+(\lambda+\mu(\ell-1)z)E=0.\label{heun2}\end{equation}
Using this relation to well-known class of Heun equations  a series 
of results on phase-lock area portrait of model of Josephson junction and related problems were obtained 
in  \cite{tert2}, \cite{bt0}--\cite{bt4}, \cite{bg, bg2}. See also a brief survey in the 
next subsection. 
\end{remark}

Recall that an {\it isomonodromic family} of linear systems   is a family in which 
 the collection of residue matrices of formal normal forms at  singular points,  
 Stokes matrices and transition matrices between canonical solution bases at different 
 singular points remain constant (up to appropriate conjugacies).

It is known that $(B,A;\omega)$ is a constriction, if and 
only if $A,\omega\neq0$ and system (\ref{tty}) has trivial monodromy; then $\ell=\frac B{\omega}\in\zz$. 
See \cite[proposition 3.2, lemma 3.3]{4} and Proposition \ref{trivmc} in Section 4.

We denote by $Jos$ the three-dimensional family of systems (\ref{tty}), which will be referred to, as 
{\it systems of Josephson type.} For the proof of Theorem \ref{famcons0} we study their isomonodromic deformations in   the four-dimensional space $\bbj$ of linear systems of the so-called {\it normalized $\rr_+$-Jimbo type} 
  \begin{equation}Y'=\left(-\tau\frac{K}{z^2}+\frac{R}z+\tau N\right)Y,  \ \tau\in\rr_+, \ K, R, N \text{ are 
  real 2x2-matrices,}\label{matkr}\end{equation}
  \begin{equation} N=\left(\begin{matrix} -\frac12 & 0\\ 0 & 0\end{matrix}\right), \ 
  R=\left(\begin{matrix}-\ell & -R_{21}\\ R_{21} & 0\end{matrix}\right),  \ 
  K=-GNG^{-1},  \ R_{21}>0, \ \ell\in\rr,\label{matkrn}\end{equation}
where $G\in\sl_2(\rr)$ is a matrix such that 
\begin{equation}G^{-1}RG=\left(\begin{matrix}-\ell & *\\ * & 0\end{matrix}\right);\label{grg}
\end{equation}
here the matrix elements $*$ may be arbitrary. 

Step 1.  We study the real one-dimensional analytic foliation of the space 
$\bbj$ by isomonodromic 
families of linear systems. These isomonodromic families are given by differential equation (\ref{isonormal}). They are obtained 
(by gauge transformations and rescaling of the variable $z$)  from 
 well-known Jimbo isomonodromic deformations \cite{J}, 
 which are given by real solutions of Painlev\'e 3 equation (P3). Namely, 
the function 
$w(\tau)=-\frac{R_{12}(\tau)}{\tau K_{12}(\tau)}=\frac{R_{21}(\tau)}{\tau K_{12}(\tau)}$ 
should satisfy  the 
P3 equation 
\begin{equation} w''=\frac{(w')^2}w-\frac{w'}{\tau}+w^3-2\ell\frac{w^2}{\tau}-\frac1w+(2\ell-2)\frac1\tau\label{p30}\end{equation}
along the isomonodromic leaves.  We show that the hypersurface $Jos\subset\bbj$ 
corresponds to poles of order 1 with residue 1 of solutions of (\ref{p30}). This implies 
that $Jos$ is  {\it transversal} to the isomonodromic foliation. This is 
the  key lemma in the proof. The only role of the Painlev\'e 3 equation in the proof is the above transversality statement. 
 Relation to Painlev\'e 3 equation led us to 
a series of new open problems presented in Section 6.

Step 2. We consider the subset $\Sigma\subset\bbj$ of systems (\ref{matkr}) with 
trivial monodromy. We show that $\ell\in\zz$ for these systems, and  their germs  at $0$ and 
at $\infty$ are analytically equivalent 
to their diagonal formal normal forms. Using this fact, we  show that $\Sigma$ 
 is a real two-dimensional analytic submanifold in $\bbj$ with the following properties:

2.1) $\Sigma$ is a union of leaves of the isomonodromic foliation; 

2.2) (the key theorem in the proof) there exists a {\it submersive} projection $\mcr:\Sigma\to\rr_x$ given 
by an analytic invariant $\mcr$ of linear systems, the so-called {\it transition cross-ratio}, 
that is constant along the leaves.

Statement 2.1) follows from definition. 
Statement 2.2) is proved by showing that $(x,\tau)$, $x=\mcr$, form local analytic coordinates on 
$\Sigma$. Fix an $\ell\in\zz$, and let $\Sigma_{\ell}\subset\Sigma$ denote the 
subset of systems with the given value of $\ell$. 
For every pair $(x_0,\tau_0)$ corresponding 
to a system from $\Sigma_\ell$ realization of any pair $(x,\tau)$ close to $(x_0,\tau_0)$ 
by a system from $\Sigma_\ell$ can be viewed as a solution of the Riemann--Hilbert type 
problem. It is proved via holomorphic vector bundle argument, 
as in famous works by A.A.Bolibruch  on the Riemann--Hilbert Problem 
and related topics: see \cite{Bol89}--\cite{Bol18} and references therein.  
We glue a holomorphic vector bundle with 
connection on $\oc$ realizing given $(x,\tau)$ from the two trivial bundles: one over the disk $D_2\subset\cc$, and the 
other one on the complement of the closed 
disk $\overline D_{\frac12}\subset\oc$. The connections on the latter trivial bundles are given by the diagonal normal 
forms prescribed by $\ell$ and $\tau$.  The gluing matrix, which is holomorphic on the annulus $D_2\setminus\overline D_{\frac12}$,  
 depends analytically on $(x,\tau)$.  The bundle thus 
 obtained is trivial for $(x_0,\tau_0)$ (by definition). It remains trivial for 
 all $(x,\tau)$ close enough to $(x_0,\tau_0)$. This follows from the classical theorem stating that 
{\it a holomorphic vector bundle close to a trivial one is also trivial \cite[appendix 3, lemma 1, theorem 2]{Bol18}, \cite[theorem 2.3]{rohrl}, \cite{grauert}.} The connection on the trivial bundle 
thus obtained is given by a meromorphic system with order two poles at $0$ and at 
$\infty$ and the same normal forms.  Its gauge equivalence to a normalized $\rr_+$-Jimbo type system 
(\ref{matkr}), (\ref{matkrn}) is proved by a symmetry argument.

The submanifold $Jos$ is transversal to $\Sigma$, by the result of Step 1 and 
Statement 2.1). Therefore, the intersection $Jos\cap\Sigma_\ell$ is a real one-dimensional 
submanifold in $\bbj$. It is transversal to the isomonodromic foliation of 
$\Sigma_\ell$ (Step 1), and hence,  is locally diffeomorphically 
projected to an open subset in $\rr$ by the mapping $\mcr$. The above intersection is 
identified with $Constr_\ell$. 
This implies that $Constr_\ell$ is a one-dimensional submanifold; each its 
 connected component  
is  analytically parametrized by an interval $I=(a,b)$ of values of the parameter 
$x=\mcr$ and hence,  is non-compact. 
 
 Step 3. We show that the coordinate $\eta=\omega^{-1}$ is unbounded from above 
 on each component $\mcc$ in $Constr_\ell$. Assuming the contrary, i.e., that 
 $\eta$ is bounded on $\mcc$, we have that  for every $c\in\{ a,b\}$ at least one of 
 the functions $\mu^{\pm1}$, $\eta^{-1}$ (depending on the choice of $c$) 
 should be unbounded, as $x\to c$.   
 Boundedness of  $\mu$ is proved by using Klimenko--Romaskevich 
 Bessel asymptotics of boundaries of the phase-lock areas  \cite{RK}. For $c\neq0$ 
 we prove boundedness of the functions $\mu^{-1}$, $\eta^{-1}$, as $x\to c$, 
 by studying accumulation points of the set $Constr_{\ell}$ in the union of coordinate 
 axes $\{\eta=0\}\cup\{\mu=0\}$.

 Afterwards, to finish the proof of Theorem \ref{famcons0}, it remains to show 
 that the rotation number and type of constriction are constant on each connected 
 component in $Constr_{\ell}$. We deduce constance of type  from the fact 
 that no constriction can be a limit of the so-called {\it generalized simple intersections:}  
those points of intersections $\La_\ell\cap\partial L_r$, 
$r\equiv\ell(\mod 2)$, that are not constrictions and do not lie in the abscissa axis. This, in its turn, is implied by the two following  facts: 

-  the generalized simple intersections correspond to  Heun equations (\ref{heun2}) 
having a polynomial solution  
 \cite[theorem 1.15]{bg2};  this remains valid for their limits with $A\neq0$;
 
 - no constriction can   correspond to  a Heun equation (\ref{heun2}) with polynomial solution  \cite[theorems 3.3, 3.10]{bg}.

   \subsection{Historical remarks}

Model (\ref{jos}) of overdamped Josephson junction was studied by V.M.Buchstaber, 
O.V.Karpov, S.I.Tertychnyi and other mathematicians and physicists, 
see    \cite{bg}--\cite{bt4}, \cite{lev}, 
\cite{4}--\cite{gn19}, \cite{LSh2009, IRF, krs, RK, tert, tert2} and references therein. Hereby we 
present a brief survey of results that were not mentioned in the introduction. 
Recall that the rotation number quantization effect for a family of dynamical systems on 
$\tt^2$ containing (\ref{josvec}) was discovered in \cite{buch2}. 
I.A.Bizyaev, A.V.Borisov, and I.S.Mamaev noticed that a big family of dynamical systems on torus in which the  rotation number quantization effect realizes 
was introduced   by W.Hess  (1890). It appears that in classical mechanics such systems  were studied in problems on  rigid body movement with fixed point in works by  
 W.Hess, P.A.Nekrassov, A.M.Lyapunov, B.K.Mlodzejewski, N.E.Zhukovsky and others. 
See \cite{bbb,  lyap, mlodnek, nekr, zhuk} and references therein. P.A.Nekrasov observed 
in \cite{nekr} that  the above-mentioned big family of  systems
considered by Hess can be equivalently described by a Riccati equation (or by    
 a linear second order differential equation). 

Transversal regularity of the fibration by level sets $\rho(B,A)=const\notin\zz$ with fixed $\omega$ 
on the complement to the union of the phase-lock areas was proved in \cite[proposition 5.3]{bg2}. Conjectures on alignment and positivity of constrictions 
(now Theorems \ref{thal} and \ref{thpos} respectively) were 
stated respectively  in \cite{4} and \cite{g18} and studied respectively in \cite{4, g18} and \cite{g18}, where some partial results were obtained. Theorem \ref{thal} for 
$\omega\geq1$ was proved in \cite{4}. For further survey on these conjectures 
 see \cite{4, bg2, g18} and references therein. A conjecture saying that 
 the semiaxis $\La_{\ell}^+:=\La_\ell\cap\{ A>0\}$ intersects the corresponding phase-lock area $L_\ell$ by a ray explicitly constructed in \cite{g18} was stated in \cite[conjecture 1.14]{g18}.   It 
was shown in \cite[theorem 1.12]{g18} that the ray in question indeed lies in $L_\ell$.  An equivalent description of model (\ref{jos}) in terms of a family of special  double confluent Heun equations (\ref{heun}) was found by S.I.Tertychnyi in \cite{tert2} and 
further studied in a series of joint papers by V.M.Buchstaber and S.I.Tertychnyi 
\cite{bt0}--\cite{bt4}.  They have shown that the constrictions are exactly those 
parameter values $(B,A;\omega)$ for which the corresponding double confluent Heun 
equation (\ref{heun}) has an entire solution: holomorphic on $\cc$ \cite{bt1}.  
Using this observation  they stated a conjecture 
describing ordinates of the constrictions lying in a given axis $\La_\ell$  as 
zeros of a known analytic function constructed via an infinite matrix product \cite{bt1}. 
This conjecture was studied in \cite{bt1, bt2} and reduced to the conjecture 
stating that if the Heun equation  (\ref{heun})  has an entire solution, then the conjugate 
Heun equation (\ref{heun2}) cannot have polynomial solution. Both conjectures were 
proved in \cite{bg}. New automorphisms of solution space of Heun equations (\ref{heun}) 
were discovered and studied in \cite{bt3, bt4}.

In \cite{bt0} V.M.Buchstaber and S.I.Tertychynyi described those $(B,A;\omega)$, for which  conjugate  Heun equation (\ref{heun2}) has a polynomial solution. Namely, for a given 
$\ell=\frac B{\omega}\in\nn$ their set is  a remarkable algebraic curve, the so-called 
spectral curve (studied in \cite{bt0, gn19}): zero locus of determinant of appropriate three-diagonal matrix with entries being linear 
non-homogeneous functions in the coefficients of equation (\ref{heun2}).  
The fact that those points $(B,A;\omega)$ for which (\ref{heun2}) has a polynomial solution 
are exactly the generalized simple intersections is a result of papers 
 \cite{bt0, bg2},  stated and proved in \cite{bg2}.

 There exists an  antiquantization procedure  that associates Painlev\'e equations to Heun equations; double confluent Heun equations correspond to Painlev\'e 3 equations. 
 See   \cite{sasla, S, slaste} and references therein.

V.M.Buchstaber, O.V.Karpov, S.I.Tertychnyi, D.A.Filimonov, V.A.Kleptsyn, I.V.Schurov 
made numerical experiences that have shown that as $\omega\to0$, 
the "upper" part of the phase-lock area portrait converges to a kind of 
parquet in the renormalized coordinates $(\ell,\mu)$: the renormalized 
phase-lock areas tend  to unions of pieces of parquet, and gaps between 
the phase-lock areas tend to zero. See Fig. 3 and the paper \cite{krs}. 
This is an open problem. In \cite{krs}  V.A.Kleptsyn, O.L.Romaskevich, and 
I.V.Schurov proved some results on smallness of gaps and  their rate of 
convergence to zero, as $\omega\to0$, using methods of slow-fast systems. 

A subfamily of family (\ref{jostor}) of dynamical systems on 2-torus 
was studied by J.Guckenheimer and Yu.S.Ilyashenko in \cite{ilguk} from the 
slow-fast system point of view. They obtained results on  its 
limit cycles, as $\omega\to0$.  
 
An analogue of the rotation number integer quantization effect  in braid groups 
was discovered by A.V.Malyutin \cite{malyutin}.

\section{Preliminaries: irregular singularities, normal forms, 
Stokes matrices and monodromy--Stokes 
data of linear systems} 
\subsection{Normal forms, canonical solutions and Stokes matrices}

All the results presented in this subsection are  particular cases of  classical results 
contained in \cite{2, BUL, bjl, 12, jlp, sib}. 

Recall that two germs of meromorphic linear systems of differential 
equations on a $n$-dimensional vector function $Y=Y(z)$ 
at a singular point (pole), say, 0 are {\it analytically equivalent,} 
if there exists a holomorphic 
 $\gl_n(\cc)$-valued function $H(z)$  on a neighborhood of 0 such that the $Y$-variable change 
 $Y=H(z)\wt Y$ sends  one system to the other one. Two systems are {\it formally} 
 equivalent, if the above is true for a formal power series $\widehat H(z)$ with 
  matrix coefficients that has an invertible free term.
 
 Consider a two-dimensional linear system 
 \begin{equation}Y'=
\left(\frac K{z^2}+\frac Rz+O(1)\right)Y, \ \ \ 
Y=\left(\begin{matrix} u\\ v\end{matrix}\right),\label{eqlin}\end{equation}
on a neighborhood of 0; here the matrix $K$ has distinct eigenvalues $\la_1\neq\la_2$, and $O(1)$ is a holomorphic 
matrix-valued function on a neighborhood of $0$. 
Then we say that the singular point 0 of system (\ref{eqlin}) is {\it irregular non-resonant 
of Poincar\'e rank 1.}  Then $K$ is conjugate to  
$\wt K=\diag(\la_1,\la_2)$, $\wt K=\mathbf H^{-1}K\mathbf H$, $\mathbf H\in GL_2(\cc)$,  and one can achieve that 
$K=\wt K$ by applying the constant linear change (gauge transformation)
$Y=\mathbf H\mathbf{\wh Y}$. System (\ref{eqlin}) is 
formally equivalent to a unique {\it formal normal form} 
\begin{equation}\wt Y'=\left(\frac{\wt K}{z^2}+\frac{\wt R}z\right)\wt Y, \ \wt K=\diag(\la_1,\la_2), 
\ \wt R=\diag(b_1,b_2),\label{nform}
\end{equation}
\begin{equation} \wt R \ \  \text{ is the diagonal part of the matrix } \ \  \mathbf H^{-1}R\mathbf H.
\label{resfo}\end{equation} 
The matrix coefficient $K$ in  system (\ref{eqlin}) and the corresponding  matrix $\wt K$ 
in (\ref{nform}) are called the 
{\it main term matrices}, and $R$, $\wt R$  the {\it residue matrices}. 

Generically, the normalizing series $\widehat H(z)$ bringing (\ref{eqlin}) to (\ref{nform}) 
diverges. At the same time, there exists a covering of a punctured 
neighborhood of zero by two sectors $S_0$ and $S_1$ with vertex at 0 in which 
there exist holomorphic matrix functions $H_j(z)$, $j=0,1$, that are $C^{\infty}$ 
smooth on  $\overline S_j\cap D_r$ for some $r>0$, and such that the variable changes $Y=H_j(z)\wt Y$ 
transform (\ref{eqlin}) to (\ref{nform}). This Sectorial Normalization Theorem 
 holds for the so-called 
{\it good sectors} (or {\it Stokes sectors}.) Namely, consider the rays issued from 0 and forming the 
set $\{\re\frac{\la_1-\la_2}z=0\}$. They are called {\it imaginary dividing rays} (or {\it Stokes rays}).  
A sector $S_j$ is {\it good,} if it contains  one imaginary dividing ray 
 and its closure does not 
contain the other one. 

Let $W(z)=\diag(\wt Y_1(z),\wt Y_2(z))$ denote the canonical diagonal fundamental 
matrix solution of the formal normal form (\ref{nform}); here $\wt Y_\ell(z)$ are solutions of its 
one-dimensional equations. The matrices 
$X^j(z):=H_j(z)W(z)$ are fundamental matrix solutions of the initial equation (\ref{eqlin}) 
defining solution bases in $S_j$ called the {\it canonical sectorial solution bases.} 
In their definition we choose the branches $W(z)=W^j(z)$ of the (a priori multivalued) matrix function $W(z)$ in $S_j$, $j=0,1$, so that $W^1(z)$ is obtained from $W^0(z)$ by counterclockwise analytic extension from $S_0$ to $S_1$. 
And in the same way we define yet 
another  branch 
$W^2(z)$ of $W(z)$ in $S_2:=S_0$ that is obtained from $W^1(z)$ by counterclockwise 
analytic extension from $S_1$ to $S_0$. This yields 
another canonical matrix solution $X^2:=H_0(z)W^2(z)$ in $S_0$, which is obtained 
from $X^0(z)$ by multiplication from the right by the monodromy matrix $\exp(2\pi i\wt R)$ 
of the formal normal form (\ref{nform}).  Let $S_{j,j+1}$ denote the connected component 
of intersection $S_{j+1}\cap S_j$, $j=0,1$, that is crossed when one moves 
from $S_j$ to $S_{j+1}$ counterclockwise, see Fig. 5. The transition matrices $C_0$, $C_1$ 
between thus defined canonical solution bases $X^j$, 
\begin{equation} X^1(z)=X^0(z)C_0 \text{ on }  S_{0,1}; \ \ X^2(z)=X^1(z)C_1 \text{ on } S_{1,2}\label{stokes}\end{equation}
are called the {\it Stokes matrices.}  
\begin{example} Let  $A=\diag(\la_1,\la_2)$, and let  $\la_2-\la_1\in \rr$. Then 
the imaginary dividing rays are the positive and negative imaginary semiaxes. 
The good sectors $S_0$ and $S_1$ covering $\cc^*$ satisfy the following 
conditions:

- the sector $S_0$ contains the positive imaginary semiaxis, and its closure does 
not contain the negative one;

- the sector $S_1$ satisfies the opposite condition. See Fig. 5.
\end{example}

\begin{figure}[ht]
  \begin{center}
   \epsfig{file=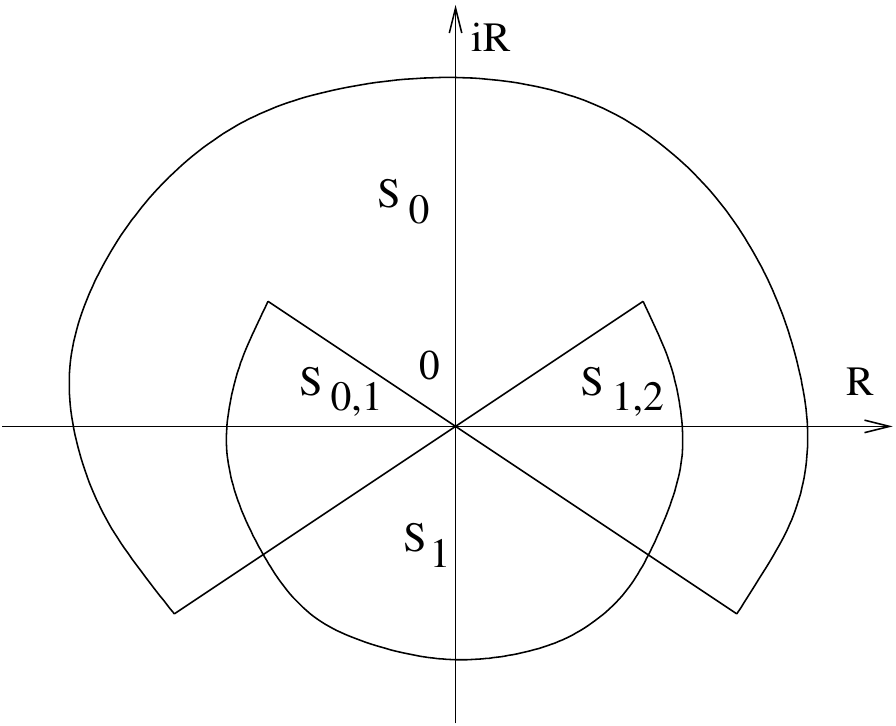}
    \caption{Good sectors in the case, when $\la_1-\la_2\in\rr$.}
  \end{center}
\end{figure}

\begin{example} \label{triangtype} Let us numerate the sectors $S_0$, $S_1$ and the eigenvalues 
$\la_1$, $\la_2$ so that  
\begin{equation}
\overline S_{0,1}\setminus\{0\}\subset\{\re\left(\frac{\la_1-\la_2}z\right)>0\}, \ 
\ \overline S_{1,2}\setminus\{0\}\subset\{\re\left(\frac{\la_1-\la_2}z\right)<0\}.
\label{sigmy01}\end{equation}
This holds, e.g., in the conditions of the above example, if $\la_2-\la_1>0$. 
The canonical solutions of the formal normal form (\ref{nform}) are given by the 
solutions $c_kz^{b_k}e^{-\frac{\la_k}z}$ of one-dimensional equations in (\ref{nform}). They are numerated by indices 
$k=1,2$ of the eigenvalues $\la_k$ of the main term matrix $K$. The corresponding 
solutions of the initial system (\ref{eqlin}) in $S_j$, $j=0,1,2$, i.e., the columns of 
the fundamental matrix $X^j(z)$, are also numerated by the same index $k$ 
and will be denoted by $f_{kj}(z)$. The norm $||f_{1j}(z)||$ is asymptotically dominated by 
$||f_{2j}(z)||$ in $S_{0,1}$, as $z\to0$, 
and the converse asymptotic domination statement holds on $S_{1,2}$. 
This implies (and it is well-known) that $f_{10}\equiv f_{11}$ on $S_{0,1}$ and 
$f_{22}\equiv f_{21}$ on $S_{1,2}$. The Stokes matrices $C_0$ and $C_1$ 
are unipotent:  $C_0$ is upper-triangular and $C_1$ is lower-triangular. 
If the numeration of either eigenvalues, or  sectors (but not both) is opposite, 
or if the singular point under question is $\infty$, not zero (see Remark  \ref{rinf} 
below), then the Stokes matrices are unipotent but of opposite triangular type. 
\end{example}
\begin{remark} \label{riccun} The tautological projection $\cc^2\setminus\{0\}\to\cp^1=\oc$ 
sends canonical sectorial basic solutions $f_{kj}(z)$ of system (\ref{eqlin}) 
to canonical 
sectorial solutions $q_{kj}(z)$ of its projectivization: the corresponding Riccati equation. 
These are the unique $\oc$-valued 
holomorphic solutions of the Riccati equation in the sector $S_j$ 
that extend $C^{\infty}$-smoothly to $\overline S_j\cap D_r$ for a sufficiently small 
 $r>0$. Their values at $0$ are the projections of 
the eigenlines of the main term matrix $K$ with eigenvalues $\la_k$.
\end{remark}

\begin{theorem} \label{anclass} \cite{2, BUL, bjl, 12, jlp, sib} A germ of linear system  at an 
irregular nonresonant singular point is analytically equivalent to its formal normal form, 
if and only if it has trivial Stokes matrices. Two germs of linear systems as above 
are analytically equivalent, if and only if 
their  formal normal forms are the same and their Stokes 
matrix collections are equivalent in the following sense: they are simultaneously 
conjugated by one and the same diagonal matrix (independent on the choice of sector 
$S_{j,j+1}$).
\end{theorem}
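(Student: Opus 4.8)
The plan is to treat both assertions as consequences of the Sectorial Normalization Theorem together with the uniqueness of the sectorial normalizing transformation on a good sector (both granted above as classical input), reducing everything to a gluing argument across the overlap components $S_{0,1}$ and $S_{1,2}$ followed by a removable-singularity argument at $0$. The governing principle I would use is that the two canonical normalizations $H_0$ and $H_1$ fail to coincide on the overlaps exactly by the amount recorded in the Stokes matrices $C_0$, $C_1$: on $S_{0,1}$ the relation $X^1=X^0C_0$ (see (\ref{stokes})) reads $H_0^{-1}H_1=WC_0W^{-1}$, where $W$ is the single-valued branch inside that overlap, and analogously on $S_{1,2}$ with $C_1$.

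For the first assertion I would argue as follows. If $C_0=C_1=Id$, these relations give $H_0\equiv H_1$ on both overlap components, so the two sectorial maps glue into a single-valued holomorphic $\gl_2(\cc)$-valued function $H$ on a full punctured neighborhood of $0$. Since each $H_j$ is $C^\infty$ up to the vertex and carries the invertible free term of the common asymptotic series $\wh H$, the glued $H$ is bounded near $0$; by the removable-singularity theorem it extends holomorphically across $0$ with value equal to that invertible free term, hence is a genuine analytic gauge transformation bringing the system to its formal normal form (\ref{nform}). Conversely, a genuine holomorphic invertible normalization $H$ restricts on each good sector to a sectorial normalization; by the uniqueness clause it must coincide with the canonical $H_j$, and single-valuedness of $H$ then forces $X^0$, $X^1$, $X^2$ to agree on the overlaps, i.e.\ $C_0=C_1=Id$.

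For the second assertion, necessity is partly immediate: analytic equivalence implies formal equivalence, hence equality of the formal normal forms (\ref{nform}). Given an analytic equivalence $\Phi$ between systems $A$ and $B$, the composite $\Phi H_j^A$ is a sectorial normalization of $B$ on $S_j$ to the same normal form; by uniqueness it equals $H_j^B$ up to right multiplication by a constant matrix in the centralizer of the normal form, that is, a diagonal matrix $D_j$. Single-valuedness of $\Phi$ and of the $H_j$ forces all $D_j$ to equal one and the same diagonal $D$, and substituting into (\ref{stokes}) yields $C_j^B=D^{-1}C_j^AD$ simultaneously. For sufficiency, assume equal normal forms and $C_j^B=D^{-1}C_j^AD$; replacing the basis $X_A^j$ by $X_A^jD$ (which conjugates each $C_j^A$ by $D$) we may take $C_j^A=C_j^B$. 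Then on each overlap the matrix $\Phi:=X_B^j(X_A^j)^{-1}$ is independent of $j$, hence defines a single-valued holomorphic invertible function on the punctured neighborhood; as in the first assertion it is bounded and extends across $0$, giving the desired analytic equivalence.

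The main obstacle is the bookkeeping of branches of the multivalued $W$ across the two overlap components and the verification that triviality of both Stokes matrices is consistent with the formal monodromy $\exp(2\pi i\wt R)$ relating $X^2$ to $X^0$ — that is, that the local gluing data genuinely close up around a full loop about $0$. This, together with the boundedness estimate underlying the removable-singularity step, is precisely where the asymptotic (Gevrey) content of the Sectorial Normalization Theorem and its uniqueness clause enter; granting that input, the remaining arguments are the elementary gluing and extension steps sketched above.
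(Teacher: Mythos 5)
The paper does not prove Theorem \ref{anclass}: it is stated as a classical result with citations to \cite{2, BUL, bjl, 12, jlp, sib}, so there is no in-paper argument to compare yours against. Your sketch is the standard proof from those references (sectorial normalization plus gluing plus removable singularity), and it is correct given the classical inputs you name. Two small points of bookkeeping. First, in the converse of the first assertion you say the global normalization $H$ ``must coincide with the canonical $H_j$'' by uniqueness; strictly, the sectorial normalization with the prescribed asymptotics is unique only up to right multiplication by a constant diagonal matrix (the centralizer of the nonresonant diagonal normal form), so $H|_{S_j}=H_jD_j$. This does not hurt you, since conjugating the unipotent triangular $C_j$ by a diagonal matrix preserves triviality, and you do handle exactly this ambiguity correctly in the second assertion. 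Second, your claim there that ``single-valuedness forces all $D_j$ to be equal'' deserves one line of justification: writing the gluing relation $H_1=H_0WC_0W^{-1}$ on $S_{0,1}$ for both systems and using that $W$ and the $D_j$ are diagonal yields $C_0^BD_1=D_0C_0^A$, and comparing diagonal entries of these unipotent triangular matrices gives $D_0=D_1$. Finally, the ``obstacle'' you flag about closing up around the loop is not a real obstruction: the cover by $S_0$ and $S_1$ has exactly two overlap components, so agreement of $H_0$ and $H_1$ on both already defines a single-valued function on the punctured disk, the formal monodromy $\exp(2\pi i\wt R)$ being entirely absorbed into the choice of branches $W^0$, $W^1$, $W^2$.
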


Recall that the {\it monodromy operator} of a germ of linear system at $0$ 
acts on the space of germs of its solutions at a point $z_0\neq0$ sending a local 
solution to the result of its counterclockwise analytic extension along a circuit around 
the origin. Let the origin be an irregular nonresonant singular point of Poincar\'e rank 1, 
and let $S_0$, $S_1$ be the corresponding good sectors. 
Let $M$ be the monodromy matrix written in the  canonical sectorial basis of solutions in $S_0$. Let $M_{norm}$ denote the diagonal monodromy matrix of the formal normal form 
in the canonical solution basis with diagonal fundamental matrix. We will call $M_{norm}$ the {\it formal monodromy.} Recall that 
$M_{norm}=\exp(2\pi i\wt R)$.  The matrix 
$M$ is expressed in terms of the formal monodromy $M_{norm}$ and the Stokes 
matrices $C_0$ and $C_1$ via the following 
well-known formula \cite[p.35]{12}: 
\begin{equation}M=M_{norm}C_1^{-1}C_0^{-1}.\label{monst}\end{equation}

\begin{remark} \label{rinf}
We will  also deal with the case, when the singular point under question is $\infty$, 
and the above statements hold in the local coordinate $\wt z:=\frac1z$. 
In the coordinate $z$ the corresponding equation 
and formal normal form take the form 
$$Y'=\left(K+\frac Rz+O\left(\frac1{z^2}\right)\right)Y, \ \ \wt Y'=\left(\wt K+\frac{\wt R}z\right)\wt Y.$$
The matrices $K$, $\wt K$ are called the {\it main term matrices,} and $R$, $\wt R$ the 
{\it residue matrices} of the corresponding systems at $\infty$. Let $\la_1$, $\la_2$ 
be the eigenvalues of the matrix $K$. An  
{\it imaginary dividing ray at infinity} is a ray issued from $0$ 
and  lying in the set $\{\re(\la_1-\la_2)z=0\}$. This yields the definition 
of good sectors "at infinity".  
The Sectorial Normalization and Analytic Classification Theorems and the definition of 
Stokes matrices at infinity are stated in the same way, as above; the sectors $S_0$, $S_{0,1}$, 
$S_1$, $S_{1,2}$ at infinity are also numerated counterclockwise. Formula 
(\ref{monst}) also holds at $\infty$. 
\end{remark}

\subsection{Systems with two irregular singularities. Monodromy--Stokes data} 
\begin{definition} By $\mcho$ we will denote the class of linear systems on the 
Riemann sphere having two singular points, at zero and at infinity, such that both of them 
are irregular nonresonant of Poincar\'e rank 1.  Each system from the class $\mcho$ 
has the type 
\begin{equation}Y'=\left(\frac K{z^2}+\frac Rz+N\right)Y, \ \ \  \ K,R,N\in\End(\cc^2),
\label{yaz}\end{equation}
where each one of the main term matrices $K$ and $N$ at zero and at $\infty$ 
has distinct eigenvalues. 
\end{definition}
\begin{definition} Consider a linear system $\mcl\in\mcho$. 
Fix a point $z_0\in\cc^*$ and two pairs of good sectors 
($S_{0}^0$, $S_1^0$), ($S_{0}^{\infty}$, $S_1^{\infty}$) for the main term matrices at $0$ and $\infty$ respectively, 
see Remark \ref{rinf}. Fix two paths 
$\alpha_p$  in $\cc^*$ numerated by $p=0,\infty$, going from the point 
$z_0$ to a point  in $S_{0}^p$. Let $f_{1p}$, $f_{2p}$ be a canonical sectorial solution basis  
for the system $\mcl$ at $p$ in $S_0^p$. Consider the analytic extensions of the basic functions $f_{kp}$ 
to the point $z_0$ along paths $\alpha_p^{-1}$. Let $\pi:\cc^2\setminus\{0\}\to\cp^1$ 
denote the tautological projection. Set $\Phi:=\frac{Y_2}{Y_1}$, 
\begin{equation} q_{kp}:=\pi(f_{kp}(z_0))\in\cp^1=\oc_{\Phi}.\label{qjp}\end{equation}
Let $M$ denote the monodromy operator of the system $\mcl$ acting on the local solution 
space at $z_0$ (identified with the space $\cc^2$ of initial conditions 
at $z_0$) by analytic extension along counterclockwise circuit around zero. The tuple 
\begin{equation}(q,M):=(q_{10},q_{20},q_{1\infty}, q_{2\infty}; M)
\label{msd}\end{equation}
taken up to the next equivalence is called the {\it monodromy--Stokes data} of the system 
$\mcl$. Namely, two tuples $(q,M),(q',M')\in(\cp^1)^4\times\gl_2(\cc)$ are called 
{\it equivalent\footnote{Here is an equivalent group-action definition. 
The group $\psl_2(\cc)$ acts on $\oc^4\times\gl_2(\cc)$ by action 
$h:q_{kp}\mapsto hq_{kp}$ on points in $\oc$ and conjugation $M\mapsto hMh^{-1}$ 
on matrices. The monodromy--Stokes data is the $\psl_2(\cc)$-orbit of a collection 
$(q,M)$ under this action.},} if there exists a linear operator $H\in\gl_2(\cc)$ whose projectivization 
sends $q_{kp}$ to $q'_{kp}$ and such that $H^{-1}\circ M'\circ H=M$. 
We will also deal with the 
{\it transition matrix} $Q$ comparing the canonical bases at $0$ and at $\infty$ at $z_0$: 
$(f_{1\infty},f_{2\infty})=(f_{10},f_{20})Q$. 
\end{definition}
\begin{remark} The  monodromy--Stokes data of a system $\mcl$ depends only 
on the homotopy class of the pair of paths $(\alpha_0,\alpha_{\infty})$ 
in the space of pairs of paths in $\cc^*$ with a common (variable) starting point $z_0$ and with endpoints 
lying in given sectors $S_0^0$ and $S_0^\infty$ respectively. Indeed, let a homotopy between 
two pairs of paths, $(\alpha_0,\alpha_{\infty})$ with base point $z_0$ and $(\alpha_0',\alpha_\infty')$ 
with base point $z_0'$, move $z_0$ to  $z_0'$ along a path $\beta$ in $\cc^*$. Let $X(z)$ 
be the germ of fundamental matrix of the system $\mcl$ at $z_0$ such that $X(z_0)=Id$. 
Let $H=X(z_0')$ denote the value at $z_0'$ of the analytic extension of the fundamental matrix function 
$X(z)$ along the path $\beta$. Then $H$ transforms the monodromy--Stokes data corresponding 
to $z_0$ and the path pair $(\alpha_0,\alpha_{\infty})$ to that corresponding to $z_0'$ and the path pair  
$(\beta^{-1}\alpha_0,\beta^{-1}\alpha_{\infty})$, as at the end of the above definition.
\end{remark}

\begin{proposition} \label{proconsm} One has $q_{1p}\neq q_{2p}$ for every $p=0,\infty$. 
 The monodromy--Stokes data of a system $\mcl\in\mcho$ determines the collection of 
formal monodromies $M_{norm,p}$, the Stokes matrices 
$C_{jp}$  at $p=0,\infty$, $j=0,1$, and the transition matrix $Q$ 
uniquely  up to the following 
equivalence. Two collections $(M_{norm,p},C_{jp},Q)$ and $(M'_{norm,p},C_{jp}',Q')$ are {\bf equivalent,} 
if $M_{norm,p}=M_{norm,p}'$ and there exists a pair of diagonal matrices $D_0$, $D_{\infty}$ such that  
$C_{jp}'= D_pC_{jp}D_p^{-1}$ for all $j$, $p$, and $Q'=D_0\circ Q\circ D_{\infty}^{-1}$.
\end{proposition}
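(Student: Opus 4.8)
The plan is to prove the distinctness assertion $q_{1p}\neq q_{2p}$ directly, and then to recover the formal monodromies, the Stokes matrices and the transition matrix from $(q,M)$ by a unique triangular factorization of the monodromy operator, tracking the two diagonal gauge freedoms that produce exactly the stated equivalence.

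First I would dispose of the inequalities $q_{1p}\neq q_{2p}$. The analytic extensions to $z_0$ of the canonical sectorial solutions $f_{1p},f_{2p}$ form a fundamental system of the linear system $\mcl$, hence are linearly independent vectors in $\cc^2$; therefore their tautological projections $q_{1p}=\pi(f_{1p}(z_0))$ and $q_{2p}=\pi(f_{2p}(z_0))$ are distinct points of $\cp^1$. (Equivalently, Remark \ref{riccun} identifies the $q_{k0}$ with the projections of the two eigenlines of the main term matrix $K$, and the $q_{k\infty}$ with those of $N$; these are distinct because $K$ and $N$ have distinct eigenvalues by definition of $\mcho$.)

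Second, the key reconstruction step at $p=0$. Working with a representative $(q_{10},q_{20},q_{1\infty},q_{2\infty};M)$ of the $\psl_2(\cc)$-class, I would fix lift vectors $e_1,e_2\in\cc^2$ with $\pi(e_k)=q_{k0}$; since $q_{10}\neq q_{20}$ these form a basis, unique up to a diagonal rescaling $e_k\mapsto d_k e_k$, i.e. up to a diagonal matrix $D_0$. These are the values at $z_0$ of the canonical $0$-basis, so writing $M$ in this basis gives a matrix $\wt M$ that is independent of the chosen $\psl_2(\cc)$-representative (the conjugation by $H$ cancels against the induced change of basis) and equals, by (\ref{monst}), the product $M_{norm,0}C_{10}^{-1}C_{00}^{-1}$. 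With the triangular types of Example \ref{triangtype} ($M_{norm,0}$ diagonal, $C_{00}$, $C_{10}$ unipotent of opposite type) this product admits a unique factorization: one reads off $m_1=\wt M_{11}$, then $m_2=\det\wt M/m_1$ (using that the unipotent Stokes factors have determinant $1$), so that $M_{norm,0}=\diag(m_1,m_2)$ is fully determined, and the two off-diagonal Stokes parameters follow from the off-diagonal entries of $\wt M$. The factorization is legitimate because $\wt M_{11}=m_1=\exp(2\pi i b_1)\neq0$, the formal monodromy being invertible. Rescaling the lift by $D_0$ conjugates $\wt M$ and hence the entire factorization by $D_0$: it fixes $M_{norm,0}$ and replaces $C_{j0}$ by $D_0C_{j0}D_0^{-1}$, which is precisely the claimed ambiguity at $p=0$.

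Third, I would recover the transition matrix and the data at $\infty$ in the same frame. Choosing lift vectors of $q_{1\infty},q_{2\infty}$ and expressing them in the $0$-basis produces $Q$, determined up to right multiplication by a diagonal $D_\infty$ (rescaling the $\infty$-lifts) and left multiplication by $D_0^{-1}$, that is, up to $Q\mapsto D_0 Q D_\infty^{-1}$. Since $0$ and $\infty$ are the only singular points, the monodromy around $\infty$ is the inverse of the monodromy $M$ around $0$; in the canonical $\infty$-basis it is $Q^{-1}\wt M^{-1}Q$. Applying (\ref{monst}) at $\infty$ together with the same unique triangular factorization, now in the opposite triangular convention of Remark \ref{rinf} and Example \ref{triangtype}, recovers $M_{norm,\infty}$ and $C_{0\infty},C_{1\infty}$ up to conjugation by $D_\infty$. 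Assembling the two independent diagonal ambiguities $(D_0,D_\infty)$ yields exactly the equivalence in the statement. The only genuine obstacle is the uniqueness of the triangular factorization of $\wt M$: I expect the sole nontrivial point to be that the decomposition of a product (diagonal)(unipotent)(unipotent of opposite type) back into its factors is unambiguous and never degenerate, which reduces to the observation that the relevant corner entry of $\wt M$ is a nonzero eigenvalue of the formal monodromy; everything else is bookkeeping of the gauge freedoms and the remark that, with only two singular points, the two monodromies are mutually inverse.
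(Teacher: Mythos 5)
Your proposal is correct and follows essentially the same route as the paper: distinctness of $q_{1p},q_{2p}$ from linear independence of the canonical basis, lifting the points to a basis unique up to diagonal rescaling, and then uniquely recovering $M_{norm,p}$ and the unipotent Stokes factors from the entries of the monodromy matrix via formula (\ref{monst}) (your determinant computation of $m_2$ is identical to the paper's $m_{2p}=M_{22}-M_{12}M_{21}M_{11}^{-1}$), with the diagonal rescaling freedom producing exactly the stated equivalence on $C_{jp}$ and $Q$. Your explicit remark that the monodromy around $\infty$ is the inverse of that around $0$ is a point the paper leaves implicit, but it changes nothing of substance.
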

\begin{proof} The inequality $q_{1p}\neq q_{2p}$ follows from linear independence 
of the basic functions $f_{1p}$, $f_{2p}$, which  implies independence of 
their values at $z_0$. A given pair of distinct points $q_{1p},q_{2p}\in\cp^1$ 
defines a basis $(v_{1p},v_{2p})$ in $\cc^2$ (whose vectors are projected to $q_{jp}$) uniquely up to 
multiplication of vectors by constants. Recall that in the  basis $(f_{1p}, f_{2p})$ of the 
local solution space at $z_0$ with $f_{kp}(z_0)=v_{kp}$ 
the monodromy matrix is given by formula (\ref{monst}): 
\begin{equation}M=M_{norm,p}C_{1p}^{-1}C_{0p}^{-1}.\label{monstp}\end{equation}
 Here the Stokes matrices 
$C_{0p}$, $C_{1p}$ are unipotent of opposite triangular types (determined by 
the main term matrix of the system $\mcl$ at $p$). Let they be, say, 
upper and lower triangular  respectively with the corresponding triangular elements 
$c_0$ and $c_1$. Recall that the formal monodromy matrix 
$M_{norm,p}$ is diagonal, set $M_{norm,p}=\diag(m_{1p},m_{2p})$; $m_{jp}\neq0$. Then 
\begin{equation}m_{1p}=M_{11}, \ c_0=-M_{12}M_{11}^{-1}, \ m_{2p}c_1=-M_{21},
\label{m1p}\end{equation}
\begin{equation}m_{2p}=M_{22}-m_{2p}c_0c_1=M_{22}-M_{12}M_{21}M_{11}^{-1}, \ c_1=-M_{21}m_{2p}^{-1},\label{m2p}\end{equation}
by (\ref{monstp}). This yields expression for the formal monodromy $M_{norm,p}$ 
and the Stokes matrices in terms of $M$. All the latter matrices depend on choice of the basic 
functions $f_{kp}$, which are uniquely defined by $q_{jp}$ up to multiplication by constant factors. 
These rescalings replace $M_{norm,p}$ and $C_{jp}$ by their conjugates by a diagonal matrix $D_p$,  
and $Q$ by $D_0QD_{\infty}^{-1}$.
This does not change the diagonal matrix $M_{norm,p}$. 
The proposition is proved. 
\end{proof}
\begin{remark} Recall that two global linear systems on the Riemann sphere are 
globally analytically (gauge) equivalent, if and only if they are sent one to the other by 
constant linear change  $Y\mapsto HY$, $H\in\gl_2(\cc)$ (i.e., constant gauge equivalent). 
For simplicity everywhere below whenever we work with global systems on $\oc$ 
we omit the word "analytically" ("constant"), and {\it "gauge equivalence"} means {\it "constant gauge equivalence".}
\end{remark}
\begin{theorem} \label{gaeq} 
Two systems $\mcl_1,\mcl_2\in\mcho$ are gauge equivalent, if and only if they have the same formal normal forms at each singular point and 
the same monodromy--Stokes data. In this case each linear automorphism of the fiber 
$\{ z=z_0\}\simeq\cc^2$ sending the monodromy--Stokes data of one system to 
that of the other system extends to a gauge equivalence of systems. Here both monodromy--Stokes data  
 correspond to the same sectors and path collections. 
\end{theorem}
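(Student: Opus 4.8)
The plan is to prove the nontrivial (sufficiency) implication by reconstructing the gauge transformation from the sectorial solution data, the necessity being immediate. For necessity: if $\mcl_1,\mcl_2$ are gauge equivalent by a constant $H\in\gl_2(\cc)$, then $Y\mapsto HY$ carries each canonical sectorial basis $f_{kp}$ of $\mcl_1$ to one of $\mcl_2$ (the formal normal forms, hence the normal-form factors $W^{j,p}$, being preserved). Thus $\pi(H)$ sends the points $q_{kp}$ to $q_{kp}'$ and $H$ conjugates the monodromy, so the fiber map $H$ realizes the equivalence of monodromy--Stokes data, and the formal normal forms coincide.

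For sufficiency, let $H_0$ be a fiber automorphism realizing the equivalence of the data. Applying the constant gauge transformation $Y\mapsto H_0Y$ to $\mcl_1$ produces a system $\wt{\mcl}_1$ whose monodromy--Stokes data is now \emph{literally} equal to that of $\mcl_2$, with the same formal normal forms. It then suffices to show $\wt{\mcl}_1=\mcl_2$ as systems, for then $Y\mapsto H_0Y$ is the claimed gauge equivalence extending $H_0$. I would first note, by Remark \ref{riccun}, that the $q_{kp}$ are the projectivized eigenlines of the main term matrix at $p$ ($K$ at $0$, $N$ at $\infty$); since the eigenvalues are fixed by the common formal normal form, the main term matrices of $\wt{\mcl}_1$ and $\mcl_2$ coincide at both $0$ and $\infty$. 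By Proposition \ref{proconsm}, equality of the data yields equal formal monodromies and, after one consistent diagonal rescaling of the canonical bases at each singular point, equal Stokes matrices $C_{jp}$ ($p=0,\infty$) and equal transition matrix $Q$. I fix the corresponding sectorial fundamental solutions $X_i^{j,p}(z)=\wh H_i^{j,p}(z)W^{j,p}(z)$ for both systems.

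Next I define, on each good sector, $H^{j,p}(z):=X_2^{j,p}(z)\bigl(X_{\wt1}^{j,p}(z)\bigr)^{-1}$ and glue these pieces. Across a Stokes line the relation \eqref{stokes} with equal Stokes matrices gives $H^{1,p}=H^{0,p}$ on the overlap; around each singular point formula \eqref{monst}, combined with the equal formal monodromy and Stokes matrices, gives equal full monodromy, so the assembled function is single-valued; and on the annulus between the two charts the equal transition matrix $Q$ forces the function built from the $0$-solutions to agree with the one built from the $\infty$-solutions. This produces a single-valued holomorphic $\gl_2(\cc)$-valued function $H$ on $\cc^*$. Because the normal-form factors $W^{j,p}$ are common to both systems, they cancel in $X_2^{j,p}(X_{\wt1}^{j,p})^{-1}=\wh H_2^{j,p}(\wh H_{\wt1}^{j,p})^{-1}$, which is therefore bounded and tends to $Id$ at each singular point (its leading terms are the coinciding diagonalizers of the coinciding main term matrices). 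Hence $H$ extends holomorphically across $0$ and $\infty$, giving a holomorphic map $\oc\to\gl_2(\cc)$, which is necessarily constant and, by the boundary limits, equal to $Id$. Thus $X_2^{j,p}\equiv X_{\wt1}^{j,p}$ and $\wt{\mcl}_1=\mcl_2$.

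I expect the main obstacle to be the gluing step: verifying single-valuedness simultaneously across the Stokes lines, around both irregular points, and across the intermediate annulus, so that the sectorially defined $H^{j,p}$ assemble into one global holomorphic function. This is precisely where Theorem \ref{anclass}, Proposition \ref{proconsm}, and Remark \ref{riccun} must be used in concert, and where the hypothesis that both data are compared for the same sectors and path collection is essential. The accompanying removable-singularity argument at $0$ and $\infty$ relies crucially on the cancellation of the divergent normal-form factors $W^{j,p}$, which in turn is available only because the two systems are assumed to share their formal normal forms.
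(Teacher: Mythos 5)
Your overall route coincides with the paper's up to one substitution: like the paper, you reduce the monodromy--Stokes data to the collection (formal monodromy, Stokes matrices $C_{jp}$, transition matrix $Q$) via Proposition \ref{proconsm}, but where the paper then simply cites the classical uniqueness statement \cite[proposition 2.5]{JMU1}, you prove it from scratch by gluing the sectorial conjugacies $H^{j,p}=X_2^{j,p}\bigl(X_{\wt 1}^{j,p}\bigr)^{-1}$ into a global matrix function on $\oc$ and applying Liouville. That gluing argument (single-valuedness from equal Stokes and monodromy matrices, matching of the $0$- and $\infty$-charts from the equal transition matrix $Q$, removability at the irregular points from cancellation of the factors $W^{j,p}$) is the standard proof of the cited proposition, and its skeleton is sound.

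However, your endgame contains a genuine error. You assert, ``by Remark \ref{riccun}'', that the $q_{kp}$ are the projectivized eigenlines of the main term matrices, and you use this to conclude that $\wt\mcl_1$ and $\mcl_2$ have equal main term matrices and hence that $H^{j,p}\to Id$ at each singular point. This misreads the remark: by (\ref{qjp}) the $q_{kp}$ are the values of the projectivized canonical solutions at the \emph{base point} $z_0$ (after continuation along $\alpha_p^{-1}$); it is their values \emph{at the singular point} $p$ that are the eigenline projections. Equal formal normal forms only make the main term matrices conjugate, not equal, so the limits of $\wh H_2^{j,p}\bigl(\wh H_{\wt 1}^{j,p}\bigr)^{-1}$ at $p$ are invertible constants conjugating one main term matrix into the other, not $Id$. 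As written, your argument therefore only yields that $H$ is \emph{some} constant gauge equivalence, which proves the first assertion of Theorem \ref{gaeq} but not the second one, that the given fiber automorphism $H_0$ itself extends. The repair is short and stays within your framework: after applying $H_0$ the two data coincide literally, so choose the same representative vectors $v_{kp}\in\cc^2$ over the common points $q_{kp}$ for both systems; then the canonical bases have equal values at $z_0$, the normalized $C_{jp}$ and $Q$ of Proposition \ref{proconsm} agree, and $H(z_0)=X_2(z_0)X_{\wt 1}(z_0)^{-1}=Id$. Constancy of $H$ (which needs only boundedness at $0$ and $\infty$, not a specific limit) then gives $H\equiv Id$ and $\wt\mcl_1=\mcl_2$.
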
 
\begin{proof} The statement of the theorem holds if one replaces the monodromy--Stokes 
data by collection of Stokes matrices and the transition matrix up to equivalence 
from the above proposition,  see \cite[proposition 2.5, p.319]{JMU1}. 
The collection of Stokes and transition matrices (taken up to the latter equivalence) 
is uniquely determined by the monodromy--Stokes data, by the same proposition. 
 Conversely, the monodromy--Stokes data can be restored from 
 the formal monodromy and Stokes and transition matrices. Namely, 
  the monodromy  matrix $M$ in the basis $(f_{10},f_{20})$ is found from (\ref{monstp}). Let us choose coordinates 
  on $\cc^2$ in which $f_{10}(z_0)=(1,0)$, $f_{20}(z_0)=(0,1)$.  Then one has $q_{10}=[1:0]$, $q_{20}=[0:1]$, 
  and  $q_{1\infty}$, $q_{2\infty}$ are the projections of the columns of 
the transition matrix $Q$. Theorem \ref{gaeq} is proved.
\end{proof}

\section{Isomonodromic deformations and Painlev\'e 3 equation}
Here we introduce general Jimbo's isomonodromic deformations of linear systems 
in $\mcho$, which form a one-dimensional holomorphic foliation of the space $\mcho$ 
(Subsection 3.2). Afterwards we study  its restriction to the so-called 
Jimbo type systems, where isomonodromic 
deformations are described by solutions of Painlev\'e 3 equation (\ref{p3}) (Subsection 3.3). 
In Subsection 3.4 we consider the space of real Jimbo type systems (i.e., defined 
by real matrices) with $R_{21}>0>R_{12}$. We introduce the space $\bbj$ of 
their appropriate normalizations with $R_{21}=-R_{12}>0$ by gauge transformations 
and variable rescalings: the so-called  normalized  $\rr_+$-Jimbo type systems. 
Their space $\bbj$ contains the space $Jos$ of systems (\ref{tty}) and is foliated 
by isomonodromic families obtained from Jimbo deformations by normalizations. 
We show that the family $Jos$ is transversal to the isomonodromic 
foliation of $\bbj$, and it corresponds to poles of order 1 with residue 1 of solutions of 
Painlev\'e equations (\ref{p3}) (Subsection 3.5). 
A background material on isomonodromic deformations is recalled in Subsection 3.1.

\subsection{Isomonodromic deformations: definition and Frobenius integrability 
sufficient condition}
Let us give the following definition of isomonodromic family of linear systems 
in $\mcho$, which is equivalent to the classical definition, by Proposition \ref{proconsm}. 
\begin{definition} A family of systems in $\mcho$  is {\it isomonodromic,} if 
the residue matrices of  formal normal forms at their singular 
points and the monodromy--Stokes data remain constant: independent on the 
parameter of the family. 
\end{definition}
\begin{remark}  If a family of systems in question is continuously parametrized 
by a connected parameter space, then constance of the monodromy--Stokes 
data automatically implies constance of the residue matrices $\wt R_p$ of the formal normal 
forms. Indeed, constance of formal monodromies $M_{norm,p}$ follows by Proposition \ref{proconsm}. 
The formula $M_{norm,p}=\exp(2\pi i\wt R_p)$  implies that the residue matrices 
$\wt R_p$ are uniquely determined by $M_{norm,p}$ up to addition of integer diagonal matrices. 
Hence, they are constant, by continuity and connectivity.
\end{remark}
\begin{theorem}\label{isom_thm}{\rm \cite{JMU1},\cite[Theorem 4.1]{FIKN}}
A holomorphic family of linear systems in $\mcho$ depending on a parameter 
$t$ from a simply connected domain $\mcd\subset\cc$, 
\begin{equation}
Y'=\frac{dY}{dz}=\left(\frac{K_2(t)}{z^2}+\frac{K_1(t)}{z}+K_0(t)\right)Y\label{deform_syst}
\end{equation}
 is isomonodromic if
there is a rational in $z$ (with possible poles only at $z=0,\infty$) and analytic in $t$ matrix differential 1-form 
$\Omega=\Omega(z,t)$ on $\oc\times \mathcal{D}$ 
such that
\begin{equation}
\quad\Omega|_{\text{fixed }t}=\left(\frac{K_2(t)}{z^2}+\frac{K_1(t)}{z}+K_0(t)\right)dz,
\label{frob0}\end{equation}
\begin{equation}
\quad d\Omega=\Omega\wedge \Omega.
\label{frob}\end{equation} 
\end{theorem}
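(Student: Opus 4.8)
The plan is to read the closed-form condition (\ref{frob}) as a flatness (zero-curvature) statement and to extract constancy of each ingredient of the monodromy--Stokes data by the standard ``single-valuedness of the deformation form'' argument; the only genuinely delicate point is the behaviour at the two irregular singularities. First I would write $\Omega=A\,dz+B\,dt$, where $A=\frac{K_2(t)}{z^2}+\frac{K_1(t)}{z}+K_0(t)$ by (\ref{frob0}) and $B=B(z,t)$ is rational in $z$ with poles only at $0,\infty$ (hence a Laurent polynomial in $z$, in particular single-valued in $z$). A direct computation turns (\ref{frob}) into the compatibility/zero-curvature equation
$$\partial_t A-\partial_z B=[B,A],$$
which is exactly the Frobenius complete-integrability condition for the overdetermined Pfaffian system $dY=\Omega Y$, i.e. the compatibility of $\partial_z Y=AY$ with $\partial_t Y=BY$. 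Since $\mcd$ is simply connected, this system admits a fundamental matrix $Y(z,t)$, holomorphic jointly in $(z,t)$ on the universal cover of $\cc^*\times\mcd$ and determined by its value at a base point $(z_0,t_0)$; this is the flat connection whose generalized monodromy I must show is $t$-independent.

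Next I would normalize $Y(z_0,t_0)=Id$ and continue $Y(\cdot,t)$ once counterclockwise around $z=0$, producing $\widetilde Y=Y\,M(t)$, where $M(t)$ is the monodromy in the (moving) frame $Y$. Because $B=\Omega_t$ is single-valued in $z$, the continued section $\widetilde Y$ again solves $\partial_t(\cdot)=B(\cdot)$; differentiating $\widetilde Y=Y\,M(t)$ in $t$ and using $\partial_t Y=BY$ together with $\partial_t\widetilde Y=B\widetilde Y$ gives $Y\,\dot M=0$, hence $\dot M=0$. So the monodromy matrix is constant along the family.

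The main obstacle is the behaviour at the irregular nonresonant points $z=0,\infty$, where I must show that the residue matrices $\wt R_p$ of the formal normal forms and the Stokes matrices $C_{jp}$ (Theorem \ref{anclass}, Remark \ref{rinf}) do not depend on $t$; here the precise rational structure of $\Omega$, forced by integrability, is essential. Expanding $\partial_t A=\partial_z B+[B,A]$ near $z=0$ and comparing the most singular Laurent coefficients shows that the leading principal part of $B$ commutes, in leading order, with the leading coefficient $K_2$ of $A$; since $K_2$ has distinct eigenvalues, this means the $t$-flow preserves the eigenframe of $K_2$ to leading order, i.e. it acts through ``formally diagonal'' transformations. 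Consequently the formal fundamental solution $\widehat Y(z,t)=\widehat H(z,t)\,z^{\wt R}\exp(-\wt K/z)$ can be chosen to solve the full system $d\widehat Y=\Omega\widehat Y$ as a series in $z$, which forces the formal invariants $\wt K$ and $\wt R$ to be constant and $\widehat H(z,t)$ to be holomorphic in $t$. By the Sectorial Normalization Theorem the genuine canonical sectorial solutions $Y_j^p(z,t)$ inherit holomorphic $t$-dependence; repeating the computation of the previous paragraph sector by sector (each $Y_j^p$ being pinned uniquely by its now $t$-independent sectorial asymptotics) shows that every $Y_j^p$ solves the full flat system, so that the Stokes matrices $C_{jp}$, defined on the overlaps by $Y_{j+1}^p=Y_j^p C_{jp}$, are constant. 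The identical argument applied to the two canonical bases at $0$ and $\infty$ yields constancy of the transition matrix $Q$.

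Finally I would assemble these facts. Constancy of the formal monodromies (equivalently of the $\wt R_p$), of the Stokes matrices $C_{jp}$, and of $Q$ is, by Proposition \ref{proconsm} and Theorem \ref{gaeq}, equivalent to constancy of the monodromy--Stokes data up to the $\psl_2(\cc)$-equivalence, which is precisely the definition of an isomonodromic family in $\mcho$. I expect the elementary part---the joint flat solution and the constancy of the monodromy $M$---to be quick; the real work is the order-by-order Laurent analysis at the irregular points, establishing that the deformation form $B$ respects the formal normalization and therefore carries canonical sectorial solutions to canonical sectorial solutions, whence the Stokes data cannot move.
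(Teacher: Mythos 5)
The paper does not prove Theorem \ref{isom_thm}: it is quoted as a classical result of Jimbo--Miwa--Ueno, with references to \cite{JMU1} and \cite[Theorem 4.1]{FIKN}, so there is no in-paper proof to compare yours against. Your overall strategy is the standard one behind those references: write $\Omega=A\,dz+B\,dt$, read (\ref{frob}) as the zero-curvature equation (\ref{frob2}), integrate the Frobenius-flat Pfaffian system to a joint fundamental solution, obtain constancy of the monodromy matrix from single-valuedness of $B$ in $z$, and then carry out the local analysis at the two irregular points to pin down the formal residues, the Stokes matrices and the transition matrix. That skeleton is right, and the monodromy part of your argument is complete and correct.

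There is, however, one concretely false step, and it sits exactly where you yourself locate ``the real work''. You claim that the Laurent analysis at $z=0$ forces the formal fundamental solution $\widehat H(z,t)\,z^{\wt R}\exp(-\wt K/z)$ to have \emph{both} $\wt K$ and $\wt R$ constant. Constancy of $\wt K$ is not a consequence of (\ref{frob}) and is false for the very deformation this paper uses: in Jimbo's family (\ref{lapair}) the integrability condition (\ref{isofield}) reads $\wt K'_s=[R,\wt K]+\wt K$, so the eigenvalues of $\wt K$ scale like $t$ along the isomonodromic leaves; only the conjugacy class of $\frac1t\wt K$ is preserved (Remark \ref{rkint}). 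The correct outcome of the computation $\partial_t\widehat Y\cdot\widehat Y^{-1}=B$ is that the logarithmic term $\widehat H\,\dot{\wt R}\,(\ln z)\,\widehat H^{-1}$ must vanish because $B$ is rational in $z$, giving $\dot{\wt R}=0$, while the term $-\widehat H\,\dot{\wt K}\,\widehat H^{-1}z^{-1}$ is perfectly admissible, being absorbed into the pole of $B$ at $0$. Since the paper's definition of isomonodromy only demands constancy of the residues $\wt R_p$ and of the monodromy--Stokes data, your final conclusion survives, but the step as written proves too much and shows the order-by-order comparison was not actually carried out. In addition, two points are asserted rather than argued, and they are the technical heart of the classical proof: (i) that the canonical sectorial solutions depend holomorphically on $t$ (a parametric version of the Sectorial Normalization Theorem), and (ii) that each canonical sectorial solution $Y_j$ actually satisfies $\partial_tY_j=BY_j$ --- here one must set $U_j:=\partial_tY_j-BY_j$, observe that $U_j=Y_jD_j(t)$ for some matrix $D_j(t)$, and kill $D_j$ using the exponential factors $e^{(\la_l-\la_k)/z}$ inside the sector. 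Without (ii), constancy of the Stokes and transition matrices, and hence of the monodromy--Stokes data, does not follow.
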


Condition (\ref{frob})
 means that $\Omega$ is integrable in the Frobenius sense. See, e.g., \cite[proof of theorem 13.2]{Bol18}.

An isomonodromic deformation with a scalar parameter is often defined by a system of PDEs $\cite{FIKN}$
\begin{equation}
\left\{\begin{array}{ll}
\frac{\partial Y}{\partial z}=U(z,t)Y\\
\frac{\partial Y}{\partial t}=V(z,t)Y,\\
\end{array}\right.
\label{lapaire}\end{equation} 
where $U(z,t)$, $V(z,t)$ are rational in $z\in\oc$ and analytic in $t\in \mathcal{D}$. In that case, one can take $\Omega=U(z,t)dz+V(z,t)dt$. Then condition  (\ref{frob0}) 
of Theorem $\ref{isom_thm}$ is satisfied if 
\begin{equation*}
U(z,t)=\frac{K_2(t)}{z^2}+\frac{K_1(t)}{z}+K_0(t). 
\end{equation*}
Condition (\ref{frob}) is equivalent to the equation 
\begin{equation}\label{frob2}
[U,V]=UV-VU=\frac{\partial V}{\partial z}-\frac{\partial U}{\partial t}.
\end{equation}

\subsection{General Jimbo's isomonodromic deformation}\label{JID}
In this section, we consider an isomonodromic deformation 
introduced by M.~Jimbo in $\cite[p.1156, (3.11)]{J}$ and describe its integrability condition $(\ref{frob})$. 
The deformation space will be  a simply connected domain 
$\mathcal D\subset\cc^*$ containing $\rr_+$. Though the deformation in \cite{J} was 
written in a seemingly special case, it works in the following  general case. 
We are looking for isomonodromic families  of systems $\mcl(t)\in\mcho$ given by 
 system (\ref{lapaire}) of the following type:
\begin{equation} \begin{cases} \frac{\partial Y}{\partial z}=\left(-\frac{\wt K(t)}{z^2}+\frac{R(t)}z
+N(t)\right)Y \ \ \ :=\mcl(t) \\
\frac{\partial Y}{\partial t}=\frac1{zt}\wt K(t)Y,
\end{cases} \ \ \ \ t\in\mcd.\label{lapair}\end{equation}
After the time variable change $t=e^s$ (which cancels "$t$" in the latter denominator), 
the integrability condition  (\ref{frob2}) takes the form of a system of autonomous polynomial ordinary 
differential equations on matrix coefficients in $\wt K$, $R$, $N$ (here and in what 
follows $[U,V]:=UV-VU$): 
\begin{equation} \begin{cases}\wt K'_s=[R,\wt K]+\wt K\\
R'_s=[\wt K,N]\\
N'_s=0.
\end{cases}\label{isofield}\end{equation}
 In the initial time variable $t$ and 
the new matrix variable $K:=\frac1t\wt K$  system (\ref{isofield}) takes the following 
simplified, though non-autonomous, form:
\begin{equation}\begin{cases} tK'=[R,K]\\ R'=[K,N]\\ N'=0.\end{cases}
\label{intpair}\end{equation}

\begin{remark} \label{rkint} Vector field (\ref{isofield}) 
 is a polynomial vector field on the space $\mcho$ identified with a connected  open 
dense subset in the space $\cc^{12}$ with coordinates being matrix coefficients.
Its complex phase curves form a one-dimensional holomorphic foliation 
of the space $\mcho$ by isomonodromic families. The corresponding 
system (\ref{intpair}) considered as a non-autonomous  
differential equation in the linear-system-valued function $\mcl(t)\in\mcho$ has the following  first integrals: 

- the matrix $N$; 

- the conjugacy class of the matrix $K=\frac1t{\wt K}$; 

- the residue matrices of the formal normal forms of $\mcl(t)$ at $0$ and at $\infty$; 

- the conjugacy class of the monodromy operator of the system $\mcl(t)$. 

Invariance of  residues and monodromy follows from Theorem \ref{isom_thm}. 
Invariance of residues can be also deduced directly from (\ref{intpair}) 
and (\ref{resfo}). 
 \end{remark}
 
 \begin{proposition} \label{pequiv} Vector field (\ref{isofield}) is equivariant under 
 gauge transformations acting on $\mcho$. Its real flow preserves the space of 
 systems in $\mcho$ defined by real matrices.
 \end{proposition}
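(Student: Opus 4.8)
The plan is to reduce both assertions to elementary algebraic properties of the matrix commutator. First I would make the gauge action explicit. A constant gauge transformation $Y=H\wt Y$, $H\in\gl_2(\cc)$, sends a system with coefficient matrices $(\wt K,R,N)$ (as in (\ref{lapair})) to the system $\wt Y'=H^{-1}A(z)H\,\wt Y$, $A(z)=-\tfrac{\wt K}{z^2}+\tfrac Rz+N$, whose coefficients are therefore $(H^{-1}\wt K H,\,H^{-1}RH,\,H^{-1}NH)$. Denote this conjugation map by $\Psi_H(\wt K,R,N)=(H^{-1}\wt K H,H^{-1}RH,H^{-1}NH)$. Since conjugation preserves eigenvalues, $\Psi_H$ preserves the nonresonance condition defining $\mcho$, so it is a self-map of $\mcho$; being linear in the matrix entries, it is a linear automorphism of the ambient $\cc^{12}$ whose differential equals itself.

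For equivariance I would show that $\Psi_H$ intertwines the vector field (\ref{isofield}) with itself, i.e. $\Psi_H(\xi(\wt K,R,N))=\xi(\Psi_H(\wt K,R,N))$, where $\xi$ denotes the right-hand side of (\ref{isofield}). The single identity doing all the work is that conjugation is an algebra homomorphism, $H^{-1}[A,B]H=[H^{-1}AH,H^{-1}BH]$, together with its linearity $H^{-1}(A+B)H=H^{-1}AH+H^{-1}BH$. Writing $\hat K=H^{-1}\wt K H$ and similarly for $\hat R,\hat N$, one applies $\Psi_H$ to the three components of $\xi$ and pushes the conjugation inside each commutator, obtaining $[\hat R,\hat K]+\hat K$ for the $\wt K$-component, $[\hat K,\hat N]$ for the $R$-component, and $0$ for the $N$-component; these are precisely the components of $\xi$ evaluated at $\Psi_H(\wt K,R,N)$. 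The only point deserving a moment's attention is the inhomogeneous term $+\wt K$ in the first equation of (\ref{isofield}), but linearity of conjugation sends it to $\hat K$, which is exactly what the transformed field requires. Equivariance of the real flow then follows from equivariance of the field.

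For the reality statement I would observe that the real locus $\mcho_{\rr}\subset\mcho$ of systems with real coefficient matrices is cut out by the real-analytic conditions $\wt K=\overline{\wt K}$, $R=\overline R$, $N=\overline N$, and that $\xi$ is tangent to it. Indeed each component of $\xi$ is a polynomial in the entries of $\wt K,R,N$ with real (in fact integer) coefficients, since commutators and sums of real matrices are real; hence $\xi$ maps real tuples to real tuples. Therefore $\xi$ restricts to a real polynomial vector field on $\mcho_{\rr}$, and its real-time flow preserves $\mcho_{\rr}$.

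I do not anticipate a genuine obstacle: both claims are formal consequences of the equivariance of the commutator under conjugation and of its invariance under complex conjugation. The only mild care needed is to confirm that $\Psi_H$ and complex conjugation preserve the defining nonresonance condition of $\mcho$ (distinct eigenvalues of $K$ and of $N$), which they do because both operations preserve the characteristic polynomials up to conjugation of their coefficients.
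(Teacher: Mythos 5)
Your proof is correct and matches the paper's intent: the paper simply states that the proposition "follows immediately from expression (\ref{isofield})," and your argument is exactly the elaboration of that one-liner, namely that conjugation by $H$ commutes with the commutators and the linear term in the right-hand side, and that those expressions are polynomials with real coefficients in the matrix entries. No discrepancy to report.
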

 The proposition follows immediately from expression (\ref{isofield}).

\subsection{Isomonodromic deformations of special Jimbo type systems}

\begin{definition} A {\it special Jimbo type} linear system is a system of type 
\begin{equation} Y'=\left(-t\frac{\ba}{z^2}+\frac{\bb}z+\left(\begin{array}{cc}
-\frac{1}{2 } &0\\0&0
\end{array}\right)\right)Y,  \ \bb=\left(\begin{matrix} -\ell & *\\ * & 0\end{matrix}\right),\label{djimbo}\end{equation}
such that there exists a matrix $G\in\gl_2(\cc)$ for which 
\begin{equation}
K=G\left(\begin{array}{cc}
\frac{1}{2 } &0\\0&0
\end{array}\right)G^{-1},  \ \ G^{-1}\bb G=\left(\begin{matrix} -\ell & *\\ * & 0\end{matrix}\right).
\label{maa}\end{equation}
In (\ref{djimbo}) and (\ref{maa}) the symbol $*$ stands for an arbitrary unknown matrix element. Here all the matrices  are complex. 
\end{definition}

\begin{remark} \label{rkfnf} The formal normal form at $\infty$  of a system (\ref{djimbo}) is 
$$Y'=\left(\diag(-\frac12,0)+\frac1z\diag(-\ell,0)\right)Y.$$
Condition (\ref{maa}) is equivalent to the statement saying 
 that its formal normal form at $0$ is 
 $$Y'=\left(\frac{t}{z^2}\diag(-\frac12,0)+\frac1z\diag(-\ell,0)\right)Y,$$
 by (\ref{resfo}).
\end{remark}

\begin{proposition} The space of Jimbo type systems (\ref{djimbo}), (\ref{maa}) 
corresponding to a given $\ell\in\cc$ is 
invariant under the  flow of field (\ref{isofield}) and hence, is a union of its phase curves. 
The number $\ell$ is a first integral. 
\end{proposition}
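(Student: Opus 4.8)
The plan is to show that each of the conditions cutting out the space of special Jimbo type systems from (\ref{djimbo}) and (\ref{maa}) is preserved by the flow of field (\ref{isofield}), and then to read off the first-integral claim. Throughout I would work in the form (\ref{intpair}), writing $K=\frac1t\wt K$, so that the field reads $tK'=[R,K]$, $R'=[K,N]$, $N'=0$. For a fixed $\ell$ the space in question is cut out by four conditions: (i) $N=\diag(-\frac12,0)$; (ii) $R_{11}=-\ell$ and $R_{22}=0$; (iii) $K$ is conjugate to $\diag(\frac12,0)$; and (iv) the compatibility condition (\ref{maa}), namely that a single $G$ diagonalizing $K$ also brings the diagonal of $G^{-1}RG$ to $(-\ell,0)$. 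Since $N$ and $-tK$ have distinct eigenvalues along such a trajectory for $t\in\mcd\subset\cc^*$, the trajectory stays inside $\mcho$, so all the invariants of Remark \ref{rkint} apply.

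First I would dispatch the easy conditions. Condition (i) is immediate from $N'=0$. For (ii), the key observation is that for a diagonal $N=\diag(n_1,n_2)$ the commutator has entries $[K,N]_{ij}=(n_j-n_i)K_{ij}$, which vanish when $i=j$; thus the diagonal of $[K,N]$ is zero. Since $R'=[K,N]$, the entries $R_{11}$ and $R_{22}$ are constant along every trajectory. This preserves (ii) and, at the same time, shows that $\ell=-R_{11}$ is a first integral, which is the last assertion of the proposition. For (iii) I would simply invoke Remark \ref{rkint}: the conjugacy class of $K=\frac1t\wt K$ is a first integral, so the eigenvalues of $K$ stay equal to $\{\frac12,0\}$ and $K$ remains diagonalizable to $\diag(\frac12,0)$.

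The one genuinely delicate step — the main obstacle — is the invariance of the compatibility condition (iv), which is not literally a polynomial relation among the matrix entries, since it asserts the existence of a single conjugator $G$ performing two tasks simultaneously. The resolution is to reinterpret it via Remark \ref{rkfnf}: condition (\ref{maa}) is equivalent to the residue matrix of the formal normal form at $z=0$ being $\diag(-\ell,0)$. Here one must first check that this reinterpretation is well posed, i.e. that the diagonal of $G^{-1}RG$ is independent of the choice of $G$ diagonalizing $K$. Indeed, any two such choices differ by right multiplication $G\mapsto GP$ with $P$ commuting with $\diag(\frac12,0)$, hence $P$ diagonal, and conjugation by a diagonal matrix leaves the diagonal entries of $G^{-1}RG$ unchanged. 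With this, (\ref{maa}) is exactly the residue formula (\ref{resfo}) applied at $0$, whose main-term matrix is $-tK$ and is diagonalized by the same $G$. Since the residue matrices of the formal normal forms at $0$ and $\infty$ are first integrals by Remark \ref{rkint}, the residue at $0$ stays equal to $\diag(-\ell,0)$; and because the eigenvalues $\frac12\neq0$ of $K$ are distinct, the labelling of the two diagonal slots varies continuously along the trajectory, so no spurious permutation of $(-\ell,0)$ can occur. Hence (iv) is preserved.

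Combining (i)--(iv), the flow of (\ref{isofield}) sends a special Jimbo type system with parameter $\ell$ to a system of the same kind, so this space is invariant and therefore a union of phase curves; and $\ell=-R_{11}$ being constant, it is a first integral. I expect the entire difficulty to be concentrated in step (iv): matching the nonlinear existence condition (\ref{maa}) to the first-integral residue data through Remark \ref{rkfnf} and controlling the eigenvalue labelling. Everything else is either immediate or a direct appeal to the invariants catalogued in Remark \ref{rkint}.
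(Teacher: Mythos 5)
Your proposal is correct and follows essentially the same route as the paper, which proves the proposition by observing that it follows from Remark \ref{rkint} (the matrix $N$, the conjugacy class of $K$, and the residue matrices of the formal normal forms are first integrals) together with Remark \ref{rkfnf} (condition (\ref{maa}) is equivalent to the formal normal form at $0$ having residue $\diag(-\ell,0)$). Your write-up merely makes explicit the details the paper leaves implicit, such as the vanishing of the diagonal of $[K,N]$ and the well-posedness of the diagonal of $G^{-1}RG$.
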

The proposition follows from Remarks \ref{rkint} and \ref{rkfnf}.

We study Jimbo's isomonodromic families of   systems (\ref{djimbo}) given by 
(\ref{intpair}), which take the following form: 
 \begin{equation}\begin{cases} t\ba'=-[\ba,\bb]\\
 \bb'=\left[\ld,\ba\right].\end{cases}\label{isomatr}
 \end{equation}
We denote the upper right entries of $K(t)$ and $R(t)$ by $K_{12}(t)$ and $R_{12}(t)$  respectively. 
%
\begin{theorem}\label{tjp} \cite[pp. 1156--1157]{J} Set 
\begin{equation}y(t)=-\frac{R_{12}(t)}{K_{12}(t)}, \ \tau=\sqrt{t}, \ w(\tau)=\frac{ y(\tau^2)}{\tau}.\label{wyt}\end{equation}
For every  Jimbo's isomonodromic family (\ref{isomatr}) of Jimbo type systems 
(\ref{djimbo}) 
the corresponding function $w(\tau)$  satisfies the 
Painlev\'e~3 equation\footnote{There is another frequently mentioned isomonodromic deformation that leads to the Painlev\'e 3 equation $\cite{JMU2,FIKN}$. 
 }
\begin{equation}
w''=\frac{(w')^2}{w}-\frac{w'}{\tau}+\alpha\frac{w^2}{\tau}+
\beta\frac{1}{\tau}+\gamma w^3+\delta\frac{1}{w},\tag{$P_3(\alpha,\beta,\gamma,\delta)$} \label{pp3}
\end{equation}
whose parameters are expressed via the first integral $\ell$  in the following way 
\begin{equation}\label{parP3_j}
\alpha=-2\ell,\quad \beta=2\ell-2,\quad \gamma=1,\quad \delta=-1:
\end{equation}
\begin{equation} w''=\frac{(w')^2}w-\frac{w'}{\tau}-2\ell\frac{w^2}{\tau}+(2\ell-2)\frac1\tau+w^3-\frac1w.\label{p3}\end{equation}
\end{theorem}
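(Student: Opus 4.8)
The plan is to verify directly that the function $w(\tau)$ defined in (\ref{wyt}) satisfies the Painlev\'e 3 equation (\ref{p3}) by reducing the matrix system (\ref{isomatr}) to a scalar second-order ODE. First I would exploit the conservation laws coming from Remark \ref{rkint} to reduce the number of unknowns. Since $K$ is conjugate to $t\cdot\diag(\frac12,0)$, its eigenvalues are $\frac t2$ and $0$, so $\operatorname{Tr}K=\frac t2$ and $\det K=0$; similarly condition (\ref{maa}) fixes $\operatorname{Tr}R=-\ell$ and pins down the conjugacy class of $R$. These give algebraic relations among the four entries of $K$ and of $R$, cutting the effective degrees of freedom down to essentially two scalar functions, which can be taken to be $K_{12}$ and $R_{12}$ (equivalently the ratio $y=-R_{12}/K_{12}$).

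Next I would write out (\ref{isomatr}) componentwise. The equation $tK'=-[K,R]$ gives first-order ODEs for the entries $K_{11},K_{12},K_{21},K_{22}$ in terms of the entries of $K$ and $R$, and $R'=[\diag(\frac12,0),K]$ gives $R'_{12}=\frac12 K_{12}$, $R'_{21}=-\frac12 K_{21}$, $R'_{11}=R'_{22}=0$ (so $R_{11}=-\ell$, $R_{22}=0$ are constants, consistent with $\operatorname{Tr}R=-\ell$). Using the constraints $\det K=0$ and the fixed trace/determinant of $R$ to eliminate the diagonal entries and $K_{21}$, I would obtain a closed system in the pair $(K_{12},R_{12})$. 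Introducing $y(t)=-R_{12}/K_{12}$ and differentiating, the relation $R'_{12}=\frac12 K_{12}$ lets me express $K_{12}$ in terms of $y$ and $y'$, and then the $K$-equation produces a second-order ODE for $y$ in the variable $t$. The final step is the change of variables $\tau=\sqrt t$, $w(\tau)=y(\tau^2)/\tau$, under which $\frac{d}{dt}=\frac1{2\tau}\frac{d}{d\tau}$ and a short computation converts the ODE for $y(t)$ into (\ref{p3}); the coefficients $-2\ell$, $2\ell-2$, $1$, $-1$ should emerge from $\operatorname{Tr}R=-\ell$ and the normalization $\operatorname{Tr}K=\frac t2$.

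The main obstacle will be the elimination bookkeeping: the commutator equations couple all four entries of each matrix, and correctly using $\det K=0$ together with the prescribed conjugacy class of $R$ to eliminate the auxiliary entries is where sign errors and spurious terms most easily creep in. In particular, one must be careful that the off-diagonal entry $K_{21}$ (needed in the commutators) is consistently expressed via $\det K=0$ and $\operatorname{Tr}K=\frac t2$, and that the cross term producing the $w^3$ and $\frac1w$ contributions comes out with the right normalization. Since Theorem \ref{tjp} is quoted from \cite[pp. 1156--1157]{J}, I would organize the verification so that it matches Jimbo's derivation, treating the passage from the general deformation (\ref{intpair}) to the scalar equation as the substantive content and the final coordinate change $\tau=\sqrt t$, $w=y/\tau$ as a routine (if delicate) rescaling.
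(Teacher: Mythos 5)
Your route is genuinely different from the paper's. The paper does not rederive the Painlev\'e 3 equation at all: Theorem \ref{tjp} is quoted from Jimbo, and the remark following it supplies an explicit dictionary (a scalar gauge factor $x^{-\ell/2}$, the rescalings $z=-2x$, $t=-4\wt t$, and $w(\tau)=-iy(-\tfrac i2\tau)$) that carries Jimbo's deformation (\ref{jcase}) and the Painlev\'e 3 equation he obtains into the family (\ref{isomatr}) and equation (\ref{p3}). Your plan is instead a self-contained componentwise derivation, which is essentially Jimbo's own computation redone in the present normalization; it buys independence from the reference at the cost of a long elimination, whereas the paper's reduction is short but requires carefully tracking several nontrivial rescalings, including a complex one.

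Two concrete points in your plan need repair before the computation will go through. First, $\operatorname{Tr}K=\frac12$, not $\frac t2$: by (\ref{maa}) the matrix $K$ itself is conjugate to $\diag(\frac12,0)$, and the Lax-type equation $tK'=[R,K]$ preserves its spectrum, so its eigenvalues stay $\frac12$ and $0$ for all $t$; the factor $t$ sits in front of $K$ in system (\ref{djimbo}) and is not absorbed into $K$ (the paper uses $\operatorname{Tr}K\equiv\frac12$ explicitly in the proof of Lemma \ref{lempol}). Carrying $\frac t2$ through the elimination would corrupt the coefficients of (\ref{p3}). Second, the system does not close on the pair $(K_{12},R_{12})$: after imposing $\operatorname{Tr}K=\frac12$, $\det K=0$ and the fixed diagonal of $R$, four free entries remain (say $K_{11},K_{12},R_{12},R_{21}$), and the commutator $[R,K]$ genuinely involves $K_{11}$ and $R_{21}$. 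You must also invoke the second relation in (\ref{maa}) --- the vanishing of the lower-right entry of $G^{-1}RG$, i.e. the analogue of constraint (\ref{rightflow}) linking $R$ to the matrix diagonalizing $K$ --- to eliminate one more unknown, and then account for the residual diagonal-conjugation gauge freedom, under which $K_{12}$ and $R_{12}$ separately are not invariant although the ratio $y=-R_{12}/K_{12}$ is. With those two corrections the elimination does yield a closed second-order ODE for $y$, and the final substitution $\tau=\sqrt t$, $w=y/\tau$ is routine as you say.
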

\begin{remark} The deformation considered in Jimbo's paper \cite[pp. 1156--1157]{J} 
was of the type
\begin{equation} \begin{cases}\frac{\partial Y}{\partial x}=\left(-\frac{\wt tA(\wt t)}{x^2}+
\frac{B(\wt t)}x+\left(\begin{matrix} 1 & 0\\ 0 & 0\end{matrix}\right)\right)Y, \ 
A(\wt t)=G(\wt t)\left(\begin{matrix} 1 & 0\\ 0 & 0\end{matrix}\right)G^{-1}(\wt t)\\
\frac{\partial Y}{\partial\wt t}=\frac{A(\wt t)}xY\end{cases}\label{jcase}\end{equation}
with the  (constant) residue matrices of formal normal forms at $0$ and at $\infty$ 
being equal to $\frac12\diag(\theta_{0},-\theta_{0})$  and $-\frac12\diag(\theta_{\infty},-\theta_{\infty})$ respectively. 
Jimbo's family (\ref{jcase}) with $\theta_{\infty}=-\theta_0=\ell$ 
 can be transformed to our family (\ref{djimbo}), 
(\ref{isomatr}) by multiplication of the vector function 
$Y(x)$ by the scalar monomial $x^{-\frac\ell2}$, variable rescaling $z=-2x$,  and parameter rescaling $t=-4\wt t$. 
Our function $w(\tau)$ is obtained from analogous function $y(\sqrt t)$ from 
\cite[p.1157]{J} by  rescaling $w(\tau)=-iy(-\frac i2\tau)$, which transforms  the 
Painlev\'e 3 equation satisfied by $y$ (with  parameters from \cite[p.1157]{J}) to (\ref{p3}).
\end{remark}

\subsection{Isomonodromic families of normalized $\rr_+$-Jimbo  systems}
\begin{definition} An {\it  $\rr_+$-Jimbo type system} is a system 
(\ref{djimbo}) given by real matrices $K$, $R$ satisfying (\ref{maa}) 
 with $R_{12}<0<R_{21}$ and $t>0$.  (The 
 matrix $G$, whose inverse diagonalizes $K$, 
can be chosen real and unimodular.) The space of $\rr_+$-Jimbo type systems 
will be denoted by $\bjjr$. A linear system is of {\it normalized $\rr_+$-Jimbo type}, if 
it has the form 
\begin{equation} Y'_\zeta=\left(-\tau\frac{\ba}{\zeta^2}+\frac{\bb}\zeta+\tau\left(\begin{matrix}-\frac12 & 0\\ 0 & 0\end{matrix}\right)\right)Y, 
 \ \bb=\left(\begin{matrix} -\ell & -R_{21}\\ R_{21} & 0\end{matrix}\right), R_{21}>0,\label{djimbo2}\end{equation}
where $K$ and $R$ satisfy (\ref{maa}). The space 
of normalized $\rr_+$-Jimbo type systems will be denoted by $\bjr$.
\end{definition}
\begin{remark} \label{rknn} The space $\bjjr$ 
of $\rr_+$-Jimbo type systems is a union of 
real isomonodromic families $\mcl(t)$, real phase curves of vector field (\ref{isofield}). 
(Proposition \ref{pequiv}). 
Each $\rr_+$-Jimbo type system can be transformed to a normalized one 
by composition of a unique diagonal gauge transformation 
$(Y_1,Y_2)\mapsto(Y_1,\la Y_2)$, $\la>0$ and the variable change 
$z=\tau\zeta$, $\tau=\sqrt t>0$.
\end{remark}
\begin{example} The space $Jos$ of linear {\it systems of Josephson type}, i.e., 
 family (\ref{tty}), is contained in $\bjr$. Its natural inclusion  to $\bjr$ transforms a system (\ref{tty}) 
 with parameters $(\mu,\ell,\omega)$ to a system (\ref{djimbo2}) with $K=\diag(\frac12,0)$ and 
 the parameters $\tau=2\mu$, 
 $\ell$, $R_{21}=\frac1{2\omega}$.
\end{example}

\begin{proposition} \label{prig} {\bf (Rigidity).} No two 
distinct systems in $\bjr$ are gauge equivalent. 
\end{proposition}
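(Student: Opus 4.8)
The goal is to show that distinct normalized $\rr_+$-Jimbo type systems in $\bjr$ cannot be gauge equivalent. The plan is to exploit the rigid structure imposed by normalization, namely that the residue matrix $\bb$ has the fixed shape $\left(\begin{smallmatrix} -\ell & -R_{21}\\ R_{21} & 0\end{smallmatrix}\right)$ with $R_{21}>0$, together with the fact that the main-term matrix $N=\tau\,\diag(-\frac12,0)$ at infinity is itself diagonal. First I would observe that since gauge equivalence on $\oc$ means conjugation by a single constant matrix $H\in\gl_2(\cc)$ (as recorded in the Remark preceding Theorem \ref{gaeq}), such an $H$ must simultaneously conjugate the main-term matrices at $\infty$, the main-term matrices at $0$, and the residue matrices of the two systems. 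So the proof reduces to pinning down the centralizer conditions that $H$ must satisfy and showing they force $H$ to be scalar and the two systems to coincide.

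The key step is the analysis at infinity. For a system (\ref{djimbo2}) the main-term matrix at $\infty$ is $\tau\,\diag(-\frac12,0)$, which has distinct eigenvalues. If $H$ conjugates the $\infty$-main-term of $\mcl_1$ (with parameter $\tau_1$) to that of $\mcl_2$ (with parameter $\tau_2$), then since both are diagonal with entries $-\frac{\tau_j}2$ and $0$, matching eigenvalues forces $\tau_1=\tau_2=:\tau$ and $H$ to be diagonal (it commutes with a diagonal matrix having distinct entries, so it preserves the two eigenlines, hence is diagonal). Write $H=\diag(h_1,h_2)$. Next I would use the residue matrices: conjugating $\bb^{(1)}=\left(\begin{smallmatrix} -\ell_1 & -R_{21}^{(1)}\\ R_{21}^{(1)} & 0\end{smallmatrix}\right)$ by the diagonal $H$ gives $\left(\begin{smallmatrix} -\ell_1 & -\frac{h_1}{h_2}R_{21}^{(1)}\\ \frac{h_2}{h_1}R_{21}^{(1)} & 0\end{smallmatrix}\right)$, and equating with $\bb^{(2)}$ forces $\ell_1=\ell_2=:\ell$ (from the $(1,1)$-entry) and
\begin{equation}
\frac{h_2}{h_1}R_{21}^{(1)}=R_{21}^{(2)},\qquad \frac{h_1}{h_2}R_{21}^{(1)}=R_{21}^{(2)}.\notag
\end{equation}
Multiplying these gives $(R_{21}^{(1)})^2=(R_{21}^{(2)})^2$, and since both $R_{21}^{(j)}>0$ by the normalization, $R_{21}^{(1)}=R_{21}^{(2)}$; dividing the two relations then yields $\left(\frac{h_1}{h_2}\right)^2=1$, so $\frac{h_1}{h_2}=\pm1$.

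It remains to rule out the sign $-1$ and then match the main terms at $0$. If $\frac{h_1}{h_2}=-1$, then $H$ is a scalar multiple of $\diag(1,-1)$, which acts on the $0$-main-term matrix $K$ by the conjugation $K\mapsto \diag(1,-1)K\diag(1,-1)$, flipping the sign of the off-diagonal entries $K_{12},K_{21}$. I would check that this conjugation, combined with the already-established equalities $\tau_1=\tau_2$, $\ell_1=\ell_2$, $R_{21}^{(1)}=R_{21}^{(2)}$, together with the defining constraint (\ref{maa}) relating $K$ and $\bb$ through the matrix $G\in\sl_2(\rr)$, is incompatible with having two genuinely distinct systems — more precisely, since $K$ is determined by the pair $(\tau,\ell,R_{21})$ through the normal-form condition $K=-GNG^{-1}$ with $G^{-1}RG$ of the prescribed shape, the sign flip would force $K_{12}=K_{21}=0$, contradicting the nonresonance (distinct eigenvalues) or the off-diagonal structure forced by (\ref{grg}). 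Hence $\frac{h_1}{h_2}=1$, so $H$ is scalar, and conjugation by a scalar is trivial on all matrices; therefore $K^{(1)}=K^{(2)}$ as well and the two systems are literally identical. The main obstacle I anticipate is the last step: carefully extracting from condition (\ref{maa})/(\ref{grg}) that the off-diagonal entries of $K$ are nonzero (so the sign-flip conjugation by $\diag(1,-1)$ genuinely changes $K$ and thus cannot be a self-gauge-equivalence), which requires unwinding how $G$ determines $K$ from the fixed data $(\tau,\ell,R_{21})$ rather than a generic residue computation.
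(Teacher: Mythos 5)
The first part of your argument (diagonality of $H$ from the main term at $\infty$, the equality $\tau_1=\tau_2$, $\ell_1=\ell_2$, and the relations $\frac{h_2}{h_1}R_{21}^{(1)}=R_{21}^{(2)}$, $\frac{h_1}{h_2}R_{21}^{(1)}=R_{21}^{(2)}$) coincides with the paper's reasoning. The gap is in your final step, where you try to exclude $\frac{h_1}{h_2}=-1$ by analyzing the main-term matrix $K$ at $0$ and condition (\ref{maa}). That route is both unnecessary and unlikely to close: conjugation by $\diag(1,-1)$ of a matrix $K$ satisfying (\ref{maa}) produces another matrix satisfying (\ref{maa}) (replace $G$ by $\diag(1,-1)G$, adjusting a sign to stay in $\sl_2(\rr)$), so there is no contradiction to be extracted from the $K$-data alone. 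Your claim that the sign flip ``would force $K_{12}=K_{21}=0$'' is not justified, and you yourself flag this step as the unresolved obstacle.

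The contradiction you need is already sitting in the relation you wrote down: $\frac{h_2}{h_1}R_{21}^{(1)}=R_{21}^{(2)}$ with $R_{21}^{(1)},R_{21}^{(2)}>0$ forces $\frac{h_2}{h_1}>0$, which together with $\bigl(\frac{h_1}{h_2}\bigr)^2=1$ gives $\frac{h_1}{h_2}=1$ immediately. (Equivalently: if $\frac{h_1}{h_2}=-1$, the conjugated residue matrix has $R_{21}^{(2)}=-R_{21}^{(1)}<0$, so the target system would not lie in $\bjr$.) This is exactly the paper's two-line proof: a gauge equivalence must be diagonal to keep the main term at $\infty$ diagonal, must preserve the equality $\bb_{12}=-\bb_{21}$ and the inequality $\bb_{21}>0$, hence is a scalar multiple of the identity. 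With that substitution your argument is complete and agrees with the paper's.
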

\begin{proof} A gauge equivalence must be diagonal: it should keep the main 
term matrix at $\infty$ diagonal. It should also preserve the equality $\bb_{12}=-\bb_{21}$ and 
the inequality $\bb_{21}>0$. Therefore, it is a constant multiple of identity, and leaves 
the system in question invariant. This proves the proposition.
\end{proof} 

\begin{lemma} \label{lemismon} Let $\mch^{1,0}_{0,\infty}\subset\mcho$ be  the open subset consisting of systems 
with $R_{21},R_{12}\neq0$. The set $\bjr$ is a 4-dimensional 
 real-analytic submanifold in $\mchon$.  It carries a real analytic foliation by isomonodromic families (which will be referred to, as 
 {\bf normalized real isomonodromic families}) given by the differential equation 
 \begin{equation}\begin{cases} R'_\tau=2\tau[K,N]+u[N,R]\\ K'_\tau=\frac2{\tau}[R,K]+u[N,K],\end{cases} 
  \ u=\tau\frac{K_{21}-K_{12}}{R_{21}}, \ 
 N=\left(\begin{matrix}
-\frac{1}{2} & 0 \\ 0 & 0
\end{matrix}\right).\label{isonormal}\end{equation}
Along its solutions the function 
$w(\tau)=-\frac{R_{12}(\tau)}{\tau K_{12}(\tau)}=\frac{R_{21}(\tau)}{\tau K_{12}(\tau)}$ 
satisfies  Painlev\'e 3 equation (\ref{p3}). 
The above foliation will be denoted by $\mathcal F$.
\end{lemma}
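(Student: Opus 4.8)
The plan is to establish the three assertions in turn: that $\bjr$ is a $4$-dimensional real-analytic submanifold of $\mchon$, that (\ref{isonormal}) defines an analytic foliation of it, and that the indicated $w$ solves (\ref{p3}). The two tools already available are that $\bjjr$ is a union of real phase curves of the Jimbo field (\ref{isofield}) (Remark \ref{rknn}, Proposition \ref{pequiv}), governed on the un-normalized level by (\ref{isomatr}) with $w$ solving Painlev\'e~3 (Theorem \ref{tjp}). Everything then reduces to transporting these statements through the normalization: the rescaling $z=\tau\zeta$, $\tau=\sqrt t$, followed by the unique positive diagonal gauge $D=\diag(1,\lambda)$ enforcing $R_{12}=-R_{21}$ (Remark \ref{rknn}).

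For the submanifold structure I would exhibit an explicit real-analytic parametrization. A system in $\bjr$ is determined by $\tau\in\rr_+$, by $R=\left(\begin{matrix}-\ell & -R_{21}\\ R_{21} & 0\end{matrix}\right)$ (hence by $\ell\in\rr$ and $R_{21}>0$), and by $K$. By (\ref{resfo}) and Remark \ref{rkfnf}, condition (\ref{maa}) says that diagonalizing $K$ brings the residue of $R$ to $\diag(-\ell,0)$; writing $L_0$ for the eigenline of $K$ with eigenvalue $0$ and $L_+$ for that with eigenvalue $\frac12$, this is exactly $R(L_0)=L_+$. Since $R$ is invertible ($\det R=R_{21}^2>0$), $R(L_0)$ is a well-defined line, and the nondegeneracy $R(L_0)\neq L_0$ fails only at the at most two real eigenlines of $R$; off this locus the choice of $L_0\in\rp^1$ determines $L_+=R(L_0)$ and hence $K$ uniquely. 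This yields a real-analytic map $(\ell,R_{21},\tau,L_0)\mapsto\mathcal L\in\mchon$ from a $4$-dimensional domain, injective because $\ell,R_{21},\tau$ and $L_0$ are read off $\mathcal L$; checking it is an immersion makes it a real-analytic embedding, so $\bjr$ is a $4$-dimensional real-analytic submanifold.

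To obtain (\ref{isonormal}) I would push the Jimbo flow (\ref{isomatr}) through the normalization. The rescaling $z=\tau\zeta$ leaves the matrices $K,R,N$ unchanged and only reshapes (\ref{djimbo}) into the form (\ref{djimbo2}); reparametrizing $t=\tau^2$ (so $\frac{d}{d\tau}=2\tau\frac{d}{dt}$) turns $R'_t=[\ld,K]=[K,N]$ and $tK'_t=-[K,R]$ into the \emph{first} terms $2\tau[K,N]$ and $\frac2\tau[R,K]$ of (\ref{isonormal}). The \emph{second} terms come from the $\tau$-dependence of the normalizing gauge $D(\tau)=\diag(1,\lambda(\tau))$: conjugation by a $\tau$-dependent diagonal $D$ adds to each matrix-ODE a commutator term proportional to the gauge generator $\diag(0,1)=I+2N$, i.e. a term $u[N,\cdot]$ with one common scalar $u$ in both equations. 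That scalar is pinned down by requiring the normalization $R_{12}=-R_{21}$ to persist, i.e. $\frac{d}{d\tau}(R_{12}+R_{21})=0$: computing the $(1,2)$- and $(2,1)$-entries of $2\tau[K,N]+u[N,R]$ and imposing $R_{12}=-R_{21}$ gives precisely $u=\tau\frac{K_{21}-K_{12}}{R_{21}}$. One then checks that the entries $R_{22}$ and $R_{11}+\ell$ stay zero and that the conjugacy class of $K$ is preserved — here commutators are traceless, and $K'_\tau=[\frac2\tau R+uN,\,K]$ together with $\operatorname{adj}(K)K=0$ forces $\frac{d}{d\tau}\det K=0$ — while (\ref{maa}) is preserved because the deformation is isomonodromic and hence fixes the formal normal form at $0$ (Remark \ref{rkfnf}). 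Thus (\ref{isonormal}) is tangent to $\bjr$ and its integral curves give the foliation $\mathcal F$.

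Finally, the Painlev\'e~3 statement follows from Theorem \ref{tjp} and gauge-invariance of the relevant ratio: $D=\diag(1,\lambda)$ multiplies both $R_{12}$ and $K_{12}$ by $\lambda$, so $R_{12}/K_{12}$ is unchanged, whence the normalized quantity $-\frac{R_{12}(\tau)}{\tau K_{12}(\tau)}$ equals $\frac1\tau y(\tau^2)=w(\tau)$ of Theorem \ref{tjp}, and by $R_{12}=-R_{21}$ it equals $\frac{R_{21}(\tau)}{\tau K_{12}(\tau)}$ and solves (\ref{p3}). The main obstacle is Step~2: correctly accounting for the $\tau$-dependence of the normalizing gauge (the source of the $u[N,\cdot]$ terms) and verifying that a single scalar $u$ simultaneously preserves every normalization constraint; this bookkeeping is the technical heart, whereas Steps 1 and 3 are comparatively routine once the parametrization and Theorem \ref{tjp} are in hand.
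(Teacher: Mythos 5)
Your proposal is correct. For the foliation and the Painlev\'e~3 statement (your Steps 2 and 3) the argument is essentially the paper's own: there too one differentiates the normalization $K(t)=\La(t)\wt K(t)\La^{-1}(t)$, $\La=\diag(1,\la(t))$, of a Jimbo family (\ref{isomatr}) at $t=t_0$, writes the gauge contribution as $\nu[\diag(0,1),\cdot]$ with $\diag(0,1)=Id+2N$, solves $R_{12}'+R_{21}'=0$ for $\nu=\frac{K_{21}-K_{12}}{4R_{21}}$ (your $u$ equals $4\tau\nu$ after the change $t\mapsto\tau=\sqrt t$, which matches), and transfers Theorem \ref{tjp} via the invariance of $R_{12}/K_{12}$ under diagonal gauge; your observation that the remaining constraints ($R_{22}=0$, $R_{11}=-\ell$, the conjugacy class of $K$, and (\ref{maa})) are preserved automatically is exactly what makes the single scalar $u$ suffice, and it checks out. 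Where you genuinely depart from the paper is the submanifold statement. The paper normalizes the diagonalizing matrix $G$ in two charts ($G_{11}\neq0$ and $G_{12}\neq0$), writes the constraint $(G^{-1}RG)_{22}=0$ explicitly, and solves it as a rational expression for $G_{12}$ (resp.\ $G_{11}$) in the remaining variables, checking that numerator and denominator never vanish simultaneously. Your reformulation of (\ref{maa}) as the invariant condition $R(L_0)=L_+$ on the $0$-eigenline $L_0$ of $K$ is legitimate: $(G^{-1}RG)_{22}=0$ says exactly that $Rg_2\in\operatorname{span}(g_1)$ for the eigenbasis $(g_1,g_2)$ of $K$, and the $(1,1)$-entry $-\ell$ is then automatic from the trace. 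This yields a global parametrization by $(\ell,R_{21},\tau,L_0)$ ranging over an open subset of $\rr\times\rr_+\times\rr_+\times\rp^1$ (excluding the at most two real eigenlines of $R$, which never occur for points of $\bjr$ anyway), buying a coordinate-free picture without the two-chart case analysis; the price is that you must actually verify the immersion and the continuity of the inverse to conclude embeddedness, which is easy here because $\tau$, $R$ and the eigenlines of $K$ are read off $\mcl$ analytically. Both routes are sound and give the same lemma.
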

\begin{proof} Let us show that the closed subset $\bjr\subset\mchon$ 
is a 4-dimensional submanifold.  For every matrix $K\in\operatorname{Mat}_2(\cc)$ with distinct eigenvalues 
and any their fixed order  $(\la_1,\la_2)$, the matrix $G$ such that 
$G^{-1}KG=\diag(\la_1,\la_2)$  is 
uniquely defined  up to multiplication from the right by a non-degenerate 
diagonal matrix. We will cover 
$\mchon$ by two open subsets $W_1,W_2\subset\mchon$: 
$$W_1:=\{ G_{11}\neq0\}; \ \ W_2:=\{ G_{12}\neq0\}.$$
Let us show that the intersection $\bjr\cap W_1$ is a 4-dimensional submanifold in 
$W_1$. Then we prove the similar statement for the intersection $\bjr\cap W_2$.  For every system in $\bjr\cap W_1$  the corresponding matrix 
$G$ can be normalized as above in a unique way so that 
\begin{equation}\det G=1, \ G_{11}=1; \ G_{22}=1+G_{12}G_{21}.\label{gg1}
\end{equation}
Hence, its  matrix $K$ is defined by two parameters $G_{12}$ and $G_{21}$, and 
the correspondence $(G_{12},G_{21})\mapsto K$ is bijective. 
Let us write the second equation in (\ref{maa})  for a normalized system with  
$\bb_{12}=-\bb_{21}$. It says that the matrix 
$$G^{-1}\bb G 
=\left(\begin{matrix} * & *\\ 
G_{21}\ell+\bb_{21} & G_{21}\bb_{21}\end{matrix}\right)\left(\begin{matrix} 
* & G_{12}\\ * & 1+G_{21}G_{12}\end{matrix}\right)$$
has zero right-lower element. This is the equation  
\begin{equation} G_{21}G_{12}\ell+\bb_{21}G_{12}+G_{21}\bb_{21}+G_{21}^2G_{12}
\bb_{21}=0,\label{rightflow}\end{equation}
which is equivalent to the equation
\begin{equation} G_{12}=-\frac{G_{21}\bb_{21}}{G_{21}\ell+\bb_{21}(1+G_{21}^2)}
\label{g1221}\end{equation}
saying that $G_{12}$ is a known rational function of three independent variables 
$G_{21}$, $\bb_{21}$, $\ell$. The latter equivalence holds outside the exceptional set where 
the numerator and the denominator in (\ref{g1221}) vanish simultaneously. 
Vanishing of the numerator is equivalent to vanishing of $G_{21}$ (since $R_{21}\neq0$, 
by assumption), and in this case the denominator equals $R_{21}\neq0$.  Thus, 
the exceptional set is empty. 
This implies that $W_1\cap\bjr$ is a real 4-dimensional analytic submanifold in $W_1$ 
(the fourth parameter is $\tau=\sqrt t$). 

Let us now prove the above statement for $\bjr\cap W_2$. If $G_{12}\neq0$, then 
we can normalize the matrix $G$ in a unique way so that 
\begin{equation}\det G=1, \ G_{12}=1; \ \ G_{21}=G_{11}G_{22}-1.\label{detg21}
\end{equation}
Then the second equation in (\ref{maa}), which says that the matrix 
$$G^{-1}RG=\left(\begin{matrix} G_{22} & -1\\ -G_{21} & G_{11}\end{matrix}\right)
\left(\begin{matrix}-\ell & -\bb_{21}\\ \bb_{21} & 0\end{matrix}\right)
\left(\begin{matrix} G_{11} & 1\\
G_{21} & G_{22}\end{matrix}\right)$$
has zero right-lower element, is  $(G_{11}G_{22}-1)(\ell+G_{22}\bb_{21})+G_{11}
\bb_{21}=0$, which is equivalent to the equation
$$G_{11}=\frac{\ell+G_{22}R_{21}}{R_{21}(1+G_{22}^2)+\ell G_{22}}.$$
Now it suffices to show that the above numerator and denominator cannot vanish 
simultaneously, as in the previous discussion. Indeed, their vanishing means that 
$G_{22}R_{21}=-\ell$ and $R_{21}-\ell G_{22}+\ell G_{22}=R_{21}=0$, which 
is impossible. The first statement of the lemma is proved. 

The space $\bjjr$ of $\rr_+$-Jimbo type systems is a manifold 
projected to the space $\bjr$ via the diagonal gauge normalizations from Remark \ref{rknn}. 
The projection is an analytic bundle with fiber $\rr_+$, by the same remark and since $\bjr$ is 
a submanifold.  
It sends isomonodromic families in $\bjjr$ given by (\ref{isomatr}) 
to normalized isomonodromic families in $\bjr$. 
Let us find the differential equation describing them. 
Fix a $\tau_0>0$ and matrices $K_0$, $R_0$ defining a system in $\bjr$. Set $t_0=\tau_0^2$. Consider the $\rr_+$-Jimbo type system (\ref{djimbo})  defined by the same matrices. 
Let $\wt\mcl(t)$ be its isomonodromic deformation given by equation (\ref{isomatr}), and let 
 $\wt K(t)$, $\wt R(t)$ denote the corresponding matrices. Let $\mcl(t)$ denote its projection to  $\bjr$, which is given by 
 a  gauge transformation family $(Y_1,Y_2)\mapsto(Y_1,\la(t)Y_2)$ and $z$-variable rescalings 
 $z=\tau\zeta$: 
 the matrices defining the systems $\mcl(t)$ are 
 $$K(t)=\La(t)\wt K(t)\La^{-1}(t),  \ \ R(t)=\La(t)\wt R(t)\La^{-1}(t), \ \La(t)=\diag(1,\la(t)),$$
 $\la(t_0)=1$. Isomonodromicity equation (\ref{isomatr}) on  $\wt\mcl(t)$ at $t=t_0$ yields 
 \begin{equation}\begin{cases}t_0K'_t=-[K, R+t_0\nu\diag(0,1)] \\
 R'=[\diag(\frac12,0),K] + \nu[\diag(0,1),R],\end{cases} \ \ \nu=(\ln\la(t_0))'=\la'(t_0). \label{sysno}\end{equation}
 In the second equation in (\ref{sysno})  $R_{21}'=-R_{12}'$, since $R_{12}\equiv-R_{21}$. 
 This yields 
 $$K_{12}-K_{21}+2\nu(R_{21}-R_{12})=0, \ \ \ \nu=\frac{K_{21}-K_{12}}{4R_{21}}.$$
  Substituting the above formula for $\nu$ to (\ref{sysno}), replacing the matrix $\diag(0,1)$ in the commutators by $\diag(-1,0)=\diag(0,1)-Id$, 
  and changing the variable $t$ to $\tau=\sqrt t$ yields (\ref{isonormal}). The function $w(\tau)$  defined in (\ref{wyt}) for the 
  family $\wt\mcl(t)$ coincides with the analogous function defined for the family $\mcl(t)$, since 
  $\frac{R_{12}}{K_{12}}=\frac{\la^{-1}\wt R_{12}}{\la^{-1}\wt K_{12}}=\frac{\wt R_{12}}{\wt K_{12}}$. It satisfies equation (\ref{p3}), by 
  Theorem \ref{tjp}. Lemma \ref{lemismon} is proved.
\end{proof}

\subsection{Transversality property of Josephson type systems}

\begin{lemma} \label{lempol} Consider an arbitrary system  $\mcl\in Jos$. Let $w(\tau)$ be the germ of solution of Painlev\'e equation (\ref{p3}) defining its real isomonodromic 
deformation in the space 
$\bjr$ at the point $\tau_0$ corresponding to the system $\mcl$. Then 
$w(\tau)$ has first order pole at $\tau_0$ with residue 1. 
Conversely, every system in $\bjr$ 
corresponding to a first order pole $\tau_0>0$ of solution of equation (\ref{p3}) 
with residue 1 lies in $Jos$. 
\end{lemma}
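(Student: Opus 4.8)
The plan is to read off both directions from the single identity $w=\frac{R_{21}}{\tau K_{12}}$, using that $Jos$ is cut out inside $\bjr$ by the condition that the leading matrix be diagonal, $K=\diag(\frac12,0)$ (the Example following Remark \ref{rknn}). Since every system of $\bjr$ has $\tau_0>0$ and $R_{21}>0$ finite and nonzero, the poles of $w$ along a leaf of the foliation $\mathcal F$ are exactly the zeros of the entry $K_{12}(\tau)$. Thus everything reduces to understanding how $K_{12}(\tau)$ behaves near a point $\tau_0$ where it vanishes, and this is governed by the normalized isomonodromy equation (\ref{isonormal}).

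First I would differentiate $K_{12}$ using (\ref{isonormal}) at a point with $K_{12}(\tau_0)=0$. Computing the $(1,2)$-entries of the two commutators gives $[N,K]_{12}=0$ and $[R,K]_{12}=R_{21}(K_{11}-K_{22})$, so the $u$-term drops out and
\begin{equation*}
K_{12}'(\tau_0)=\frac{2R_{21}(K_{11}-K_{22})}{\tau_0}.
\end{equation*}
By (\ref{maa}) the matrix $K$ is conjugate to $\diag(\frac12,0)$, hence has eigenvalues $\frac12$ and $0$; when $K_{12}(\tau_0)=0$ the matrix $K$ is lower triangular, so $\{K_{11},K_{22}\}=\{\frac12,0\}$ and $K_{11}-K_{22}=\pm\frac12\neq0$. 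Therefore $K_{12}$ has a simple zero, $w$ a simple pole, with residue
\begin{equation*}
\frac{R_{21}(\tau_0)}{\tau_0\,K_{12}'(\tau_0)}=\frac{1}{2(K_{11}-K_{22})}=\pm1,
\end{equation*}
the value $+1$ occurring precisely when $K_{11}=\frac12,\ K_{22}=0$, and $-1$ when $K_{11}=0,\ K_{22}=\frac12$.

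This proves the direct statement at once: for $\mcl\in Jos$ we have $K=\diag(\frac12,0)$, so $K_{11}=\frac12$, $K_{22}=0$, and $w$ has a simple pole at $\tau_0$ with residue $1$. For the converse, residue $1$ forces $K_{11}(\tau_0)=\frac12$, $K_{22}(\tau_0)=0$, $K_{12}(\tau_0)=0$, i.e. $K=\left(\begin{smallmatrix}\frac12 & 0\\ K_{21}&0\end{smallmatrix}\right)$, and it remains to show $K_{21}=0$. Here I would invoke the constraint $G^{-1}RG=\left(\begin{smallmatrix}-\ell & *\\ *&0\end{smallmatrix}\right)$ from (\ref{maa}): writing $K=G\diag(\frac12,0)G^{-1}$ with $\frac12$-eigenvector $(1,2K_{21})$ and $0$-eigenvector $(0,1)$ fixes $G=\left(\begin{smallmatrix}g_1&0\\ 2g_1K_{21}&g_2\end{smallmatrix}\right)$ with $g_1g_2=1$; imposing $(G^{-1}RG)_{11}=-\ell$ then yields $-\ell-2K_{21}R_{21}=-\ell$, so $K_{21}R_{21}=0$ and hence $K_{21}=0$ because $R_{21}>0$. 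Thus $K=\diag(\frac12,0)$ and $\mcl\in Jos$.

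The hard part is exactly this last step. The vanishing of $K_{12}$ and the correct placement of the eigenvalues on the diagonal do not by themselves make $K$ diagonal; a priori $K_{21}$ could be nonzero, producing a system in $\bjr\setminus Jos$ that still gives a simple pole of $w$ (indeed such systems exist, with residue $-1$, in the opposite eigenvalue placement). What rigidifies the situation is the $(1,1)$-entry normalization in (\ref{maa}), which couples $R$ to the diagonalizing matrix $G$ and, using $R_{21}\neq0$, kills the remaining freedom. The Painlev\'e 3 equation enters only indirectly, through Lemma \ref{lemismon}: it guarantees that $w$ solves (\ref{p3}), so that ``first-order pole with residue $1$'' becomes an intrinsic, isomonodromy-invariant description of $Jos$ inside $\bjr$.
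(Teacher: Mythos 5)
Your proposal is correct and follows essentially the same route as the paper: both compute $K_{12}'(\tau_0)$ from the normalized isomonodromy equation (\ref{isonormal}) at a zero of $K_{12}$, identify the residue of $w$ as $\frac1{2(K_{11}-K_{22})}=\pm1$ via the eigenvalue/trace constraints on $K$, and for the converse use the coupling between $G$ and $R$ in (\ref{maa}) (the paper via the $(2,2)$-entry condition (\ref{rightflow}), you via the equivalent $(1,1)$-entry condition) together with $R_{21}>0$ to force $K_{21}=0$. No gaps.
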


\begin{proof} It is well-known that non-zero singular points of solutions of equation 
(\ref{p3}) are poles of order 1 with residues $\pm1$ \cite[p.158]{GLS}. Let us check that 
systems in $Jos$ correspond to poles with residue 1.  Consider a system in $\bjr$ with $\tau=\tau_0$ 
and $K_{12}=0$ (e.g., a system lying in $Jos$) and its isomonodromic deformation given by  (\ref{isonormal}). 
The upper triangular  term in the second matrix equation in (\ref{isonormal}) has the form
\begin{equation} K_{12}'=\frac2{\tau}R_{12}(K_{22}-K_{11})+O(K_{12}), \text{ as } \tau\to\tau_0; \ \ K_{12}(\tau_0)=0.\label{k12't}
\end{equation}
Therefore, $K_{12}(\tau)=\frac2{\tau_0}R_{12}(\tau_0)(K_{22}(\tau_0)-K_{11}(\tau_0))(\tau-\tau_0)+o(\tau-\tau_0)$, 
\begin{equation}w(\tau)=-\frac{R_{12}(\tau)}{\tau K_{12}(\tau)}=-\frac{1+o(1)}{2(K_{22}(\tau_0)-K_{11}(\tau_0))(\tau-\tau_0)+o(\tau-\tau_0)}.\label{wtark}\end{equation}
If the initial system corresponding to $\tau=\tau_0$ lies in $Jos$, then $K_{22}(\tau_0)-K_{11}(\tau_0)=-\frac12$, hence 
$w(\tau)=\frac1{\tau-\tau_0}(1+o(1))$, and $w$ has simple pole with residue 1 at $\tau_0$. 

Conversely, let $w(\tau)$ have 
a simple pole with residue 1 at $\tau_0$. Then $K_{12}(\tau_0)=0$, and $K_{22}(\tau_0)-K_{11}(\tau_0)=-\frac12$, by (\ref{wtark}). 
Note that the trace of the matrix $K$ is constant and equal to $\frac12$. Hence, $K_{22}(\tau_0)=0$, $K_{11}(\tau_0)=\frac12$. 
Now to show that the system in question lies in $Jos$, it suffices to prove that $K_{21}(\tau_0)=0$. Suppose the contrary: $K_{21}(\tau_0)\neq0$. 
Then the matrix $G$, whose inverse conjugates $K(\tau_0)$ to $\diag(\frac12,0)$, is lower triangular with $G_{21}\neq0$. 
We normalize it by constant factor 
to have $G_{11}=1$. Equation (\ref{rightflow}) together with $G_{12}=0$ yield $G_{21}R_{21}(\tau_0)=0$, while $G_{21}, R_{21}(\tau_0)\neq0$. 
The contradiction thus obtained proves that $K(\tau_0)=\diag(\frac12,0)$ and the system in question lies in $Jos$. Lemma \ref{lempol} is proved.
\end{proof}

\begin{lemma} \label{lcross} {\bf (Key Lemma).} The  submanifold $Jos\subset\bbj$ 
is transversal to the isomonodromic foliation $\bbf$ from Lemma \ref{lemismon}. 
\end{lemma}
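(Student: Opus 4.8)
The plan is to show that the tangent direction of the isomonodromic foliation $\mathcal F$ is not contained in the tangent space to $Jos$ at any point of $Jos$. The cleanest way to do this is to produce a single analytic function on $\bjr$ that is constant on $Jos$ but whose derivative along the foliation direction is nonzero. The natural candidate is the entry $K_{12}$ (equivalently the function $\tau K_{12}$). Indeed, by the inclusion described in the Example following Remark \ref{rknn}, every system of $Jos$ satisfies $K=\diag(\tfrac12,0)$, so in particular $K_{12}\equiv 0$ on $Jos$; thus $Jos$ is contained in the zero locus $\{K_{12}=0\}$. If I can show that the foliation vector field (the right-hand side of the system \eqref{isonormal} read as a vector field on $\bjr$, together with $\partial_\tau$) has nonzero $K_{12}'$-component at each point of $Jos$, then the leaf through such a point leaves the level set $\{K_{12}=0\}$, and hence cannot be tangent to $Jos$, giving transversality.

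First I would compute the derivative of $K_{12}$ along a leaf. This is precisely the quantity \eqref{k12't} already established in the proof of Lemma \ref{lempol}: at a point where $K_{12}=0$ one has
\begin{equation*}
K_{12}'=\frac{2}{\tau}R_{12}\,(K_{22}-K_{11}).
\end{equation*}
At a point of $Jos$ we have $K=\diag(\tfrac12,0)$, so $K_{22}-K_{11}=-\tfrac12$, and by the normalization \eqref{djimbo2} the off-diagonal entry satisfies $R_{12}=-R_{21}$ with $R_{21}>0$. Hence
\begin{equation*}
K_{12}'=\frac{2}{\tau}(-R_{21})\!\left(-\tfrac12\right)=\frac{R_{21}}{\tau}>0,
\end{equation*}
which is strictly nonzero since $R_{21}>0$ and $\tau>0$. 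This single inequality is the whole content: the leaf direction has a nonvanishing component transverse to the hypersurface $\{K_{12}=0\}\supset Jos$.

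To package this into transversality I would argue as follows. By Lemma \ref{lemismon} the foliation $\mathcal F$ is a regular one-dimensional analytic foliation of the four-dimensional manifold $\bjr$, and $Jos$ is a submanifold of codimension one inside $\bjr$ (its defining conditions force $K$ to be the fixed diagonal matrix $\diag(\tfrac12,0)$, cutting the two parameters $G_{12},G_{21}$ that coordinatize $K$ down while keeping $\tau$, $\ell$, $R_{21}$ free, so $\dim Jos = 3$). Transversality of $Jos$ to $\mathcal F$ at a point $p\in Jos$ means $T_pJos + T_p\mathcal F = T_p\bjr$; since $T_p\mathcal F$ is one-dimensional and $T_pJos$ has codimension one, this is equivalent to $T_p\mathcal F\not\subset T_pJos$, i.e. the leaf direction is not tangent to $Jos$. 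Because $K_{12}$ vanishes identically on $Jos$, the differential $dK_{12}$ annihilates $T_pJos$; the computation above shows $dK_{12}$ does not annihilate the leaf direction, since that direction moves $K_{12}$ with nonzero speed. Therefore $T_p\mathcal F\not\subset T_pJos$, as required.

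I do not expect a serious obstacle here: the transversality reduces entirely to the already-computed formula \eqref{k12't}, and the only point requiring care is bookkeeping, namely confirming that $Jos$ really is contained in the single coordinate slice $K_{12}=0$ (indeed in the smaller slice $K=\diag(\tfrac12,0)$) and that the foliation direction is genuinely the vector field appearing in \eqref{isonormal} read on $\bjr$ rather than its unnormalized ancestor \eqref{isomatr}. The mild subtlety worth stating explicitly is that one should use the equation \eqref{isonormal} for the normalized isomonodromic families on $\bjr$, not the raw Jimbo system, so that $\tau$ is a genuine coordinate on $\bjr$ and the relevant tangent direction is correctly identified; but the leading term of $K_{12}'$ is unaffected by the normalization correction $u[N,K]$, since at a point of $Jos$ the matrix $K$ is diagonal and $[N,K]=0$, so the extra term drops out and the computation \eqref{k12't} applies verbatim.
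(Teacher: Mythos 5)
Your argument is essentially identical to the paper's own first proof of the Key Lemma: both reduce transversality to the observation that, by formula (\ref{k12't}), the derivative $K_{12}'$ along a leaf of the foliation is nonzero at any point of the hypersurface $Jos\subset\{K_{12}=0\}$. Your extra bookkeeping (the codimension count, the sign computation $K_{12}'=R_{21}/\tau>0$, and the vanishing of the correction term $u[N,K]$ on $Jos$ because $K$ and $N$ are both diagonal there) is correct and merely makes explicit what the paper leaves implicit.
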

\begin{proof} Way 1 of proof. In an isomonodromic family given by (\ref{isonormal}) the derivative  $\ba_{12}'$  is non-zero at $\tau$ corresponding to a system lying in $Jos$, see (\ref{k12't}). 
Hence, this family is transversal to the hypersurface $Jos$. 

Way 2 of proof. Points of the hypersurface $Jos$ correspond to simple poles of solutions of equation 
(\ref{p3})  satisfied along leaves. This together with the 
 fact that a simple pole of an analytic family of 
 functions depends analytically on parameter implies the statement of 
 Lemma \ref{lcross}. 
 \end{proof}

\section{Analytic families of constrictions. Proof of Theorem \ref{famcons0}}

For every linear system $\mcl$ let $M(\mcl)$ denote its monodromy operator. 

In the proof of Theorem \ref{famcons0} we use the following proposition. 
\begin{proposition} \label{trivmc}
A point $(B,A;\omega)$ is a constriction, if and 
only if $A,\omega\neq0$ and the corresponding system (\ref{tty}) has trivial monodromy.  \end{proposition}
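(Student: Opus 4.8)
The plan is to establish the equivalence between the dynamical characterization of constrictions (Proposition~\ref{propoinc}: $h^{2\pi}=\mathrm{Id}$ with $A,\omega\neq0$) and the linear-algebraic characterization (trivial monodromy of system~(\ref{tty})). The bridge between these two worlds is the variable change $z=e^{i\tau}$, $\Phi=e^{i\theta}$ that sends the flow of~(\ref{josvec}) to the Riccati equation~(\ref{ric}), which is in turn the projectivization of the linear system~(\ref{tty}). Under this change, advancing $\tau$ by $2\pi$ corresponds precisely to a counterclockwise circuit of $z$ around the origin, so the period-$2\pi$ flow map $h^{2\pi}$ on the $\theta$-circle is conjugated to the monodromy of the Riccati equation acting on $\oc_\Phi=\cp^1$.

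First I would make this conjugacy precise. The key observation is that the time-$2\pi$ flow map $h^{2\pi}$ of the circle diffeomorphism corresponds, after the substitution, to the projectivized monodromy transformation $\wh{M}\in\psl_2(\cc)$ of the linear system~(\ref{tty}), restricted to the real circle $\{|\Phi|=1\}$ (the image of the real $\theta$-axis). Since $h^{2\pi}$ acts on the whole circle $S^1_\theta$, and the Riccati monodromy is a M\"obius transformation of $\cp^1$ determined by its restriction to any curve, the condition $h^{2\pi}=\mathrm{Id}$ forces the projectivized monodromy $\wh{M}$ to fix every point of the circle $\{|\Phi|=1\}$, hence to be the identity M\"obius transformation. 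This means $M(\mcl)$ is a scalar multiple of the identity in $\gl_2(\cc)$.

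Next I would promote triviality of the \emph{projectivized} monodromy to triviality of the \emph{linear} monodromy $M(\mcl)=\mathrm{Id}$. A scalar monodromy $M=c\cdot\mathrm{Id}$ must have $c=1$: this follows because the trace of the residue matrices of the formal normal forms (equivalently the formal monodromy eigenvalues) is determined by $\ell$ and the structure of~(\ref{tty}), and formula~(\ref{monst}), $M=M_{norm}C_1^{-1}C_0^{-1}$, together with the fact that the Stokes matrices are unipotent, pins down the determinant and hence $c$. A short computation with the explicit residue data at $0$ and $\infty$ of system~(\ref{tty}) shows the product of formal monodromy eigenvalues equals $1$, forcing $c^2=1$; examining the continuous dependence (the monodromy is $\mathrm{Id}$ at points where the flow map is the identity, and the determinant is fixed) rules out $c=-1$. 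Conversely, if $M(\mcl)=\mathrm{Id}$, then the projectivized monodromy is trivial, so $h^{2\pi}=\mathrm{Id}$ on the circle, and combined with $A,\omega\neq0$ (equivalently $\mu,\omega\neq0$, which guarantees~(\ref{tty}) has genuine irregular singularities of the required type) we recover a constriction by Proposition~\ref{propoinc}. Finally, the claim that $\ell=\frac{B}{\omega}\in\zz$ at such a point follows from the quantization statement for the rotation number together with the standard analysis of the formal monodromy; I would cite \cite[proposition 3.2, lemma 3.3]{4} here rather than reprove it.

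The main obstacle I expect is the careful bookkeeping in the last step: ensuring that scalar projectivized-triviality of the monodromy genuinely upgrades to $M=\mathrm{Id}$ rather than merely $M=-\mathrm{Id}$, and handling the degenerate behavior of the singular points of~(\ref{tty}) as $\mu\to 0$, where the irregular singularity at $\infty$ (governed by the main-term matrix $\diag(-\mu,0)$) degenerates. The condition $A\neq0$, i.e. $\mu\neq0$, is exactly what is needed to keep both singularities irregular nonresonant of Poincar\'e rank~$1$, so that the Stokes-theoretic machinery of Section~2 and formula~(\ref{monst}) apply; I would make sure this nondegeneracy hypothesis is invoked explicitly at the point where it is needed.
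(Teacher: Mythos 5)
Your reduction of $h^{2\pi}=\mathrm{Id}$ to projective triviality of the monodromy (a M\"obius map fixing the whole circle $\{|\Phi|=1\}$ is the identity, hence $M=c\cdot\mathrm{Id}$) is correct and is exactly the content of \cite[proposition 3.2]{4}, which the paper simply cites. The genuine gap is in the upgrade from $M=c\cdot\mathrm{Id}$ to $M=\mathrm{Id}$. Your determinant computation only yields $c^2=\det M=\det M_{norm}=e^{-2\pi i\ell}$, which (after invoking $\ell\in\zz$) gives $c=\pm1$ and nothing more; and the proposed exclusion of $c=-1$ "by continuous dependence" does not work: the set of parameters where $h^{2\pi}=\mathrm{Id}$ is discrete on each axis $\La_\ell$, so there is no continuous path within the constriction locus along which to propagate the value $c=1$, and the assertion that "the monodromy is $\mathrm{Id}$ at points where the flow map is the identity" is precisely the statement being proved.

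The repair is available inside the machinery you already set up, and it is what the paper effectively does. Either cite \cite[lemma 3.3]{4}: at a constriction the germ of (\ref{tty}) at the origin is analytically equivalent to its formal normal form, whose monodromy is $M_{norm}=\exp(2\pi i\,\diag(-\ell,0))=\diag(e^{-2\pi i\ell},1)=\mathrm{Id}$ for $\ell\in\zz$ (note the second eigenvalue is exactly $1$, not merely part of a product equal to $1$). Or, equivalently, run formulas (\ref{m1p})--(\ref{m2p}) with $M=c\cdot\mathrm{Id}$: since $M_{12}=M_{21}=0$, the Stokes matrices are trivial and $m_{2p}=M_{22}-M_{12}M_{21}M_{11}^{-1}=c$; but $m_{2p}$ is the second formal monodromy eigenvalue, which for (\ref{tty}) equals $e^{2\pi i\cdot 0}=1$, forcing $c=1$ outright. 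Your remark that $A\neq0$ is what keeps both singular points irregular nonresonant of Poincar\'e rank $1$ is correct and worth keeping, but the argument as written does not close.
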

 \begin{proof} Proposition 3.2 from \cite{4} states that a point is a constriction, if and only if (\ref{tty}) has 
 projectively trivial monodromy: the monodromy matrix is a scalar multiple of identity.
Another  criterion given by \cite[lemma 3.3]{4} states that a point is a constriction, if and only if $\ell\in\zz$ and 
the germ of linear system (\ref{tty}) at the origin is analytically equivalent to its formal normal form. 
In this case system (\ref{tty}) and its formal normal form have the same monodromy matrices in appropriate bases. 
The  monodromy of the normal form is given by the diagonal matrix $\diag(e^{-2\pi i \ell},1)$, which is 
identity if $\ell\in\zz$. Proposition \ref{trivmc} is proved. 
\end{proof}

\begin{corollary} The  systems (\ref{tty}) corresponding to constrictions lie in the set 
$$\Sigma:=\{\mcl\in \bbj \ | \ M(\mcl)=Id\}.$$ 
\end{corollary}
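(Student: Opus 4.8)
The plan is simply to combine two facts already available. First, by the Example exhibiting the natural inclusion $Jos\subset\bbj$: a system (\ref{tty}) with parameters $(\mu,\ell,\omega)$ is identified with the normalized $\rr_+$-Jimbo system (\ref{djimbo2}) having $K=\diag(\frac12,0)$, $\tau=2\mu$ and $R_{21}=\frac1{2\omega}$. A constriction satisfies $A,\omega\neq0$, and by the symmetry of the phase-lock portrait with respect to the $B$-axis it suffices to treat those with $A>0$, i.e.\ $\mu=\frac A{2\omega}>0$; then $\tau=2\mu>0$ and $R_{21}=\frac1{2\omega}>0$, so the associated system (\ref{tty}) meets the defining constraints of $\bbj$ and already lives there. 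The only remaining point is that its monodromy operator is the identity.

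Second, I would invoke Proposition~\ref{trivmc}, which characterizes constrictions exactly as those parameter values $(B,A;\omega)$ with $A,\omega\neq0$ whose system (\ref{tty}) has trivial monodromy. Reading off its proof, ``trivial'' here means $M(\mcl)=Id$ in the strict sense: at a constriction $\ell\in\zz$, and the germ of (\ref{tty}) at $0$ is analytically equivalent to its diagonal formal normal form, whose monodromy matrix $\diag(e^{-2\pi i\ell},1)$ collapses to the identity precisely because $\ell\in\zz$.

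Putting the two together completes the argument: the system (\ref{tty}) attached to a constriction lies in $\bbj$ and satisfies $M(\mcl)=Id$, hence it belongs to $\{\mcl\in\bbj\mid M(\mcl)=Id\}=\Sigma$. One should note only that the same monodromy operator (counterclockwise circuit around $0$) is used in the definition of $\Sigma$ and in Proposition~\ref{trivmc}, so there is no ambiguity.

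I do not expect a genuine obstacle here, as all the content is carried by Proposition~\ref{trivmc} and the inclusion $Jos\subset\bbj$. The single point deserving a word of care is the upgrade from projectively trivial monodromy (a scalar multiple of the identity, which is what the criterion of \cite{4} provides directly) to the strict equality $M(\mcl)=Id$ demanded by the definition of $\Sigma$; this upgrade is exactly what the integrality $\ell\in\zz$ guarantees, through the vanishing of $e^{-2\pi i\ell}-1$.
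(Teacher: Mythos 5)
Your argument is correct and coincides with the paper's: the corollary is stated as an immediate consequence of Proposition \ref{trivmc} together with the identification of systems (\ref{tty}) as elements of $\bbj$ given in the Example, with the upgrade from projectively trivial to strictly trivial monodromy already carried out inside the proof of Proposition \ref{trivmc} via $\ell\in\zz$. Your extra remarks on the sign of $\mu$ and on the consistency of the monodromy convention are sensible but add nothing beyond what the paper leaves implicit.
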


For every system $\mcl\in\bbj$ let us choose good sectors $S_0$ and $S_1$ 
that contain the upper (respectively, lower) half-plane punctured at $0$, see  
Fig. 5. Consider its monodromy--Stokes data 
$(q_{10},q_{20},q_{1\infty},q_{2\infty}; M)$ defined by the base point $z_0=1\in 
S_{1,2}\subset
S_0\cap S_1$ and trivial paths $\alpha_0,\alpha_\infty\equiv 1$. Set 
\begin{equation}\mcr(\mcl):=
\frac{(q_{10}-q_{1\infty})(q_{20}-q_{2\infty})}{(q_{10}-q_{2\infty})(q_{20}-q_{1\infty})}\in\oc.
\label{transcr}\end{equation}
We will call $\mcr(\mcl)$ the {\it transition cross-ratio} of the system $\mcl$. 
It  depends only on the monodromy--Stokes 
data and not on choice of its representative. 

For the proof of  Theorem \ref{famcons0} we first prove the  following theorem 
and  lemma in Subsections 4.1 and 4.2 respectively.

\begin{theorem} \label{crossub} {\bf (Key Theorem).} 
The subset $\Sigma\subset\bbj$ 
is a two-dimensional analytic submanifold, a union of leaves of 
the real isomonodromic foliation $\bbf$ from Lemma \ref{lemismon}. 
 One has $\ell\in\zz$ for every system in $\Sigma$. 
  The function $\mcr$ is constant on leaves of $\bbf$ in $\bbj$. The restriction $\mcr|_\Sigma$ is real-valued; 
  it is an analytic submersion $\Sigma\to\rp^1=\rr\cup\{\infty\}$. The map 
  $(\mcr,\tau):\Sigma\to\rp^1\times\rr_+$ is a local diffeomorphism. 
\end{theorem}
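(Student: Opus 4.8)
The plan is to analyze the monodromy--Stokes data of a system $\mcl\in\bbj$ with trivial monodromy $M=Id$, and to extract from it both the structure of $\Sigma$ and the submersivity of $\mcr$. First I would use the reality and normalization conditions defining $\bbj$. For a system in $\bbj$, the main term matrices at $0$ and $\infty$ are real, and the eigenvalue differences are real (the normal forms are $\diag(-\frac12,0)$ times $\tau$ at $0$, $\diag(-\ell,0)/z$ in the residues, etc.), so the imaginary dividing rays at both singular points are the imaginary semiaxes, and the good sectors $S_0,S_1$ may be chosen symmetrically as in Figure~5 and Example~\ref{triangtype}. Reality of the matrices forces the monodromy--Stokes data to be invariant under complex conjugation composed with the reflection $z\mapsto\bar z$; since the base point $z_0=1$ is real, the four points $q_{kp}\in\oc$ come in a configuration symmetric under complex conjugation, and consequently their cross-ratio $\mcr(\mcl)$ is real. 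This establishes that $\mcr|_\Sigma$ is real-valued, landing in $\rp^1$.

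Next I would prove $\ell\in\zz$ and that $\Sigma$ is a union of isomonodromic leaves. Triviality of $M$ means, via formula (\ref{monstp}), $M=M_{norm,0}C_{10}^{-1}C_{00}^{-1}=Id$; since the Stokes matrices are unipotent of opposite triangular types (Example~\ref{triangtype}) and $M_{norm,0}=\exp(2\pi i\wt R_0)=\diag(e^{-2\pi i\ell},1)$ is diagonal, a short triangular-factorization argument forces $C_{00}=C_{10}=Id$ and $e^{-2\pi i\ell}=1$, hence $\ell\in\zz$ and trivial Stokes matrices at $0$; by Proposition~\ref{trivmc} and Theorem~\ref{anclass} this is exactly the condition that the germ at $0$ is analytically equivalent to its diagonal normal form. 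Since the monodromy--Stokes data is a first integral of the isomonodromic field (Remark~\ref{rkint}), the condition $M=Id$ is preserved along leaves of $\bbf$, so $\Sigma$ is $\bbf$-saturated and $\mcr$ is constant on leaves; moreover $\ell$ is itself a first integral.

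The analytic manifold structure of $\Sigma$ and the submersivity of $\mcr$ I would obtain together, by showing that $(\mcr,\tau)$ are local analytic coordinates on $\Sigma$. Here the transition matrix $Q$ carries the essential information: once the Stokes data is trivial and $\ell$ is fixed, the monodromy--Stokes data up to equivalence (Proposition~\ref{proconsm}) reduces to the $\psl_2$-orbit of the four points $q_{kp}$, of which only the cross-ratio $\mcr$ is a genuine modulus, the remaining freedom being absorbed by the diagonal normalization $Q'=D_0 Q D_\infty^{-1}$. The Riemann--Hilbert realization argument sketched in Step~2 of the introduction --- gluing a holomorphic bundle on $\oc$ from two trivial bundles with the prescribed diagonal connections at $0$ and $\infty$ and a gluing matrix depending analytically on $(\mcr,\tau)$, which remains trivial near a realized point by the stability-of-trivial-bundles theorem (\cite[appendix 3]{Bol18}, \cite[theorem 2.3]{rohrl}) --- produces, for every $(\mcr,\tau)$ near a given one, a normalized $\rr_+$-Jimbo system in $\Sigma_\ell$; the symmetry/rigidity argument (Proposition~\ref{prig}) guarantees uniqueness and that the realized system is genuinely of the normalized form (\ref{djimbo2}). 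This local bijectivity makes $(\mcr,\tau)$ analytic coordinates, so $\Sigma$ is a two-dimensional analytic submanifold and $\mcr|_\Sigma$ is a submersion onto $\rp^1$, with $(\mcr,\tau)$ a local diffeomorphism to $\rp^1\times\rr_+$.

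The main obstacle I expect is the Riemann--Hilbert realization step: one must show that an \emph{arbitrary} prescribed value of the modulus $\mcr$ (together with the fixed diagonal normal forms determined by $\ell$ and $\tau$) is realized by some system whose glued bundle is holomorphically trivial, and that the realizing system lies in the \emph{real, normalized} class $\bbj$ rather than merely in $\mcho$. Triviality near an already-realized point is handled by the classical stability theorem, but genuine surjectivity of $\mcr$ onto $\rp^1$ and the descent to the real normalized form require the explicit symmetry argument; this is where the real care lies, the Painlev\'e/Stokes bookkeeping in the earlier steps being comparatively routine.
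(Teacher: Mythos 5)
Your plan follows the paper's own route almost step for step: reality of $\mcr$ via the conjugation symmetry, $\ell\in\zz$ and triviality of the Stokes matrices from $M=Id$, saturation of $\Sigma$ by leaves of $\bbf$, and then $(\mcr,\tau)$ as local coordinates via a Riemann--Hilbert gluing argument stabilized by the ``a bundle close to trivial is trivial'' theorem, with a conjugation/rigidity argument to descend to the real normalized class. You also correctly locate the main difficulty in the realization step. But there is one concrete ingredient you omit, and the argument does not close without it: you must show that for every $\mcl\in\Sigma$ the four points $q_{10},q_{20},q_{1\infty},q_{2\infty}$ contain \emph{at least three distinct} points. Your assertion that ``only the cross-ratio is a genuine modulus'' of the $\psl_2(\cc)$-orbit is false for degenerate configurations: if only two of the four points were distinct, the cross-ratio would be locked at $0$, $1$ or $\infty$ and would not separate nearby orbits, so $(\mcr,\tau)$ would fail to be local coordinates and $\mcr$ would fail to be a submersion. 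The paper proves non-degeneracy (its Proposition \ref{prodis}) by a genuinely non-routine argument: if only two points were distinct, the two projectivized sectorial solutions would extend to meromorphic functions on all of $\oc$ with disjoint graphs in $\oc\times\oc$; the intersection form on $\operatorname{H}_2(\oc\times\oc,\zz)$ then forces both to be constant, making the system diagonal, which is impossible in $\bbj$. Some argument of this kind is indispensable and is not supplied by anything else in your outline.

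Two smaller points. First, your reality argument is stated in the wrong order: the conjugation $(\Phi,z)\mapsto(\overline\Phi,\bar z)$ swaps the good sectors $S_0$ and $S_1$, so it carries the canonical basis in $S_0$ to the one in $S_1$; the individual points $q_{kp}$ (hence $\mcr$) are only seen to be real \emph{after} one knows the Stokes matrices are trivial, so that the sectorial projectivized solutions glue to single-valued functions on $\cc^*$ whose values at the real point $z_0=1$ are conjugation-fixed. A ``conjugation-symmetric configuration'' of four points does not by itself have real cross-ratio for an arbitrary induced permutation. Second, in the realization step the descent to real matrices again uses that the number of distinct $q_{kp}$ is at least three (to pin down the unique circle through them and to force the comparison automorphism to be scalar), so the non-degeneracy proposition is used twice, not once.
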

For every $\ell\in\zz$ by $\Sigma_\ell\subset\Sigma$ we denote the subset of systems with given $\ell$. 

\begin{lemma} \label{constran} 
 For every $\ell\in\zz$ the subset $Constr_{\ell}\subset(\rr_+^2)_{(\mu,\eta)}$, 
$\eta=\omega^{-1}$, is a real-analytic one-dimensional submanifold identified 
with the intersection $Jos\cap\Sigma_\ell$. The restriction of the function $\mcr$ to the latter intersection yields a mapping $Constr_\ell\to\rr\setminus\{0,1\}$ that is 
a local analytic diffeomorphism.
\end{lemma}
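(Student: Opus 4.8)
The plan is to assemble Lemma \ref{constran} from the pieces already established, treating it essentially as an intersection-theoretic statement combined with an identification of coordinate systems. The target claims three things: (1) $Constr_\ell$ is identified with $Jos\cap\Sigma_\ell$; (2) this intersection is a real-analytic one-dimensional submanifold; (3) the restriction of $\mcr$ to it is a local analytic diffeomorphism onto its image, landing in $\rr\setminus\{0,1\}$.

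First I would establish the identification $Constr_\ell\cong Jos\cap\Sigma_\ell$. By Proposition \ref{trivmc}, a point $(B,A;\omega)$ with $A,\omega\neq0$ is a constriction if and only if the corresponding Josephson system (\ref{tty}) has trivial monodromy, i.e. lies in $\Sigma$; and by Remark \ref{remint} every constriction has $\ell\in\zz$, so the relevant systems lie in $\Sigma_\ell$ once $\ell$ is fixed. Under the inclusion $Jos\hookrightarrow\bjr$ described in the Example following Remark \ref{rknn}, a Josephson system with parameters $(\mu,\ell,\omega)$ maps to a normalized $\rr_+$-Jimbo system with $\tau=2\mu$ and $R_{21}=\frac1{2\omega}$; since $(\mu,\eta)=(\mu,\omega^{-1})$ and $(\tau,R_{21})$ are related by an invertible real-analytic change of coordinates, the set $Constr_\ell\subset(\rr_+^2)_{(\mu,\eta)}$ is precisely the image of $Jos\cap\Sigma_\ell$, and this identification is an analytic diffeomorphism of the ambient parameters.

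Next I would prove that $Jos\cap\Sigma_\ell$ is a one-dimensional analytic submanifold using transversality. By the Key Theorem \ref{crossub}, $\Sigma$ (hence $\Sigma_\ell$) is a two-dimensional analytic submanifold of $\bjr$ that is a union of leaves of the isomonodromic foliation $\bbf$. By the Key Lemma \ref{lcross}, $Jos$ is transversal to $\bbf$. Since $\Sigma_\ell$ is foliated by leaves of $\bbf$, the hypersurface $Jos$ is transversal to $\Sigma_\ell$ within the three-dimensional slice $\{\ell=\text{const}\}$ of $\bjr$; the transversal intersection of the two-dimensional $\Sigma_\ell$ with the codimension-one $Jos$ is therefore a real-analytic submanifold of dimension $2-1=1$. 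The main obstacle here is bookkeeping the dimensions and codimensions correctly: one must verify that "$Jos$ transversal to the leaves of $\bbf$" together with "$\Sigma_\ell$ a union of those leaves" genuinely upgrades to transversality of $Jos$ and $\Sigma_\ell$ as submanifolds, rather than merely to $Jos$ meeting each individual leaf cleanly. I would argue this by noting that at a point of $Jos\cap\Sigma_\ell$ the tangent space $T\Sigma_\ell$ contains the leaf direction (the $\bbf$-direction), which by Lemma \ref{lcross} is not contained in $TJos$; hence $T\Sigma_\ell + TJos$ spans the full tangent space of the $\{\ell=\text{const}\}$ slice, giving the claimed transversality and the correct dimension count.

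Finally I would analyze $\mcr$ restricted to this intersection. By the Key Theorem, $\mcr$ is constant along the leaves of $\bbf$ and the pair $(\mcr,\tau)\colon\Sigma\to\rp^1\times\rr_+$ is a local diffeomorphism; restricting to $\Sigma_\ell$, the pair $(\mcr,\tau)$ gives local analytic coordinates on the surface $\Sigma_\ell$. Since the leaves are exactly the level curves $\{\tau\text{ varies},\ \mcr=\text{const}\}$ and $Jos\cap\Sigma_\ell$ is transversal to them, the one-dimensional submanifold $Jos\cap\Sigma_\ell$ projects locally diffeomorphically onto an interval of $\mcr$-values under $\mcr$; equivalently, $\mcr$ serves as a local analytic coordinate on each component of $Constr_\ell$. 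It remains to locate the image in $\rr\setminus\{0,1\}$. Since $\Sigma$ consists of systems with trivial monodromy, the four Stokes points satisfy the nondegeneracy $q_{1p}\neq q_{2p}$ (Proposition \ref{proconsm}), so the cross-ratio $\mcr$ in (\ref{transcr}) is well-defined and finite; moreover triviality of the monodromy forces the transition data to be genuinely nondegenerate, which excludes the degenerate cross-ratio values $0$, $1$, and $\infty$. I would verify that $\mcr$ is real on $\Sigma$ (as asserted in Theorem \ref{crossub}) and that the values $0$ and $1$ correspond to coincidences among the $q_{kp}$ that cannot occur for a trivial-monodromy system with $A\neq0$, pinning the image in $\rr\setminus\{0,1\}$ and completing the proof.
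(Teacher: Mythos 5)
Your first three steps coincide with the paper's argument: the identification $Constr_\ell\cong Jos\cap\Sigma_\ell$ via Proposition \ref{trivmc} and the coordinate change $(\tau,R_{21})=(2\mu,\tfrac{\eta}{2})$, the transversality and dimension count from Theorem \ref{crossub} and Lemma \ref{lcross} (including the correct upgrade from ``$Jos$ transversal to the leaves'' to ``$Jos$ transversal to $\Sigma_\ell$''), and the conclusion that $\mcr$ restricted to a curve transversal to the level foliation of $\mcr$ is a local analytic diffeomorphism.

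The gap is in the final step, the exclusion of the values $0$, $1$, $\infty$. You assert that triviality of the monodromy ``forces the transition data to be genuinely nondegenerate, which excludes the degenerate cross-ratio values,'' but this is not true: Proposition \ref{prodis} only guarantees that at least \emph{three} of the four points $q_{kp}$ are distinct for a system in $\Sigma$, which is entirely compatible with $\mcr\in\{0,\infty\}$ (note that Theorem \ref{crossub} states the codomain of $\mcr|_\Sigma$ as all of $\rp^1$, not $\rr\setminus\{0,1\}$). The value $1$ is the harmless one: since $q_{1p}\neq q_{2p}$ by Proposition \ref{proconsm}, $\mcr=1$ would force $q_{10}=q_{20}$ or $q_{1\infty}=q_{2\infty}$ and so is excluded for free. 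What must actually be ruled out for a \emph{constriction} are the coincidences $q_{k0}=q_{k\infty}$ (giving $\mcr=0$) and $q_{k0}=q_{(3-k)\infty}$ (giving $\mcr=\infty$), and the paper's Proposition \ref{cons4} does this with two inputs specific to the Josephson family rather than to $\Sigma$: the symmetry $(\Phi,z)\mapsto(\Phi^{-1},z^{-1})$ of the Riccati equation (\ref{ric}), which shows that one coincidence of the first kind forces the other, leaving only two distinct points and contradicting Proposition \ref{prodis}; and the non-triangularity of the transition matrix between the canonical bases at $0$ and $\infty$, a nontrivial external result quoted from \cite[theorem 2.10]{g18}, which kills the second kind. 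Without these two ingredients the image of $\mcr$ cannot be pinned inside $\rr\setminus\{0,1\}$, so as written your proof does not establish the last clause of the lemma.
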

Afterwards in Subsection 4.3 we prove the following more precise version of 
the first two statements of Theorem \ref{famcons0}. 
\begin{theorem} \label{famcons1} 1) For every connected component $\mcc$ of the submanifold 
$Constr_{\ell}$ the mapping $\mcr:\mcc\to\rr$ is a diffeomorphism 
onto an interval $I=(a,b)$. 

2) Let $C:=\mcr^{-1}:I\to\mcc$ denote the inverse function. For every $c\in\{ a,b\}\setminus\{0\}$ 
there exists a sequence $x_n\in I$, $x_n\to c$, as $n\to\infty$, such that 
$\eta_n=\eta(C(x_n))\to\infty$, i.e., $\omega(C(x_n))\to0$. 
\end{theorem}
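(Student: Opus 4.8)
The plan is to handle the two parts by different means: part 1 by the soft topology of connected $1$-manifolds, and part 2 by a properness argument fed by three quantitative boundedness inputs.

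For part 1, I would start from Lemma \ref{constran}, which gives that each connected component $\mcc$ of $Constr_{\ell}$ is a connected real-analytic $1$-manifold without boundary and that $\mcr|_{\mcc}$ is a local analytic diffeomorphism into $\rr\setminus\{0,1\}$. A connected $1$-manifold is diffeomorphic either to the circle $S^1$ or to an open interval. The circle is excluded immediately: if $\mcc\cong S^1$, then $\mcr(\mcc)$ would be simultaneously compact (continuous image of a compact set) and open (image of a local diffeomorphism), and no nonempty subset of $\rr$ has both properties. Hence $\mcc$ is diffeomorphic to $\rr$, and $\mcr|_{\mcc}$ is a local diffeomorphism $\rr\to\rr$. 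Such a map has nowhere vanishing derivative, hence constant sign of the derivative, hence is strictly monotone, hence injective; a strictly monotone local diffeomorphism is a diffeomorphism onto its image, which is an open interval $I=(a,b)$. This proves part 1.

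For part 2, fix an endpoint $c\in\{a,b\}\setminus\{0\}$ and write $C=\mcr^{-1}:I\to\mcc$. Since $\mcr$ is a local diffeomorphism on all of $Constr_{\ell}$ (Lemma \ref{constran}), each component $\mcc$ is open and closed in $Constr_{\ell}$; and $Constr_{\ell}$ is itself closed in the open quadrant $\rr_+^2$, because the constriction condition (triviality of the monodromy of (\ref{tty}), Proposition \ref{proconsm}) is closed there. Hence $\mcc$ is closed in $\rr_+^2$. Moreover $C$ is proper towards the ends of $I$: as $x\to c$ the point $C(x)$ leaves every compact subset of $\mcc$, for otherwise some $C(x_n)\to p\in\mcc$ with $x_n\to c$ would give $x_n=\mcr(C(x_n))\to\mcr(p)\in I$, contradicting $x_n\to c$. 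Combining closedness with properness, the accumulation set of $\{C(x)\}_{x\to c}$ in $\overline{\rr_+^2}\cup\{\infty\}$ can meet only the boundary axes $\{\mu=0\}\cup\{\eta=0\}$ and the point at infinity. It therefore suffices to prove that $\eta$ is unbounded along $C(x)$ as $x\to c$: if instead $\eta\le N$ near $c$, then leaving compacts forces at least one of $\mu\to\infty$, $\mu\to0$, $\eta\to0$ along a subsequence, and ruling these three escapes out produces a contradiction.

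The three boundedness statements are the technical core of the plan. First I would bound $\mu$ from above using the Klimenko--Romaskevich Bessel asymptotics (\ref{bessas}) of the boundary curves of the phase-lock areas \cite{RK}: since a constriction lies on $\partial L_r$ at abscissa $B=\ell\omega$, these asymptotics constrain its ordinate and prevent $\mu=\tfrac{A}{2\omega}$ from tending to $+\infty$ along constrictions with fixed $\ell$; this excludes $\mu\to\infty$. For the remaining two cases I would analyze the limit of the transition cross-ratio $\mcr$ from (\ref{transcr}) as system (\ref{djimbo2}) degenerates onto a coordinate axis: as $\mu\to0$ (equivalently $\tau\to0$) the irregular singularity at $0$ degenerates, and as $\eta\to0$ (equivalently $R_{21}\to0$) the residue matrix becomes diagonal; in both regimes two of the four points $q_{10},q_{20},q_{1\infty},q_{2\infty}$ collide, forcing $\mcr\to0$. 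Since $0$ is precisely the value excluded in the hypothesis, accumulation on either axis would force $c=0$, contrary to $c\in\{a,b\}\setminus\{0\}$; hence $\mu\to0$ and $\eta\to0$ are impossible, and the only way for $C(x)$ to leave compacts is $\eta\to\infty$, which yields the required sequence $x_n\to c$ with $\eta_n\to\infty$.

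The hard part will be the asymptotic analysis of the monodromy--Stokes data, and hence of $\mcr$, as the system approaches the axes $\{\mu=0\}$ and $\{\eta=0\}$: one must track how the canonical sectorial bases at $0$ and at $\infty$, and thus the points $q_{kp}$, behave when the singularity structure degenerates, and confirm that the collision of points drives $\mcr$ to the excluded value $0$ rather than to a finite nonzero limit. The second delicate point is extracting the upper bound on $\mu$ from the Bessel asymptotics (\ref{bessas}) in a way that is uniform along the whole family $Constr_{\ell}$, since it is the behavior of constrictions with fixed $\ell$ and varying $\omega$ that must be controlled.
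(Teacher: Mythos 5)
Your part 1) is correct and coincides with the paper's argument (compact components are excluded because a circle admits no local diffeomorphism to $\rr$, so each component is an interval with coordinate $x=\mcr$). The skeleton of your part 2) — properness of $C$ toward the ends of $I$, closedness of $\mcc$ in $\rr_+^2$, and reduction to excluding the escapes $\mu\to\infty$, $\mu\to 0$, $\eta\to0$ under the contrary assumption that $\eta$ stays bounded — also matches the paper, as do two of the three exclusions: boundedness of $\mu$ via the Klimenko--Romaskevich asymptotics (the paper's Proposition \ref{boundcom}, which erects barriers $\{\mu=\frac{u_k}2\}$ at the maxima $u_k$ of $|J_\ell|$ that no constriction can cross), and the exclusion of $\eta\to0$ with $\mu$ bounded away from $0$, where the limit system is diagonal, remains in the class $\mcho$, and has cross-ratio $0$, forcing $c=0$.

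The gap is in your exclusion of $\mu\to0$. You assert that as $\mu\to0$ ``two of the four points $q_{10},q_{20},q_{1\infty},q_{2\infty}$ collide, forcing $\mcr\to0$,'' but this is unjustified and is not the mechanism the paper uses. When $\mu\to0$ both main-term matrices $\diag(-\mu,0)$ degenerate to the zero matrix, the limit system leaves the class $\mcho$ (it becomes an Euler system), and the canonical sectorial solutions --- hence the points $q_{kp}$ and the cross-ratio --- have no evident limit: this is a confluence problem. Even granting a collision, you would need to identify which pair collides, since $q_{10}\to q_{20}$ or $q_{1\infty}\to q_{2\infty}$ drives $\mcr$ to $1$, and $q_{10}\to q_{2\infty}$ drives it to $\infty$, not to the excluded value $0$. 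The paper replaces this by two separate genuine arguments: (i) for accumulation at the corner $(\mu,\eta)\to(0,0)$, Lemma \ref{noco0}, which computes the Taylor expansion (\ref{tayu}) of the time-$2\pi$ map $h_{\mu,\eta}$ via the variational equations and shows that the analytic set $\{h_{\mu,\eta}=Id\}$ cannot contain a curve germ at the origin along which $\mu,\eta\not\equiv0$; (ii) for accumulation at $(0,\eta^*)$ with $\eta^*>0$, a rotation-number argument: the limit equation $\dot\theta=\eta^*\cos\theta+\ell$ must inherit the identity Poincar\'e map, hence has integer rotation number, which is $<\ell$ because the growth point of $L_\ell$ has abscissa $\sqrt{\ell^2\omega^2+1}>\ell\omega$; by continuity the nearby constrictions would then have rotation number $<\ell$, contradicting \cite[theorem 1.2]{4}, by which every constriction in $\La_\ell$ has rotation number at least $\ell$. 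Without these two arguments (or a genuine confluence analysis substantiating your claimed limit of $\mcr$), the case $\mu\to0$ is not handled and part 2) is not proved.
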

In Subsection 4.4 we prove constance of the rotation number 
and type of constriction on each connected component in $Constr_\ell$ 
 and finish the proof of Theorem \ref{famcons0}.
\subsection{Systems with trivial monodromy. Proof of Theorem \ref{crossub}}

In the proof of Theorem \ref{crossub} we use a series of propositions. 

\begin{proposition} \label{protriv} Every system $\mcl\in\mcho$ with trivial monodromy 
(e.g., every system in $\Sigma$) 
has trivial Stokes matrices and trivial formal monodromies at both singular points 
$0$, $\infty$. In particular, the residue matrices of its formal normal forms 
have integer elements. If $\mcl\in\Sigma$, then one has $\ell\in\zz$.
\end{proposition}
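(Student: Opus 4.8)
The plan is to reduce everything to a pointwise computation at each singular point using the factorization formula (\ref{monst}). First I would observe that the defining condition $M(\mcl)=Id$ concerns the monodromy along a counterclockwise loop around $0$; since $\oc$ carries no singular points other than $0$ and $\infty$, such a loop is freely homotopic in $\cc^*$ to a clockwise loop around $\infty$, so the monodromy around $\infty$ equals $M^{-1}=Id$ as well. Hence the hypothesis of trivial monodromy holds at both singular points, and it suffices to run one and the same argument at each $p\in\{0,\infty\}$, invoking Remark \ref{rinf} for the case $p=\infty$.

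Next I would fix $p$ and apply formula (\ref{monst}), which reads $M=M_{norm,p}C_{1p}^{-1}C_{0p}^{-1}$ in the canonical sectorial basis at $p$. With $M=Id$ this rewrites as $M_{norm,p}=C_{0p}C_{1p}$. By Example \ref{triangtype} the two Stokes matrices are unipotent of opposite triangular type, say $C_{0p}=\left(\begin{smallmatrix}1 & c_0\\ 0 & 1\end{smallmatrix}\right)$ and $C_{1p}=\left(\begin{smallmatrix}1 & 0\\ c_1 & 1\end{smallmatrix}\right)$, whereas the formal monodromy $M_{norm,p}$ is diagonal. Computing $C_{0p}C_{1p}=\left(\begin{smallmatrix}1+c_0c_1 & c_0\\ c_1 & 1\end{smallmatrix}\right)$ and comparing with a diagonal matrix forces the off-diagonal entries to vanish, i.e. $c_0=c_1=0$. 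Thus both Stokes matrices are trivial and $M_{norm,p}=C_{0p}C_{1p}=Id$. Writing $M_{norm,p}=\exp(2\pi i\wt R_p)$ with $\wt R_p=\diag(b_1,b_2)$, triviality of the formal monodromy gives $e^{2\pi i b_k}=1$, hence $b_k\in\zz$; this yields the integrality of the residue matrices of the formal normal forms at both points.

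Finally, for $\mcl\in\Sigma\subset\bbj$ the system has normalized $\rr_+$-Jimbo type (\ref{djimbo2}), whose main term matrix at $\infty$ is the diagonal matrix $\tau\,\diag(-\frac12,0)$; by (\ref{resfo}) (equivalently Remark \ref{rkfnf}) the residue matrix of its formal normal form at $\infty$ is then the diagonal part $\diag(-\ell,0)$ of $R$. Applying the integrality just established to the $(1,1)$-entry gives $-\ell\in\zz$, that is $\ell\in\zz$. The only genuinely delicate point in this argument is the bookkeeping of the opposite triangular types of $C_{0p},C_{1p}$ and the reduction of the monodromy at $\infty$ to that at $0$; once these are in hand, the identity ``diagonal $=$ product of two opposite unipotents'' does all the work, so no analytic difficulty arises.
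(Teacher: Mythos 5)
Your proof is correct and follows essentially the same route as the paper: both rest on the factorization (\ref{monst}), $M=M_{norm,p}C_{1p}^{-1}C_{0p}^{-1}$, which with $M=Id$ forces the off-diagonal entries of the opposite-triangular unipotent Stokes matrices to vanish (the paper extracts this from formulas (\ref{m1p})--(\ref{m2p}), you do the equivalent computation $M_{norm,p}=C_{0p}C_{1p}$ directly), whence $M_{norm,p}=Id$, integrality of the formal residues, and $\ell\in\zz$ via the diagonal part $\diag(-\ell,0)$ of $R$ at $\infty$.
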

\begin{proof} The proof  repeats  arguments from  \cite[proof of lemma 3.3]{4}. Triviality of the Stokes matrices follows from formulas 
(\ref{m1p}) and (\ref{m2p}). Then $M=M_{norm}=Id$, by (\ref{monst}). Hence, $\ell\in\zz$, 
if $\mcl\in\Sigma$.
\end{proof}
\begin{proposition} \label{pstr} Let in a system $\mcl\in\mcho$, see (\ref{yaz}), 
the matrices $K$, $R$, $N$ be real, and let 
each one of the matrices $K$, $N$ have distinct real eigenvalues. Let 
the Stokes matrices of the system $\mcl$ at $0$ and at $\infty$ be trivial. 
Then the transition cross-ratio $\mcr(\mcl)$ is either real, or infinite. 
\end{proposition}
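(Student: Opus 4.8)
The plan is to exploit the reality of the matrices $K,R,N$ through the complex-conjugation symmetry of the system and to show that all four points $q_{10},q_{20},q_{1\infty},q_{2\infty}$ lie on the circle $\rp^1=\rr\cup\{\infty\}\subset\oc_\Phi$; four concyclic points have real (or infinite) cross-ratio, which is exactly the assertion for $\mcr(\mcl)$. Concretely, since $A(z):=K/z^2+R/z+N$ has real matrix coefficients, $A(\bar z)=\overline{A(z)}$, so whenever $Y(z)$ solves $Y'=A(z)Y$ the function $\overline{Y(\bar z)}$ solves the same system. On the level of the projectivization (the Riccati equation, $\Phi=Y_2/Y_1$) this is the anti-holomorphic involution $\sigma:(\Phi,z)\mapsto(\bar\Phi,\bar z)$; for fixed real $z$ its action on the $\Phi$-sphere is reflection in $\rp^1$, whose fixed-point set is $\rr\cup\{\infty\}$.

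First I would record that the conjugation symmetry respects the labelling of the canonical sectorial solutions. Because $K$ (and likewise $N$ at $\infty$) is real with distinct real eigenvalues, a diagonalizing matrix $G$ may be chosen real, so the residue matrix $\wt R$ of the formal normal form is real and its diagonal exponents $b_1,b_2$ are real. Hence the formal solution $\wt Y_k=c_kz^{b_k}e^{-\la_k/z}$ satisfies $\overline{\wt Y_k(\bar z)}=\bar c_k z^{b_k}e^{-\la_k/z}$ (using $\la_k,b_k\in\rr$), i.e. it is again the $k$-th formal solution up to a constant. Since conjugation $z\mapsto\bar z$ swaps the upper and lower half-planes, it carries the canonical solution $f_{k0}$, defined in the good sector $S_0$ (containing the upper half-plane) and asymptotic to $\wt Y_k$, to a solution in $S_1$ asymptotic to the same $\wt Y_k$; by uniqueness of canonical sectorial solutions this is a scalar multiple of $f_{k1}$. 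Passing to $\cp^1$ via Remark \ref{riccun}, $\sigma$ sends the canonical Riccati solution of $\mcl$ in $S_0$ to the canonical Riccati solution in $S_1$, with the same index $k$. The same holds at $\infty$ in the coordinate $1/z$.

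The decisive step uses the hypothesis of trivial Stokes matrices. By Example \ref{triangtype}, triviality of $C_0,C_1$ forces $f_{k0}$ and $f_{k1}$ to be proportional on the overlap component $S_{1,2}$, hence the corresponding canonical Riccati solutions to coincide there. The base point $z_0=1$ is real, lies in $S_{1,2}$ (and in the analogous overlap at $\infty$), and is fixed by conjugation. Evaluating at $z_0$ therefore gives $\sigma(q_{kp})=\pi\big(\overline{f_{kp}(z_0)}\big)$, which by the previous paragraph equals the projection at $z_0$ of the canonical solution of $\mcl$ at $p$ in the conjugate good sector; triviality of the Stokes matrices makes this projection coincide with $q_{kp}$. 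Thus $\sigma(q_{kp})=q_{kp}$ for $p=0,\infty$ and $k=1,2$, so every $q_{kp}$ lies on $\rp^1$. Consequently the four points are concyclic and $\mcr(\mcl)\in\rr\cup\{\infty\}$, as claimed.

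The main obstacle is the bookkeeping in the middle step: one must match the complex-conjugated canonical solution to the canonical solution in the \emph{conjugate} sector with the correct index and correct branch of the factor $z^{b_k}e^{-\la_k/z}$, and verify that the chosen good sectors at $0$ and at $\infty$ are genuinely interchanged by $z\mapsto\bar z$ while the real base point $z_0=1$ stays in the relevant overlaps. Once this is set up, triviality of the Stokes matrices is precisely what is needed to upgrade ``conjugate solution $\propto f_{k1}$'' to ``$q_{kp}$ fixed by $\sigma$''; without it, $f_{k0}$ and $f_{k1}$ would differ by the nontrivial Stokes action at $z_0$ and the points would generically leave $\rp^1$.
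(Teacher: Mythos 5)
Your proof is correct and follows essentially the same route as the paper's: both use the anti-holomorphic involution $\sigma:(\Phi,z)\mapsto(\overline\Phi,\bar z)$ to show it permutes the projectivized canonical solutions in the two good sectors with the same index, then invoke triviality of the Stokes matrices to identify the sectorial solutions into a single global function, so that its value at the real base point $z_0=1$ is $\sigma$-fixed and hence lies in $\rr\cup\{\infty\}$. The only difference is that you spell out the index-matching step (reality of the formal exponents and eigenvalues) in more detail than the paper, which simply cites uniqueness of the projectivized sectorial solutions.
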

\begin{proof} Let $f_{1j,p}, f_{2j,p}$ denote the canonical sectorial solution basis 
of the system $\mcl$ at point $p=0,\infty$ in the sector $S_j$, $j=0,1$, see Fig. 5. 
The complex conjugation $\wh\sigma:(Y_1,Y_2;z)\mapsto(\overline Y_1,\overline Y_2; 
\bar z)$ leaves $\mcl$ invariant and sends graphs of its solutions to graphs 
of solutions. Its projectivization $\sigma:(\Phi,z)\mapsto(\overline\Phi,\bar z)$,  $\Phi:=\frac{Y_2}{Y_1}$, permutes  the sectors $S_0$, $S_1$ 
and   graphs of the projectivized solutions 
$$g_{k0,p}:=\pi\circ f_{k0,p}, \ g_{k1,p}:=\pi\circ f_{k1,p}.$$ 
Here $\pi:\cc^2\setminus\{0\}\to\cp^1=\oc_{\Phi}$ is the tautological projection. 
This follows from uniqueness of projectivized sectorial basic solutions 
(Remark \ref{riccun}). Triviality of Stokes matrices implies that $g_{k0,p}=g_{k1,p}$ 
is a global holomorphic $\oc$-valued function on $\cc^*$. 
In particular, for every $z\in\rr$ one has $g_{k0,p}(z)=g_{k1,p}(z)$; hence, 
$(g_{k0,p}(z),z)$ is a fixed point of the involution $\sigma$ and 
$g_{k0,p}(z)\in\rr\cup\{\infty\}$. Finally, 
 $q_{kp}=g_{k0,p}(1)\in\rr\cup\{\infty\}$ for every $k=1,2$ and $p=0,\infty$, and thus, 
$\mcr(\mcl)\in\rr\cup\{\infty\}$. Proposition \ref{pstr} is proved.
\end{proof}

\begin{proposition} \label{prodis} For every system $\mcl\in\Sigma$ the corresponding 
collection of points $q_{kp}$, $k=1,2$, $p=0,\infty$ consists of at least three distinct 
points. One has $q_{1p}\neq q_{2p}$ for every $p=0,\infty$. 
\end{proposition}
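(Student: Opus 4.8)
The inequality $q_{1p}\neq q_{2p}$ for $p=0,\infty$ is immediate and already contained in Proposition \ref{proconsm}: the canonical sectorial basic solutions $f_{1p},f_{2p}$ are linearly independent, hence so are their values at $z_0=1$, whose projections are $q_{1p},q_{2p}$. It therefore remains to prove that at least three of the four points $q_{10},q_{20},q_{1\infty},q_{2\infty}$ are distinct. The plan is to work with the projectivized Riccati solutions. Since $\mcl\in\Sigma$ has trivial monodromy and, by Proposition \ref{protriv}, trivial Stokes matrices at both singular points, the Analytic Classification Theorem \ref{anclass} provides convergent normalizations at $0$ and at $\infty$; consequently each projectivized canonical sectorial solution glues into a global single-valued holomorphic solution $g_{kp}\colon\cc^*\to\cp^1$ of the Riccati equation, and by Remark \ref{riccun} it extends holomorphically across the singular point $p$ with value there equal to the corresponding eigenline of the main term matrix ($K$ at $p=0$, $N$ at $p=\infty$). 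Recall that two solutions of a Riccati equation that agree at one point coincide identically; hence $q_{kp}=g_{kp}(1)$, and an equality $q_{kp}=q_{k'p'}$ is equivalent to the identity $g_{kp}\equiv g_{k'p'}$.

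Given $q_{10}\neq q_{20}$ and $q_{1\infty}\neq q_{2\infty}$, the four points reduce to only two distinct ones exactly when $\{g_{10},g_{20}\}=\{g_{1\infty},g_{2\infty}\}$ as unordered pairs of global solutions (this is the ``$Q$ diagonal or anti-diagonal'' case in the transition-matrix description from the proof of Theorem \ref{gaeq}). I would rule this out by showing it forces the system to split globally. In either matching, the relations $g_{10}\equiv g_{1\infty}$, $g_{20}\equiv g_{2\infty}$ (or the analogous crossed pair) exhibit two global solutions that extend holomorphically over all of $\oc$, giving two holomorphic line subbundles $L_1,L_2$ of the trivial bundle $\cc^2$ over $\oc$ that are invariant under the connection $\mcl$. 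They are transverse at every point of $\cc^*$ (distinct Riccati solutions never meet) and also at $0$ and $\infty$ (the limiting eigenlines of $K$, resp.\ $N$, are distinct), so $L_1\oplus L_2=\cc^2$ as holomorphic bundles over $\oc$ and $\mcl$ is the direct sum of the two induced rank-one connections.

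The main point is then a degree count, which is also where the only real obstacle lies: a priori the invariant sections could be non-constant, so one must show the splitting is by constant subspaces. Since each $L_i$ is a sub-line-bundle of the trivial (degree zero) bundle one has $\deg L_i\leq 0$, while $\deg L_1+\deg L_2=\deg\cc^2=0$; hence $\deg L_1=\deg L_2=0$, each $L_i$ is trivial, and the inclusion $L_i\hookrightarrow\cc^2$ is a nowhere-vanishing constant section. Thus $L_1,L_2$ are spanned by constant vectors that are common eigenvectors of $K$, $R$ and $N$, so these three matrices are simultaneously diagonalized in this eigenbasis. For a normalized $\rr_+$-Jimbo system (\ref{djimbo2}) the matrix $N=\diag(-\tfrac12,0)$ has the coordinate axes as its only eigenlines, forcing the common eigenbasis to be the standard one; but then $R$ would be diagonal, contradicting $R_{21}>0$. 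This contradiction rules out the two-point collapse and proves that at least three of the $q_{kp}$ are distinct.
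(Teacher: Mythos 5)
Your proof is correct and follows essentially the same route as the paper's: both reduce the two-point collapse to a pair of globally defined, everywhere-disjoint $\cp^1$-valued Riccati solutions on $\oc$, force them to be constant by a degree count, and then contradict $R_{21}>0$ via diagonality of the system. The only cosmetic difference is that you phrase the constancy step via degrees of the two invariant line subbundles splitting the trivial bundle, whereas the paper computes the intersection form on $\operatorname{H}_2(\oc\times\oc,\zz)$ applied to the graphs; these are the same computation.
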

\begin{proof} One has $q_{kp}=g_{k0,p}(1)=
\pi\circ f_{k0,p}(1)$, where $f_{10,p}$, $f_{20,p}$ 
form  the canonical basis of solutions of the system in $S_0$. Their linear independence 
implies linear independence of their values at $z=1$, and hence, the inequality 
$q_{1p}\neq q_{2p}$. Let us now prove that among the points $q_{kp}$ there are 
at least three distinct ones. To do this, we use the fact that 
$g_{k,p}(z):=g_{k0,p}(z)=g_{k1,p}(z)$ are two meromorphic functions on  $\cc^*\cup\{ p\}$, 
$p=0,\infty$. 
Meromorphicity on $\cc^*$ follows from Proposition \ref{protriv} and 
the proof of Proposition \ref{pstr}. Meromorphicity at $p$  
follows from  Remark \ref{riccun}.  Suppose the contrary: there are only two distinct points among $q_{kp}$. 
Then $g_{1,0}\equiv g_{k_1,\infty}$, $g_{2,0}\equiv g_{k_2,\infty}$, where 
$(k_1,k_2)$ is some permutation of $(1,2)$. Therefore,  
$g_{1,p}$ and $g_{2,p}$ are meromorphic on $\oc$, by the above 
discussion. Their graphs are disjoint, since so are graphs of their restrictions to $\cc^*$  (being phase curves of the 
Riccati foliation on $\cp^1\times\oc$ defined by $\mcl$), and their values at 
each point $p\in\{0,\infty\}$ are distinct and equal to 
the projections of the eigenlines of the main term matrix at $p$ (Remark \ref{riccun}). 
The main term matrix at infinity being diagonal, one has $g_{1,\infty}(\infty)=[1:0]$, $g_{2,\infty}(\infty)=[0:1]$. 
But graphs of two meromorphic functions on $\oc$ with values in $\cp^1=\oc$ may be disjoint only if the 
functions are constant. Indeed, $\operatorname{H}_2(\oc\times\oc,\zz)=\zz\oplus\zz$ (K\"unneth Formula), 
and the intersection form on the latter homology group is given by the formula 
$<(m_1,n_1), (m_2,n_2)>=m_1n_2+m_2n_1$. See the corresponding background material in 
\cite[chapter 0, section 4]{grh}. The homology class of graph of a rational function $F$ of degree $n$ is 
$(1,n)$; $n>0$, if $F\not\equiv const$. Therefore, if $F\not\equiv const$, then the intersection 
index of its graph with the graph of any rational function is positive. Hence, 
$g_{1,\infty}\equiv[1:0]$, $g_{2,\infty}\equiv[0:1]$, 
and the constant functions  $\Phi(z)\equiv0$, $\Phi(z)\equiv\infty$ are solutions of the Riccati equation corresponding 
to $\mcl$. This implies that the matrices of the system $\mcl$ are diagonal,  which is obviously impossible for a system 
from $\bbj$. The contradiction thus obtained proves the proposition.
\end{proof}

\begin{proposition} \label{procross} For every collection 
$q_0=(q_{10},q_{20},q_{1\infty},q_{2\infty})\in\oc^4$ that has at least three distinct points 
there exists a neighborhood 
$\mathcal V=\mathcal V(q_0)\subset\oc^4$ 
such that two collections in $\mathcal V$ lie in the same $\psl_2(\cc)$-orbit, 
if and only if they have the same cross-ratio.
\end{proposition}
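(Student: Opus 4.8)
The plan is to exploit the classical fact that $\psl_2(\cc)$ acts sharply $3$-transitively on $\oc=\cp^1$, together with the invariance of the cross-ratio under this action. Since the collection $q_0$ contains at least three distinct points, I would first fix a labelling of three of them, say three of the four entries that are pairwise distinct, and use them to pin down the Möbius transformation. The cross-ratio $\mcr$ defined in (\ref{transcr}) is a classical $\psl_2(\cc)$-invariant, so the ``only if'' direction is immediate: if two collections lie in the same orbit, i.e. differ by some $h\in\psl_2(\cc)$ applied entrywise, then their cross-ratios coincide.

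For the nontrivial ``if'' direction I would argue as follows. First I would choose a distinguished ordered triple of indices from $\{(1,0),(2,0),(1,\infty),(2,\infty)\}$ at which $q_0$ takes three distinct values; by continuity there is a neighborhood $\mcv(q_0)$ on which every collection still takes three distinct values at this same distinguished triple. For each collection $q\in\mcv$ let $h_q\in\psl_2(\cc)$ be the unique Möbius transformation sending the three distinguished points of $q$ to $(0,1,\infty)$ (sharp $3$-transitivity). Applying $h_q$ normalizes the distinguished triple to a fixed standard position, so the orbit of $q$ is determined entirely by the image under $h_q$ of the remaining fourth point. That image, in turn, is determined by the cross-ratio of the four points, because once three of the points are sent to $0,1,\infty$ the cross-ratio is (up to a fixed Möbius relation) precisely the coordinate of the fourth. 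Hence two collections in $\mcv$ with equal cross-ratio are sent by their respective $h_q$ to the \emph{same} normalized collection, and therefore differ by $h_{q'}^{-1}\circ h_q\in\psl_2(\cc)$, placing them in one orbit.

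The one genuine subtlety, which is why the statement is only local (a neighborhood $\mcv$ rather than all of $\oc^4$), is bookkeeping of \emph{which} three of the four points are distinct. Globally the subset where a chosen triple degenerates is nonempty, and the expression of $\mcr$ in terms of a normalizing triple depends on the choice of triple; passing to a neighborhood lets me fix one triple at which $q_0$ is already nondegenerate and keep that choice valid throughout $\mcv$ by an open-condition argument. I expect this labelling/nondegeneracy issue to be the main obstacle, and it is handled purely by shrinking the neighborhood. A minor point to verify is that the four points need not be pairwise distinct for $\mcr$ to be well defined and informative; having three distinct suffices, since the normalizing $h_q$ only needs three points and the cross-ratio then reads off the position of the fourth whether or not it coincides with one of the three. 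With these observations the proof reduces to the standard linear-algebra fact about cross-ratios and sharp $3$-transitivity, and no hard analysis is required.
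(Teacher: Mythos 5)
Your proof is correct and follows essentially the same route as the paper: shrink to a neighborhood where a fixed triple of entries stays pairwise distinct, use sharp $3$-transitivity of $\psl_2(\cc)$ to normalize that triple to $0,1,\infty$, and observe that the cross-ratio then determines the fourth point. The paper's own argument is exactly this three-line normalization, so no further comparison is needed.
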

\begin{proof} Fix a neighborhood $\mathcal V$ such that 
 3 distinct points in 
 $q_0$ 
 remain distinct in each collection from $\mathcal V$. 
Let us normalize them by the $\psl_2(\cc)$ action in such a way that these points be 
$0$, $1$, $\infty$: such normalization is unique. 
Then the fourth point is uniquely determined by the cross-ratio. 
\end{proof}

\begin{proof} {\bf of Theorem \ref{crossub}.} 
A system $\mcl\in\bbj$  is uniquely defined 
by the formal invariants $\ell$, $\tau$ and the  monodromy--Stokes data (Theorem 
\ref{gaeq} and Proposition \ref{prig}). Let now $M(\mcl)=Id$. Then the latter data are 
reduced to the $\psl_2(\cc)$-orbit of 
 the collection $(q_{10},q_{20},q_{1\infty},q_{2\infty})$. The latter collection 
  consists of at least three distinct points (Proposition 
\ref{prodis}). Therefore,  {\it each system $\mcl\in\Sigma_\ell$ 
has a neighborhood 
$\mathcal W=\mathcal W(\mcl)\subset\Sigma_\ell$ 
such that two systems in $\mathcal W$ have the same monodromy--Stokes data, 
if and only if the corresponding cross-ratios $\mcr$ are equal.} This follows from 
  Proposition 
\ref{procross} and the above discussion. One has $(\mcr,\tau)(\mcl)\in\rp^1\times\rr$, by 
Propositions \ref{protriv} and \ref{pstr}. This together with the above 
statement on unique local determination by $\mcr$
 imply that the mapping 
$\Pi:(\mcr,\tau):\Sigma\to\rp^1\times\rr$ is  locally injective.

\begin{proposition} \label{pinve} 
For every $\ell\in\zz$ and  $\mcl_0\in\Sigma_\ell$, 
set $T_0=(\mcr_0,\tau_0):=(\mcr,\tau)(\mcl_0)$, there exist neighborghoods 
$V_1=V_1(\mcl_0)\subset\bbj$, $V_2=V_2(T_0)\subset\rp^1\times\rr$ and 
an analytic inverse $g=(\mcr,\tau)^{-1}:V_2\to V_1$ with $g(V_2)=V_1\cap\Sigma_\ell$. 
\end{proposition}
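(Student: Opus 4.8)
The plan is to construct the inverse map $g$ by solving a Riemann--Hilbert type realization problem: for each $(x,\tau)$ near $T_0$ I will build, by a Birkhoff--Bolibruch gluing that depends analytically on $(x,\tau)$, a system in $\bbj$ with trivial monodromy, the formal normal forms prescribed by $\ell$ and $\tau$, and transition cross-ratio $\mcr=x$. Local injectivity of the map $(\mcr,\tau)$ on $\Sigma$ has just been established before the proposition, so once such a system is produced analytically in $(x,\tau)$ it is automatically the unique point of $\Sigma_\ell$ over $(x,\tau)$, and $g$ is the sought analytic inverse, with $g(V_2)=V_1\cap\Sigma_\ell$.

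I first isolate the rigid part of the data. For $\mcl\in\Sigma$ the Stokes matrices and formal monodromies at $0$ and $\infty$ are trivial (Proposition \ref{protriv}), so the germ at each singular point is analytically equivalent to its diagonal formal normal form, and these normal forms depend only on $\ell$ and $\tau$: at $0$ the normal form is $Y'=(\diag(-\tfrac\tau2,0)z^{-2}+\diag(-\ell,0)z^{-1})Y$ and at $\infty$ it is $Y'=(\diag(-\tfrac\tau2,0)+\diag(-\ell,0)z^{-1})Y$. Their fundamental matrix solutions
\[
W_0(z)=\diag\!\big(z^{-\ell}e^{\tau/(2z)},\,1\big),\qquad W_\infty(z)=\diag\!\big(z^{-\ell}e^{-\tau z/2},\,1\big)
\]
are single-valued on $\cc^*$ precisely because $\ell\in\zz$. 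By Theorem \ref{gaeq} the gauge class of a system in $\Sigma$ is then determined by the $\psl_2(\cc)$-orbit of the collection $(q_{10},q_{20},q_{1\infty},q_{2\infty})$, hence, by Propositions \ref{prodis} and \ref{procross}, locally by the single number $\mcr$.

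Now I carry out the gluing. Cover $\oc$ by $U_0=D_2$ and $U_\infty=\oc\setminus\overline D_{1/2}$, overlapping in the annulus $A=\{\tfrac12<|z|<2\}$, and on each piece take the trivial rank-two bundle equipped with the corresponding diagonal normal-form connection above. Choose a constant matrix $Q=Q(x)\in\gl_2(\cc)$, analytic in $x$ and real for real $x$, whose columns project to points $q_{1\infty},q_{2\infty}\in\cp^1$ forming cross-ratio $x$ with the images $q_{10},q_{20}$ of the $W_0$-frame (possible since $q_{10},q_{20}$ are the standard projective points, as in the proof of Theorem \ref{gaeq}). Glue $U_0$ and $U_\infty$ along $A$ by the cocycle $\Gamma_{x,\tau}(z)=W_\infty(z)\,Q(x)\,W_0(z)^{-1}$, which is holomorphic and invertible on $A$ and analytic in $(x,\tau)$; the two connections then agree on $A$ and descend to a meromorphic connection on the resulting bundle $E_{x,\tau}\to\oc$, with poles of order two at $0,\infty$, the prescribed normal forms, trivial monodromy, and transition cross-ratio $x$. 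At $T_0$ the bundle $E_{x_0,\tau_0}$ is the underlying bundle of the global system $\mcl_0\in\bbj\subset\mcho$, hence holomorphically trivial, so by the classical theorem that a holomorphic bundle on $\oc$ close to a trivial one is trivial \cite[appendix 3, lemma 1, theorem 2]{Bol18}, \cite[theorem 2.3]{rohrl}, \cite{grauert}, the bundle $E_{x,\tau}$ stays trivial near $T_0$; in an analytically chosen global trivialization the connection becomes a global meromorphic system $\mcl'_{x,\tau}\in\mcho$ with the prescribed normal forms and $\mcr=x$, analytic in $(x,\tau)$. To land it in $\bbj$ I use a symmetry argument: since $Q(x)$ is real, $\mcl'_{x,\tau}$ is invariant up to gauge under $\wh\sigma\colon(Y,z)\mapsto(\overline Y,\bar z)$, the conjugating gauge is unique by rigidity (Proposition \ref{prig}), hence $\mcl'_{x,\tau}$ is gauge-equivalent to a real system, and diagonalizing the main term at $\infty$ together with a real diagonal gauge produces $R_{12}=-R_{21}$ with $R_{21}>0$ (nonvanishing of $R_{21}$ being guaranteed by distinctness of the $q_{kp}$, Proposition \ref{prodis}), giving $\mcl=g(x,\tau)\in\Sigma_\ell$ with $(\mcr,\tau)(\mcl)=(x,\tau)$.

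The main obstacle is this last step: ensuring that the trivialization of $E_{x,\tau}$, and hence the system $\mcl'_{x,\tau}$, can be chosen to depend analytically on the parameters $(x,\tau)$, and that the subsequent symmetry normalization genuinely places the system in $\bbj$ rather than merely in $\mcho$ up to gauge. This is the Bolibruch-type heart of the argument; the Painlev\'e/isomonodromy input plays no role here, entering only through the transversality of $Jos$ established earlier.
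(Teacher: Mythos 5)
Your construction is essentially the paper's proof: the same gluing of the two diagonal normal-form connections over $D_2$ and $\oc\setminus\overline D_{\frac12}$ by a cocycle $W^\infty(z)\mathbf L_1(W^0(z))^{-1}$ realizing the prescribed cross-ratio, the same appeal to the theorem that a holomorphic bundle close to a trivial one is trivial (the paper secures the analytic dependence of the trivialization on $(x,\tau)$ — the obstacle you flag — by trivializing the total bundle $\wh E$ over $\oc\times\wh V_2$ in one stroke), and the same complex-conjugation symmetry argument to land in $\bbj$. One small correction: the uniqueness of the gauge conjugating $\mcl'_{x,\tau}$ to its $\wh\sigma$-image does not follow from Proposition \ref{prig} (which concerns systems already normalized to lie in $\bjr$), but from the fact that this gauge must fix the at least three distinct points $q_{kp}$ in the fiber over $z=1$ and is therefore scalar (Proposition \ref{prodis}), which is exactly the argument the paper uses.
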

\begin{proof} We have to realize each $T=(\mcr,\tau)$ close to $T_0$ 
by a linear system from $\Sigma$. To  this end, we first realize $T$ by 
an abstract two-dimensional holomorphic  vector bundle over $\oc$ 
with connection. Namely, we take two linear systems defined by 
the given formal normal forms at $0$ and $\infty$ respectively: 
\begin{equation}\mch_0: \ \ Y'=\left(\frac1{z^2}\diag(-\frac{\tau}2,0)+\frac1z\diag(-\ell,0)\right)Y;\label{normy0}
\end{equation}
\begin{equation}\mch_\infty: \ \ Y'=\left(\diag(-\frac{\tau}2,0)+\frac1z\diag(-\ell,0)\right)Y.
\label{normyi}\end{equation}
We consider the following trivial bundles with connections over discs covering
 $\oc$: the bundle $F_0:=\cc^2_{Y^0}\times D_2$ equipped with the system $\mch_0$;  
 the bundle $F_{\infty}:=\cc^2_{Y^{\infty}}\times(\oc\setminus \overline D_{\frac12})$ 
equipped with the system $\mch_{\infty}$. The  bundle realizing $T$ is 
obtained by the following gluing $F_0$ and $F_{\infty}$ over the annulus $\mca:=D_2\setminus\overline D_{\frac12}$. 
 Let $v_1=(1,0)$, $v_2=(0,1)$ denote the standard basis in $\cc^2$. For every $\mcr$ close enough to $\mcr_0$ 
fix a linear isomorphism $\mathbf L_1:\cc^2\to\cc^2$ such  that the tautological projection to $\cp^1=\oc$ 
 of the collection of vectors $\mathbf L_1v_1$, $\mathbf L_1v_2$, $v_1$, $v_2$ has  the given cross-ratio $\mcr$ 
 and $\mathbf L_1$ depends analytically on $\mcr$. 
  Let  $W^0(z)=\diag(e^{\frac{\tau}{2}(\frac1z-1)}z^{-\ell}, 1)$, $W^{\infty}(z)=\diag(e^{-\frac\tau2(z-1)}z^{-\ell}, 1)$ 
  be the standard fundamental matrix solutions of systems $\mch_0$, $\mch_{\infty}$ normalized to be 
  equal to the identity at $z=1$. Set 
  \begin{equation}\mathbf L_z=\mathbf L_{z,\mcr,\tau}=W^{\infty}(z)\mathbf L_1(W^0(z))^{-1}.\label{lzz}\end{equation}
  Let $E=E(\mcr,\tau)$ denote the  disjoint union $F_0\sqcup F_{\infty}$ pasted by the following identification: for every $z\in\mca$ the point 
  $(Y^0,z)\in F_0$ is equivalent to $(Y^{\infty}, z)\in F_{\infty}$, if $Y^{\infty}=\mathbf L_zY^0$. The space $E$ 
  inherits a structure of holomorphic vector bundle over $\oc$ with a well-defined meromorphic connection induced by 
  the formal normal forms $\mch_0$, $\mch_{\infty}$ in the charts $F_0$ and $F_{\infty}$ (which paste 
  together by $\mathbf L_z$ to the same connection  over $\mca$). 
  This connection has two Poincar\'e rank 1 irregular nonresonant singular points at $0$ and $\infty$ 
  where it is analytically equivalent to  $\mch_0$ and $\mch_{\infty}$. 
  Note that the monodromy--Stokes data and the transition cross-ratio are well-defined for bundles with 
  connections as well, provided that the singularities at $0$ and at $\infty$ are irregular nonresonant of Poincar\'e rank 1. 
  The transition cross-ratio of the bundle $E(\mcr,\tau)$ coincides with $\mcr$, by construction. 
  
  Let now $\wh V_2$ be a small ball centered at $T_0=(\mcr_0,\tau_0)$ in the complex product 
  $\oc_\mcr\times\cc_\tau$ 
  (in its local chart centered at $T_0$). 
  Set $\wh E:=\sqcup_{(\mcr,\tau)\in \wh V_2}E(\mcr,\tau)$.   
  This is a holomorphic  vector bundle  over the product $\oc\times\wh V_2$. 
    
  {\bf Claim 1.} {\it The bundle $\wh E$ is trivial, if the ball $\wh V_2$ is small enough.}
  
  \begin{proof} The bundle $E(T_0)$ is trivial, since it has the same monodromy--Stokes data and formal normal 
  forms, as the system $\mcl_0$ (which is a connection on trivial bundle), 
  and by Theorem \ref{gaeq} (which remains valid for bundles 
  with connections). It is glued from two trivial bundles over the domains 
  $D_2$ and $\oc\setminus\overline D_{\frac12}$ by 
  the transition matrix function $\mathbf L_{z,T_0}$. Triviality implies that there exist (and unique) 
  $\gl_2(\cc)$-valued matrix functions $U_0(z)$ and $U_{\infty}(z)$ holomorphic on  $D_2$ and 
  $\oc\setminus\overline D_{\frac12}$ respectively 
  such that 
  $U_0(z)=U_\infty(z)\mathbf L_{z,T_0}$ 
on $\mca$ and $U_\infty(\infty)=Id$. They are holomorphic on  bigger domains $D_3\Supset\overline D_2$, 
  $\oc\setminus\overline D_{\frac13}\Supset\oc\setminus D_{\frac12}$, by the above statement applied 
  to the latter bigger domains and holomorphicity of the transition 
  matrix function $\mathbf L_{z,T_0}$  on $\cc^*$. 
Consider the following new trivializations of the trivial bundles $\cc^2_{Y^0}\times (D_2\times \wh V_2)$ and 
$\cc^2_{Y^{\infty}}\times((\oc\setminus\overline D_{\frac12})\times \wh V_2)$:
  $$\wt Y^0:=U_0(z) Y^0, \ \wt Y^{\infty}:=U_\infty(z)Y^{\infty}.$$
 In the new coordinates $\wt Y^0$ and $\wt Y^{\infty}$ the  fiber identifications gluing $\wh E$ of the above 
 trivial bundles over points 
 $(z,T)\in\mca\times\wh V_2$ become the following: 
 a point $(\wt Y^0,z,T)$ is identified with $(\wt Y^{\infty}, z,T)$, if $M(z,T)\wt Y^0=\wt Y^{\infty}$, where 
  $$M(z,T)=U_\infty(z)\mathbf L_{z,T} U_0^{-1}(z).$$
  Therefore, $\wh E$  can be viewed as the bundle 
  glued from two trivial bundles on $D_2\times\wh V_2$ and $(\oc\setminus\overline D_{\frac12})\times\wh V_2$ 
  by the transition matrix function $M(z,T)$ holomorphic on $\overline\mca\times\wh V_2$. 
  One has $M(z,T_0)=Id$, by construction. Choosing $\wh V_2$ small enough, one can make 
  $M(z,T)$ continuous on $\overline{\mca\times\wh V_2}$ and make 
  the $C^0$-norm $||M(z,T)-Id||$ on $\overline{\mca\times\wh V_2}$  arbitrarily small. Therefore, the bundle 
  $\wh E$ glued by $M(z,T)$ is  "close to  trivial", and hence, is trivial, whenever
   $\wh V_2$ is small enough, by \cite[appendix 3, lemma 1]{Bol18}. (Formally speaking, this lemma should 
   be applied after rescaling the coordinates in the  chart containing $\wh V_2$ in the parameter space 
   to make  $\wh V_2$ the  unit ball.) The claim is proved.
   \end{proof}

Let $V_2\subset\wh V_2$ be the subset of real points of the complex ball $\wh V_2$, 
which is a real planar disk. The claim implies that the family $E(T)|_{T\in V_2}$ yields a 
family of connections on the trivial bundle $\cc^2\times\oc$ depending analytically on the parameter $T\in V_2$. 
They should be linear systems in $\mcho$, 
since the singularities at $0$ and $\infty$ are irregular non-resonant of Poincar\'e rank 1. 
This  yields an analytic map $g:V_2\to V_1$ from
 a  neighborhood $V_2=V_2(T_0)\subset\rp^1\times\rr$ to a domain  
 $V_1\subset\mcho$ such that for every 
 $(\mcr,\tau)\in V_2$ the system $g(\mcr,\tau)$ has trivial monodromy, transition cross-ratio equal to $\mcr$, and is analytically equivalent to  formal normal forms (\ref{normy0}), 
 (\ref{normyi}) near $0$ and $\infty$ respectively. 
 Without loss of generality we consider that $g(T_0)=\mcl_0$, 
 applying a gauge transformation independent on $(\mcr,\tau)$. 
 For every system in $g(V_2)$ the 
 corresponding points $q_{kp}\in\oc_{\Phi}$ from the monodromy--Stokes data given by the base point $z_0=1$ 
 and trivial paths $\alpha_0\equiv\alpha_\infty\equiv1$ lie on the same circle, since their cross-ratio 
 $\mcr$ lies in $\rr\cup\{\infty\}$.
 The latter circle is unique, since there are at least three distinct points $q_{kp}$: this is true for $T=T_0$ 
 (Proposition \ref{prodis}) 
 and remains valid for all $T\in V_2$, provided that $\wh V_2$ is chosen small enough . 
  We normalize the systems in $g(V_2)$ so that the latter circle is the real line, 
  applying an analytic family of gauge transformations depending on $(\mcr,\tau)$. 
  
  {\bf Claim 2.} {\it The systems in $g(V_2)$ are defined by real matrices.}
  
  \begin{proof} The transformation 
  $\wh\sigma:(Y_1,Y_2; z)\mapsto(\overline Y_1,\overline Y_2; 
  \bar z)$ applied to systems in $g(V_2)$ preserves formal normal forms and monodromy--Stokes data, by construction and the above normalization. 
  Therefore, it sends each system in $g(V_2)$ to a system 
  gauge equivalent to it, and the collections of points $q_{kp}$ in the  
  fiber $\oc\times\{1\}$ are the same for both systems. 
  Their  gauge equivalence restricted to the fiber $\cc^2\times\{1\}$ 
  should fix the lines corresponding to $q_{kp}$. Hence, it is identity up to scalar factor, since the number of distinct 
  points $q_{kp}$ is at least three. 
  Therefore,  the systems in question coincide. 
  Thus,  $\wh\sigma$ fixes each system in $g(V_2)$, which means that its matrices are 
  real.
  \end{proof} 
  
  The  main term matrix $N$  at $\infty$ 
  of each system in $g(V_2)$ is real, and its eigenvalues are
   $-\frac\tau2$, $0$. It is close to $\diag(-\frac\tau2,0)$, if 
  $V_2$ is small enough. 
  Therefore, it is conjugated to the diagonal matrix $\diag(-\frac\tau2,0)$ 
  by a real matrix $H$ close to the identity. 
  The matrix $H$ is unique up to left multiplication by a real diagonal matrix. It can be 
  chosen in a unique way so that the  gauge transformation $Y=H^{-1}\wt Y$ makes 
  $R_{21}=-R_{12}>0$. This yields a family of gauge transformations 
  sending systems in $g(V_2)$ to systems lying in $\bbj$, and hence,  in 
  $\Sigma_\ell$ (triviality of monodromy). From now on, the mapping $V_2\to\bbj$ thus 
  constructed will be denoted by $g$. By construction, its image lies in  $\Sigma_\ell$,  
  and  for every $(\mcr',\tau')\in V_2$ the transition 
  cross-ratio $\mcr$ and the formal invariant  $\tau$ of the system 
  $g(\mcr',\tau')$ are respectively $\mcr'$ and $\tau'$. Conversely, 
  every system $\mcl\in\Sigma_\ell$ close enough to $\mcl_0$ has invariants $(\mcr,\tau)$ 
   lying in $V_2$, and hence $\mcl=g(\mcr,\tau)$, by construction, Theorem \ref{gaeq} and Proposition \ref{prig}.  This proves 
   Proposition \ref{pinve}.
\end{proof}

 The mapping $g$  is an immersion, since the projection  
 $\mcl\mapsto(\mcr,\tau)(\mcl)$ is real-analytic and $(\mcr,\tau)\circ g=Id$. This together with Proposition \ref{pinve} 
implies that $\Sigma_\ell$ is a 2-dimensional submanifold, and $(\mcr,\tau):\Sigma_\ell\to\rp^1\times\rr$ 
is a local diffeomorphism. Hence, the projection $\mcr:\Sigma_\ell\to\rp^1$ (which is constant along isomonodromic 
leaves) 
is a submersion.  Theorem \ref{crossub} is proved. 
\end{proof}
  
\subsection{The manifold of constrictions. Proof of Lemma \ref{constran}}
The space of systems (\ref{tty}) with given $\ell$ 
is identified with $(\rr_+)^2_{\mu,\eta}$, $\eta=\omega^{-1}$. They are represented as systems in $Jos\subset\bjr$ with 
parameters $\tau=2\mu$, $\ell$, $R_{21}=\frac{\eta}2$.  
The  constriction subset $Constr_\ell\subset(\rr_+)^2_{\mu,\eta}$ is thus identified with the  
intersection  $Jos\cap\Sigma_\ell$, by  Proposition \ref{trivmc}. The latter 
intersection is transversal, since $\Sigma_\ell$ is a union of leaves of the isomonodromic foliation $\bbf$ and $Jos$ is transversal  to $\bbf$ (Lemma \ref{lcross}). Therefore, $Constr_\ell$ is a one-dimensional 
submanifold  transversal to the isomonodromic foliation on $\Sigma_\ell$.  Hence,  
$\mcr: Constr_\ell\to\rp^1$ is a local 
analytic
diffeomorphism (submersivity of the projection 
$\mcr:\Sigma_\ell\to\rp^1$, see Theorem \ref{crossub}). It remains to show that $\mcr\neq0,1,\infty$ on $Constr_\ell$. 
 \begin{proposition} \label{cons4} For every constriction $(B,A;\omega)$ the collection 
 of points $q_{kp}$ from the monodromy--Stokes data of 
 the corresponding linear system (\ref{tty}) consists of four distinct points. Or equivalently, 
 $\mcr\neq0,1,\infty$.
 \end{proposition}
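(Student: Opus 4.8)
The plan is to read the three excluded values of $\mcr$ directly off the transition matrix $Q$ comparing the canonical bases at $0$ and at $\infty$, and then to convert the two genuinely dangerous cases $\mcr\in\{0,\infty\}$ into a reducibility statement that is known to be incompatible with a constriction. First I normalize coordinates on the fiber $\{z=1\}$ so that the canonical basis $(f_{10},f_{20})$ at $0$ becomes the standard one; then $q_{10}=0$ and $q_{20}=\infty$ in $\oc_\Phi$, while $q_{1\infty}$ and $q_{2\infty}$ are the $\Phi$-projections of the columns of $Q$, namely $q_{1\infty}=Q_{21}/Q_{11}$ and $q_{2\infty}=Q_{22}/Q_{12}$. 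Substituting into (\ref{transcr}) gives $\mcr=\frac{Q_{12}Q_{21}}{Q_{11}Q_{22}}$ (a computation manifestly invariant under rescaling the basic solutions). Hence $\mcr=1$ would force $\det Q=Q_{11}Q_{22}-Q_{12}Q_{21}=0$, impossible since $Q$ is invertible; this disposes of $\mcr=1$, in agreement with $q_{10}\neq q_{20}$, $q_{1\infty}\neq q_{2\infty}$ from Proposition \ref{prodis}.

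It remains to exclude $\mcr=0$ and $\mcr=\infty$. By the same formula these occur exactly when $Q$ has a vanishing entry, i.e. when $Q$ is triangular, equivalently when $q_{k0}=q_{k'\infty}$ for some indices $k,k'$. Since the monodromy is trivial (Proposition \ref{protriv}), every solution of (\ref{tty}) is single-valued on $\cc^*$, so such a coincidence means that one and the same single-valued solution is sectorially canonical at both singular points. Its $\Phi$-projection agrees on $\cc^*$ with the two projectivized canonical solutions $g_{k,0}\equiv g_{k',\infty}$, which are meromorphic near $0$ and near $\infty$ respectively (Remark \ref{riccun} and the proof of Proposition \ref{prodis}); by uniqueness of the Riccati leaf through a point of $\cc^*$ they coincide on all of $\cc^*$, so together they furnish a single \emph{global} meromorphic (rational) solution $g$ of the Riccati equation (\ref{ric}) on $\oc$. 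In other words, the coincidence is equivalent to reducibility of (\ref{tty}): the existence of a rational invariant line subbundle.

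I would then contradict this using the constriction hypothesis. Note that the intersection–theoretic argument of Proposition \ref{prodis} does not apply here: only one of the four projectivized solutions extends meromorphically to all of $\oc$ (the remaining two are dominant, hence genuinely oscillating, at the opposite singular point), and a residue count on $\oc$ shows that reducibility by itself imposes no obstruction. The extra input is the specific Josephson structure. After the exponential gauge $E=e^{\mu z}v$ (resp. its dual gauge on the first component) that turns (\ref{tty}) into the double confluent Heun equation, the rational invariant subbundle — whose formal exponents at $0$ and $\infty$ are the integer residues forced by triviality of the formal monodromy (Proposition \ref{protriv}) together with the eigenvalues $\{-\mu,0\}$ of the main–term matrices $K$ and $N$ — produces a \emph{polynomial} solution of the conjugate Heun equation (\ref{heun2}), i.e. the parameter point is a generalized simple intersection in the sense of \cite{bt0, bg2}. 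But by \cite[theorems 3.3, 3.10]{bg} no constriction corresponds to a Heun equation (\ref{heun2}) possessing a polynomial solution. This contradiction excludes $\mcr\in\{0,\infty\}$ and, with the previous paragraph, proves that the four points are distinct and $\mcr\neq 0,1,\infty$.

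The delicate step is the last one: matching the rational invariant subbundle to a genuinely polynomial — not merely rational or entire — solution of the \emph{correct} Heun equation, with the right sign of $\ell$ and the right exponential gauge, and verifying that the cited non-existence result applies to the solution so produced. The bookkeeping lies in tracking which of the two eigendirections $\{0,\infty\}$ at each singular point the invariant subbundle connects, and hence whether one lands in (\ref{heun}) or in its conjugate (\ref{heun2}); here the integrality $b_k\in\zz$ of the formal exponents and the explicit eigenvalues of $K$ and $N$ are what guarantee that the resulting Heun solution truncates to a polynomial. This identification is the main obstacle, and it is precisely the place where one must invoke the detailed spectral analysis of the Heun family from \cite{bg, bg2, bt0}.
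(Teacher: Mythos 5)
Your reduction of the three excluded values to properties of the transition matrix $Q$ is correct: with the canonical basis at $0$ normalized to the standard one, $\mcr=\frac{Q_{12}Q_{21}}{Q_{11}Q_{22}}$, so $\mcr=1$ is excluded by $\det Q\neq0$, while $\mcr=0$ corresponds to a same-index coincidence $q_{k0}=q_{k\infty}$ and $\mcr=\infty$ to a crossed coincidence $q_{k0}=q_{(3-k)\infty}$; and at a trivial-monodromy point (Proposition \ref{trivmc}) any such coincidence is indeed equivalent to a rational solution of the Riccati equation, i.e.\ to an invariant line subbundle. The genuine gap is the step you yourself flag as ``the main obstacle'': the assertion that such a subbundle always produces a polynomial solution of the conjugate Heun equation (\ref{heun2}). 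This is never proved in your text, and as a blanket statement it is false. Since the Stokes matrices are trivial (Proposition \ref{protriv}), the flat section of the subbundle has, near each singular point, one of the exact forms $e^{\mu/z}z^{-\ell}\cdot(\text{hol.\ nonvanishing})$ or $(\text{hol.\ nonvanishing})$ at $0$, and $e^{-\mu z}z^{-\ell}\cdot(\text{hol.\ nonvanishing})$ or $(\text{hol.\ nonvanishing})$ at $\infty$. Checking the four combinations, three of them collapse by a Liouville argument (after stripping the exponential and power factors the flat section becomes a nowhere-vanishing holomorphic $\cc^2$-valued function on $\oc$, or a bounded entire one vanishing at $\infty$, hence constant resp.\ zero, incompatible with (\ref{tty}) because $\mcb_{21}=\frac1{2\omega}\neq0$); only one crossed combination (for $\ell\geq1$, joining the $e^{\mu/z}$-direction at $0$ to the non-exponential direction at $\infty$) survives and yields $Y=e^{\mu/z}z^{-\ell}P(z)$ with $P$ a vector polynomial. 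It is only this object that must then be matched, through the gauge $E=e^{\mu z}v$ and the involution exchanging (\ref{heun}) with (\ref{heun2}), to a polynomial solution of (\ref{heun2}) before \cite[theorems 3.3, 3.10]{bg} can be invoked. None of this bookkeeping is carried out, so the proof is incomplete exactly where its content lies.

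For comparison, the paper splits the two dangerous cases differently and avoids the unproved bridge. The same-index coincidences ($\mcr=0$) are disposed of by the symmetry $(\Phi,z)\mapsto(\Phi^{-1},z^{-1})$ of the Riccati equation (\ref{ric}), which forces both coincidences $q_{10}=q_{1\infty}$ and $q_{20}=q_{2\infty}$ to hold simultaneously, leaving only two distinct points and contradicting Proposition \ref{prodis}; your observation that Proposition \ref{prodis} ``does not apply'' overlooks this upgrade. The crossed coincidences ($\mcr=\infty$) are exactly the triangularity of the transition matrix between the canonical basis at $0$ and the reversed canonical basis at $\infty$, which the paper rules out by citing \cite[theorem 2.10, statement (2.19)]{g18} --- the reference where the Heun-polynomial analysis you allude to is actually performed. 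To complete your route you would need to (i) treat the $\mcr=0$ coincidences separately, since they do not lead to polynomial solutions of (\ref{heun2}), and (ii) actually establish, or correctly cite, the identification of the one remaining crossed coincidence with a polynomial solution of (\ref{heun2}).
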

 \begin{proof} One has
 $q_{1p}\neq q_{2p}$. Hence, the only a priori possible coincidences are the following. 
 
 Case 1): $q_{k0}=q_{k\infty}$ for some $k$. Then the same equality holds for the 
 other $k$, by symmetry $(\Phi,z)\mapsto(\Phi^{-1},z^{-1})$ of the corresponding 
 Riccati equation (\ref{ric}). Thus, the collection of points $q_{kp}$ consists of 
 two distinct points. This contradicts to Proposition \ref{prodis}. 
 
 Case 2): $q_{k0}=q_{(3-k)\infty}$ for some $k$. This means that the transition matrix 
 between the canonical solution base of system (\ref{tty}) at $0$ and the canonical 
 base at $\infty$ taken in inverse order is a triangular matrix. But this contradicts to 
 \cite[theorem 2.10, statement (2.19)]{g18}. 
 
 Finally none of cases 1), 2) is possible. Proposition \ref{cons4} is proved.
 \end{proof}
 Lemma \ref{constran} follows from Proposition \ref{cons4} and the discussion before it.
  
\subsection{Asymptotics and unboundedness. Proof of Theorem \ref{famcons1}}

The subset $Constr_{\ell}\subset(\rr_+^2)_{\mu,\eta}$ 
is a submanifold that admits a locally diffeomorphic projection $\mcr$ 
to $\rr\setminus\{0,1\}$  (Lemma \ref{constran}). This implies that it has no compact components, since no compact 
component can admit a locally diffeomorphic mapping to $\rr$. 
Therefore, each its component $\mcc$ 
is diffeomorphic to some interval $I=(a,b)$ with coordinate $x:=\mcr$.  This implies 
the first statement of Theorem \ref{famcons1}. 
To prove its second statement, the existence of a sequence $x_n\to c$ with $\eta(C(x_n))\to\infty$ for $c\in\{ a,b\}\setminus\{0\}$, 
$C=\mcr^{-1}$, 
 we will 

-  use the following Klimenko--Romaskevich Bessel asymptotic result \cite{RK} 
to show that boundedness of $\eta$ implies boundedness of $\mu$;

- prove that $(\mu,\eta)(x_n)$ cannot converge to $(0,0)$, by using solution of variational equation to (\ref{jostor}) and studying  local parametrization of the 
 analytic 
 subset 
 in $\rr^2$ containing $Constr_\ell$;
 
-  show that  if $\eta(x_n)\to0$, then $c=\lim x_n=0$.

Let us recall that the boundary of the phase-lock area $L_r$ consists of two curves $\partial L_{r,0}$, $\partial L_{r,\pi}$, corresponding to those parameter values, for which the Poincar\'e map of the corresponding dynamical system (\ref{josvec}) acting on the circle $\{\tau=0\}$ has fixed points $0$ and $\pi$ respectively. These  are graphs 
$$\partial L_{r,\alpha}=\{ B= G_{r,\alpha}(A)\}, \ G_{r,\alpha} \text{ are analytic functions on } \rr; \  \alpha=0,\pi.$$
 \begin{theorem} \cite[theorem 2]{RK}. There exist positive constants 
 $\xi_1$, $\xi_2$, $K_1$, $K_2$, $K_3$ such that the following statement 
 holds. Let $r\in\zz$, $A$, $\omega>0$ be such that 
 \begin{equation}|r\omega|+1\leq \xi_1\sqrt{A\omega}, \ \ A\geq \xi_2\omega.
 \label{preeq}\end{equation}
  Let $J_r$ denote the $r$-th Bessel function. Then 
  \begin{equation}\left|\frac1\omega G_{r,0}(A)-r+\frac1{\omega} 
  J_r\left(-\frac A{\omega}\right)\right|\leq\frac1A\left(K_1+\frac{K_2}{\omega^3}+
  K_3\ln\left(\frac A{\omega}\right)\right),\label{bbo}\end{equation}
  \begin{equation}\left|\frac1\omega G_{r,\pi}(A)-r-\frac1{\omega} 
  J_r\left(-\frac A{\omega}\right)\right|\leq\frac1A\left(K_1+\frac{K_2}{\omega^3}+
  K_3\ln\left(\frac A{\omega}\right)\right).\label{bbo2}\end{equation}
 \end{theorem}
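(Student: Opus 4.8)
The plan is to reduce both estimates (\ref{bbo}) and (\ref{bbo2}) to a single oscillatory-integral estimate for the average of $\cos\theta$ along the boundary solution, to extract the Bessel main term by the Jacobi--Anger generating function, and then to control the nonlinear remainder by exploiting oscillation.

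First I would record an exact integral identity for the boundary curve. Equation (\ref{jostor}) carries the symmetry $(\theta,\tau)\mapsto(-\theta,-\tau)$, so a symmetric periodic solution lying on $\partial L_{r,\alpha}$ meets the cross-section $\{\tau=0\}$ at $\theta=\alpha$ with $\alpha\in\{0,\pi\}$; being a fixed point of the Poincar\'e map of a system with rotation number $r$, it satisfies $\theta_\alpha(2\pi)=\alpha+2\pi r$. Integrating (\ref{jostor}) over one period and using $\int_0^{2\pi}\cos\tau\,d\tau=0$ yields the exact relation
\[
\frac1\omega G_{r,\alpha}(A)=\ell=r-\frac1{2\pi\omega}\int_0^{2\pi}\cos\theta_\alpha(\tau)\,d\tau .
\]
Hence (\ref{bbo}) and (\ref{bbo2}) are equivalent to the single assertion that $\frac1{2\pi}\int_0^{2\pi}\cos\theta_\alpha\,d\tau$ equals $J_r(-A/\omega)$ for $\alpha=0$ and $-J_r(-A/\omega)$ for $\alpha=\pi$, with an error of order $\frac\omega A$ times the bracket on the right of (\ref{bbo}).

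Second, for the leading term I would drop the bounded summand $\omega^{-1}\cos\theta$ from (\ref{jostor}) and freeze $\ell=r$, obtaining the explicit zeroth approximation $\theta_0(\tau)=\alpha+r\tau+2\mu\sin\tau$, where $2\mu=A/\omega$ by (\ref{elmu}). The generating function $e^{iz\sin\tau}=\sum_{n}J_n(z)e^{in\tau}$ then gives $\frac1{2\pi}\int_0^{2\pi}\cos(r\tau+2\mu\sin\tau)\,d\tau=J_{-r}(2\mu)=J_r(-A/\omega)$, and the shift of $\theta_0$ by $\pi$ flips the sign in the case $\alpha=\pi$. These are exactly the Bessel main terms in (\ref{bbo}), (\ref{bbo2}).

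The hard part will be the uniform remainder estimate throughout the regime (\ref{preeq}). Writing $\theta_\alpha=\theta_0+\psi$ with $\psi'=\omega^{-1}\cos\theta_\alpha$ and $\psi(0)=0$, the error splits into the contribution of the deviation $\psi$ and that of the replacement of $\ell$ by $r$ in the phase. The crude bound $|\psi|\le 2\pi/\omega$ is useless here; the essential point is that $\cos\theta_0$ oscillates with phase $r\tau+2\mu\sin\tau$ carrying the large parameter $2\mu=A/\omega$, so that both $\psi=\omega^{-1}\int_0^\tau\cos\theta_\alpha$ and the remainder $\frac1{2\pi}\int_0^{2\pi}(\cos\theta_\alpha-\cos\theta_0)\,d\tau\approx-\frac1{2\pi}\int_0^{2\pi}\psi\,\sin\theta_0\,d\tau$ must be estimated by stationary-phase (van der Corput) bounds together with the decay $|J_r(z)|=O(z^{-1/2})$, a Gronwall bootstrap closing the estimate for $\psi$ itself. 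I expect the logarithm $\ln(A/\omega)$ to arise from the near-resonant stationary points of the phase $r\tau+2\mu\sin\tau$, and the factor $\omega^{-3}$ from the worst-case accumulation of the $\omega^{-1}$ weights over the range of orders permitted by $|r\omega|+1\le\xi_1\sqrt{A\omega}$. Producing this bound uniformly in all admissible triples $(r,A,\omega)$ is the principal obstacle; once it is in place, the leading-order computation above is essentially mechanical.
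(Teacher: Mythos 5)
This theorem is not proved in the paper you were given: it is imported verbatim as Theorem 2 of Klimenko--Romaskevich \cite{RK}, so there is no internal proof to compare against. That said, your opening moves are correct and do reproduce the skeleton of the argument in \cite{RK}: integrating (\ref{jostor}) over one period of the symmetric boundary solution gives the exact identity
\begin{equation*}
\frac1\omega G_{r,\alpha}(A)=r-\frac1{2\pi\omega}\int_0^{2\pi}\cos\theta_\alpha(\tau)\,d\tau,
\end{equation*}
and the Jacobi--Anger computation for the truncated flow $\theta_0(\tau)=\alpha+r\tau+(A/\omega)\sin\tau$ correctly produces the main terms $\mp\frac1\omega J_r(-A/\omega)$, including the sign flip for $\alpha=\pi$.

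The genuine gap is exactly where you flag it: the uniform remainder estimate \emph{is} the theorem, and you have not supplied it. Everything preceding it is a two-line computation; the specific bracket $\frac1A\left(K_1+K_2\omega^{-3}+K_3\ln(A/\omega)\right)$, valid uniformly over all triples satisfying (\ref{preeq}) including $\omega\to0$, is what distinguishes the statement from a formal expansion. Concretely: (i) your deviation satisfies $\psi'=\omega^{-1}\cos\theta_\alpha+(\ell-r)$, not $\psi'=\omega^{-1}\cos\theta_\alpha$; the correction $\ell-r$ is itself of size $\sim(A\omega)^{-1/2}$, which under (\ref{preeq}) is only $O(\xi_1)$ --- bounded, not negligible --- so its contribution to the phase must be fed back into the estimate, and the required smallness of $\xi_1$ is never quantified. (ii) The stationary-phase plus Gronwall scheme must be closed by a continuity/bootstrap argument: linearizing $\cos\theta_\alpha-\cos\theta_0\approx-\psi\sin\theta_0$ presupposes $\psi$ small, which is the very thing being estimated, and closing this loop uniformly while the weight $\omega^{-1}$ is unbounded is the real work. (iii) The provenance of the $\ln(A/\omega)$ and $\omega^{-3}$ terms is stated as an expectation rather than derived, so the right-hand sides of (\ref{bbo}) and (\ref{bbo2}) are asserted, not proved. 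As it stands the proposal is a correct reduction plus a plausible plan, not a proof.
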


 \begin{proposition} \label{boundcom} Fix an $\ell\in\zz$. 
 For every $\eta_0>0$  the intersection 
 $$Constr_{\ell,\eta_0}:=Constr_\ell\cap\{0<\eta<\eta_0\}\subset\rr_+\times(0,\eta_0)$$ 
 is a one-dimensional analytic submanifold with infinitely many connected components, 
 and each component is bounded.  
 \end{proposition}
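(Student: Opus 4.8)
The plan is to read off both assertions from the Klimenko--Romaskevich Bessel asymptotics (\ref{bbo})--(\ref{bbo2}), using that $Constr_\ell$ is already known to be a one-dimensional analytic submanifold (Lemma \ref{constran}). First, $Constr_{\ell,\eta_0}$ is itself such a submanifold, being the intersection of $Constr_\ell$ with the open strip $\{0<\eta<\eta_0\}$.

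For boundedness of the components I would argue as follows. A constriction lying in $\La_\ell$ is a point where the two boundary graphs of some phase-lock area $L_r$ cross, i.e.\ $G_{r,0}(A)=G_{r,\pi}(A)=\ell\omega$ with $A=2\mu\omega$, $\omega=\eta^{-1}$. On the axis $\La_\ell$ one has the elementary a priori bound $\rho\in[\ell-\eta,\ell+\eta]$ (compare (\ref{jostor}) with the fields $\tfrac{d\theta}{d\tau}=\ell\pm\eta+2\mu\cos\tau$, whose rotation numbers are $\ell\pm\eta$, and use monotonicity of the rotation number); hence $r=\rho$ satisfies $|r-\ell|<\eta_0$ and ranges over a finite set. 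For $\mu$ large (depending only on $\ell,\eta_0$) condition (\ref{preeq}) then holds uniformly in $\eta\in(0,\eta_0)$ for all these $r$, so (\ref{bbo})--(\ref{bbo2}) apply at the crossing. Adding the two estimates gives $|r-\ell|\le(\text{error})\to 0$, which forces $r=\ell$ once $\mu$ is large; subtracting them gives
$$|J_\ell(2\mu)|\le\phi(\mu):=\frac{K_1+K_2\eta_0^3+K_3\ln(2\mu)}{2\mu}\to0 \ \text{ as } \mu\to\infty.$$
Since at the midpoints between consecutive zeros of $J_\ell$ one has $|J_\ell|$ of order $(2\mu)^{-1/2}\gg\phi(\mu)$, I can pick $M_j\to\infty$ with $2M_j$ such a midpoint, so that $|J_\ell(2M_j)|>\phi(M_j)$ and no constriction of $\La_\ell$ lies on $\{\mu=M_j\}$ for $j$ large. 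These lines are disjoint from $Constr_{\ell,\eta_0}$ and split the strip into bounded pieces; the $\mu$-image of any connected component is a subinterval of $\rr_+$ avoiding all $M_j$, hence bounded, and together with $\eta\in(0,\eta_0)$ this makes each component bounded.

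For infinitely many components I would show that $Constr_{\ell,\eta_0}$ is unbounded and then conclude by contradiction, since finitely many bounded components would force $\mu$ to be bounded on $Constr_{\ell,\eta_0}$. To see unboundedness, fix any $\omega_0>\eta_0^{-1}$ (so $\eta=\omega_0^{-1}<\eta_0$) and invoke the garland structure of $L_\ell(\omega_0)$ (statement~4 of the introduction, from \cite{RK}): it has infinitely many separation points with $A\to+\infty$, each a crossing of $\partial L_{\ell,0}$ and $\partial L_{\ell,\pi}$. By Remark \ref{remint} each such crossing has abscissa $\ell'\omega_0$ with $\ell'\in\zz$, and the Bessel computation above gives $\ell'\to\ell$, so $\ell'=\ell$ for $A$ large; these crossings are therefore constrictions in $Constr_\ell$ with fixed $\eta=\omega_0^{-1}<\eta_0$ and $\mu=A/(2\omega_0)\to\infty$.

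The main obstacle is the reduction to a single Bessel function. A priori a constriction on $\La_\ell$ might be a crossing coming from $L_r$ with $r\neq\ell$, and since every admissible $r$ has the parity of $\ell$, the zeros of the corresponding $J_r$ asymptotically coincide, so a naive gap argument applied to the union of all these Bessel functions fails. The key that unlocks the proof is that, for $\mu$ large, the Bessel estimates together with the rotation-number bound force $r=\ell$, after which the well-separated zeros of the single function $J_\ell$ furnish the barrier lines $\{\mu=M_j\}$.
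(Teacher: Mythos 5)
Your proof is correct, and its skeleton coincides with the paper's: both arguments use the Klimenko--Romaskevich asymptotics to produce an infinite sequence of vertical barrier segments $\{\mu=M_j\}\times(0,\eta_0)$, placed at near-extrema of $|J_\ell|$ where $|J_\ell(2\mu)|\sim c\,\mu^{-1/2}$ dominates the $O(\mu^{-1}\ln\mu)$ error term uniformly in $\eta\in(0,\eta_0)$, so that no constriction can sit on them; connectedness then confines each component between consecutive barriers, and infiniteness follows from the existence of infinitely many constrictions on $\La_\ell$ at one fixed admissible $\omega$.

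The one step where you genuinely diverge is the mechanism for excluding constrictions on the barrier lines, and there the paper's route is slicker than yours. You correctly identify the difficulty that a constriction on $\La_\ell$ could a priori be a neck of $L_r$ with $r\neq\ell$, and you dispose of it by the comparison bound $|\rho-\ell|\le\eta$ together with the sum of the two estimates (\ref{bbo}), (\ref{bbo2}) to force $r=\ell$ for large $\mu$; this is sound but adds machinery. The paper sidesteps the issue entirely: it invokes the asymptotics only for $r=\ell$ at the barrier ordinates $A_k=u_k/\eta$ ($u_k$ a local maximum point of $|J_\ell(-u)|$) and concludes that $\ell\omega$ lies strictly between $G_{\ell,0}(A_k)$ and $G_{\ell,\pi}(A_k)$, i.e.\ the barrier point lies in $\operatorname{Int}(L_\ell)$ --- and a point interior to one phase-lock area cannot lie on the boundary of any level set of $\rho$, hence cannot be a constriction of any $L_r$ whatsoever. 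For infiniteness the paper simply cites \cite{RK} and \cite[theorem 1.2]{4} for the fact that each line $\La_\ell$ carries infinitely many constrictions with $A\to+\infty$ for every fixed $\omega$, whereas you rederive this from the garland structure plus the same Bessel computation (showing the integer abscissa $\ell'\omega_0$ of a high neck of $L_\ell(\omega_0)$ must equal $\ell\omega_0$); your derivation is valid and slightly more self-contained.
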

 \begin{proof}
 Let $u_1<u_2<\dots$ denote the sequence of points of local maxima of the modulus $|J_\ell(-u)|$, which tends to 
 plus
 infinity.  
 
 {\bf Claim.} {\it Fix an $\eta_0>0$ and an $\ell\in\zz$. 
 For every $k\in\nn$ large enough (dependently 
 on $\eta_0$ and $\ell$) the interval $\wh I_k:=\{ \mu=\frac{u_k}2\}\times(0,\eta_0)$ does not intersect the constriction set $Constr_\ell$.}
 
 \begin{proof} In the coordinates $(\mu,\eta)$ inequalities (\ref{preeq}), 
 (\ref{bbo}) and (\ref{bbo2}) 
 can be rewritten for $r=\ell$ respectively as 
 \begin{equation} |\frac{\ell}\eta|+1\leq\frac{\xi_1}\eta\sqrt{2\mu}, \ \mu\geq\frac{\xi_2}2,
 \label{preeqn}\end{equation}
 \begin{equation} \left|\eta G_{\ell,0}(\frac{2\mu}\eta)-\ell+ \eta 
  J_\ell(-2\mu)\right|\leq\frac{\eta}{2\mu}\left(K_1+K_2\eta^3+
  K_3\ln(2\mu)\right),\label{bbon*}\end{equation}
  \begin{equation} \left|\eta G_{\ell,\pi}(\frac{2\mu}\eta)-\ell-\eta 
  J_\ell(-2\mu)\right|\leq\frac{\eta}{2\mu}\left(K_1+K_2\eta^3+
  K_3\ln(2\mu)\right).\label{bbon2*}\end{equation}
  For every $k$ large enough the value $\mu=\frac{u_k}2$ satisfies inequality (\ref{preeqn}) 
  for all $\eta\in(0,\eta_0)$. Substituting $\mu=\frac{u_k}2$ to the right-hand side in (\ref{bbon*}) 
 transforms it to a sequence of functions of $\eta\in(0,\eta_0)$ 
 with uniform asymptotics $\eta(O(\frac1{u_k})+O(\frac{\ln{u_k}}{u_k}))$, as $k\to\infty$. 
 The values $|J_\ell(-u_k)|$ are known to behave asymptotically as $\frac1{\sqrt u_k}$ 
 (up to a known constant factor). 
 Therefore, they dominate the  
 right-hand sides in (\ref{bbon*}) and (\ref{bbon2*}). This together with (\ref{bbon*}),  (\ref{bbon2*}) implies that 
 for every $k$ large enough, 
 set $A_k=\frac{u_k}{\eta}$, 
  $$G_{\ell,0}(A_k)=\ell\omega-J_\ell(-u_k)(1+o(1)), \ G_{\ell,\pi}(A_k)=\ell\omega+J_\ell(-u_k)(1+o(1)),$$
  as $k\to\infty$, uniformly in $\omega>\omega_0=\eta_0^{-1}$. 
  This implies that $\ell\omega$ lies between $G_{\ell,0}(A_k)$ and $G_{\ell,\pi}(A_k)$ for large $k$. 
    Therefore, for every $k$ large enough and every $\omega>\omega_0$ 
    the point $(\ell\omega, A_k;\omega)$ lies in the interior of the phase-lock area $L_\ell$, and hence, is not a constriction. 
    This proves the claim.
 \end{proof}
 
 For every point  $q\in Constr_\ell$ and every $k$ large enough dependently on $q$ 
 the connected component of the point $q$ in $Constr_\ell$ is separated from 
 infinity by the segment $\wh I_k$ from the above claim. This proves boundedness 
 of connected components.  
Infiniteness of number of connected components follows from their boundedness and 
 the fact that for every given $\ell\in\zz$ and $\omega>0$ the 
 vertical line $\La_{\ell}=\{ B=\omega\ell\}$ contains an infinite sequence of constrictions 
 with $A$-ordinates converging to $+\infty$; the latter fact follows from  
 \cite[the discussion after definition 2]{RK} and \cite[theorem 1.2]{4}. 
 This finishes the proof of Proposition \ref{boundcom}. 
 \end{proof}

\begin{lemma} \label{noco0}
 For every $\ell\in\zz$ the subset $Constr_\ell\subset\rr_+^2$ does not accumulate to zero. 
That is, there exists no sequence of constrictions $(B_k,A_k;\omega_k)$ with 
$B_k=\ell\omega_k$ where $\omega_k\to+\infty$ and $\mu_k:=\frac{A_k}{2\omega_k}\to0$, 
as $k\to\infty$. 
\end{lemma}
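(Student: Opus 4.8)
The plan is to analyze the period-$2\pi$ flow map $h^{2\pi}$ of (\ref{jostor}) directly, exploiting that it degenerates to the identity as $\omega\to\infty$ with $\mu\to0$, and to extract the leading obstruction to $h^{2\pi}=\mathrm{Id}$ from the first nontrivial term of its $\eta$-expansion. Assume, for contradiction, a sequence of constrictions $(B_k,A_k;\omega_k)$ with $B_k=\ell\omega_k$, $\omega_k\to+\infty$, $\mu_k\to0$; in the coordinates $(\mu,\eta)$, $\eta=\omega^{-1}$, this means $(\mu_k,\eta_k)\to(0,0)$ with $\eta_k>0$. Let $\theta(\tau;\theta_0,\mu,\eta)$ be the solution of (\ref{jostor}), written as $\frac{d\theta}{d\tau}=\eta\cos\theta+\ell+2\mu\cos\tau$, with $\theta(0)=\theta_0$, and put
$$\Delta(\theta_0;\mu,\eta):=\theta(2\pi;\theta_0,\mu,\eta)-\theta_0-2\pi\ell .$$
Since the flow of an analytic field is analytic in parameters on $[0,2\pi]$, $\Delta$ is real-analytic near $(\mu,\eta)=(0,0)$. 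By Proposition \ref{propoinc} each constriction satisfies $h^{2\pi}=\mathrm{Id}$; as $\theta(2\pi)-\theta_0$ is then a constant integer multiple of $2\pi$ equal to $2\pi\ell+O(\mu)+O(\eta)$, for large $k$ this is equivalent to $\Delta(\,\cdot\,;\mu_k,\eta_k)\equiv0$ in $\theta_0$.

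The analysis rests on two explicit facts. First, at $\eta=0$ equation (\ref{jostor}) integrates to $\theta(\tau)=\theta_0+\ell\tau+2\mu\sin\tau$, so $\Delta(\theta_0;\mu,0)\equiv0$ for all $\theta_0,\mu$; hence $\Delta$ is divisible by $\eta$. Second, solving the variational (linearized) equation along this explicit $\eta=0$ background and using the Jacobi--Anger expansion gives
$$\partial_\eta\Delta|_{\eta=0}=\int_0^{2\pi}\cos(\theta_0+\ell\tau+2\mu\sin\tau)\,d\tau=2\pi(-1)^{\ell}J_\ell(2\mu)\cos\theta_0 .$$
For $\ell=0$ this already settles the lemma: the $\cos\theta_0$-Fourier coefficient $a_1(\mu,\eta)=\frac1\pi\int_0^{2\pi}\Delta\cos\theta_0\,d\theta_0$ vanishes at $\eta=0$ and has $\partial_\eta a_1|_{\eta=0}=2\pi J_0(2\mu)$, so $a_1=\eta\,(2\pi J_0(2\mu)+O(\eta))$ with $J_0(0)=1$; thus $a_1\neq0$ for $(\mu,\eta)$ near the origin with $\eta>0$, contradicting $\Delta\equiv0$.

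For $\ell\neq0$ the leading term is a multiple of $\cos\theta_0$, so I would pass to the $\theta_0$-average $a_0(\mu,\eta):=\frac1{2\pi}\int_0^{2\pi}\Delta\,d\theta_0$, which annihilates it. From $\Delta(\,\cdot\,;\mu,0)\equiv0$ and $\partial_\eta\Delta|_{\eta=0}\propto\cos\theta_0$ one gets $a_0(\mu,0)\equiv0$ and $\partial_\eta a_0|_{\eta=0}\equiv0$, so $a_0=\eta^2 h(\mu,\eta)$ with $h$ analytic and $h(\mu,0)=\frac1{4\pi}\int_0^{2\pi}\partial_\eta^2\Delta|_{\eta=0}\,d\theta_0$. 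Carrying the $\eta$-expansion at $\mu=0$ to second order, writing $\theta=\theta_0+\ell\tau+\eta\,\theta^{(1)}+\eta^2\theta^{(2)}+\cdots$, the variational equation gives $\dot\theta^{(1)}=\cos(\theta_0+\ell\tau)$, i.e. $\theta^{(1)}(\tau)=\frac1\ell(\sin(\theta_0+\ell\tau)-\sin\theta_0)$ with $\theta^{(1)}(2\pi)=0$, and $\dot\theta^{(2)}=-\sin(\theta_0+\ell\tau)\,\theta^{(1)}$, whence $\theta^{(2)}(2\pi)=-\int_0^{2\pi}\sin(\theta_0+\ell\tau)\,\theta^{(1)}\,d\tau=-\frac\pi\ell$. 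At $\mu=0$ the integrand $\partial_\eta^2\Delta|_{\eta=0}=2\theta^{(2)}(2\pi)=-\frac{2\pi}\ell$ is independent of $\theta_0$, so $h(0,0)=-\frac\pi\ell\neq0$. Hence $h$ is nonvanishing near the origin and $a_0=\eta^2 h\neq0$ for $(\mu,\eta)$ near $(0,0)$ with $\eta>0$, forcing $\Delta\not\equiv0$ and contradicting the existence of the sequence $(\mu_k,\eta_k)$; this proves the lemma.

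The step that requires genuine care---the main obstacle---is the passage from the single identity $h^{2\pi}=\mathrm{Id}$ holding degenerately all along $\{\eta=0\}$ to its failure for \emph{all} small $\eta>0$: a priori the higher-order-in-$\eta$ corrections could conspire to restore $\Delta\equiv0$ along some curve $\mu=\mu(\eta)$ accumulating at the origin. The device that excludes this is to project $\Delta$ onto one well-chosen Fourier mode in $\theta_0$ (the constant mode for $\ell\neq0$, the first mode for $\ell=0$) and to show the resulting analytic coefficient factors as a positive power of $\eta$ times a nonvanishing analytic unit; equivalently, the analytic subset of $\rr^2_{(\mu,\eta)}$ on which this coefficient vanishes coincides near the origin with $\{\eta=0\}$, which carries no constrictions. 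Everything then reduces to checking that one explicit Taylor coefficient ($-\pi/\ell$, respectively $2\pi$) is nonzero, which is the short variational computation above; the remaining analyticity and factorization are routine.
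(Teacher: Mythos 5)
Your proof is correct, and it reaches the conclusion of Lemma \ref{noco0} by a genuinely different route than the paper. Both arguments rest on the same perturbative input: the explicit solution $\theta=\theta_0+\ell\tau+2\mu\sin\tau$ at $\eta=0$ and the first and second variational equations in $\eta$, giving $\partial_\eta^2\theta(2\pi)\big|_{\mu=\eta=0}=-\tfrac{2\pi}{\ell}$. But the logic on top of these computations differs. The paper computes the mixed derivatives $\partial^{k+1}\dot\theta/\partial\eta\,\partial\mu^k$ order by order up to $k=\ell$, obtains the two-term expansion $h_{\mu,\eta}(\theta_0)=\theta_0-\tfrac{\pi}{\ell}\eta^2+g(\theta_0)\eta\mu^\ell+\dots$ with $g$ a nonconstant multiple of $\sin\theta_0$ or $\cos\theta_0$, and then invokes curve selection: accumulation of $Constr_\ell$ at the origin would force an irreducible analytic curve germ $\mu=c\eta^\alpha(1+o(1))$ inside the analytic set $\{h_{\mu,\eta}=Id\}$, along which the $\eta^2$ and $\eta\mu^\ell$ terms would have to cancel, contradicting $g\not\equiv const$. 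You instead project the displacement $\Delta$ onto a single Fourier mode in $\theta_0$: the Jacobi--Anger identity gives the closed form $\partial_\eta\Delta|_{\eta=0}=2\pi(-1)^\ell J_\ell(2\mu)\cos\theta_0$ for all $\mu$ at once, so the zeroth mode $a_0$ is divisible by $\eta^2$ with an analytic cofactor that is a unit ($h(0,0)=-\pi/\ell\neq0$), hence $a_0\neq0$ on a full punctured half-neighborhood $\{\eta>0\}$ of the origin. This buys you two simplifications: you never need the real-analytic-set structure of $\{h_{\mu,\eta}=Id\}$ or a curve-selection argument (one scalar obstruction suffices pointwise), and you do not need the non-constancy of $g$, since averaging annihilates the entire first-order term in $\eta$ rather than only its leading $\mu^\ell$ part. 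Your separate treatment of $\ell=0$ via the first Fourier mode and $J_0(0)=1$ is also correct and handles a case the paper's formulas (which divide by $\ell$) do not address directly; what the paper's version yields in exchange is a bit more information about the shape of a putative zero curve (the exponent relation $1+\ell\alpha=2$), which your argument discards.
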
 
For the proof of Lemma \ref{noco0} (given below) let us recall that the first 
equation in system (\ref{josvec}) describing model of Josephson junction 
takes the following form in the  new parameters $\mu$ and $\eta$: 
\begin{equation}\dot\theta:=\frac{d\theta}{d\tau}=\eta\cos\theta+\ell+2\mu\cos\tau, \ 
\ \eta=\omega^{-1}, \ \mu=\frac A{2\omega}.\label{jn}\end{equation}
The constrictions correspond to those values of $(\mu,\eta)\in\rr_+^2$  for which 
the time $2\pi$ flow map 
$$h=h^{2\pi}=h_{\mu,\eta}$$
 of equation (\ref{jn}) acting on the $\theta$-circle $\{\tau=0\}$ 
is identity (Proposition \ref{propoinc}).  For the proof of the lemma it suffices to show that $(0,0)$ is an isolated point 
 in the analytic subset $\{ h_{\mu,\eta}=Id\}\subset\rr^2_{\mu,\eta}$. 
 This is done by using the following formulas for a solution $\theta(\tau)$ 
 of (\ref{jn}) and its derivatives in parameters for $\eta=0$. 
 \begin{proposition} Let $\theta(\tau,\theta_0;\mu,\eta)$ 
denote the  solution of equation (\ref{jn})
 with initial condition $\theta(0)=\theta_0$. One has the following formulas for the 
 solution and its partial derivatives in the parameters $(\mu,\eta)$: 
 \begin{equation}\theta(\tau,\theta_0;\mu,0)=\theta_0+\ell\tau+2\mu\sin\tau, \ \ h_{\mu,0}=Id,
 \label{labsol}\end{equation}
 \begin{equation}\theta(\tau,\theta_0):=\theta(\tau,\theta_0,0,0)=\theta_0+\ell\tau,\label{solao}
 \end{equation}
 \begin{equation} \theta'_\eta=\frac{\partial\theta}{\partial\eta}=\frac1\ell(\sin(\theta_0+\ell\tau)-\sin\theta_0) \text{ at the locus } 
 \{ \mu=\eta=0\},\label{phiu1}\end{equation}
 \begin{equation} 
 \theta'_\mu=2\sin\tau, \ \ \theta^{(k)}_{\mu\dots\mu}=\frac{\partial ^k\theta}{\partial\mu^k}=0 \text{ for } k\geq2 \text{ at  the locus } \{ \eta=0\}.
 \label{phiu}\end{equation}
 The following two formulas 
 hold
  at the locus $\{ \mu=\eta=0\}$: 
 \begin{equation} \frac{\partial^2\theta}{\partial\eta^2}=-
 \frac{\tau}\ell+\frac1{2\ell^2}
 (\sin2(\theta_0+\ell\tau)-\sin2\theta_0)-\frac2{\ell^2}\sin\theta_0(\cos(\theta_0+\ell\tau)-
 \cos\theta_0);\label{d2phi}\end{equation}
 \begin{equation} \frac{\partial^{k+1}\dot\theta}{\partial\eta\partial\mu^k}=2^ks_k(\theta_0+\ell\tau)
 \sin^k\tau, \text{ where } s_k(y)=\begin{cases} (-1)^{\frac k2}\cos y \text{ for even } k\\
 (-1)^{\frac{k+1}2}\sin y \text{ for odd } k.\end{cases}\label{highphi}
 \end{equation}
 Here "dot" is the derivative in $\tau$. 
 \end{proposition}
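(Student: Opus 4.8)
The plan is to derive every formula from the fact that equation (\ref{jn}) is explicitly solvable when $\eta=0$, together with the standard variational equations for the dependence of the solution on parameters. When $\eta=0$ the right-hand side of (\ref{jn}) no longer contains $\theta$, so $\dot\theta=\ell+2\mu\cos\tau$ integrates directly with $\theta(0)=\theta_0$ to give $\theta=\theta_0+\ell\tau+2\mu\sin\tau$, which is (\ref{labsol}). Evaluating at $\tau=2\pi$ gives $\theta(2\pi)=\theta_0+2\pi\ell\equiv\theta_0\pmod{2\pi}$ because $\ell\in\zz$ in our setting, hence $h_{\mu,0}=Id$; and setting $\mu=0$ yields (\ref{solao}). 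Since (\ref{labsol}) is affine in $\mu$, at the locus $\{\eta=0\}$ one reads off $\theta'_\mu=2\sin\tau$ and the vanishing of all $\mu$-derivatives of order $\geq2$, which is (\ref{phiu}).

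For the formulas involving $\eta$ I would differentiate the identity $\dot\theta=F(\tau,\theta;\mu,\eta)$, $F:=\eta\cos\theta+\ell+2\mu\cos\tau$, in the parameters, using $F_\theta=-\eta\sin\theta$ and $F_\eta=\cos\theta$. The sensitivity $\theta'_\eta$ solves the linear variational equation $\frac{d}{d\tau}\theta'_\eta=F_\theta\,\theta'_\eta+F_\eta$ with $\theta'_\eta(0)=0$ (the initial condition $\theta_0$ being parameter-independent). At the locus $\{\mu=\eta=0\}$ we have $F_\theta=0$ and $\theta=\theta_0+\ell\tau$ by (\ref{solao}), so this reduces to $\frac{d}{d\tau}\theta'_\eta=\cos(\theta_0+\ell\tau)$, and a single integration produces (\ref{phiu1}).

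To obtain (\ref{d2phi}) I would differentiate the variational equation for $\theta'_\eta$ once more in $\eta$, getting $\frac{d}{d\tau}\theta''_{\eta\eta}=-2\sin\theta\cdot\theta'_\eta-\eta\cos\theta\cdot(\theta'_\eta)^2-\eta\sin\theta\cdot\theta''_{\eta\eta}$. At $\{\mu=\eta=0\}$ the two terms carrying an explicit factor $\eta$ drop out, leaving $\frac{d}{d\tau}\theta''_{\eta\eta}=-\frac2\ell\sin(\theta_0+\ell\tau)(\sin(\theta_0+\ell\tau)-\sin\theta_0)$ after substituting (\ref{phiu1}) and $\theta=\theta_0+\ell\tau$; integrating from $0$ to $\tau$ with $\theta''_{\eta\eta}(0)=0$, using $\sin^2 x=\tfrac12(1-\cos 2x)$ on the quadratic term, reproduces (\ref{d2phi}). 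For (\ref{highphi}) I would instead differentiate $\dot\theta=F$ in $\eta$ and set $\eta=0$: since $F_\theta$ vanishes there, $\partial_\eta\dot\theta|_{\eta=0}=\cos\theta|_{\eta=0}=\cos(\theta_0+\ell\tau+2\mu\sin\tau)$ by (\ref{labsol}). Applying $\partial_\mu^k$ and setting $\mu=0$ is then a chain-rule computation: with argument $y+a\mu$, $y=\theta_0+\ell\tau$, $a=2\sin\tau$, one gets $a^k\cos^{(k)}(y)$, and identifying $\cos^{(k)}$ with $\pm\cos$ or $\pm\sin$ according to the parity of $k$ gives exactly the coefficients $s_k$ and the factor $2^k\sin^k\tau$ of (\ref{highphi}); the interchange of $\partial_\eta$ and $\partial_\mu^k$ is justified by smoothness of the flow in parameters.

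The computations are all elementary integrations, so I expect no conceptual obstacle. The one step needing genuine care is (\ref{d2phi}): one must assemble the second variational equation correctly — in particular track which inhomogeneous terms survive at $\eta=0$ — and then carry out the trigonometric integration without sign errors. Everything else follows from the explicit $\eta=0$ solution (\ref{labsol}) and the commutativity of the mixed partials $\partial_\eta\partial_\mu^k$.
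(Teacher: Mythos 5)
Your proposal is correct and follows essentially the same route as the paper: explicit integration of the $\eta=0$ equation for (\ref{labsol})--(\ref{phiu}), the first and second variational equations in $\eta$ evaluated at $\mu=\eta=0$ for (\ref{phiu1}) and (\ref{d2phi}), and $k$-fold $\mu$-differentiation of the first variational equation for (\ref{highphi}). Your version merely spells out a few steps (the second variational equation, the chain-rule identification of $\cos^{(k)}$ with $s_k$) that the paper leaves implicit.
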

 \begin{proof} Formulas (\ref{labsol}) and (\ref{solao}) are obvious. 
  The equation in variations for the derivative $\theta'_\eta$ is 
  \begin{equation}\dot\theta'_\eta=\cos(\theta_0+\ell\tau+2\mu\sin\tau)+O(\eta), \ 
  \text{ as } \eta\to0.\label{ouu}
  \end{equation}
  The derivative $\theta'_\eta$ is a solution of (\ref{ouu}) vanishing at $\tau=0$. Therefore, 
  for $\mu=\eta=0$ it  is given by (\ref{phiu1}).
  Formulas (\ref{phiu}) follow immediately by differentiating (\ref{labsol}) in $\mu$. 
  Formula (\ref{highphi}) follows by differentiating (\ref{ouu}) in $\mu$ and taking 
  the value thus obtained at $\mu=\eta=0$. It remains to prove (\ref{d2phi}). 
  Differentiating equation (\ref{jn}) in $\eta$ twice at $\eta=\mu=0$ and substituting 
  (\ref{phiu1}) yields the following differential equation for the derivative $\theta''_{\eta\eta}=\frac{\partial^2\theta}{\partial\eta^2}$: 
  $$\dot\theta''_{\eta\eta}=-2\sin\theta\theta'_\eta=
  -\frac2\ell\sin(\theta_0+\ell\tau)(\sin(\theta_0+\ell\tau)-\sin\theta_0).$$
 Taking the primitive in $\tau$ of the right-hand side that vanishes at $\tau=0$ yields 
 (\ref{d2phi}). The proposition is proved.
 \end{proof}
 \begin{proposition} \label{propoin} Let $\ell\in\nn$. The Taylor expansion in $(\mu,\eta)$ 
 of the time $2\pi$ flow map $h_{\mu,\eta}(\theta_0)$ takes the form 
 \begin{equation} h_{\mu,\eta}(\theta_0)=\theta_0-\frac{\pi}{\ell}\eta^2+g(\theta_0)\eta\mu^\ell+
 o(\eta^2)+o(\eta\mu^\ell), \text{ as } \mu,\eta\to0,\label{tayu}\end{equation}
 where $g(\theta_0)$ is a non-constant function of  $\theta_0$ that is equal to either 
 $\sin\theta_0$, or $\cos\theta_0$ up to non-zero constant factor. 
 \end{proposition}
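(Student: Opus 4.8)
The plan is to read the claimed asymptotics directly off the parameter-derivative formulas (\ref{phiu1}), (\ref{d2phi}) and (\ref{highphi}) of the preceding proposition, organizing the expansion of $h_{\mu,\eta}(\theta_0)=\theta(2\pi,\theta_0;\mu,\eta)$ by powers of $\eta$. Since the flow map is jointly real-analytic in $(\mu,\eta,\theta_0)$ and $h_{\mu,0}=Id$ identically by (\ref{labsol}), the $\eta^0$-part vanishes and one may write, uniformly for $\theta_0\in S^1$ and small $\mu$,
\begin{equation*}
h_{\mu,\eta}(\theta_0)-\theta_0=\eta\,\Psi(\mu,\theta_0)+\frac{\eta^2}{2}\,\Xi(\mu,\theta_0)+o(\eta^2),
\end{equation*}
where $\Psi:=\partial_\eta h|_{\eta=0}$ and $\Xi:=\partial_\eta^2 h|_{\eta=0}$. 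It then suffices to analyze $\Psi(\mu,\theta_0)$ in powers of $\mu$ and to evaluate $\Xi(0,\theta_0)$.

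For the pure $\eta^2$-term I would set $\mu=0$ in $\Xi$ and use (\ref{d2phi}) at $\tau=2\pi$: because $\ell\in\nn$, the oscillatory contributions $\sin 2(\theta_0+\ell\cdot2\pi)-\sin2\theta_0$ and $\cos(\theta_0+\ell\cdot2\pi)-\cos\theta_0$ vanish, leaving $\Xi(0,\theta_0)=-\tfrac{2\pi}{\ell}$, so $\tfrac{\eta^2}{2}\Xi(0,\theta_0)=-\tfrac{\pi}{\ell}\eta^2$. Since $\Xi(\mu,\theta_0)-\Xi(0,\theta_0)=O(\mu)$ by continuity, the full $\eta^2$-contribution is $-\tfrac{\pi}{\ell}\eta^2+o(\eta^2)$, matching the first nontrivial term of (\ref{tayu}).

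The heart of the matter is $\Psi$. Integrating the variational equation (\ref{ouu}) at $\eta=0$ gives $\Psi(\mu,\theta_0)=\int_0^{2\pi}\cos(\theta_0+\ell\tau+2\mu\sin\tau)\,d\tau$, whose $k$-th $\mu$-Taylor coefficient is $\tfrac1{k!}\int_0^{2\pi}2^k s_k(\theta_0+\ell\tau)\sin^k\tau\,d\tau$ by (\ref{highphi}). For $k=0$ this is $\int_0^{2\pi}\cos(\theta_0+\ell\tau)\,d\tau=0$ since $\ell$ is a nonzero integer (this is why there is no pure $\eta$-term and the expansion of $h-\theta_0$ begins at order two). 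For $1\le k<\ell$ it again vanishes, by a non-resonance argument: the Fourier spectrum of $\sin^k\tau$ is supported on frequencies of modulus $\le k<\ell$, which cannot cancel the frequency $\pm\ell$ carried by $s_k(\theta_0+\ell\tau)$, so the integral over the full period is zero. The first surviving coefficient occurs at $k=\ell$, where the extremal frequencies $\pm\ell$ of $\sin^\ell\tau$ resonate with $e^{\mp i\ell\tau}$ inside $s_\ell(\theta_0+\ell\tau)$; a short Fourier (residue) computation then yields $\int_0^{2\pi}2^\ell s_\ell(\theta_0+\ell\tau)\sin^\ell\tau\,d\tau=\pm\,2\pi\cos\theta_0$ with a definite nonzero sign (in both parities of $\ell$ the surviving term is a multiple of $\cos\theta_0$, one of the two forms allowed by the statement). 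Hence $\Psi(\mu,\theta_0)=g(\theta_0)\mu^\ell+O(\mu^{\ell+1})$ with $g(\theta_0)=\pm\tfrac{2\pi}{\ell!}\cos\theta_0$ nonconstant, so that $\eta\Psi=g(\theta_0)\eta\mu^\ell+o(\eta\mu^\ell)$.

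Adding the two contributions reproduces (\ref{tayu}). The one step that is more than bookkeeping is the resonance analysis of $\Psi$: proving that its leading order in $\mu$ is exactly $\ell$ — vanishing of all coefficients $k<\ell$ by orthogonality and non-vanishing at $k=\ell$ through the single resonant Fourier mode — and checking that the resulting $g$ is a nonzero multiple of one trigonometric function of $\theta_0$. This non-constancy of $g$ is precisely the feature to be exploited afterwards, to show that $(0,0)$ is isolated in $\{h_{\mu,\eta}=Id\}$ and hence that $Constr_\ell$ does not accumulate at the origin (Lemma \ref{noco0}).
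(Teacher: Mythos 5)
Your proof is correct and follows essentially the same route as the paper: both organize the expansion via the mixed partials $\frac{\partial^{k+m}\theta}{\partial\mu^k\partial\eta^m}$ from the preceding proposition, extract $-\frac{\pi}{\ell}\eta^2$ from (\ref{d2phi}), and identify the first surviving $(k,1)$-coefficient at $k=\ell$ by a resonance analysis of $2^k s_k(\theta_0+\ell\tau)\sin^k\tau$. Your Fourier-orthogonality computation of $\int_0^{2\pi}2^k s_k(\theta_0+\ell\tau)\sin^k\tau\,d\tau$ is just the paper's observation (that the primitive is $2\pi$-periodic for $k<\ell$ and acquires a secular term $g(\theta_0)\tau$ only at $k=\ell$) phrased in the frequency domain, and your explicit constant $g(\theta_0)=\pm\frac{2\pi}{\ell!}\cos\theta_0$ is consistent with, and slightly sharper than, the paper's statement.
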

  \begin{proof} The Taylor coefficient of the difference $h_{\mu,\eta}(\theta_0)-\theta_0$ 
  at $\mu^k\eta^m$  at  the locus $\mu=\eta=0$ equals  
  $\frac1{k!m!}\frac{\partial^{k+m}\theta}{\partial \mu^k\partial\eta^m}(\tau,\theta_0; 0,0)$ 
  where $\tau=2\pi$. The latter 
  derivative at $\tau=2\pi$ vanishes for $(k,m)=(0,1), (n,0)$, by (\ref{phiu1}), (\ref{phiu});  
  it equals $-\frac{2\pi}\ell$ for $(k,m)=(0,2)$, by (\ref{d2phi}).
    
  {\bf Claim.} {\it The above $(k,1)$-th derivative is $2\pi$-periodic in $\tau$, if 
  $1\leq k\leq\ell-1$.  If $k=\ell$, it is equal to $g(\theta_0)\tau$ plus a $2\pi$-periodic 
  function; here $g(\theta_0)$ has the same type, as in Proposition \ref{propoin}.} 
  
  \begin{proof} The $(k,1)$-th derivative 
   equals the primitive of the right-hand side in (\ref{highphi}). 
   The latter right-hand side is 
  a linear combination of values of 
  $\sin$ (or $\cos$) of $\theta_0+r\tau$, $r\in\zz$, $\ell-k\leq r\leq
  \ell+k$. Moreover, the coefficient at the "lower term", the 
  $\sin$ ($\cos$) of $\theta_0+(\ell-k)\tau$, is non-zero, by elementary trigonometry. 
  Therefore, the primitive of the latter right-hand side in (\ref{highphi}) is a linear 
  combination of $\cos$ ($\sin$) of the above arguments, except for a possible term 
  with $r=0$, which is $\tau\cos\theta_0$ ($\tau\sin\theta_0$) up to constant factor. 
  For  $k\leq\ell-1$ the latter term does not arise. For $k=\ell$ it arises with  a non-zero 
  constant factor, by the above  discussion. The claim is proved.
  \end{proof}
  
  One has $\frac{\partial^{k+m}\theta}{\partial \mu^k\partial\eta^m}(0,\theta_0; 0,0)=0$, 
  by definition. This together with the above claim and  discussion implies the 
  statements of Proposition \ref{propoin}.
  \end{proof}
  
  \begin{proof} {\bf of Lemma \ref{noco0}.} Suppose the contrary: the set $Constr_\ell$ 
  accumulates to zero. Recall that it lies in the ambient analytic 
  subset
   in $\rr\times\rr$ 
  defined by the equation $h_{\mu,\eta}=Id$. (The Poincar\'e map $h_{\mu,\eta}$ 
  is M\"obius, being  the restriction to $S^1=\{|\Phi|=1\}$ of the 
  monodromy map of Riccati equation (\ref{ric});  the equation 
  $h_{\mu,\eta}=Id$ is written in the Lie group $\operatorname{Aut}(D_1)\simeq\psl_2(\rr)$.) 
   Therefore, the latter 
   analytic
    subset 
    contains  an irreducible germ of analytic curve $\Gamma$ at 0 with 
  $\eta|_\Gamma, \mu|_\Gamma\not\equiv0$, since $\eta,\mu\neq0$ on $Constr_\ell$. 
  Hence, $\Gamma$ can be considered as a graph of (may be singular) analytic function 
  $\mu=c\eta^\alpha(1+o(1))$, $\alpha>0$, $c\neq0$. Substituting the latter expression 
  for $\mu$ to the Taylor formula (\ref{tayu}) yields
  \begin{equation} h_{\mu,\eta}(\theta_0)=\theta_0-\frac{\pi}{\ell}\eta^2+c^\ell g(\theta_0)
  \eta^{1+\ell\alpha}
  +o(\eta^2)+o(\eta^{1+\ell\alpha}).\label{hum}\end{equation}
  The right-hand side in (\ref{hum}) should be identically equal to $\theta_0$, 
  since $h_{\mu,\eta}=Id$ for $(\mu,\eta)\in\Gamma$. This together with (\ref{hum}) implies 
  that its  second and third terms should cancel out: $1+\ell\alpha=2$ and  
  $g(\theta_0)\equiv c^{-\ell}\frac{\pi}\ell$. But we know that $g(\theta_0)\not\equiv const$. The contradiction thus obtained proves Lemma \ref{noco0}.
  \end{proof} 
  
  \begin{proof} {\bf of the second statement of Theorem \ref{famcons1}.} 
 Suppose the contrary: as $x\in I$ tends to a non-zero endpoint $c\in\{ a,b\}$ 
 of the interval $I$, the function $\eta=\eta(C(x))$ is bounded  from above. But then 
 $\mu(C(x))$ is also bounded from above, by Proposition \ref{boundcom}. 
 The component $\mcc$ being a non-compact submanifold in $\rr_+^2$, it should go to "infinity" 
 (to the boundary), as $x\to c$. Therefore, there exists a sequence $x_k\to c$ 
 such that $C(x_k)\to C^*\in\{\mu\eta=0 \ | \ \mu,\eta\geq0\}$ (boundedness of  $\mu$ and 
   $\eta$). One has $C^*\neq(0,0)$, by Lemma \ref{noco0}.  Let show that 
 two other possible cases treated below are impossible.
 
 Case 1):  $C^*=(0,\eta)$, $\eta>0$.  Then the  equations (\ref{jn}) corresponding to 
 $C(x_k)=(\mu_k,\eta_k)$ have identity Poincar\'e map and limit to the  equation 
  \begin{equation}\frac{d\theta}{d\tau}=\eta\cos\theta+\ell,\label{josl0}\end{equation}
 which   should also have identity Poincar\'e map. In the case, when $\ell=0$, 
 this is obviously impossible, since the dynamical system on $\tt^2$ given by (\ref{josl0}) is 
 hyperbolic with an attracting periodic orbit $\theta\equiv\frac{\pi}2$. In the case, when 
 $\ell\in\nn$, the rotation number of the above system  is an integer non-negative 
 number $\rho<\ell$. This follows from the fact that the $\ell$-th phase-lock area $L_\ell$ 
  intersects 
 the $B$-axis $\{ A=0\}=\{\mu=0\}$ at the so-called growth point with known abscissa 
 $B(\ell,\omega)=\sqrt{\ell^2\omega^2+1}$, $\omega=\eta^{-1}$, see 
 \cite[corollary 3]{buch1}, 
  while $C(x_k)$ correspond to constrictions with 
  the abscissas 
  $\ell\omega_k<B(\ell,\omega_k)$. 
  Therefore, the points $C(x_k)\in Constr_\ell$ 
  also  correspond to the same rotation number $\rho<\ell$, whenever $k$ is large 
  enough (continuity of the 
  rotation number function and its integer-valuedness on the 
  points $(B_k,A_k;\omega_k)$ corresponding to $C(x_k)$). Thus, the points $C(x_k)$ 
  correspond to constrictions lying on the axis $\La_\ell=\{ B=\ell\omega\}$ with 
  non-negative rotation number $\rho<\ell$. But all the constrictions lying in 
  $\La_\ell$ should correspond to rotation numbers no less than $\ell$, by 
    \cite[theorem 1.2]{4}. The contradiction thus obtained shows that the case under consideration is impossible. 
    
    Case 2): $C^*=(\mu,0)$, $\mu>0$. Then the linear system (\ref{tty}) corresponding 
    to $C^*$ is diagonal, and hence, has zero cross-ratio $\mcr=x$. Hence, 
    the cross-ratios $x_k$ corresponding to $C(x_k)$ tend to zero. But their limit 
    $c$ is non-zero, by assumption. The contradiction thus obtained shows that 
    Case 2) is also impossible and finishes the proof of Theorem \ref{famcons1}. 
    \end{proof}

\subsection{Constance of rotation number and type. Proof  of Theorem \ref{famcons0}} 
Without loss of generality we can and will restrict 
ourselves 
to the case, when $\ell\in\zz_{\geq0}$, 
due to symmetry. 

All the statements 
    of Theorem \ref{famcons0} except for the last one  follow immediately from 
    Theorem \ref{famcons1}. Let us prove  its last statement: constance of rotation 
    number and type. Fix a connected component $\mcc$ of the 
    manifold $Constr_\ell$. Constance of the rotation number function 
    on $\mcc$ follows from its continuity and integer-valuedness. Constance 
    of the constriction type  is obvious for $\ell=0$: the $A$-axis lies in $L_0$, 
    hence, all its constrictions are positive. Thus, everywhere below we consider that 
    $\ell\in\nn$ (symmetry). To prove constance of type, we 
    use the following proposition. To state it, 
    let us recall that for every $\omega>0$ a {\it generalized simple intersection} 
    is a point $(B,A;\omega)$ with $\ell=\frac B{\omega}\in\zz$, $A\neq0$ and 
    $\rho=\rho(B,A;\omega)\equiv \ell(\mod2\zz)$  that lies in the boundary 
    of the phase-lock area $L_\rho=L_\rho(\omega)$ and that is not a constriction 
    \cite[definition 1.16]{gn19}; they exist only for $\ell\neq0$. 
    
     \begin{proposition} \label{nopint} 
 A constriction $C=(B,A;\omega)$ cannot be a limit of generalized simple intersections 
 with some $\omega_k\to\omega$.
\end{proposition}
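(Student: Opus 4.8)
The plan is to reach a contradiction with the two facts recalled in the Introduction: generalized simple intersections correspond to conjugate Heun equations (\ref{heun2}) possessing a polynomial solution (\cite[theorem 1.15]{bg2}), whereas no constriction can correspond to a conjugate Heun equation (\ref{heun2}) admitting a polynomial solution (\cite[theorems 3.3, 3.10]{bg}). Since generalized simple intersections occur only for $\ell\neq0$, and we have reduced to $\ell\in\zz_{\geq0}$, the statement is vacuous unless $\ell\in\nn$, which we now assume. Suppose, to the contrary, that a constriction $C=(B,A;\omega)$ with $A\neq0$ is the limit of generalized simple intersections $(B_k,A_k;\omega_k)$, $\omega_k\to\omega$, $B_k\to B$, $A_k\to A$. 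Passing to the renormalized parameters $\ell_k=\frac{B_k}{\omega_k}\in\zz$, $\mu_k=\frac{A_k}{2\omega_k}$ and $\la_k=\frac1{4\omega_k^2}-\mu_k^2$, we have $\ell_k\to\frac B\omega=\ell\in\zz$, so $\ell_k=\ell$ for $k$ large, while $\mu_k\to\mu=\frac A{2\omega}\neq0$ and $\la_k\to\la=\frac1{4\omega^2}-\mu^2$. Hence the conjugate Heun equations (\ref{heun2}) attached to the points $(B_k,A_k;\omega_k)$ converge, coefficient by coefficient, to the conjugate Heun equation attached to $C$.

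By \cite[theorem 1.15]{bg2}, for every large $k$ the equation (\ref{heun2}) with parameters $(\ell,\mu_k,\la_k)$ has a nonzero polynomial solution $P_k=\sum_j a_{j,k}z^j$. The key point is a \emph{uniform degree bound}. In (\ref{heun2}) the only terms that can raise the degree are those coming from $-\mu z^2E'$ and from $\mu(\ell-1)zE$; consequently, if $P_k$ has degree $n_k$, the coefficient of $z^{n_k+1}$ equals $\mu_k a_{n_k,k}(\ell-1-n_k)$. Since $\mu_k\neq0$ and $a_{n_k,k}\neq0$, this forces $n_k=\ell-1$. Thus every $P_k$ lies in the fixed space of polynomials of degree at most $\ell-1$, which we identify with $\cc^{\ell}$ through the coefficient vector.

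Normalize each $P_k$ so that its coefficient vector has unit Euclidean norm. By compactness of the unit sphere in $\cc^{\ell}$, along a subsequence $P_k\to P$, where $P$ is a polynomial of degree at most $\ell-1$ whose coefficient vector still has unit norm; in particular $P\not\equiv0$. Since the coefficients of (\ref{heun2}) depend analytically on $(\mu,\la)$ and the equations attached to $(B_k,A_k;\omega_k)$ converge to the one attached to $C$, the limit $P$ is a nonzero polynomial solution of the conjugate Heun equation (\ref{heun2}) corresponding to the constriction $C$; here $A\neq0$, i.e. $\mu\neq0$, guarantees that the limiting equation is genuinely of the form (\ref{heun2}). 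This contradicts \cite[theorems 3.3, 3.10]{bg} and proves Proposition \ref{nopint}. The one delicate step is this passage to the limit: a priori the degrees of the $P_k$ could grow, or the normalized limit could degenerate to $0$; both dangers are eliminated by the leading-coefficient computation, which pins the degree to the single value $\ell-1$ for all $k$ and so traps the $P_k$ in one finite-dimensional space, after which continuity and compactness finish the argument.
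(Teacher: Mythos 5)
Your proof is correct and follows essentially the same route as the paper: combine \cite[theorem 1.15]{bg2} (generalized simple intersections give conjugate Heun equations (\ref{heun2}) with polynomial solutions) with \cite[theorems 3.3, 3.10]{bg} (no constriction can correspond to such an equation), the only work being the passage to the limit. The paper simply asserts that the polynomial-solution property "remains valid for limits with $A\neq0$"; your leading-coefficient computation pinning the degree to $\ell-1$ and the compactness of the unit sphere of coefficient vectors is a correct and welcome filling-in of that step.
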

\begin{proof} One has $\ell=\frac{B}{\omega}\in\zz$. 
Without loss of generality we can 
and will consider that $\ell\geq1$ (symmetry). Generalized 
simple intersections  correspond to 
special double confluent Heun equations (\ref{heun2}) having polynomial solution 
\cite[theorem 1.15]{bg2}. If, to the contrary,  the constriction $C$ 
were a limit of generalized simple intersections, then it would also corresponds to 
equation (\ref{heun2}) having polynomial solution. But this is impossible, 
by \cite[theorems 3.3 and 3.10]{bg}. The contradiction thus obtained proves 
the proposition.
\end{proof}

Let a constriction $C(x_0)\in Constr_\ell$ be negative. Let us show that for every 
$x$ close to $x_0$ 
the constriction $C(x)=(B(x),A(x); \omega(x))$ is also negative: the case of positive constriction is treated analogously. (Note that each 
constriction is either positive, or negative, by \cite[theorem 1.8]{g18}.) 
Let $\rho\in\zz$ denote the rotation number 
of the constriction $C(x_0)$. Set $\omega_0:=\omega(x_0)$, 
$\La_\ell(\omega):=\{ B=\ell\omega\}\subset\rr^2_{B,A}$. For every $r>0$ let  
 $U_{r}\subset\rr^2$ denote the disk of radius $r$ 
  centered at $(B(x_0),A(x_0))$. Fix an $r>0$ such that  $(B(x_0),A(x_0))$ is the only point of intersection $\partial L_\rho(\omega_0)\cap\La_\ell(\omega_0)$   lying in  $U_{2r}$. Such an $r$ exists, since the latter intersection is discrete, by analyticity of 
  the graphs $\partial L_{\rho,0}$, $\partial L_{\rho,\pi}$ forming $\partial L_\rho$, and since none of these graphs is a vertical line.

Case 1). Let  for every $x$ close enough to $x_0$ the point $(B(x),A(x))$  be the only 
point of intersection $\partial L_\rho(\omega(x))\cap\La_\ell(\omega(x))$ lying in $U_r$. Then all the above 
constrictions $C(x)$ have the same, negative type, by definition.

Case 2). Let now the unique point $C(x_0)$ of intersection $\partial L_{\rho}(\omega(x_0))\cap \La_\ell(\omega(x_0))$ 
split into several intersection points, as we perturb $x=x_0$ slightly.  
Then all these points are constrictions, by Proposition \ref{nopint} and since $\rho\equiv\ell(\mod 2)$, see \cite[theorem 3.17]{4}. Their number is 
finite,  and they split the intersection   
 $\La_\ell(\omega(x))\cap U_r$ into a finite number of intervals. 
Any two adjacent division intervals either both lie outside the phase-lock 
area $L_\rho(\omega(x))$, or both lie inside $L_\rho(\omega(x))$, since the constriction separating them is either 
negative, or positive (see \cite[theorem 1.8]{g18} and Remark \ref{posneg}). 
The division intervals adjacent to $\partial U_r$ 
should lie outside, since this is true for $x=x_0$ and by continuity. 
Therefore, all the above intervals   lie outside. 
Hence, all the  constrictions bounding them are negative.  
 Theorem \ref{famcons0} is proved.

\section{Slow-fast methods. Absence of ghost constrictions for small $\omega$}

We prove Theorem \ref{noq} in Subsections 5.1--5.5. 
Theorem \ref{noghost} 
will be proved in Subsection 5.6. 

It suffices to prove absence of ghost constrictions with $B=\omega\ell$, $\ell\in\nn$, and $A>0$, 
by symmetry and since the constrictions with $\ell=0$ are positive and lie in $L_0$. Thus, everywhere below 
without loss of generality we consider that $\ell\in\nn$. 
It is already known that 
\begin{equation} \text{there are no constrictions  } (\ell\omega,A) \text{ with } A\in(0,1-\ell\omega],
\label{nocons}\end{equation}
since all the points $(B,A)$ with $|B|+|A|\leq1$ lie in 
 the phase-lock area $L_0$ \cite[proposition 5.22]{bg2}, and all the constrictions in $L_0$ lie in the $A$-axis. 
  
First in Subsection 5.1 for small $\omega$ we prove absence of ghost constrictions in the semiaxis 
 $\La_{\ell}$ with ordinate greater than 
 \begin{equation}A_{\frac12}=A_{\frac12}(\omega):=1+(\ell-\frac12)\omega.\label{a12om}\end{equation}
Their absence follows from results of \cite{g18, gn19}, which imply that the whole  ray 
$\{\ell\omega\}\times[A_{\frac12},+\infty)$ lies in the phase-lock area
  $L_\ell$ 
  for small $\omega$. 
   In Subsection 5.5 we show that there are  
 no constrictions $(\ell\omega,A)$  with $A\in(1-\ell\omega,A_{\frac12}(\omega))$, whenever $\omega$ is small enough. This is done by 
 studying family of systems (\ref{josvec}) modeling Josephson junction as a 
 slow-fast family of dynamical systems, with small $\omega$ and 
 $A=A_{\alpha}(\omega)=1+(\ell-\alpha)\omega+o(\omega)$. 
 The corresponding background material 
 on slow-fast systems is given in Subsection 5.2.  The key lemma used in the proof 
 of absence of the above-mentioned constrictions is the Monotonicity Lemma 
 stated and proved in Subsection 5.4. 
 It concerns a pair of slow-fast families 
 (\ref{josvec}) corresponding to two families of ordinates $A_{\alpha_1}$ and 
 $A_{\alpha_2}$ as above with 
 $0<\alpha_1<\alpha_2$. It deals with their Poincar\'e maps of the 
 cross-section $\{\tau=0\}$ lifted to the universal cover as maps of the  line 
 $\{\tau=0\}$ to $\{\tau=2\pi\}$.   The Monotonicity Lemma  states that 
 the Poincar\'e map of  the system 
 (\ref{josvec}) corresponding to $A_{\alpha_2}$ is less than the analogous 
 Poincar\'e map for  $A_{\alpha_1}$, whenever $\omega$ 
 is small enough. Its  proof  is based on the Comparison Lemma 
 on arrangement and disjointness of slow flowboxes of the systems in question 
 (stated and proved in Subsection 5.3). 
 
 \subsection{Absence of ghost constrictions with big ordinates}
 \begin{lemma} \label{quehigh} For every 
 $\ell\in\nn$ 
 and every $\omega>0$ small enough 
 dependently on $\ell$ the ray 
 $$\La_{\ell,\frac12}:=\La_\ell\cap\{A\geq A_{\frac12}\}\subset\La_\ell,$$ 
 see (\ref{a12om}), lies in the phase-lock area with the rotation number $\ell$. It contains no ghost 
 constrictions.
 \end{lemma}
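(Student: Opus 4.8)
The plan is to reduce the lemma to the single assertion that the whole ray $\La_{\ell,\frac12}$ is contained in the phase-lock area $L_\ell$, and to obtain the latter from the explicit ray construction already available in the literature. Recall that \cite[conjecture 1.14]{g18} predicts that the intersection $\La_\ell^+\cap L_\ell$ is a single ray issued upward, and that \cite[theorem 1.12]{g18} proves that a concretely constructed ray $R=R(\ell,\omega)\subset\La_\ell$, with some lower endpoint ordinate $A_*(\omega)$, indeed lies in $L_\ell$. Thus it suffices to show that $\La_{\ell,\frac12}\subset R$, i.e. that $A_*(\omega)\leq A_{\frac12}(\omega)=1+(\ell-\frac12)\omega$ once $\omega$ is small enough (dependently on $\ell$).

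To this end I would first extract from \cite{g18, gn19} the behaviour of the lower endpoint $A_*(\omega)$ of the constructed ray as $\omega\to0$. Both $A_*(\omega)$ and $A_{\frac12}(\omega)$ tend to $1$ and admit expansions of the form $1+(\ell-c)\omega+o(\omega)$, so that the desired inequality $A_*(\omega)\leq A_{\frac12}(\omega)$ reduces to comparing the relevant first-order coefficients (and, in case of equality, the $o(\omega)$ remainders). This comparison, carried out with the quantitative description of $R$ supplied by \cite{g18, gn19}, is the main technical point of the lemma: everything hinges on verifying that the constructed ray already starts at or below the level $A_{\frac12}$ for all sufficiently small $\omega$. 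Once this is settled, $\La_{\ell,\frac12}\subset R\subset L_\ell$, and since $L_\ell$ is the level set $\{\rho=\ell\}$ the rotation number equals $\ell$ on all of $\La_{\ell,\frac12}$, which is the first assertion of the lemma.

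It then remains to exclude ghost constrictions on $\La_{\ell,\frac12}$. Let $C=(\ell\omega,A_0)$, $A_0\geq A_{\frac12}$, be a constriction on this ray. Its abscissa gives $\frac{B}{\omega}=\ell$, and since $C\in L_\ell=\{\rho=\ell\}$ we have $\rho(C)=\ell$; hence $C$ is not a ghost constriction of the type $\rho\neq\ell$. To see that $C$ is positive, observe that every point of $\La_{\ell,\frac12}$ with ordinate $A>A_0$ lies in $L_\ell$, and hence in the union of all phase-lock areas; consequently no punctured vertical neighbourhood of $C$ can lie entirely in the complement of that union, so $C$ is not negative. By the dichotomy of \cite[theorem 1.8]{g18}, which asserts that every constriction is either positive or negative, it follows that $C$ is positive. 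Therefore $\La_{\ell,\frac12}$ carries only positive constrictions with rotation number $\ell$, i.e. no ghost constrictions, completing the proof. The only genuinely delicate step is the endpoint comparison of the second paragraph; the remaining deductions are formal once the ray is known to lie in $L_\ell$.
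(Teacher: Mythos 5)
Your overall reduction coincides with the paper's: you invoke the ray $S\ell\subset L_\ell\cap\La_\ell^+$ from \cite[theorem 1.12]{g18}, bounded below by the higher generalized simple intersection $\mcp_\ell$, and reduce the lemma to the endpoint inequality $A(\mcp_\ell)\leq A_{\frac12}(\omega)$ for small $\omega$. Your concluding deduction (no ghost constriction of type $\rho\neq\ell$ since the ray lies in $L_\ell$; positivity via the positive/negative dichotomy of \cite[theorem 1.8]{g18}) also matches the paper's one-line conclusion and is sound.

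However, there is a genuine gap at exactly the point you yourself flag as ``the main technical point'': you assert that $A_*(\omega)$ ``admits an expansion of the form $1+(\ell-c)\omega+o(\omega)$'' and that the comparison reduces to reading off coefficients from \cite{g18, gn19}, but no such expansion of the endpoint ordinate is directly quotable from those references, and you never derive it. The paper has to do real work here: it uses that $\mcp_\ell$ is characterized as the point with maximal $\mu>0$ on the spectral curve $\Gamma_\ell\subset\rr^2_{(\la,\mu)}$ (the locus where the conjugate Heun equation (\ref{heun2}) has a polynomial solution, by \cite[theorem 1.15]{bg2}); as $\omega\to0$ this point escapes to infinity along $\Gamma_\ell$, and by Netay's result \cite[proposition 1.10]{gn19} the complexified curve meets the line at infinity in $\ell$ real regular points with asymptotic ratios $\frac{\la}{\mu}\to \ell-1,\ell-3,\dots,-(\ell-1)$. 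Since $\frac{\la}{\mu}=\frac{1-A^2}{4\omega^2\mu}\simeq\frac{1-A}{\omega}$, this yields $A=1+m\omega+o(\omega)$ with $m\in\{\ell-1,\ell-3,\dots,-(\ell-1)\}$, the maximal branch giving $A(\mcp_\ell)=1+(\ell-1)\omega+o(\omega)<1+(\ell-\tfrac12)\omega=A_{\frac12}$. Without this spectral-curve computation (or an equivalent one) the first and essential assertion of the lemma is not established, so your argument as written is incomplete precisely where the content lies.
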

 \begin{proof} The intersection of the phase-lock area $L_\ell$ with 
 the semiaxis $\La_{\ell}^+:=\La_\ell\cap\{ A>0\}$ contains a ray $S\ell$ bounded 
 by a point $\mcp_\ell$, the so-called higher generalized simple intersection \cite[theorem 1.12]{g18}. 
 Therefore, for the proof of the inclusion $\La_{\ell,\frac12}\subset L_\ell$ it suffices 
 to show that $A(\mcp_\ell)<A_{\frac12}$ whenever $\omega$ is small enough. Let 
 us show that 
 \begin{equation} A(\mcp_\ell)=1+(\ell-1)\omega+o(\omega), \text{ as } \omega\to0.
 \label{mcpl}\end{equation}
 To do this, 
 let us recall the definition of the point $\mcp_\ell$.  Set 
 $$\mu:=\frac{A}{2\omega}, \ \  \la:=\frac1{4\omega^2}-\mu^2=\frac{1-A^2}{4\omega^2}.$$
 Consider the corresponding Heun equation (\ref{heun2}).  Fix an $\omega>0$. 
   The value $\mu(\mcp_\ell)=\frac{A(\mcp_{\ell})}{2\omega}$ is the maximal number 
 $\mu>0$ for which equation (\ref{heun2}) has a polynomial solution, see \cite[definition 1.9]{g18}, \cite[theorem 1.15]{bg2}. It was 
 shown in \cite{bt0} that existence of polynomial solution is equivalent to 
 the condition that the point $(\la,\mu)$ lies in a remarkable algebraic curve 
 $\Gamma_\ell\subset\rr^2$, the so-called {\it spectral curve.} Thus, for every 
  $\omega>0$ the point $(\la(\mcp_\ell),\mu(\mcp_\ell))$ lies in $\Gamma_\ell$, and it is the point 
 in $\Gamma_\ell$ with the biggest  coordinate $\mu$. 
 As $\omega\to0$, 
one has $\frac1{\omega}={\sqrt{4(\la+\mu^2)}}\to\infty$, thus, $(\la,\mu)\to\infty$. 
It is known that the complexified curve $\Gamma_\ell$ intersects the complex 
infinity line in $\cp^2$  at $\ell$ distinct regular real points. Their 
 asymptotic directions correspond to the ratios $\frac{\la}{\mu}$ equal to 
$\ell-1,\ell-3,\dots,-(\ell-1)$,  and the corresponding local branches are real. This was 
proved by I.V.Netay \cite[proposition 1.10]{gn19}. 
Therefore, as a point of the curve $\Gamma_\ell$ 
tends to its  infinite point, one has $\mu\to\infty$, 
$$\la=O(\mu)=o(\mu^2), \ \frac1{4\omega^2}=\la+\mu^2\simeq\mu^2, \ 2\omega\mu=A\simeq 1,$$
$$\frac{\la}{\mu}=\frac{1-A^2}{4\omega^2\mu}\simeq k, \ \ \ k\in\{ \ell-1,\ell-3,\dots,-(\ell-1)\}.$$
But $\frac{1-A^2}{4\omega^2\mu}=\frac{(1-A)(1+A)}{2\omega A}\simeq\frac{1-A}{\omega}$. 
The latter ratio should tend to a number $k$ as above. Therefore, as a point 
in $\Gamma_{\ell}$ tends to infinity, one of the following asymptotics takes place: 
$$A=1+m\omega+o(\omega), \  \ m=-k\in\{ \ell-1,\ell-3,\dots,-(\ell-1)\}.$$
The asymptotics corresponding to points with the maximal possible $A$ is given 
by $m=\ell-1$. This proves (\ref{mcpl}). Hence, $A(\mcp_\ell)<A_{\frac12}=1+(\ell-\frac12)\omega$, whenever 
$\omega$ is small enough, by (\ref{mcpl}). The inclusion $\La_{\ell,\frac12}\subset L_\ell$ 
 is proved. It implies that all the constrictions in $\La_{\ell,\frac12}$ are positive,  
 lie in  $L_\ell$, and hence, are not ghost. The lemma is proved.
 \end{proof}
 \subsection{Model of Josephson junction with small $\omega$ as slow-fast system}
 We study one-parameter subfamilies of vector fields (\ref{josvec}) on $\tt^2$ parametrized by small $\omega$ as 
 slow-fast families of dynamical systems, where $\ell=\frac B\omega\equiv const$  and $A$ depends on $\omega$. 
 To do this,   we recall the following results on topology of 
 the zero level curve of the $\theta$-component in (\ref{josvec}): the so-called slow curve
 $$\gamma=\gamma_{B,A}:=\{ f(\theta,\tau)=0\}, \  \  \ f(\theta,\tau):=\cos\theta+B+A\cos\tau.$$ 
 \begin{proposition} \label{pconv} (see \cite[proposition 2]{krs}). 
For every $(A,B)\in\rr^2_+$ with $|1-B|<A<1+B$ the curve $\gamma$
 is a regular strictly convex contractible curve lying in the interior of the 
fundamental square $[0,2\pi]^2$ of the torus $\tt^2$.  See Fig. 6a).
\end{proposition}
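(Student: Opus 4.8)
The plan is to treat $f(\theta,\tau)=\cos\theta+B+A\cos\tau$ directly as a Morse function on $\tt^2$ for the topological claims, and then to compute the curvature of its zero level curve for the convexity claim. It is convenient to center coordinates at the point $(\pi,\pi)$: setting $u=\theta-\pi$, $v=\tau-\pi$ one has $f=-\cos u-A\cos v+B$, so that $\gamma=\{\cos u+A\cos v=B\}$ and the enclosed region $\{f<0\}$ becomes the superlevel set $\{\cos u+A\cos v>B\}$.

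First I would locate the critical points of $f$ on $\tt^2$. Since $f$ is separable, $\nabla f$ vanishes exactly at the four points with $\theta,\tau\in\{0,\pi\}$: a global maximum at $(0,0)$ with value $1+A+B$, two saddles at $(0,\pi)$ and $(\pi,0)$ with values $1+B-A$ and $A+B-1$, and a global minimum at $(\pi,\pi)$ with value $B-A-1$. The hypotheses $|1-B|<A<1+B$ are precisely the strict triangle inequalities for $1,A,B$; they give $1+B-A>0$, $A+B-1>0$ and $B-A-1<0$. Hence $0$ lies strictly between the minimum value and both saddle values and is not a critical value, so by the standard monotone-family-of-circles picture the sublevel set $\{f\le t\}$ stays a topological disk encircling $(\pi,\pi)$ for all $t$ in this range, in particular for $t=0$. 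Thus $\gamma$ is a regular, contractible, simple closed curve. Moreover $f>0$ on every edge of the square $[0,2\pi]^2$ (for $\theta\in\{0,2\pi\}$ one has $f\ge 1+B-A>0$, and for $\tau\in\{0,2\pi\}$ one has $f\ge A+B-1>0$), so $\gamma$ lies in the open interior.

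The main work is strict convexity, and I expect the real content to lie there. I would compute the signed curvature of the level curve $\{F=0\}$ for $F:=\cos u+A\cos v-B$ via the standard formula, whose numerator is $F_u^2F_{vv}-2F_uF_vF_{uv}+F_v^2F_{uu}=-A(\sin^2u\cos v+A\sin^2v\cos u)$, using $F_{uv}=0$. Writing $c=\cos u$, $d=\cos v$ and imposing the relation $c+Ad=B$ that holds on $\gamma$, the bracketed factor collapses:
\[
\sin^2u\cos v+A\sin^2v\cos u=(1-c^2)d+A(1-d^2)c=d+Ac-Bcd=\tfrac1A\,q(c),
\]
where $q(c)=Bc^2+(A^2-B^2-1)c+B$. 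So the whole problem reduces to showing that the upward parabola $q$ is positive along $\gamma$. Its discriminant is $(A^2-B^2-1)^2-4B^2$, which is negative exactly when $|A^2-B^2-1|<2B$; the right-hand inequality here is $A<1+B$ and the left-hand one is $A>|1-B|$, so the discriminant is negative precisely under our hypotheses. Hence $q>0$ for every real $c$, the curvature numerator has constant nonzero sign, and $\gamma$ is strictly convex.

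The principal obstacle is this convexity computation, and it is worth flagging why the naive route fails: $\cos u+A\cos v$ is not concave on the whole relevant domain (concavity breaks once $\gamma$ reaches $|u|>\tfrac\pi2$, which occurs when $B<A$), so one cannot simply invoke concavity of the defining function to conclude that $\{f<0\}$ is convex. The decisive point is that after substituting the defining relation, the sign of the curvature is governed by the single quadratic $q(c)$, and the positivity of $q$ is equivalent to the negativity of a discriminant that turns out to be exactly the two triangle inequalities already assumed. Everything else—regularity, connectedness, contractibility, and containment in the fundamental square—then follows from the elementary Morse-theoretic analysis of the separable function $f$.
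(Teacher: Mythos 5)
Your proof is correct and its core coincides with the paper's: both reduce strict convexity to the sign of the Hessian-on-skew-gradient (equivalently, the level-curve curvature numerator) along $\gamma$, substitute the defining relation to collapse it to a single quadratic, and observe that negativity of its discriminant is exactly the triangle inequalities $|1-B|<A<1+B$ (the paper eliminates $\cos\theta$ and gets $ABv^2+(A^2+B^2-1)v+AB$ in $v=\cos\tau$, you eliminate $\cos v$ and get the equivalent $Bc^2+(A^2-B^2-1)c+B$). Your Morse-theoretic treatment of the four critical points is a slightly more explicit way to get regularity, connectedness and contractibility than the paper's boundary check, but it is a minor packaging difference rather than a different route.
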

\begin{remark} The curve $\gamma$ is always symmetric with respect to the 
horizontal and vertical lines through the center of the latter square.
\end{remark}
 
For completeness of presentation we give the proof of Proposition \ref{pconv}.

\begin{proof} {\bf of Proposition \ref{pconv}.}  
Let $1-B<A<1+B$. Let us now show 
that the curve $\gamma$ does not 
intersect the boundary of the above fundamental square. 
Indeed, on the boundary either $\cos\theta=1$, or $\cos\tau=1$. 
If $\cos\theta=1$, then $f(\theta,\tau)=\cos\theta+B+A\cos\tau\geq1+B-A>0$. 
If $\cos\tau=1$, then $f(\theta,\tau)\geq-1+B+A>0$. 
Therefore, $f(\theta,\tau)\neq0$ on the boundary of the fundamental square, and $\gamma$ lies in its interior. 
For the proof of strict convexity it suffices to show 
that the
 value\footnote{The value of the Hessian form of a function $f$ on its skew gradient, 
i.e., the expression in the left-hand side in (\ref{hess+}) was introduced by S.Tabachnikov in \cite{tab08}.} 
  of the Hessian form 
  of the function $f$ on its skew gradient tangent to 
its level curves is positive on $\gamma$. 
That is, 
\begin{equation}\frac{\partial^2 f}{\partial\theta^2}\left(\frac{\partial f}{\partial\tau}\right)^2+
 \frac{\partial^2 f}{\partial\tau^2}\left(\frac{\partial f}{\partial\theta}\right)^2-2\frac{\partial^2 f}{\partial\theta\partial\tau}\left(\frac{\partial f}{\partial\tau}\right)\left(\frac{\partial f}{\partial\theta}\right)>0 \ \ 
 \text{ on } \  \gamma.\label{hess+}\end{equation}
 
Substituting  $u:=\cos\theta$, $v:=\cos\tau$ to the latter left-hand side and dividing 
it by $A$ yields the following equivalent inequality: 
 \begin{equation}-Au(1-v^2)-v(1-u^2)>0, \ \text{ whenever } u+B+Av=0  \text{ and } 
 |u|,|v|\leq1.\label{insf}\end{equation}
 Substituting $u=-B-Av$ to the left-hand side in (\ref{insf}) transforms it to the polynomial 
 $$P(v)=ABv^2+v(A^2+B^2-1)+AB.$$
  One has $P(v)>0$ for every $v\in\rr$, since its discriminant is negative, i.e., 
  $-2AB<A^2+B^2-1<2AB$. Indeed, the latter inequality can be rewritten as 
  $|A-B|<1<A+B$, which is equivalent to the system of inequalities of the proposition 
  for positive $A$ and $B$. The proposition is proved.
 \end{proof}
 \begin{proposition} \label{pnoc} In the case, when $A,B>0$ and 
  $A=1-B$, the curve $\gamma$ 
   is regular, except for one singular point $q=(\pi,0)$ of type 
  "transversal double  self-intersection". Its intersection with the interior of the 
  fundamental square $[0,2\pi]^2$ is a convex curve.
   In the case, when 
  $B>0$ and $0<A<1-B$, the curve $\gamma$ is regular and 
  consists of two non-contractible closed 
  connected components of homological type $(0,1)$ in the standard 
  basis in $H_1(\tt^2_{\theta,\tau})$. See Fig. 6b),c).
  \end{proposition}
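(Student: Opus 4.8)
The plan is to analyze the level set $\gamma=\{f=0\}$, with $f(\theta,\tau)=\cos\theta+B+A\cos\tau$, directly through its gradient and Hessian, reusing the convexity computation already carried out in the proof of Proposition \ref{pconv}. First I would compute $\nabla f=(-\sin\theta,-A\sin\tau)$, which vanishes exactly at the four torus points $(\theta,\tau)\in\{0,\pi\}^2$. Evaluating $f$ there gives the values $1+B+A$, $1+B-A$, $-1+B+A$, $-1+B-A$; under the standing hypotheses $A,B>0$ and $A+B\le1$ (so $B<1$) only $-1+B+A$ can vanish, and it does so precisely when $A+B=1$, at the point $q=(\pi,0)$. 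Hence, in the case $0<A<1-B$ no critical point of $f$ lies on $\gamma$, so $\gamma$ is everywhere regular, whereas in the case $A=1-B$ the only critical point on $\gamma$ is $q$.

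Next I would read off the global shape by writing $\gamma$ as $\cos\theta=c(\tau):=-(B+A\cos\tau)$, so that $c(\tau)$ ranges over $[-(A+B),\,A-B]$. When $A+B<1$ one has $c(\tau)\in(-1,1)$ for every $\tau$, so over each $\tau$ the equation has exactly two solutions $\theta_\pm(\tau)=\pm\arccos c(\tau)$; these never collide (since $c\neq\pm1$), giving two disjoint smooth graphs over the $\tau$-circle, each a closed curve of homological type $(0,1)$, which settles the second case. When $A+B=1$ the upper end $A-B=1-2B$ still lies in $(-1,1)$, but now the value $c(\tau)=-1$ is attained, exactly at $\tau=0$ (where $\cos\tau=1$), forcing $\theta_+(0)=\theta_-(0)=\pi$; thus the two branches merge at $q$ and $\gamma$ becomes a single closed curve crossing itself at $q$.

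To pin down the type of $q$ I would compute the Hessian of $f$: since $f_{\theta\theta}=-\cos\theta$, $f_{\tau\tau}=-A\cos\tau$, $f_{\theta\tau}=0$, at $q=(\pi,0)$ it equals $\diag(1,-A)$, which is nondegenerate and indefinite. By the Morse lemma $f$ is locally a saddle, so its zero level set is a pair of smooth arcs crossing transversally, i.e.\ $q$ is a transversal double self-intersection. For the convexity of the part of $\gamma$ inside the open square I would reuse the computation behind Proposition \ref{pconv}: after substituting $u=\cos\theta$, $v=\cos\tau$ and using $u+B+Av=0$, the skew-gradient Hessian quantity (\ref{hess+}) equals $A\,P(v)$ with $P(v)=ABv^2+(A^2+B^2-1)v+AB$, see (\ref{insf}). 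In the boundary case $A+B=1$ one has $A^2+B^2-1=-2AB$, whence $P(v)=AB(v-1)^2\ge0$, with equality only at $v=1$, i.e.\ at $\tau=0$. Since $\tau=0$ is attained only on the boundary of the square, on the open square $v=\cos\tau<1$ forces $P(v)>0$; as $\gamma$ carries no critical point of $f$ in the open square, its interior part is a regular, strictly convex arc.

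The genuinely delicate point is the convexity assertion in the degenerate case $A=1-B$: here the convexity inequality of Proposition \ref{pconv} is no longer strict but degenerates to an equality, and the whole argument hinges on verifying that this degeneracy of $P(v)$ is concentrated exactly at the pinch value $v=1$, i.e.\ at the boundary self-intersection $q$, so that strict convexity survives throughout the open square. The remaining ingredients---the critical-point bookkeeping for $\nabla f$, the Morse classification of $q$, and the two-graphs description of the split curve---are routine once the case split according to whether $A+B=1$ or $A+B<1$ is made.
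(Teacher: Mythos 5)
Your proposal is correct, and for the harder half of the statement it takes a genuinely different and more elementary route than the paper. For the case $0<A<1-B$ the paper argues that the topological type of $\gamma$ can only change at parameter values where $\gamma$ is a critical level of $f$ (i.e., $\pm1+B\pm A=0$), and then determines the type by a bifurcation analysis at $A=1-B$: it tracks how the convex contractible curve of Proposition \ref{pconv} pinches at the Morse saddle $q$ and splits into two components, using the symmetries $\theta\mapsto-\theta$, $\tau\mapsto-\tau$ and the local Morse picture. Your explicit resolution $\cos\theta=c(\tau)=-(B+A\cos\tau)$ with $c(\tau)\in(-1,1)$, giving the two disjoint graphs $\theta_{\pm}(\tau)=\pm\arccos c(\tau)$, reaches the same conclusion directly and avoids the deformation argument entirely; it buys a cleaner, self-contained proof of the $(0,1)$-homology claim. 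The singular-point analysis (critical points of $f$ at $\{0,\pi\}^2$, only $-1+B+A$ can vanish, Morse lemma at the saddle $q$) coincides with the paper's. For convexity in the degenerate case $A=1-B$ the paper simply passes to the limit of the strictly convex curves with $A>1-B$, whereas you recompute the quantity (\ref{hess+}) and observe that $P(v)$ degenerates to $AB(v-1)^2$, which vanishes only at $v=1$, i.e., at the pinch point on the boundary of the square; this is a nice refinement, since it yields \emph{strict} local convexity on the open square rather than the a priori non-strict convexity of a limit of convex curves. The only step you should make explicit is the passage from pointwise positivity of (\ref{hess+}) on the two open arcs to the assertion that their union (closed up by the corner $q$) bounds a convex region --- e.g., via local-to-global convexity of the sublevel set $\{f\le0\}$, checking local convexity also at the corner where $f\approx\frac12((\theta-\pi)^2-A\tau^2)$ --- but this is the same criterion the paper itself invokes in Proposition \ref{pconv}, so it is a presentational point rather than a gap.
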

  \begin{proof} Consider the first the case: $A=1-B$. Convexity is preserved under passing to limit, as 
  $A>1-B$ tends to $1-B$. The singular point 
  statement and uniqueness of singular point follow by straightforward 
  calculation, the Implicit Function Theorem and Morse Lemma. In more detail,  $\gamma$ being a level 
  curve of an analytic function $f(\theta,\tau)$, its singular points (if any) are the critical points of the function $f$ 
  contained in $\gamma$. The critical points  are those with $\cos\theta,\cos\tau=\pm1$. 
 The only critical point in $\gamma$ is the one with $\cos\theta=-1$, $\cos\tau=1$, i.e., $q=(\pi,0)$.  
 This is a Morse critical point  with index  $-1$, i.e., the Hessian form of the function 
 $f$ at $q$ has eigenvalues of opposite signs: $\frac{\partial^2f}{\partial\theta\partial\tau}=0$, 
 $\frac{\partial^2f}{\partial\theta^2}=1$, $\frac{\partial^2f}{\partial\tau^2}=-A<0$. Hence, 
 it is a  transversal  self-intersection singular point of the curve $\gamma$ (Morse Lemma). See Fig. 6b).
  
  As $A$ and $B$ vary, the topological type of the curve $\gamma$ may change only near those parameter values, 
  for which $\gamma$ is a critical level curve of the function $f(\theta,\tau)$. It follows from the above critical point 
  description that $\gamma$ is a critical level curve, if and only if $\pm1+B\pm A=0$ for some of the four possible 
  sign choices. Therefore, the topological type is constant in the domain $\{ B>0, \ 0<A<1-B\}$ in the parameter space. 
To find this topological type, fix a point 
$(B_0,A_0)\in\rr_+^2$ 
with $A_0=1-B_0$. We show that as $A>A_0$ decreases and crosses the value $A_0$,  
   a connected contractible curve $\gamma_{B_0,A}$  given by Proposition \ref{pconv} is transformed to 
   two disjoint curves isotopic to the $\tau$-circle. For $A$ close to $A_0$ the complement of each $\gamma_{B_0,A}$
    to a small disk $U$ centered at  the singular point $q$ is a regular curve  
   depending analytically on the parameter $A$. It consists of two connected components $\gamma_{B_0,A;\pm}(U)$ 
   disjoint from the circle $\{\theta=\pi\}$ and 
projected diffeomorphically to an interval $(\var,2\pi-\var)$ of the $\tau$-circle; here $\var=\var(U)$ is small. 
(The projection interval is the same for both components, since the symmetries $\theta\mapsto-\theta$, $\tau\mapsto-\tau$ 
preserve each curve $\gamma_{B,A}$.) The curves $\gamma_{B_0,A}\cap U$ 
    form a 
    singular
     foliation in $U$ by level curves of  the function $g(\theta,\tau):=
  -\frac1{\cos\tau}(\cos\theta+B_0)$ with critical value $A_0$ corresponding to a Morse critical point $q$ of index $1$. 
  The union of local branches of the singular curve $\gamma_{B_0,A_0}$ at $q$ 
   is invariant under the above symmetries,  and the local branches intersect transversally. Therefore, they are 
   transversal to the circles $\{\theta=\pi\}$, $\{\tau=0\}$.  
 For $A>A_0$ close to $A_0$ the curve $\gamma_{B_0,A}$ is strictly convex, and its intersection with 
  $U$ is a union of two connected components separated by the circle $\{\tau=0\}$, 
  by Proposition \ref{pconv}. This implies that for $A<A_0$ close to $A_0$ the local level curve 
  $\gamma_{B_0,A}\cap U$  consists of two components intersecting the circle $\{\tau=0\}$,  diffeomorphically 
  projected to an interval in the $\tau$-circle and disjoint from the circle $\{\theta=\pi\}$. 
  Adding the latter components to $\gamma_{B_0,A;\pm}(U)$ results 
  in two closed curves in $\tt^2$ disjoint from the circle $\{\theta=\pi\}$ and 
  projected diffeomorphically onto the $\tau$-circle. See Fig. 6c). Thus, they are isotopic to the $\tau$-circle. 
  This proves the last statement of Proposition 
\ref{pnoc}.
\end{proof}
\begin{figure}[ht]
  \begin{center}
   \epsfig{file=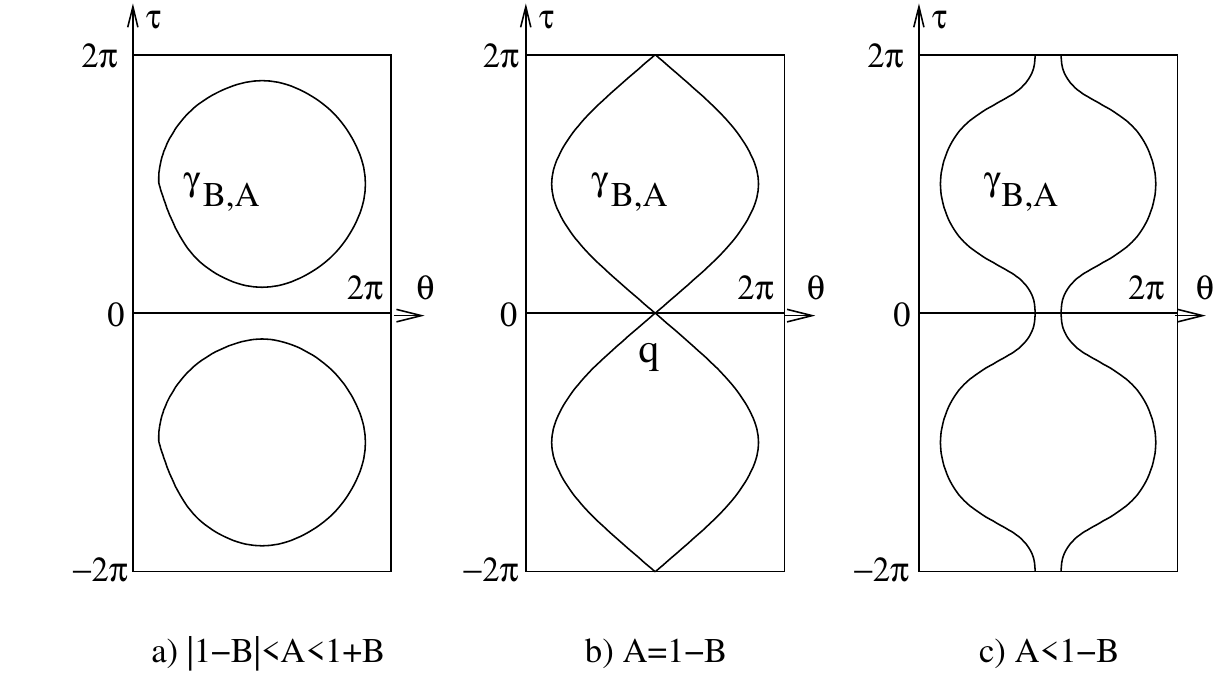}
    \caption{Different topological types of the  curve $\gamma_{B,A}=\{f(\theta,\tau)=0\}$ for $B,A>0$, $A<1+B$. 
    We present its liftings to 
     the universal covering $\rr^2_{\theta,\tau}$.}
  \end{center}
\end{figure} 

 Consider family (\ref{josvec}) 
 with  a fixed $\ell\in\nn$ and $\mu=\frac{A(\omega)}{2\omega}$ where
  \begin{equation}A(\omega)=A_{\alpha}(\omega)
  =1+(\ell-\alpha)\omega+o(\omega), \text{ as } \omega\to0; \ \alpha>0 
   \text{ is a  constant.}\label{asa}\end{equation}
  Multiplying family (\ref{josvec}) by $\omega$  
  yields a slow-fast family of dynamical systems 
 \begin{equation}\begin{cases} \dot\theta_t= f_{\alpha}(\theta,\tau;\omega)\\
 \dot \tau_t=\omega,\end{cases}  \ \ t=\omega^{-1}\tau, \  \ f_{\alpha}(\theta,\tau;\omega)=
\cos\theta + \ell\omega + A_{\alpha}(\omega) \cos \tau\label{slf}\end{equation}
on $\tt^2$ with $\omega\to0$.  
According to the commonly used terminology in the theory of slow-fast systems, see e.g. \cite{ilguk}, we will call the curve  
$$\gamma_\alpha(\omega):=\{ f_{\alpha}(\theta,\tau;\omega)=0\}\subset\tt^2=\rr^2_{(\theta,\tau)}\slash
2\pi\zz$$
 the {\it slow curve} of family (\ref{slf}). Propositions \ref{pconv} and \ref{pnoc} imply the following

 \begin{corollary} \label{cconv} 
For every fixed $\ell,\alpha\in\rr_+$ with $\alpha\neq2\ell$,  for every 
$\omega$ small enough dependently on $\ell$ and $\alpha$ 

(i) if $0<\alpha<2\ell$, then the slow curve of system (\ref{slf}) is convex, regular, contractible 
and lies in the interior of the fundamental square $[0,2\pi]^2$; 

(ii) if $\alpha>2\ell$, then  the slow curve  is regular 
and consists of two non-contractible closed connected components of 
homological type $(0,1)$.
\end{corollary}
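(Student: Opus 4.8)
The plan is to reduce the corollary directly to Propositions \ref{pconv} and \ref{pnoc} by translating the asymptotic relation (\ref{asa}) into the sign conditions on $A\mp B$ that govern the topological type of $\gamma$. In the notation of those propositions the slow curve $\gamma_\alpha(\omega)$ is the zero level curve of $f(\theta,\tau)=\cos\theta+B+A\cos\tau$ with
$$B=\ell\omega, \qquad A=A_\alpha(\omega)=1+(\ell-\alpha)\omega+o(\omega).$$
Since $\ell,\alpha>0$ are fixed and $B=\ell\omega\to 0^+$, for all small $\omega$ one has $0<B<1$, hence $|1-B|=1-B$, while $A\to 1>0$; in particular $A,B>0$, which is the standing positivity assumption of both propositions.

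First I would treat case (i), $0<\alpha<2\ell$. Here I compute the two relevant gaps
$$A-(1-B)=(2\ell-\alpha)\omega+o(\omega), \qquad (1+B)-A=\alpha\omega+o(\omega).$$
The first has leading coefficient $2\ell-\alpha>0$ and the second has leading coefficient $\alpha>0$, so each is a nonzero constant times $\omega$ plus a dominated remainder; both are therefore strictly positive once $\omega$ is small enough (dependently on $\ell,\alpha$). This gives $|1-B|<A<1+B$, and Proposition \ref{pconv} applies directly, yielding that $\gamma_\alpha(\omega)$ is regular, strictly convex, contractible, and contained in the interior of the fundamental square $[0,2\pi]^2$.

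Next I would treat case (ii), $\alpha>2\ell$. Here the decisive gap is
$$(1-B)-A=(\alpha-2\ell)\omega+o(\omega),$$
whose leading coefficient $\alpha-2\ell$ is a positive constant, so the expression is positive for small $\omega$ and hence $A<1-B$. Combined with $A>0$ and $B>0$, these are exactly the hypotheses of the second case of Proposition \ref{pnoc}, which then gives that $\gamma_\alpha(\omega)$ is regular and consists of two non-contractible closed components of homological type $(0,1)$.

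There is no substantive obstacle: the only content is that the linear-in-$\omega$ main terms of $A-(1-B)$ and $(1+B)-A$ are the nonzero constants $2\ell-\alpha$, $\alpha$ (and in case (ii), $\alpha-2\ell$), so the $o(\omega)$ error is always harmless. The excluded threshold $\alpha=2\ell$ is precisely where $A-(1-B)$ changes sign, matching the omitted singular case $A=1-B$ of Proposition \ref{pnoc}. The smallness of $\omega$ required depends on $\ell$ and $\alpha$ only through how large $\omega$ may be taken before the remainder overtakes these nonvanishing linear terms.
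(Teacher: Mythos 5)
Your proof is correct and follows exactly the route the paper intends: the paper simply asserts that Propositions \ref{pconv} and \ref{pnoc} imply Corollary \ref{cconv}, and your computation of the gaps $A-(1-B)=(2\ell-\alpha)\omega+o(\omega)$, $(1+B)-A=\alpha\omega+o(\omega)$, and $(1-B)-A=(\alpha-2\ell)\omega+o(\omega)$ is precisely the routine verification being left to the reader. No issues.
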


 \begin{remark} \label{rtend} Fix an arbitrary $\alpha>0$. 
 As $\omega\to0$,  the slow curve  tends to the square with vertices 
 $(0,\pi)$, $(\pi,2\pi)$, $(2\pi,\pi)$, $(\pi,0)$, whose sides 
 lie in the lines $\{\theta+\tau=2\pi\pm\pi\}$, $\{\tau-\theta=\pm\pi\}$. 
 The corresponding vector fields converge to a 
 vector field with zero $\tau$-component and whose $\theta$-component 
 has simple zeros on the edges of the above square (with vertices deleted). 
 \end{remark}

  We deal with the liftings to the universal cover $\rr^2$ over $\tt^2$ 
   of vector fields (\ref{slf}) and their phase portraits. The lifted fields 
  will be denoted by the same symbol (\ref{slf}). 
 The slow curve $\gamma_\alpha=\gamma_\alpha(\omega)\subset\tt^2$ will be identified with its lifting $\gamma_\alpha^0$ 
 to the square $[0,2\pi]^2\subset\rr^2$. Its other lifting, obtained from the latter one 
 by translation by the vector $(2\pi,0)$ will be denoted by $\gamma^1_\alpha$. 
 \begin{definition} The {\it interior component} of the complement $\tt^2\setminus\gamma_{\alpha}$ is its connected 
 component containing the point $(\pi,\pi)$. Its liftings to  the squares $[0,2\pi]^2$ and $[2\pi,4\pi]\times[0,2\pi]$ will be 
 called the interior components of the complements  of the latter squares to the curves $\gamma^0_\alpha$ and $\gamma^1_\alpha$ respectively. 
 \end{definition}

 Fix  constants $h_0$, $h_1$, $h_2$ such that 
 $$\frac{3\pi}2<h_0<h_1<h_2<2\pi.$$
 For example, one can take, $h_0=\frac{6.5\pi}4$, $h_1=\frac{7\pi}4$, $h_2=\frac{15\pi}8$. 
  \begin{proposition} \label{pgeom} For every $\omega>0$ small enough the restriction of the function 
  $f_{\alpha}(\theta,\tau):=f_{\alpha}(\theta,\tau;\omega)$ 
   to the rectangle $[0,4\pi]\times[0,2\pi]$ is negative exactly in the 
 interior components of  complements of the curves $\gamma_\alpha^j$, 
 $j=0,1$, and positive outside the closure of the latter  components. 
 The strip 
 $$\Pi:=\{h_1\leq\tau\leq h_2\}$$
 intersects the curve $\gamma^1_\alpha$ by two disjoint 
 graphs (called  left and right) 
  $$L_{1,\alpha}:=\{\theta=\psi_1(\tau)\}, \ L_{2,\alpha}:=\{\theta=\psi_2(\tau)\}, \ \tau\in[h_1,h_2], \ \psi_1<\psi_2.$$ 
 The latter graphs converge 
uniformly in the $C^1$-norm to  segments parallel to the lines $\{\tau=\theta\}$ and $\{\tau=-\theta\}$ 
respectively, as $\omega\to0$. 
 \end{proposition}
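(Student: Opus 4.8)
The plan is to work directly with the explicit form of the slow curve. Writing $f_\alpha(\theta,\tau)=\cos\theta+c(\tau)$ with $c(\tau):=\ell\omega+A_\alpha(\omega)\cos\tau$, the curves $\gamma^0_\alpha,\gamma^1_\alpha$ are the zero locus $\{\cos\theta=-c(\tau)\}$ in the two fundamental squares. Since $1+\ell\omega-A_\alpha(\omega)=\alpha\omega+o(\omega)>0$ for $\omega$ small, the minimal value $c_{\min}=\ell\omega-A_\alpha(\omega)$ satisfies $c(\tau)\ge c_{\min}>-1$ for every $\tau$. Hence for each fixed $\tau$ the slice $\{f_\alpha<0\}\cap\{\tau=\mathrm{const}\}$ is either empty (when $c(\tau)\ge 1$) or a single open $\theta$-interval symmetric about $\theta=\pi$ (when $-1<c(\tau)<1$), and never the whole circle.

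First I would establish the sign statement. The set $\{\tau:\ c(\tau)<1\}$ is a connected $\tau$-interval (possibly all of the circle), and on it the segment $\{\theta=\pi\}$ lies in $\{f_\alpha<0\}$ because $f_\alpha(\pi,\tau)=-1+c(\tau)<0$ there; each nonempty $\tau$-slice is an interval attached to this segment. Consequently $\{f_\alpha<0\}\cap[0,2\pi]^2$ is connected, contains $(\pi,\pi)$ (where $f_\alpha=-1+\ell\omega-A_\alpha<0$), and lies in the complement of $\gamma^0_\alpha$; therefore it is exactly the interior component. Translating by $(2\pi,0)$ gives the same for $[2\pi,4\pi]\times[0,2\pi]$. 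As $\{f_\alpha=0\}=\gamma^0_\alpha\cup\gamma^1_\alpha$, the complement of the closure of the two interior components is precisely $\{f_\alpha>0\}$. This also recovers the topological picture of Corollary \ref{cconv} uniformly across the regimes $\alpha<2\ell$ and $\alpha>2\ell$, the only input being $c(\tau)>-1$.

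Next I would analyze the strip. For $\tau\in[h_1,h_2]\subset(\tfrac{3\pi}2,2\pi)$ one has $\cos\tau\in[\cos h_1,\cos h_2]\subset(0,1)$, so $-c(\tau)\to-\cos\tau$ uniformly as $\omega\to0$, with values in a fixed compact subinterval of $(-1,1)$. Hence for small $\omega$ the equation $\cos\theta=-c(\tau)$ has on $[2\pi,4\pi]$ exactly the two smooth solutions
\[
\psi_1(\tau)=2\pi+\arccos(-c(\tau)),\qquad \psi_2(\tau)=4\pi-\arccos(-c(\tau)),
\]
with $\arccos(-c(\tau))\in(\tfrac\pi2,\pi)$, so that $\psi_1<3\pi<\psi_2$ and the graphs $L_{1,\alpha},L_{2,\alpha}$ are disjoint; these are exactly $\gamma^1_\alpha\cap\Pi$. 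Using $\arccos(-\cos\tau)=\tau-\pi$ on $(\tfrac{3\pi}2,2\pi)$ gives $\psi_1\to\tau+\pi$ and $\psi_2\to 5\pi-\tau$ pointwise. For the $C^1$-statement I would differentiate: $\psi_1'(\tau)=-A_\alpha\sin\tau/\sqrt{1-c(\tau)^2}$, which tends to $-\sin\tau/|\sin\tau|=1$ (as $\sin\tau<0$ here), and symmetrically $\psi_2'\to-1$. Thus $L_{1,\alpha}$ and $L_{2,\alpha}$ converge in $C^1$ to segments of slopes $+1$ and $-1$, i.e. parallel to $\{\tau=\theta\}$ and $\{\tau=-\theta\}$ respectively.

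The only genuinely delicate point is this last one: the uniform $C^1$-convergence hinges on $\sqrt{1-c(\tau)^2}$ being bounded away from $0$ on $\Pi$, which is exactly why the strip must stay away from $\tau=2\pi$ — there the two branches collide at the vertex $(3\pi,2\pi)$ of the limiting square and $\psi_i'$ blows up. This is precisely what the hypotheses $\tfrac{3\pi}2<h_1<h_2<2\pi$ guarantee, since they keep $\cos\tau$ (hence $-c(\tau)$) in a compact subinterval of $(0,1)$ away from $\pm1$. Everything else is a routine verification of the stated convergences.
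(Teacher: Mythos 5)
Your proof is correct and follows the same route the paper takes: the paper disposes of Proposition \ref{pgeom} in one line by citing Remark \ref{rtend} (the convergence of the slow curve to the square with sides on $\{\theta+\tau=2\pi\pm\pi\}$, $\{\tau-\theta=\pm\pi\}$), and your argument is exactly the explicit verification of that remark via $\cos\theta=-c(\tau)$, the two $\arccos$ branches, and the uniform bound $1-c(\tau)^2\geq const>0$ on $[h_1,h_2]$. You supply the details (in particular the sign analysis of the $\tau$-slices and the $C^1$ estimate) that the paper leaves implicit, and your identification of where the hypotheses $\tfrac{3\pi}2<h_1<h_2<2\pi$ enter is accurate.
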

 Proposition \ref{pgeom} follows from  Remark \ref{rtend}. 
 \begin{proposition} \label{pslf} Let $\alpha>0$. Let 
 $I_+\subset\rr^2$ denote the horizontal segment 
 connecting the points $(2\pi,h_0)$ and $(3\pi,h_0)$.  
  The intersection of the strip 
 $\Pi$ with the 
 orbit of the segment 
 $I_+$ by flow of vector  field (\ref{slf}) is  a flowbox denoted by 
 $$F_{\alpha,+}=F_{\alpha,+}(\omega).$$ 
It will be called a {\bf slow flowbox}. Its flow lines are uniformly 
$\omega$-close to $L_{1,\alpha}$ in the $C^1$-norm.  
The  intersections $F_{\alpha,+}\cap\{\tau=h\}$ with $h\in[h_1,h_2]$ are segments 
whose lengths are uniformly bounded (in $h$, $\omega$) by an exponentially 
small  quantity $\exp(-\frac c{\omega})$; $c>0$ is  independent on $h$ and 
$\omega$. See Fig. 7. 
\end{proposition}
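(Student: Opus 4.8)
The plan is to treat (\ref{slf}) as a scalar ODE with the slow variable $\tau$ as independent parameter, to recognize the left graph $L_{1,\alpha}$ as an \emph{attracting} branch of the slow curve, and then to push the segment $I_+$ forward by the flow while measuring the exponential contraction transverse to $L_{1,\alpha}$. First I would use that $\dot\tau=\omega>0$ along every orbit, so $\tau$ increases strictly and each orbit meets every line $\{\tau=h\}$ exactly once; writing an orbit as a graph $\theta=\theta(\tau;\theta_0)$ over $\tau$, with $\theta(h_0;\theta_0)=\theta_0$, turns (\ref{slf}) into
\[
\frac{d\theta}{d\tau}=\frac1\omega\,f_\alpha(\theta,\tau;\omega).
\]
Since orbits cannot cross, the time-$h$ map $\theta_0\mapsto\theta(h;\theta_0)$ is an orientation-preserving diffeomorphism onto its image, so the orbit of $I_+$ meets each $\{\tau=h\}$ in a single segment. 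This gives the claimed flowbox structure of $F_{\alpha,+}$ and reduces the length estimate to controlling the Jacobian $\partial\theta(h;\theta_0)/\partial\theta_0$.

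Next I would establish that $L_{1,\alpha}$ is normally hyperbolic attracting with rate of order $\omega^{-1}$, and that $I_+$ lies in its basin. Along $L_{1,\alpha}$ one has, as $\omega\to0$, $\theta=\tau+\pi+o(1)$, hence $\sin\theta=-\sin\tau+o(1)\geq c_0>0$ for $\tau\in[h_0,h_2]$, because $h_2<2\pi$ keeps $\tau$ away from the zero of $-\sin\tau$ at $2\pi$; since $\partial f_\alpha/\partial\theta=-\sin\theta$, this makes $L_{1,\alpha}$ attracting. A direct sign check at the endpoints of $I_+$, namely $f_\alpha(2\pi,\tau)=1+\ell\omega+A_\alpha\cos\tau>0$ and $f_\alpha(3\pi,\tau)=-1+\ell\omega+A_\alpha\cos\tau<0$ for $\tau\in[h_0,h_2]$ (using $\cos\tau>0$ there, $\cos\tau\le\cos h_2<1$, and $A_\alpha\to1$), shows that the band $2\pi\leq\theta\leq3\pi$ is positively invariant; hence every orbit issued from $I_+$ stays in $(2\pi,3\pi)$, where $\sin\theta>0$, and is trapped in the basin of $L_{1,\alpha}$ rather than of the repelling branch $L_{2,\alpha}$.

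Then I would run the variational equation: $\xi:=\partial\theta/\partial\theta_0$ solves $d\xi/d\tau=-\omega^{-1}\sin\theta(\tau)\,\xi$, so
\[
\frac{\partial\theta(h;\theta_0)}{\partial\theta_0}=\exp\left(-\frac1\omega\int_{h_0}^{h}\sin\theta(\tau;\theta_0)\,d\tau\right),
\]
and the length of $F_{\alpha,+}\cap\{\tau=h\}$ is the integral of this Jacobian over $I_+$. It remains to bound $\int_{h_0}^{h}\sin\theta\,d\tau$ below by a constant $c>0$ uniform in $\theta_0\in[2\pi,3\pi]$ and $h\in[h_1,h_2]$: each orbit reaches an exponentially small neighbourhood of $L_{1,\alpha}$ after a fast transient occupying only $O(\omega)$ of slow time, so the integrand is $\geq c_0$ off an $O(\omega)$-interval, giving $\int_{h_0}^{h}\sin\theta\,d\tau\geq c_0(h_1-h_0)/2=:c>0$ for $\omega$ small. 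This produces the bound $\exp(-c/\omega)$ on the slice lengths. The same normally-hyperbolic analysis (Fenichel's theorem applied to the attracting branch $L_{1,\alpha}$) furnishes an invariant slow manifold $O(\omega)$-$C^1$-close to $L_{1,\alpha}$ carrying the flowbox, whence the flow lines of $F_{\alpha,+}$ are uniformly $C^1$-$\omega$-close to $L_{1,\alpha}$.

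I expect the main obstacle to be the uniformity of the lower bound on $\int_{h_0}^{h}\sin\theta\,d\tau$ near the endpoints $\theta_0=2\pi,3\pi$ of $I_+$, where the contraction rate $\sin\theta$ degenerates to $0$ at the initial instant: bounding the duration of the fast transient there (so that the deficit in the integral is only $O(\omega)$), and upgrading the $C^0$-closeness of the flow lines to $C^1$-closeness, are the points requiring care, and are precisely where the Fenichel normal-hyperbolicity estimates enter.
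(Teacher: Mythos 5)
Your proposal is correct and follows essentially the same route as the paper: the paper's proof of Proposition \ref{pslf} is a one-line reference to Proposition \ref{pgeom} together with the classical theory of slow-fast systems (citing Ilyashenko--Guckenheimer), and your argument is precisely that theory instantiated for this field --- the sign checks $f_\alpha(2\pi,\tau)>0>f_\alpha(3\pi,\tau)$ giving the trapping band, normal hyperbolicity of the attracting branch $L_{1,\alpha}$ from $\partial f_\alpha/\partial\theta=-\sin\theta<0$ there, the variational equation yielding the exponential contraction $\exp\bigl(-\omega^{-1}\int\sin\theta\,d\tau\bigr)$, and Fenichel-type estimates for the $C^1$-closeness. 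The only minor imprecision is the phrase ``exponentially small neighbourhood after $O(\omega)$ of slow time'' (reaching an exponentially small neighbourhood costs $O(\omega\log(1/\omega))$), but what your integral bound actually needs --- a fixed $\delta$-neighbourhood reached in $O(\omega)$ slow time, with $\sin\theta\geq0$ throughout the transient because the orbit stays in $[2\pi,3\pi]$ --- is exactly what you establish, so there is no gap.
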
 
  \begin{proof} The proposition follows from Proposition \ref{pgeom} 
   and the classical theory of slow-fast systems. See, e.g., 
   \cite[theorem 3 and proposition 4]{ilguk}. 
 \end{proof}
 \begin{figure}[ht]
  \begin{center}
   \epsfig{file=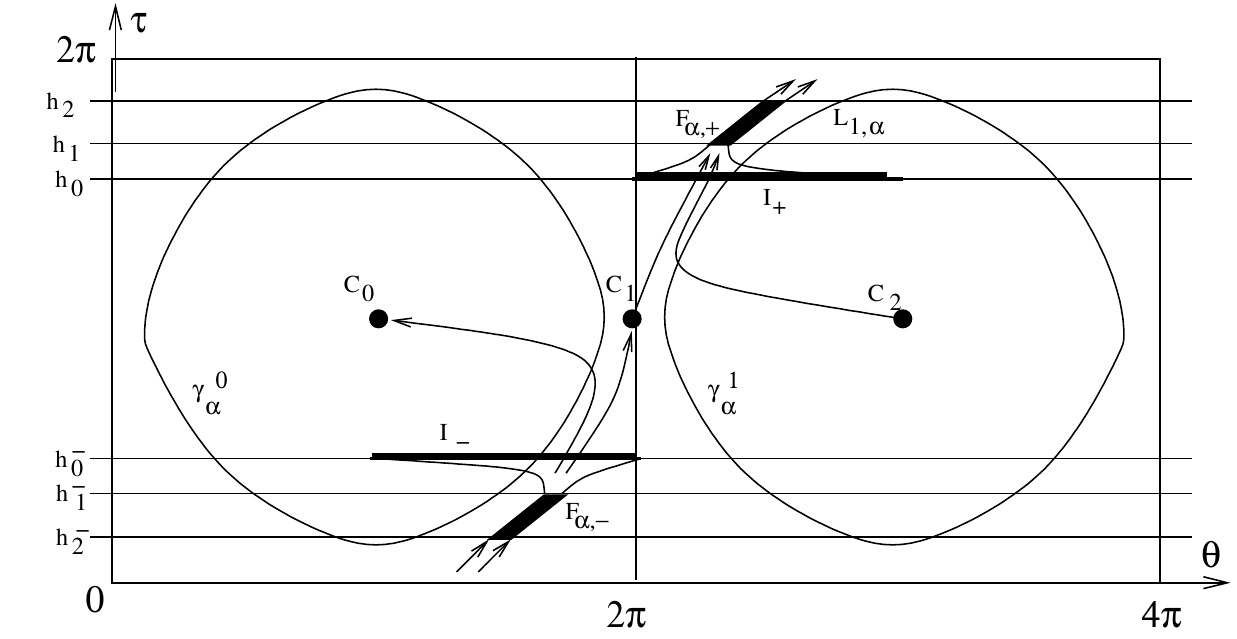}
    \caption{The slow flowboxes $F_{\alpha,\pm}$ (black) and orbits of points $C_j$.}
  \end{center}
\end{figure} 

\begin{remark} \label{remsym} The phase-portrait of vector field (\ref{slf}) is symmetric with 
respect to the points 
$$C_0:=(\pi,\pi), \ C_1:=(2\pi,\pi), \ C_2:=(3\pi,\pi);$$ 
the symmetry changes the sign (i.e., orientation) of the field. 
Let $I_-$ denote the horizontal segment symmetric to $I_+$ with respect to the point $C_1$, see Fig. 7. 
The above construction applied to the inverse vector field, the segment $I_-$ 
and the heights $h_j^-:=2\pi-h_j$ 
yields the slow flowbox 
$$F_{\alpha,-} \text{ symmetric to } F_{\alpha,+} \text{ with respect to the point } C_1.$$
\end{remark}
\subsection{The Comparison Lemma}
\begin{lemma} {\bf (Comparison Lemma).} Let $0<\alpha_1<\alpha_2$. 
 Consider two families (\ref{slf})$_j$, $j=1,2$, 
 of dynamical systems  (\ref{slf}) with $A=A_{\alpha_j}(\omega)$ satisfying (\ref{asa}). 
 For every $\omega>0$ small enough the corresponding 
 flowboxes $F_{\alpha_1+}$ and $F_{\alpha_2,+}$ are disjoint and 
 $F_{\alpha_2,+}$ lies on the left from the flowbox $F_{\alpha_1,+}$. Similarly, 
 the  flowboxes $F_{\alpha_1-}$ and $F_{\alpha_2,-}$ are disjoint and
 $F_{\alpha_2,-}$ lies on the right from the flowbox $F_{\alpha_1,-}$.
 \end{lemma}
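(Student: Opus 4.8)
The plan is to reduce the comparison of the two flowboxes to a comparison of individual orbits issued from the \emph{same} initial point of the segment $I_+$, and then to absorb the exponentially small width of each flowbox into the polynomially large ($O(\omega)$) gap between such paired orbits. Since $F_{\alpha_1,-}$ and $F_{\alpha_2,-}$ are the images of $F_{\alpha_1,+}$ and $F_{\alpha_2,+}$ under the reflection through $C_1=(2\pi,\pi)$, which reverses the $\theta$-ordering (Remark \ref{remsym}), the statement for the minus-flowboxes will follow at once from that for the plus-flowboxes; so I concentrate on the latter.

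The starting observation is a pointwise inequality between the two fields. Along any orbit the coordinate $\tau$ increases (since $\dot\tau=\omega>0$), so every orbit issued from $I_+\subset\{\tau=h_0\}$ meets the strip $\Pi$ after running through $\tau\in[h_0,h_2]$, where $\cos\tau\ge\cos h_0>0$ because $h_0>\tfrac{3\pi}2$. From $A_{\alpha_1}(\omega)-A_{\alpha_2}(\omega)=(\alpha_2-\alpha_1)\omega+o(\omega)$, see (\ref{asa}), and $\cos\tau>0$ one gets, for all small $\omega$ and all $\theta$, the inequality $f_{\alpha_1}(\theta,\tau;\omega)-f_{\alpha_2}(\theta,\tau;\omega)=(A_{\alpha_1}-A_{\alpha_2})\cos\tau\ge c_1\omega$ with a constant $c_1>0$. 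Parametrising an orbit by $\tau$ gives $\tfrac{d\theta}{d\tau}=\tfrac1\omega f_\alpha$; writing $\theta_{\alpha_j}(\tau)$ for the two orbits issued from a common point $(\theta_0,h_0)\in I_+$ and $D=\theta_{\alpha_1}-\theta_{\alpha_2}$, one finds $\omega D'=-a(\tau)D+g(\tau)$, where $a(\tau)=-\partial_\theta f=\sin\xi$ for an intermediate value $\xi$ and $g(\tau)=(A_{\alpha_1}-A_{\alpha_2})\cos\tau\ge c_1\omega$. As in the analysis preceding Proposition \ref{pslf}, every orbit of $I_+$ stays in the zone $\{2\pi<\theta<3\pi\}$, where $\sin\theta\ge0$, so $0\le a(\tau)\le1$. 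Comparing $D$ (which vanishes at $\tau=h_0$) with the solution of $\omega\underline D'=-\underline D+c_1\omega$, $\underline D(h_0)=0$, namely $\underline D(\tau)=c_1\omega\big(1-e^{-(\tau-h_0)/\omega}\big)$, yields $D(\tau)\ge\underline D(\tau)\ge\tfrac{c_1}2\omega$ for all $\tau\in[h_1,h_2]$ once $\omega$ is small, \emph{uniformly} in the initial point $\theta_0\in[2\pi,3\pi]$.

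It then remains to pass from this uniform gap between paired orbits to disjointness of the flowboxes. Fix a height $h\in[h_1,h_2]$. Since the time-$\tau$ flow preserves the $\theta$-order on the fibre, each $F_{\alpha_j,+}\cap\{\tau=h\}$ is an interval of length at most $\exp(-c/\omega)$, by Proposition \ref{pslf}. The left endpoint of $F_{\alpha_1,+}\cap\{\tau=h\}$ equals $\min_{\theta_0}\theta_{\alpha_1}(h;\theta_0)\ge\min_{\theta_0}\theta_{\alpha_2}(h;\theta_0)+\tfrac{c_1}2\omega$, while the right endpoint of $F_{\alpha_2,+}\cap\{\tau=h\}$ is at most $\min_{\theta_0}\theta_{\alpha_2}(h;\theta_0)+\exp(-c/\omega)$. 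As $\tfrac{c_1}2\omega-\exp(-c/\omega)>0$ for small $\omega$, the left endpoint of the $\alpha_1$-box lies strictly to the right of the right endpoint of the $\alpha_2$-box; hence the two boxes are disjoint and $F_{\alpha_2,+}$ lies on the left of $F_{\alpha_1,+}$, as claimed. The minus-case follows by the reflection through $C_1$ noted above.

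The main obstacle is precisely the gain of a full power of $\omega$ in the separation: the two attracting slow curves $L_{1,\alpha_1}$ and $L_{1,\alpha_2}$ are themselves only $O(\omega)$ apart, the same order as the $O(\omega)$ distance of a flowbox to its own slow curve, so a crude appeal to the $C^1$-closeness of flow lines to the slow curves does not separate the boxes. The point of comparing two orbits with a \emph{common} initial condition is that their difference $D$ satisfies a scalar linear equation with strictly positive forcing $g\ge c_1\omega$ and non-positive relaxation coefficient $-a\le0$, which drives $D$ to a quasi-equilibrium of size $\asymp\omega$; this order-$\omega$ gap then dwarfs the exponentially small widths $\exp(-c/\omega)$ supplied by Proposition \ref{pslf}. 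The remaining routine points to verify are that $\cos\tau$ is bounded away from $0$ on $[h_0,h_2]$, that every orbit of $I_+$ indeed stays in the attracting zone $\{2\pi<\theta<3\pi\}$ up to the top of the strip, and that the $o(\omega)$ remainders in (\ref{asa}) are uniform enough to guarantee $A_{\alpha_1}-A_{\alpha_2}\ge\tfrac{\alpha_2-\alpha_1}2\,\omega$ for small $\omega$.
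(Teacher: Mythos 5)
Your proof is correct, and it takes a genuinely different route from the paper's. Both arguments rest on the same two ingredients: the pointwise field comparison $f_{\alpha_1}-f_{\alpha_2}=(A_{\alpha_1}-A_{\alpha_2})\cos\tau\ge c_1\omega$ for $\tau\in[h_0,h_2]$ (this is inequality (\ref{star1}) of Proposition \ref{pangle}) and the exponentially small flowbox widths from Proposition \ref{pslf}; they differ in how the order-$\omega$ advantage is converted into disjointness. The paper argues softly: it uses only the orientation statement (the orbit of field (\ref{slf})$_2$ through a point of $W$ stays weakly to the left of the orbit of field (\ref{slf})$_1$), introduces an auxiliary flowbox over the enlarged strip $\{h_1'\le\tau\le h_2\}$, and splits into two cases according to whether the corner point $q_2$ lies inside that box, escaping in the bad case via the unit-time flow map of field (\ref{slf})$_2$ together with the uniform angle bound $\sigma$. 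You instead pair the orbits issued from a \emph{common} point of $I_+$, write the scalar linear equation $\omega D'=-aD+g$ for their difference, and run a Gronwall comparison against the barrier $\underline D(\tau)=c_1\omega\bigl(1-e^{-(\tau-h_0)/\omega}\bigr)$ to get the explicit separation $D\ge\frac{c_1}{2}\omega$ on $[h_1,h_2]$, uniformly in the initial point; this then dwarfs the widths fibrewise. Your route is shorter and more quantitative; in fact the forward invariance of $\{2\pi\le\theta\le3\pi\}$ that you invoke (true, by checking the sign of $f_{\alpha_j}$ on the two bounding vertical lines for $\tau\in[h_0,h_2]$, though not spelled out in the paper at the place you cite) is not even needed, since the barrier comparison only uses $a=\sin\xi\le1$ together with $D\ge0$ and $\underline D\ge0$, not $a\ge0$. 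What the paper's softer argument buys is that it avoids any rate computation along orbits and reuses Proposition \ref{pangle} wholesale; both proofs are complete.
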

 It suffices to prove the statement 
 of the lemma for the flowboxes $F_{\alpha_j,+}$, by symmetry (Remark \ref{remsym}). 
Here and below we use the next proposition. 
 \begin{proposition} \label{pangle} For every $\omega$ small enough 
 the following statements hold. The vectors of the 
 fields (\ref{slf})$_1$ and (\ref{slf})$_2$ form a positively oriented basis 
 at each point of the union of two strips 
 $$W:=\{0\leq\tau<\frac{\pi}2\}\cup\{\frac{3\pi}2<\tau\leq2\pi\}.$$
 At each point in the $\frac\omega8$-neighborhood of the flowbox $F_{\alpha_1,+}$ the  
 angles between the  vectors of the fields are 
  greater than $\sigma:=\arctan(2+b)-\arctan2$, 
  $b=\frac{\cos h_0}2(\alpha_2-\alpha_1)$. 
  The image of the flowbox  $F_{\alpha_1,+}$ under the unit time 
   flow map of the field (\ref{slf})$_2$ is disjoint from $F_{\alpha_1,+}$, and its intersection with 
   the strip $\Pi$ lies on  the left from $F_{\alpha_1,+}$. 
 \end{proposition}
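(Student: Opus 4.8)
The plan is to reduce everything to the single elementary identity that the fields (\ref{slf})$_1$ and (\ref{slf})$_2$ differ only in their $\theta$-components, namely
\[
f_{\alpha_1}(\theta,\tau;\omega)-f_{\alpha_2}(\theta,\tau;\omega)=(A_{\alpha_1}(\omega)-A_{\alpha_2}(\omega))\cos\tau,
\]
where by (\ref{asa}) one has $A_{\alpha_1}(\omega)-A_{\alpha_2}(\omega)=(\alpha_2-\alpha_1)\omega+o(\omega)>0$ for $\omega$ small. First I would dispatch the orientation statement: the field vectors are $v_j=(f_{\alpha_j},\omega)$, so the oriented area of the pair is $\det(v_1,v_2)=\omega(f_{\alpha_1}-f_{\alpha_2})$, which has the sign of $\cos\tau$. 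On $W$ the excluded heights $\tau=\frac\pi2,\frac{3\pi}2$ are exactly the zeros of $\cos\tau$, so $\cos\tau>0$ throughout $W$ and the basis $(v_1,v_2)$ is positively oriented there for all small $\omega$.

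For the angle bound I would work with the trajectory slopes $s_j:=\omega/f_{\alpha_j}=\tfrac{d\tau}{d\theta}$ along field $j$. By Propositions \ref{pgeom} and \ref{pslf} the flow lines of $F_{\alpha_1,+}$ are $C^1$-$\omega$-close to the left graph $L_{1,\alpha_1}$, whose limiting slope is $1$; hence $f_{\alpha_1}=\omega(1+o(1))$ on $F_{\alpha_1,+}$, and moving at most $\tfrac\omega8$ horizontally changes $f_{\alpha_1}$ by at most $|\sin\theta|\tfrac\omega8\le\tfrac\omega8$, so on the whole $\tfrac\omega8$-neighborhood $f_{\alpha_1}\ge\tfrac78\omega(1+o(1))$ and therefore $s_1\le 2$ once $\omega$ is small. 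The same identity gives $f_{\alpha_1}-f_{\alpha_2}=(\alpha_2-\alpha_1)\omega\cos\tau(1+o(1))$, whence $s_2-s_1=\omega(f_{\alpha_1}-f_{\alpha_2})/(f_{\alpha_1}f_{\alpha_2})=(\alpha_2-\alpha_1)\cos\tau(1+o(1))$; since $\Pi=\{h_1\le\tau\le h_2\}$ and $\cos$ increases on $(\tfrac{3\pi}2,2\pi)$, one has $\cos\tau\ge\cos h_1>\cos h_0$, so $s_2-s_1>b=\tfrac{\cos h_0}2(\alpha_2-\alpha_1)$, the halving absorbing the $o(1)$ and the deviation of $f_{\alpha_1}f_{\alpha_2}$ from $\omega^2$. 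The angle between the fields equals $\arctan s_2-\arctan s_1$, and since $s\mapsto\arctan(s+b)-\arctan s$ is decreasing, $\arctan s_2-\arctan s_1\ge\arctan(s_1+b)-\arctan s_1\ge\arctan(2+b)-\arctan 2=\sigma$, with strict inequality because $s_1<2$; this is the claim.

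The displacement statement I would get from the same sign computation. The unit-time flow of (\ref{slf})$_2$ raises $\tau$ by exactly $\omega$; comparing, at a common height, the $\theta$-coordinate of the image of a point of $F_{\alpha_1,+}$ with the position of the field-$1$ flowbox, the relative drift rate in $\theta$ is $f_{\alpha_1}-f_{\alpha_2}=(\alpha_2-\alpha_1)\omega\cos\tau>0$, i.e.\ the image moves strictly to the left. Integrating over the unit time interval (during which $\tau$ varies by only $\omega$, so $\cos\tau$ is essentially frozen) produces a leftward shift of order $(\alpha_2-\alpha_1)\omega\cos\tau$, which dominates the exponentially small width $\exp(-c/\omega)$ of $F_{\alpha_1,+}$ from Proposition \ref{pslf}. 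Hence the image is disjoint from $F_{\alpha_1,+}$ and its part inside $\Pi$ lies to the left of it.

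The hard part will be the uniform slow-fast bookkeeping rather than any single inequality: one must check that the $C^1$-closeness of Proposition \ref{pslf} really forces $f_{\alpha_1}=\omega(1+o(1))$ uniformly over the \emph{entire} $\tfrac\omega8$-neighborhood (not merely on the flowbox), and that all $o(1)$, $o(\omega)$ errors are uniform in $\tau\in[h_1,h_2]$ and in the two families simultaneously, so that the explicit thresholds $s_1\le2$ and $s_2-s_1>b$ hold for \emph{all} $\omega$ below one common bound. The generous constants (the value $2$ in place of the true slope $1$, and the factor $\tfrac12$ in $b$) are precisely the slack that makes these uniform estimates go through; the equal-height comparison in the displacement step also needs the mild care of writing both the image and the flowbox as graphs over $\tau$ before subtracting.
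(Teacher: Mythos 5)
Your argument is correct and is essentially the paper's own proof: the orientation statement from the sign of $f_{\alpha_1}-f_{\alpha_2}=(A_{\alpha_1}(\omega)-A_{\alpha_2}(\omega))\cos\tau$, the angle bound from the pair of estimates $f_{\alpha_1}-f_{\alpha_2}>b\omega$ on $\{h_0\le\tau\le h_2\}$ and $f_{\alpha_1}\asymp\omega$ on the $\frac{\omega}{8}$-neighborhood of $F_{\alpha_1,+}$ (the paper's inequalities (\ref{star1}), (\ref{star2})), and the last statement from an order-$\omega$ leftward drift against the exponentially small flowbox width. The only detail to tighten is that the $\frac{\omega}{8}$-neighborhood is two-dimensional, so perturbing $f_{\alpha_1}$ off a flow line uses both $|\partial f/\partial\theta|\le 1$ and $|\partial f/\partial\tau|\le A_{\alpha_1}(\omega)$, not only $|\sin\theta|\frac{\omega}{8}$; this is absorbed by your slack exactly as it is by the paper's generous bounds in (\ref{star2}).
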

 \begin{proof} The vectors of the fields (\ref{slf})$_1$ and (\ref{slf})$_2$ 
 have the same $\tau$-component equal to $\omega$. The difference of their 
 $\theta$-components is  $f_{\alpha_1}(\theta,\tau;\omega)-f_{\alpha_2}(\theta,\tau;\omega)=(\alpha_2-\alpha_1)\omega(1+o(1))\cos\tau>0$ on $W$, whenever $\omega$ is 
 small enough, since $\cos\tau>0$ on $W$. Therefore, the vectors 
 of the field (\ref{slf})$_2$ are directed to the left from the vectors of the 
 field (\ref{slf})$_1$ on $W$, that is,  
  the orientation statement of the proposition holds. For every $\omega$ small enough one has 
  \begin{equation} f_{\alpha_1}(\theta,\tau;\omega)-f_{\alpha_2}(\theta,\tau;\omega)>b\omega, \ 
   b:=\frac{\cos h_0}2(\alpha_2-\alpha_1), \text{ if } \tau\in[h_0,h_2],\label{star1}\end{equation}
   by the above asymptotics, and also 
   \begin{equation}\frac\omega4< f_{\alpha_1}(\theta,\tau;\omega)<2\omega\ \ \text{ on the } \frac\omega8-\text{neighborhood 
   of  }\ \ F_{\alpha_1,+}.\label{star2}\end{equation}
   Indeed, the flow lines of the field (\ref{slf})$_1$ in $F_{\alpha_1,+}$ $C^1$-converge to 
   the line 
   $\tau=\theta-\pi$ 
 (Propositions \ref{pgeom} and \ref{pslf}), hence $f_{\alpha_1}(\theta,\tau;\omega)\simeq\omega$ on $F_{\alpha_1,+}$. 
 This together with (\ref{asa}), (\ref{slf}) and the obvious inequality $|\cos'x|=|\sin x|\leq1$ implies (\ref{star2}). 
 The angle lower bound statement of Proposition \ref{pangle} follows from (\ref{star1}) and (\ref{star2}). Its last statement 
 on image of the flowbox $F_{\alpha_1,+}$ under the unit time flow map  of the field (\ref{slf})$_2$ 
 follows from the above angle bound and the fact that the vectors of the field (\ref{slf})$_2$ have length 
 no less than $\omega$, while the width of the flowbox $F_{\alpha_1,+}$ is exponentially small (the last statement 
 of Proposition \ref{pslf}). Proposition \ref{pangle} is proved.
   \end{proof}
   
  \begin{proof} {\bf of the Comparison Lemma.} 
  
  {\bf Claim.} {\it For every $\omega>0$ small enough for every $p\in W$ the  positive flow line of the field 
 (\ref{slf})$_2$ through $p$ in $W$  
  lies on the left from the corresponding flow line of the field (\ref{slf})$_1$.}
  
  The claim follows from   
  the orientation statement of Proposition \ref{pangle}.

 Fix an intermediate number $h'_1\in(h_0,h_1)$. 
 Consider  the flowbox $F_{\alpha_1,+}'$ constructed as in Proposition \ref{pslf}  with   
 $\Pi$ replaced by $\Pi':=\{h_1'\leq\tau\leq h_2\}$. One obviously has $\Pi\cap F_{\alpha_1,+}'=F_{\alpha_1,+}$. 
 The lengths of horizontal sections of the flowbox $F_{\alpha_1,+}'$ are uniformly 
 bounded by a quantity $\exp(-\frac{d}{\omega})$, with $d>0$ independent on $\omega$ (Proposition \ref{pslf}). 
 Take the lower  horizontal base of the flowbox $F_{\alpha_1,+}'$, which is a segment 
 in the line $\{\tau=h_1'\}$ with length bounded by the above exponent. Let 
 $q_1:=(\chi_1,h_1')$ denote its right boundary point, which lies in the (\ref{slf})$_1$-orbit of the  
 end $(3\pi,h_0)$ of the segment $I_+$. 
 
 Consider the analogous flowbox $F_{\alpha_2,+}'$ and point $q_2:=(\chi_2,h_1')$ 
 for the field (\ref{slf})$_2$. One has $\chi_2<\chi_1$, by the claim. 
 First suppose  
 that $q_2\notin F_{\alpha_1,+}'$. Then the lower base of the flowbox $F_{\alpha_2,+}'$ 
 is disjoint from the flowbox $F_{\alpha_1,+}'$ and lies on its left. 
 This together with the above claim implies that the flowboxes  are disjoint. 
 In the case, when $q_2\in F_{\alpha_1,+}'$, the image $q_2'$ of the point $q_2$ under 
 the  time 1 flow map of the field (\ref{slf})$_2$ would lie strictly to the left from the 
 flowbox $F_{\alpha_1,+}'$, by Proposition \ref{pangle}. Therefore, 
 the positive orbit of the point $q_2'$ also lies on its left, by the claim. 
 Note that 
 $$\tau(q_2')=\tau(q_2)+\omega=h_1'+\omega<h_1,$$ 
 whenever $\omega$ is small enough. Therefore, the above positive orbit intersects 
 the strip $\Pi=\{ h_1\leq\tau\leq h_2\}$ by an arc of curve going from its lower 
 base to its upper base and lying on the left from the flowbox $F_{\alpha_1,+}$. 
 The latter curve bounds $F_{\alpha_2,+}$ from the right, by construction. 
 Hence, $F_{\alpha_2,+}$ is disjoint from $F_{\alpha_1,+}$ and lies on its left.  
 The Comparison Lemma is proved.
 \end{proof}
\subsection{The Monotonicity Lemma}
Consider two families of vector fields (\ref{slf})$_{j}$, 
$j=1,2$ (treated as fields lifted to $\rr^2$),  as in the Comparison Lemma, corresponding to 
$\alpha_1>0$ and $\alpha_2>\alpha_1$. 
We study their {\it Poincar\'e maps} $P_j^{\tau_1,\tau_2}$: 
the time $\frac{\tau_2-\tau_1}\omega$ flow maps from the line $\{\tau=\tau_1\}$ to the 
line $\{\tau=\tau_2\}$ considered as functions of the coordinate $\theta$. 
For simplicity, we denote 
$$P_j(\theta):=P_j^{0,2\pi}(\theta).$$ 

\begin{lemma} \label{lemon} {\bf (Monotonicity Lemma)} 
For every $\omega>0$ small enough  
\begin{equation}P_2(\theta)<P_1(\theta) \text{ for every } \theta\in\rr.\label{monpoinc}
\end{equation}
\end{lemma}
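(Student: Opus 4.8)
The plan is to fix $\theta$ and follow the two lifted trajectories $\Theta_1(\tau),\Theta_2(\tau)$ of the fields (\ref{slf})$_1$ and (\ref{slf})$_2$ issued from the common point $(\theta,0)$, so that $P_j(\theta)=\Theta_j(2\pi)$ and $\frac{d\Theta_j}{d\tau}=\omega^{-1}f_{\alpha_j}(\Theta_j,\tau;\omega)$; the target is the strict inequality $\Theta_2(2\pi)<\Theta_1(2\pi)$. The first step is to record the one-sided crossing property that underlies everything: at any instant where the trajectories meet, $\Theta_1(\tau)=\Theta_2(\tau)$, one has
\[
\frac{d}{d\tau}(\Theta_2-\Theta_1)=\omega^{-1}(A_{\alpha_2}-A_{\alpha_1})\cos\tau=-(\alpha_2-\alpha_1)(1+o(1))\cos\tau,
\]
by (\ref{asa}). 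Hence $\Theta_2$ can cross $\Theta_1$ only downward (to the left) on the strips $W=\{0\le\tau<\frac{\pi}2\}\cup\{\frac{3\pi}2<\tau\le2\pi\}$, where $\cos\tau>0$, and only upward on the band around $\tau=\pi$, where $\cos\tau<0$. This is the trajectory form of the orientation statement of Proposition \ref{pangle}.

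The main step is to upgrade this local information to a global conclusion using the attracting slow flowbox near the end of the period. By Propositions \ref{pgeom} and \ref{pslf}, before $\tau$ enters the strip $\Pi=\{h_1\le\tau\le h_2\}$ both trajectories are funnelled onto the attracting slow flowboxes $F_{\alpha_1,+}$ and $F_{\alpha_2,+}$, inside which the flow is squeezed onto a tube of exponentially small width $\exp(-c/\omega)$ about the central flow line $C^1$-close to $L_{1,\alpha_j}$. The Comparison Lemma places $F_{\alpha_2,+}$ strictly to the left of $F_{\alpha_1,+}$, with a separation of order $\omega$ (hence far larger than the tube widths), produced by the definite angle lower bound $\sigma$ of Proposition \ref{pangle}. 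Thus at the top $\tau=h_2$ of $\Pi$ one has $\Theta_2(h_2)<\Theta_1(h_2)$; continuing the flow from $h_2$ to $2\pi$ stays inside $W$, where by the first step the strict left-ordering can only be preserved. This yields $\Theta_2(2\pi)<\Theta_1(2\pi)$, i.e. (\ref{monpoinc}).

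The main obstacle, and the reason the flowbox machinery is genuinely needed, is to guarantee that the two trajectories reach corresponding copies of these ordered flowboxes rather than copies shifted by a full period $2\pi$. The difference $D=\Theta_2-\Theta_1$ satisfies
\[
D'=-(\alpha_2-\alpha_1)(1+o(1))\cos\tau-\omega^{-1}\sin(\xi)\,D
\]
with $\xi$ between $\Theta_1(\tau)$ and $\Theta_2(\tau)$, whose large coefficient $-\omega^{-1}\sin\xi$ is strongly contracting on the attracting branches ($\sin\xi>0$) but strongly amplifying on the repelling branch near $\tau=\pi$ ($\sin\xi<0$), so a priori $D$ could grow like $e^{c/\omega}$. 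I would rule this out by showing that both trajectories shadow one and the same attracting slow trajectory up to an exponentially small error outside the brief fast jumps and the short repelling transition, during which $\tau$ changes only by $O(\omega)$ and the bounded forcing keeps $D$ well below $2\pi$; the one-sided crossing property, together with the repelling flowbox $F_{\alpha_j,-}$ of the Comparison Lemma used as a barrier near $\tau=0$, prevents the trajectories from drifting into different copies. Once the copies are matched and the order-$\omega$ separation is seen to dominate the exponentially small contraction error, (\ref{monpoinc}) holds for every $\theta$.
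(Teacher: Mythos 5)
Your first step (one-sided crossing where $\cos\tau>0$) and your use of the Comparison Lemma to order the flowboxes both match the paper's Claim 1 and its inputs. But the central step of your argument --- ``at the top $\tau=h_2$ of $\Pi$ one has $\Theta_2(h_2)<\Theta_1(h_2)$'' because both trajectories have been funnelled into the ordered flowboxes $F_{\alpha_2,+}$ and $F_{\alpha_1,+}$ --- rests on a false premise. The map $P_j^{0,h_2}$ is a lift of a circle diffeomorphism, hence a surjection of $\rr$ onto $\rr$, while the cross-section $F_{\alpha_j,+}\cap\{\tau=h_2\}$ is an exponentially short interval (Proposition \ref{pslf}). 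So for a nonempty set of initial conditions $\theta$ the trajectory does \emph{not} land in the attracting flowbox at $\tau=h_2$: these are precisely the canard-like orbits that follow the repelling (left) branch of the slow curve through the band where $\cos\tau<0$. For such $\theta$ the flowbox ordering says nothing about the relative position of $\Theta_1(h_2)$ and $\Theta_2(h_2)$. Your third paragraph acknowledges the exponential amplification $e^{c/\omega}$ near the repelling branch but proposes to rule it out by showing that ``both trajectories shadow one and the same attracting slow trajectory''; that is exactly what fails for the exceptional initial conditions, so the gap is not closed. Pointwise comparison of the two trajectories issued from the same $\theta$ across the repelling passage cannot work uniformly in $\theta$: two orbits of the \emph{same} field that are exponentially close on entry can exit $O(1)$ apart there.

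The paper circumvents this by never comparing the two trajectories pointwise through the dangerous region. It factors $P_j$ as in (\ref{poincomp}), reduces to the middle maps $\wt P_j=P_j^{h_1^-,h_1^+}$, and proves $\wt P_2<\wt P_1$ by a segment argument: the marked orbits through $C_0=(\pi,\pi)$ and $C_1=(2\pi,\pi)$ are guaranteed by Proposition \ref{inflow} to lie in the flowboxes $F_{\alpha_j,\pm}$, which pins down the endpoints $a_{j\pm}$, $b_{j\pm}$ up to $O(\omega)$ and yields the interlacing (\ref{ineqp}); then monotonicity of each $\wt P_j$ sandwiches the image of every point of a fundamental segment --- including the canard trajectories --- between images of these marked points, giving (\ref{middlep}) on each of four subsegments. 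Some device of this kind (reference orbits whose landing is controlled, plus monotonicity to handle everything in between) is what your proposal is missing; without it the inequality is only established off an exceptional set of $\theta$, which is not enough since a single periodic canard orbit with rotation number $\ell$ would suffice to keep the parameter in $L_\ell$.
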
 
Lemma \ref{lemon} is proved below. In its proof we use the following proposition.

\begin{proposition} \label{inflow} 
 Let $C_0$, $C_1$, $C_2$, $h_k^-$, $F_{\alpha_j,-}$ be the same, 
 as in Remark \ref{remsym}.  The intersection of the positive 
orbit of the segment $[C_1,C_2]$ under the flow of the field (\ref{slf})$_j$ with 
the strip $\Pi=\{ h_1\leq\tau\leq h_2\}$ lies in the flowbox $F_{\alpha_j,+}$. The intersection of the negative orbit 
of the segment $[C_0,C_1]$ with the strip $\Pi_-:=\{ h_2^-\leq\tau\leq h_1^-\}$ lies in $F_{\alpha_j,-}$.  See Fig. 7.
\end{proposition}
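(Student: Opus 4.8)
The plan is to reduce the first assertion to a nesting of flow tubes, then to prove that nesting by an elementary positively-invariant-region argument, and finally to deduce the second assertion from the central symmetry of Remark \ref{remsym}. First I would use that $\dot\tau=\omega>0$ along (\ref{slf})$_j$, so $\tau$ increases strictly along every trajectory; since $\pi<h_0<h_1$, each forward orbit issued from a point of $[C_1,C_2]\subset\{\tau=\pi\}$ crosses the line $\{\tau=h_0\}$ exactly once before it can enter $\Pi=\{h_1\le\tau\le h_2\}$. Writing $P_j^{\pi,h_0}$ for the flow map from $\{\tau=\pi\}$ to $\{\tau=h_0\}$, it then suffices to show that $P_j^{\pi,h_0}$ carries the $\theta$-interval $[2\pi,3\pi]$ of $[C_1,C_2]$ into the $\theta$-interval $[2\pi,3\pi]$ of $I_+$. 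Granting this, every point of the positive orbit of $[C_1,C_2]$ lying in $\Pi$ is the forward image of a point of $I_+$, hence lies in (orbit of $I_+$)$\,\cap\,\Pi=F_{\alpha_j,+}$.

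The key step is to show that the rectangle $R:=\{2\pi\le\theta\le 3\pi,\ \pi\le\tau\le h_0\}$ is positively invariant, i.e. the field (\ref{slf})$_j$ points strictly inward along both vertical sides. On the right side $\{\theta=3\pi\}$ one has $f_\alpha(3\pi,\tau;\omega)=-1+\ell\omega+A_\alpha(\omega)\cos\tau$; since $\cos$ is increasing on $[\pi,2\pi]$ we have $\cos\tau\le\cos h_0<1$ for $\tau\in[\pi,h_0]$, so $f_\alpha(3\pi,\tau;\omega)<0$ for $\omega$ small (the field points left, into $R$). On the left side $\{\theta=2\pi\}$, writing $1+A\cos\tau=(1-A)+2A\cos^2(\tau/2)$ and using $A_\alpha(\omega)=1+(\ell-\alpha)\omega+o(\omega)$ from (\ref{asa}), one gets $f_\alpha(2\pi,\tau;\omega)=\bigl(\alpha\omega+o(\omega)\bigr)+2A_\alpha(\omega)\cos^2(\tau/2)>0$ for $\omega$ small (the field points right, into $R$). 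As $\dot\tau=\omega>0$, a trajectory entering $R$ through its bottom side $[C_1,C_2]$ cannot leave through the vertical sides and must exit through the top side $\{\tau=h_0\}$ with $\theta\in[2\pi,3\pi]$. This is precisely $P_j^{\pi,h_0}([C_1,C_2])\subseteq I_+$, and the first assertion follows.

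For the second assertion I would invoke the central symmetry $\rho_{C_1}\colon(\theta,\tau)\mapsto(4\pi-\theta,\,2\pi-\tau)$ through $C_1$, which by Remark \ref{remsym} sends each field (\ref{slf})$_j$ to its opposite and therefore carries positive orbits to negative orbits. It fixes $C_1$ and sends $C_2\mapsto C_0$, so $[C_1,C_2]\mapsto[C_0,C_1]$; it maps $\Pi=\{h_1\le\tau\le h_2\}$ onto $\Pi_-=\{h_2^-\le\tau\le h_1^-\}$; and, by the very definition of $F_{\alpha_j,-}$ as the $\rho_{C_1}$-image of $F_{\alpha_j,+}$, it transports the first assertion verbatim into the second.

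I expect the only real (though routine) obstacle to be the invariance step: one must check that the two explicit sign estimates for $f_\alpha$ on the vertical sides of $R$ survive the $o(\omega)$ error in (\ref{asa}) uniformly in $\tau\in[\pi,h_0]$. This is exactly where the placement $\tfrac{3\pi}2<h_0<2\pi$ (forcing $\cos h_0<1$) and the strict positivity of $\alpha$ enter, the latter guaranteeing $f_\alpha(2\pi,\tau;\omega)>0$ even near $\tau=\pi$, where the nonnegative term $2A_\alpha\cos^2(\tau/2)$ degenerates.
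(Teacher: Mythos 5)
Your proof is correct and follows essentially the same route as the paper: both arguments show the field points strictly inward along the two vertical sides of the rectangle between $[C_1,C_2]$ and $I_+$ (the paper's segments $J_l$, $J_r$), conclude that the flow map from $\{\tau=\pi\}$ to $\{\tau=h_0\}$ sends $[C_1,C_2]$ into $I_+$, and dispose of the second assertion by the central symmetry of Remark \ref{remsym}. The only difference is cosmetic: where the paper derives the sign of $f_{\alpha_j}$ on the vertical sides from Remark \ref{rtend} and Proposition \ref{pgeom}, you verify it by the explicit estimates $f_\alpha(3\pi,\tau;\omega)\le -1+\cos h_0+O(\omega)<0$ and $f_\alpha(2\pi,\tau;\omega)=\alpha\omega+o(\omega)+2A_\alpha\cos^2(\tau/2)>0$, which is a perfectly valid (and self-contained) substitute.
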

\begin{proof} It suffices to prove the first statement  of the proposition, due to symmetry 
(Remark \ref{remsym}). The segment $I_+$ defining the flowbox $F_{\alpha_j,+}$ 
is horizontal and is obtained from the segment $[C_1,C_2]$ by vertical shift up. 
The shift length is fixed and equal to $h_0-\pi>0$.  Let $J_l$ and $J_r$ denote 
respectively the segment connecting $C_1$ ($C_2)$ to the left (respectively, 
right) endpoint of the segment $I_+$. One has $f_{\alpha_j}>0$ on $J_l$ and 
$f_{\alpha_j}<0$ on $J_r$, which follows from Remark \ref{rtend} and Proposition \ref{pgeom}. Thus, 
on the segment $J_l$ ($J_r$) the vectors 
of the field (\ref{slf})$_j$ are directed to the right (respectively, left). 
This implies that  the time $\frac{h_0-\pi}{\omega}$ flow map of the field 
sends the segment $[C_1,C_2]$ strictly inside the segment $I_+$. This together 
with the definition of the flowbox $F_{\alpha_j,\omega}$ implies the 
first statement of the proposition.
\end{proof}
\begin{figure}[ht]
  \begin{center}
   \epsfig{file=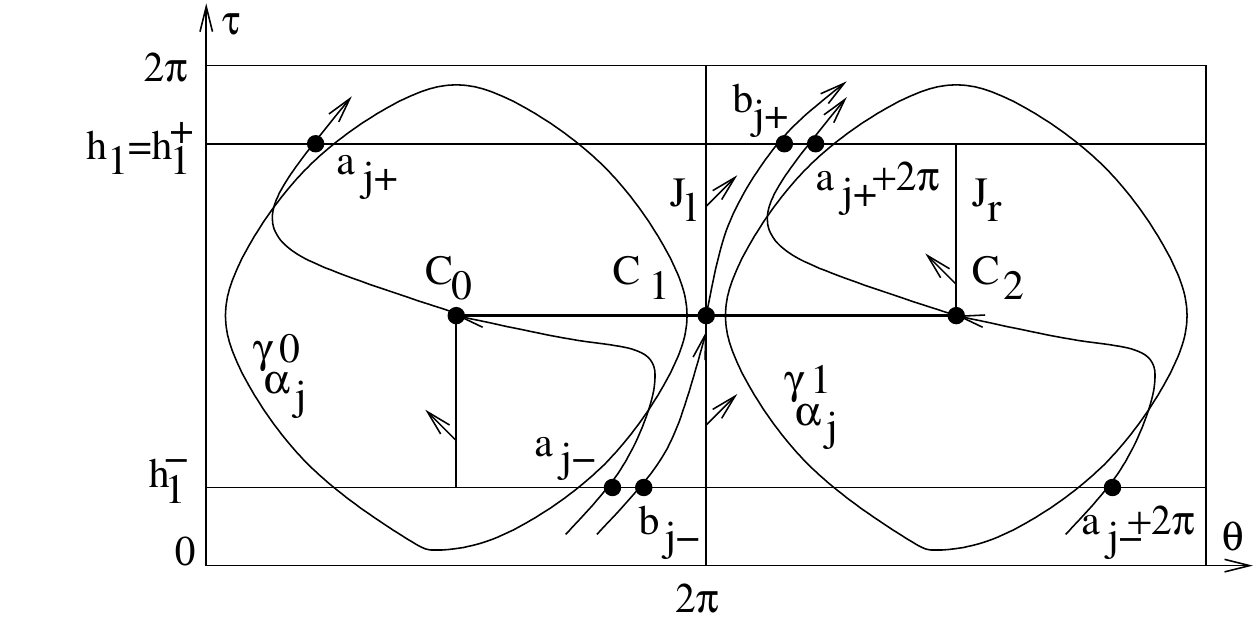}
    \caption{Orbits of segments $[C_0,C_1]$, $[C_1,C_2]$ and points $a_{j\pm}$, $b_{j\pm}$.}
  \end{center}
\end{figure}

\begin{proof} {\bf of the Monotonicity Lemma.} Set $h_j^+:=h_j$. One has 
\begin{equation}P_j=P_j^{h_1^+,2\pi}\circ \wt P_j\circ P^{0,h_1^-}_j, \ \ \wt P_j:=P_j^{h_1^-,h_1^+}.
\label{poincomp}\end{equation}
{\bf Claim 1.} {\it Whenever $\omega$ is small enough, 
one has $P^{0,h_1^-}_2(\theta)<P^{0,h_1^-}_1(\theta)$,   
$P_2^{h_1^+,2\pi}(\theta)<P_1^{h_1^+,2\pi}(\theta)$ for every $\theta\in\rr$.}

\begin{proof} Vector fields (\ref{slf})$_2$ and (\ref{slf})$_1$ have the same 
$\tau$-components. On the set $\{\tau\in [0, h_1^-]\cup[h_1^+,2\pi]\}$ 
the $\theta$-component of the former vector field is less than that of the latter, 
since $\cos\tau>0$ on this set.  This implies the  inequalities of the claim.
\end{proof}

Taking into account Claim 1 and (\ref{poincomp}), 
for the proof of the Monotonicity Lemma it suffices to prove the above  
inequality for the middle 
Poincar\'e maps in (\ref{poincomp}) for all $\omega$ small enough:
\begin{equation} \wt P_2(\theta)<\wt P_1(\theta) \text{ for every } 
\theta\in\rr.\label{middlep}\end{equation}
Consider the horizontal lines $L_{\pm}:=\{\tau=h_1^{\pm}\}$,   
which are the cross-sections for the Poincar\'e maps in question. We  identify each 
their point with its $\theta$-coordinate. For every $j=1,2$ let $b_{j\pm}$ denote 
the point of intersection of the line $L_{\pm}$ with the orbit of vector field 
(\ref{slf})$_j$  through the point $C_1=(2\pi,\pi)$. Let $a_{j\pm}$ denote the analogous 
intersection points with the orbit through the point $C_0=(\pi,\pi)$. See Fig. 8.

{\bf Claim 2.} {\it One has}
$$ a_{1-}<b_{1-}<a_{2-}<b_{2-}<a_{1-}+2\pi,$$
\begin{equation} 
a_{2+}<a_{1+}<b_{2+}<a_{2+}+2\pi<b_{1+}<a_{1+}+2\pi.\label{ineqp}\end{equation}

\begin{proof} The points $a_{j-}$ and $b_{j-}$ are the images of 
the points $C_0$ and $C_1$ respectively under  the Poincar\'e map 
$P_j^{\pi,h_1^-}$, and $\theta(C_0)<\theta(C_1)$, by definition.  Hence, 
$a_{j-}<b_{j-}$. The segment $[a_{j-},b_{j-}]$ lies in the flowbox $F_{\alpha_j,-}$, 
by Proposition \ref{inflow}. The flowbox $F_{\alpha_1,-}$ is disjoint from 
the flowbox $F_{\alpha_2,-}$ and lies on the left from it, by the Comparison Lemma. 
Therefore, the same is true for the corresponding segments $[a_{1-},b_{1-}]$ 
and $[a_{2-},b_{2-}]$. The four endpoints of the latter segments 
are $O(\omega)$-close to each other. Indeed the  flowboxes in question 
are $O(\omega)$-close to the right arcs  
of the corresponding intersections $\gamma_{\alpha_j}^0\cap\{h_2^-\leq\tau\leq h_1^-\}$ 
(Proposition \ref{pslf}). The 
latter arcs  are $\omega$-close, which follows from the Implicit Function 
Theorem for the equations defining the curves $\gamma_{\alpha_j}^0$. 
This together with the above discussion proves $O(\omega)$-closeness of the four 
points $a_{j-}$ and $b_{j-}$, $j=1,2$. This proves the first part of inequality (\ref{ineqp}). 
The proof of its second part is analogous. 
\end{proof}

\begin{proof} {\bf of  inequality (\ref{middlep}).} It suffices to prove it 
on the segment $K:=[a_{1-},a_{1-}+2\pi]\subset L_-$, by periodicity. The segment $K$ 
is split into 4 subsegments by points $a_{j-}$, $b_{j-}$. We check inequality (\ref{middlep}) 
on each splitting subsegment.

1) Let us start with the segment $[b_{2-},a_{1-}+2\pi]$. One has 
$$\wt P_1([b_{2-},a_{1-}+2\pi])\subset\wt P_1([b_{1-},a_{1-}+2\pi])=[b_{1+},a_{1+}+2\pi],$$
$$\wt P_2([b_{2-},a_{1-}+2\pi])\subset\wt P_2([b_{2-},a_{2-}+2\pi])=[b_{2+},a_{2+}+2\pi],$$
by (\ref{ineqp}). The latter segment-image  in the right-hand side is disjoint from the 
former one and lies on the left from it, by (\ref{ineqp}). This proves inequality (\ref{middlep}) 
on the segment $[b_{2-},a_{1-}+2\pi]$.  

2) Now we consider the segment $[a_{2-},b_{2-}]$. One has 
$$\wt P_1([a_{2-},b_{2-}])\subset\wt P_1([b_{1-},a_{1-}+2\pi])=[b_{1+},a_{1+}+2\pi],$$
$$\wt P_2([a_{2-},b_{2-}])=[a_{2+},b_{2+}], \  b_{2+}<b_{1+},$$
by (\ref{ineqp}). This proves inequality (\ref{middlep}) on  $[a_{2-},b_{2-}]$.  

3) The segment $[b_{1-},a_{2-}]$. One has 
$$\wt P_1([b_{1-},a_{2-}])\subset\wt P_1([b_{1-},a_{1-}+2\pi])=[b_{1+},a_{1+}+2\pi],$$
$$\wt P_2([b_{1-},a_{2-}]) \text{ lies on the left from the point } a_{2+}=\wt P_2(a_{2-})<b_{1+},$$
by (\ref{ineqp}). This proves inequality (\ref{middlep}) on  $[b_{1-},a_{2-}]$.  

4) The segment $[a_{1-},b_{1-}]$. One has 
$$\wt P_1([a_{1-},b_{1-}])=[a_{1+},b_{1+}],$$
$$\wt P_2([a_{1-},b_{1-}]) \text{ lies on the left from the point } a_{2+}=\wt P_2(a_{2-})<a_{1+},$$
since $b_{1-}<a_{2-}$, see (\ref{ineqp}). This proves inequality (\ref{middlep}) on  $[a_{1-},b_{1-}]$. Inequality 
(\ref{middlep}) is proved on all of the segment $K$, and hence, on the whole 
horizontal line $L_-$. 
\end{proof}

The statement of the Monotonicity Lemma follows from (\ref{poincomp}), 
Claim 1 and inequality (\ref{middlep}). 
\end{proof}

 \subsection{Absence of  constrictions with small ordinates} 
 Here we prove the following theorem and then Theorem \ref{noq}.
 \begin{theorem} \label{consm} 
 For every $\ell\in\nn$, $\beta>0$ and every $\omega>0$ small enough dependently 
 on $\ell$ and $\beta$ there are no constrictions $(B,A)$ with $B=\ell\omega$ and 
 $A\in[1-\ell\omega,1+(\ell-\beta)\omega]$.  
 \end{theorem}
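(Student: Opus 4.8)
The plan is to pass to the renormalized ordinate parameter $\alpha$ of Subsection 5.2 and to pit the constriction condition against the Monotonicity Lemma. Taking the exact linear instance $A=A_\alpha(\omega):=1+(\ell-\alpha)\omega$ of (\ref{asa}), the ordinate range $A\in[1-\ell\omega,1+(\ell-\beta)\omega]$ corresponds precisely to $\alpha\in[\beta,2\ell]$; for $\alpha\in(0,2\ell)$ the slow curve is convex and contractible (Corollary \ref{cconv}), so the slow--fast constructions of Subsections 5.2--5.4 apply. The left endpoint $\alpha=2\ell$, i.e. $A=1-\ell\omega$, carries no constriction by (\ref{nocons}), so I only need to rule out constrictions with $\alpha_0\in[\beta,2\ell)$. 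By Proposition \ref{propoinc} a constriction at $\alpha_0$ means that the lifted period-$2\pi$ map $P_{\alpha_0}$ (the map $P_j$ of the Monotonicity Lemma for the family with $\alpha=\alpha_0$) is the translation $\theta\mapsto\theta+2\pi\rho_0$, where $\rho_0\in\zz$ is its rotation number; and by \cite[theorem 1.2]{4} every constriction on the axis $\La_\ell$ has $\rho_0\geq\ell\geq1$.

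First I would confine the constriction candidates to a compact subinterval of $(0,2\ell)$. The rotation number $\rho(\ell\omega,A;\omega)$ is continuous and non-decreasing in $A$, equals $0$ at $A=1-\ell\omega$ (there $|B|+|A|=1$, so the point lies in $L_0$), and first attains the value $\ell$ at the higher generalized simple intersection $\mcp_\ell$, whose ordinate is $A(\mcp_\ell)=1+(\ell-1)\omega+o(\omega)$ by (\ref{mcpl}). Hence a constriction, which needs $\rho_0\geq\ell$, must have $A_{\alpha_0}\geq A(\mcp_\ell)$, i.e. $\alpha_0\leq\alpha(\mcp_\ell)$ with $\alpha(\mcp_\ell)\to1$ as $\omega\to0$; thus $\alpha_0\in[\beta,\alpha(\mcp_\ell)]\Subset(0,2\ell)$ for $\ell\geq1$. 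On this compact range the Comparison and Monotonicity Lemmas hold with constants uniform in $\alpha_0$ (the angular separation governed by $b=\tfrac{\cos h_0}{2}(\alpha_0-\alpha_1)\geq\tfrac{\cos h_0}{2}(\beta-\alpha_1)$ in Proposition \ref{pangle} is bounded below), so the Monotonicity Lemma can be invoked uniformly once $\omega$ is small.

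The core is then a one-line comparison against a reference system. I would fix $\alpha_1:=\tfrac12\min(\beta,1)\in(0,\beta)$; since $\alpha_1<1$, for small $\omega$ the point $(\ell\omega,A_{\alpha_1})$ lies above $\mcp_\ell$ and hence inside $L_\ell$, so $\rho(\ell\omega,A_{\alpha_1};\omega)=\ell$. As this rotation number is the integer $\ell$, the lift $P_{\alpha_1}$ possesses a point $\theta^\ast$ with $P_{\alpha_1}(\theta^\ast)=\theta^\ast+2\pi\ell$, whence $\min_\theta\bigl(P_{\alpha_1}(\theta)-\theta\bigr)\leq2\pi\ell$. On the other hand, applying the Monotonicity Lemma (Lemma \ref{lemon}) to the pair $\alpha_1<\alpha_0$ gives $P_{\alpha_0}(\theta)<P_{\alpha_1}(\theta)$ for all $\theta$; together with $P_{\alpha_0}(\theta)=\theta+2\pi\rho_0\geq\theta+2\pi\ell$ this forces $P_{\alpha_1}(\theta)-\theta>2\pi\ell$ for every $\theta$, i.e. $\min_\theta\bigl(P_{\alpha_1}(\theta)-\theta\bigr)>2\pi\ell$. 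This contradicts the preceding inequality, so no $\alpha_0\in[\beta,2\ell)$ is a constriction, which is the assertion of the theorem.

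All the dynamical substance is already contained in the Monotonicity Lemma; the only real obstacle in assembling the present deduction is the bookkeeping that makes the comparison legitimate -- namely extracting the reference rotation number $\rho=\ell$ from the location of $\mcp_\ell$ established in Subsection 5.1, and verifying that the constriction candidates remain in a compact range of $\alpha$ on which the slow--fast comparison estimates stay uniform as $\omega\to0$.
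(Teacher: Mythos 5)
Your core comparison is exactly the one the paper uses: fix $\alpha_1\in(0,\beta)$ (with $\beta<\tfrac12$ without loss of generality) so that $(\ell\omega,A_{\alpha_1})$ lies in $L_\ell$ by Lemma \ref{quehigh}, producing a point $a$ with $P_{\alpha_1}(a)=a+2\pi\ell$, and then contradict this with $P_{\alpha_0}<P_{\alpha_1}$ from the Monotonicity Lemma together with $P_{\alpha_0}=\operatorname{id}+2\pi\rho_0$, $\rho_0\geq\ell$, at a constriction. But the way you reduce to that comparison has two genuine gaps. First, your confinement step rests on the claim that $\rho(\ell\omega,A;\omega)$ is non-decreasing in $A$. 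This is nowhere available and is almost certainly false: monotonicity of the rotation number holds in the drift parameter $B$, not in the amplitude $A$ (the increment $\Delta A\cos\tau$ of the field changes sign along the orbit), and if such monotonicity were known then $L_\ell\cap\La_\ell$ would automatically be connected, which would settle the Connectivity Conjecture that Section 6.1 treats as open. So the bound $\alpha_0\leq\alpha(\mcp_\ell)$ is not justified.

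Second, you invoke the Monotonicity Lemma ``uniformly in $\alpha_0$'' over a compact range, whereas the lemma is stated and proved for a fixed pair $\alpha_1<\alpha_2$ of families satisfying (\ref{asa}), with the admissible range of $\omega$ depending on that pair through Corollary \ref{cconv}, Propositions \ref{pgeom}, \ref{pslf}, \ref{pangle} and the Comparison Lemma. Your parenthetical only controls the angular-separation constant $b$; it does not address the $\alpha$-dependence of the flowbox estimates, in particular as $\alpha_0$ approaches $2\ell$, where the slow curve degenerates and Corollary \ref{cconv}(i) no longer applies. The paper's proof circumvents both difficulties at once: it argues by contradiction with a sequence $\omega_k\to0$ of alleged constrictions, notes that the hypothesis forces $\alpha_k\in[\beta,2\ell]$, extracts a convergent subsequence $\alpha_k\to\alpha^*\geq\beta$, and embeds the whole sequence into a \emph{single} family $A(\omega)=1+(\ell-\alpha^*)\omega+o(\omega)$ of type (\ref{asa}); the Monotonicity Lemma is then applied exactly once, to the fixed pair $(\alpha,\alpha^*)$ with $\alpha\in(0,\beta)$. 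Replacing your two asserted steps by this compactness-and-embedding argument (or, alternatively, actually carrying out the uniform versions of the slow--fast estimates on $[\beta,2\ell]$) is what is needed to make the proof complete.
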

 \begin{remark} Absence of constrictions with $B-1<A<B+1$, $B=\ell\omega$, 
 for small $\omega$ 
 was numerically observed in \cite[fig. 2, 3]{krs}. Theorem \ref{consm} confirms 
a part of  this experimental result  theoretically.
 \end{remark}
\begin{proof} {\bf of Theorem \ref{consm}.}  
  It suffices to prove the statement of the theorem for arbitrarily 
small $\beta$, e.g., $\beta<\frac12$. 
 Set $B=\ell\omega$. For every $\alpha>0$ set $A_{\alpha}=A_{\alpha}(\omega)=1+(\ell-\alpha)\omega$. 
 The family of systems 
(\ref{slf}) defined by this ordinate family $A_{\alpha}$ 
will be denoted  by (\ref{slf})$_{\alpha}$.

Suppose the contrary: there exists a sequence $\omega_k\to0$ such that 
there exists a sequence 
of constrictions $(B_k,A_{\alpha_k})$ with 
$$B_k=\ell\omega_k, \ \ 
A_{\alpha_k}=1+(\ell-\alpha_k)\omega_k, \ \ \beta\leq\lim\inf\alpha_k\leq\lim\sup\alpha_k\leq2\ell.$$ 
Passing to a subsequence, 
without loss of generality we can and will consider that  
$\alpha_k$ converge to some $\alpha^*\geq\beta$. Thus, the sequence 
of dynamical systems  corresponding to the above $(B_k,A_{\alpha_k})$ can be 
embedded into a continuous family of systems (\ref{slf}) with $\alpha$ 
replaced by $\alpha^*$. The latter new family of systems (\ref{slf}) 
will be denoted by (\ref{slf})$_{\alpha^*}$.

Fix an arbitrary $\alpha\in(0,\beta)$. 
Let $P$ and $P^*$ denote respectively the Poincar\'e maps $P^{0,2\pi}$ 
of the line $\{\tau=0\}$ to the line $\{\tau=2\pi\}$ defined by vector fields 
(\ref{slf})$_{\alpha}$ and (\ref{slf})$_{\alpha^*}$. 
For every $\omega$ small enough the point 
$(\ell\omega,1+(\ell-\alpha)\omega)$
 lies in 
the phase-lock area $L_\ell$, by Lemma \ref{quehigh} and since $\alpha<\beta<\frac12$. Therefore, the 
corresponding system (\ref{slf})$_{\alpha}$ has a periodic 
orbit with rotation number $\ell$. This means that there exists a point 
$a$ in the $\theta$-axis with $P(a)=a+2\pi\ell$. On the other hand, 
$$P^*<P, \ \ P^*(a)<P(a)=a+2\pi\ell, \text{ whenever } \omega \text{ is small enough,}$$
by the Monotonicity Lemma and since $\alpha^*\geq\beta>\alpha$. Therefore, 
the rotation number of system (\ref{slf})$_{\alpha^*}$ is no greater than $\ell$ and 
$a$ cannot be its periodic point with rotation number at least $\ell$ 
 for small $\omega$. In particular, the latter statements holds for the systems 
 corresponding to the above constrictions $(B_k,A_{\alpha_k})\in\La_\ell$. 
 On the other hand, the dynamical system (\ref{josvec}) corresponding to a constriction
lying in $\La_\ell$  should have rotation number at least $\ell$ 
 and all its orbits should be periodic with rotation number at least $\ell$, see 
\cite[theorem 1.2 and proposition 2.2]{4}. The contradiction thus obtained proves 
Theorem \ref{consm}.
\end{proof}
 
\begin{proof} {\bf of Theorem \ref{noq}.} Fix an $\ell\in\nn$. For every $\omega>0$ 
small enough all the constrictions lying in $\La_\ell$ with ordinates 
$A\geq A_{\frac12}=1+(\ell-\frac12)\omega$ are 
not ghost 
(Lemma \ref{quehigh}). There 
are no constrictions in $\La_\ell$ with  smaller 
positive ordinates (Theorem \ref{consm} and statement (\ref{nocons})). Theorem \ref{noq} is proved.
\end{proof}

 \subsection{Proof of Theorem \ref{noghost}}

 Let, to the contrary, there exist a ghost constriction $(B,A;\omega)$. 
 Then $\ell=\frac B{\omega}\in\zz\setminus\{0\}$, and without loss of generality we can and will consider 
 that $\ell\geq1$ (see the beginning of Section 5). Let $\mcc$ denote the 
connected component of the submanifold $Constr_\ell\subset(\rr_+^2)_{\mu,\eta}$ 
containing the corresponding point $(\frac A{2\omega},\omega^{-1})$. 
 The restriction to $\mcc$ of the function $\omega=\eta^{-1}$ is unbounded from   below, while 
 all the constrictions in $\mcc$ are ghost (Theorem \ref{famcons0}). Thus, there exist ghost constrictions 
 with given $\ell$ and arbitrarily small $\omega$. 
This yields a  contradiction to Theorem \ref{noq} and proves absence of ghost constrictions. The proof of 
 Theorem \ref{noghost}, and hence, Theorems \ref{thal} and \ref{thpos} 
 is complete.
 
 \section{Some applications and open problems}
 
 \subsection{Geometry of phase-lock areas}
 For every $\ell\in\zz_{\neq0}$ let 
 $\mcp_\ell=(\ell\omega,A(\mcp_\ell))\subset\rr^2_{B,A}$ 
 denote the higher 
 generalized simple intersection lying in $\La_\ell:=\{ B=\ell\omega\}$, see Subsection 5.1. 
Recall that 
$$S\ell:=\La_\ell\cap\{ A\geq A(\mcp_\ell)\}\subset L_\ell^+:=L_\ell\cap\{ A>0\}, \ \ \mcp_\ell\in\partial L_\ell^+.$$ 

 The {\bf Connectivity Conjecture}, see \cite[conjecture 1.14]{g18}, states that   {\it the intersection  
 $L_\ell^+\cap\La_\ell$ coincides with the ray $S\ell$, and thus, is connected.} 
 
 Theorem \ref{thpos} implies the following corollary 
 
 \begin{corollary} \label{cornp} Let, to the contrary to the above conjecture, the intersection 
 $L\La_\ell:=L_\ell^+\cap\La_\ell\cap\{0<A<A(\mcp_\ell)\}$ be non-empty. Then its lowest point 
(i.e., its point with minimal ordinate $A$) is a generalized simple intersection. 
 \end{corollary}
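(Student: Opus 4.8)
The plan is to identify the lowest point $P_0=(\ell\omega,A_0)$ of $L\La_\ell$, where $A_0:=\inf\{A\ :\ (\ell\omega,A)\in L\La_\ell\}$, and to verify for it, one by one, the four defining properties of a generalized simple intersection: that $\frac{B}\omega=\ell\in\zz$, that $A\neq0$, that $\rho\equiv\ell\ (\mod 2)$ with $P_0\in\partial L_\rho$, and that $P_0$ is not a constriction. Throughout I would use only two structural inputs: that each phase-lock area is closed (being a level set of the continuous rotation number function) and that every constriction is positive (Theorem~\ref{thpos}). By the symmetry $L_r\leftrightarrow L_{-r}$ across the $A$-axis it suffices to treat $\ell\geq1$.

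First I would check that the infimum $A_0$ is strictly positive and attained, so that $P_0$ is genuinely a point of $L\La_\ell$. The key fact is that $(\ell\omega,0)\notin L_\ell$: on the $B$-axis the area $L_\ell$ meets $\{A=0\}$ only at the growth point, whose abscissa $\sqrt{\ell^2\omega^2+1}$ is strictly larger than $\ell\omega$ (see \cite[corollary 3]{buch1}); equivalently, the rotation number of the autonomous equation at $B=\ell\omega$ is strictly less than $\ell$. Since $L_\ell$ is closed, a whole neighbourhood of $(\ell\omega,0)$ avoids $L_\ell$, whence $A_0>0$. As the set is non-empty one also has $A_0<A(\mcp_\ell)$, and closedness of $L_\ell$ and $\La_\ell$ gives $P_0\in\overline{L\La_\ell}\cap\{0<A<A(\mcp_\ell)\}=L\La_\ell$, so the minimum is realized at $P_0$.

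Next I would collect the easy properties and the boundary property. Since $B=\ell\omega$ we have $\frac{B}\omega=\ell\in\zz$; moreover $A_0\neq0$ and $\rho(P_0)=\ell\equiv\ell\ (\mod 2)$, the last because $P_0\in L_\ell$. For the boundary property, minimality of $A_0$ forces every point $(\ell\omega,A)$ with $0<A<A_0$ to lie outside $L_\ell$ (such a point in $L_\ell$ would belong to $L\La_\ell$ with smaller ordinate). Hence $P_0\in L_\ell$ is a limit of points not in $L_\ell$, so $P_0\in\partial L_\ell=\partial L_\rho$.

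The decisive step, and the only place where Theorem~\ref{thpos} is used, is to show that $P_0$ is not a constriction. Suppose it were; by Theorem~\ref{thpos} it would then be positive, so---its rotation number being $\ell$---some punctured neighbourhood of $A_0$ on the vertical line $\La_\ell=\{B=\ell\omega\}$ would lie in $\operatorname{Int}(L_\ell)$. In particular the points $(\ell\omega,A)$ with $A$ slightly below $A_0$, still positive and below $A(\mcp_\ell)$, would lie in $L_\ell^+$ and hence in $L\La_\ell$, contradicting the minimality of $A_0$. Therefore $P_0$ is not a constriction, and all the defining conditions of a generalized simple intersection hold. I do not anticipate a genuine obstacle here: the only delicate point is the attainment and strict positivity of the infimum $A_0$, and this is settled by the explicit location of the growth point together with the closedness of $L_\ell$.
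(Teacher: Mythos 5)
Your proposal is correct and follows essentially the same route as the paper: positivity of the infimum and membership in $\partial L_\ell$ via the growth-point abscissa $\sqrt{\ell^2\omega^2+1}>\ell\omega$ and closedness of $L_\ell$, then exclusion of the constriction case using Theorem \ref{thpos} applied to the lower adjacent interval. The only (immaterial) difference is the direction of the final contradiction — you derive that a positive constriction would force points below $A_0$ into $L_\ell$, while the paper notes the lower interval lies outside $L_\ell$ so the constriction would have to be negative, which Theorem \ref{thpos} forbids.
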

 \begin{proof} The  lowest point  $P\in L\La_\ell$ is well-defined, 
 has positive ordinate and lies in $\partial L_\ell$, since the growth point in $L_\ell$, 
 i.e., its intersection point with the abscissa axis, has abscissa 
 $\sqrt{\ell^2\omega^2+1}>\ell\omega$. Hence, it is either a constriction, or a 
 generalized simple intersection, by definition. If $P$ were a constriction, it would be negative, since 
 its lower  adjacent interval $\La_\ell\cap\{0<A<A(P)\}$ lies outside the phase-lock area $L_\ell$. 
 But there are no negative constrictions, by Theorem \ref{thpos}. Therefore, $P$ is a generalized 
 simple intersection. 
 \end{proof}
 
 \begin{remark} It is known that the generalized simple intersections $(\ell\omega, A)$ correspond to  the parameters 
 $(\la,\mu)$, $\mu=\frac A{2\omega}$, $\la=\frac1{4\omega^2}-\mu^2$, of those special 
 double confluent Heun equations (\ref{heun2}) that have polynomial solutions. The set of the latter parameters 
 $(\la,\mu)$ is a remarkable algebraic curve: the so-called {\it spectral curve} $\Gamma_\ell\subset\rr^2_{(\la,\mu)}$ introduced in \cite{bt0} and 
 studied in \cite{bt0, gn19}. It is the zero locus of the polynomial from \cite[formula (21)]{bt0}, which is the determinant of a three-diagonal matrix 
 formed by diagonal terms of type $\la+const$ and linear functions in $\mu$ at off-diagonal places.  
 See also \cite[formula (1.4)]{gn19}. (The complexification of the spectral curve is known to be irreducible, see 
  \cite[theorem 1.3]{gn19}.)  For every given $\omega>0$ the curve $\Gamma_\ell$ contains at most $\ell$ 
 points $(\la,\mu)$   corresponding to the given $\omega$ with $\mu>0$;  the point with the biggest $\mu$ corresponds to the higher  generalized simple intersection $\mcp_\ell$. This follows from B\'ezout Theorem and the fact that the spectral curve $\Gamma_\ell$ is the zero locus 
  of a polynomial of degree $\ell$ in $(\la,\mu^2)$, see \cite[p. 937]{bt0}.
 \end{remark}
 Corollary \ref{cornp} and the above remark reduce the Connectivity Conjecture to the following equivalent, 
  algebro-geometric conjecture.

 \begin{conjecture} \label{connconj2} For every $\omega>0$ the above real spectral curve $\Gamma_\ell$ contains a unique 
 point $(\la,\mu)$  with $\la=\frac1{4\omega^2}-\mu^2$ (up to change of sign at $\mu$) for which the corresponding rotation number 
 $\rho=\rho(\ell\omega, 2\mu\omega)$ equals $\ell$. (The  point $(B,A)=(\ell\omega,2\mu\omega)$ 
 coincides with  $\mcp_\ell$, see the above remark.)
 \end{conjecture}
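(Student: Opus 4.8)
The plan is to fix $\ell\in\nn$ and $\omega>0$ and to study the finite set of points of $\Gamma_\ell$ lying on the line $\la=\frac1{4\omega^2}-\mu^2$. Since $\Gamma_\ell$ is the zero locus of a polynomial of degree $\ell$ in $(\la,\mu^2)$, substituting $\la=\frac1{4\omega^2}-\mu^2$ yields a degree-$\ell$ polynomial in $\mu^2$, hence at most $\ell$ points with $\mu>0$, as recalled in the Remark. We already know that the one with the largest $\mu$, namely $\mcp_\ell$, is a generalized simple intersection with $\rho=\ell$; so the whole content is the \emph{uniqueness}, equivalently the emptiness of $L\La_\ell$ (Corollary \ref{cornp}). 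I would \emph{not} attempt a direct monotonicity-in-$A$ argument for $\rho$ along $\La_\ell$: the family (\ref{josvec}) is not monotone in $A$, since $\partial_A\dot\theta=\cos\tau$ changes sign along $\tau$, so $\rho$ may a priori oscillate as $A$ grows. This non-monotonicity is precisely what makes the statement delicate and keeps it open.

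The main line I propose is oscillation-theoretic, through the correspondence of the Remark following Corollary \ref{cornp}. Each such point of $\Gamma_\ell$ corresponds to a polynomial solution of the conjugate Heun equation (\ref{heun2}); a short leading-coefficient computation (matching the two top-degree contributions $-\mu z^2E'$ and $\mu(\ell-1)zE$) shows that every such polynomial has degree exactly $d=\ell-1$, so the $\le\ell$ eigenvalues are the roots of the tridiagonal spectral determinant of \cite[formula (21)]{bt0}. Via the projectivized linear system, such a polynomial solution produces a global meromorphic (rational) solution $\Phi(z)$ of the Riccati equation (\ref{ric}) on $\oc$, and the rotation number $\rho$ of the associated flow should equal the winding index of the closed curve $\tau\mapsto\Phi(e^{i\tau})$ relative to the circle $\{|\Phi|=1\}$. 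The key step is then a Sturm-type separation: ordering the $\le\ell$ Heun polynomials by eigenvalue (equivalently by $\mu$), the corresponding winding index should strictly drop by $2$ at each step, so that $\rho$ runs through $\ell,\ell-2,\dots$ and only the largest-$\mu$ point, namely $\mcp_\ell$, attains $\rho=\ell$.

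The hard part will be this separation step. Classical Sturm oscillation theory does not apply directly, because the Heun operator in (\ref{heun2}) is not self-adjoint and the relevant objects are Heun polynomials rather than eigenfunctions of a symmetric boundary-value problem; moreover $\rho$ is a transcendental invariant of the flow, whereas the indexing of the points is algebraic (roots of the spectral determinant on $\Gamma_\ell$). Bridging this gap — identifying $\rho$ with an honest zero/winding count of $\Phi$ by the argument principle, and proving that this count is strictly monotone in the eigenvalue, uniformly in $\omega$ — is the crux of the problem.

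As a fallback I would try a deformation argument in $\omega$. For $\omega$ small the slow-fast machinery of Section 5 (the Monotonicity Lemma \ref{lemon} and Theorem \ref{consm}) should directly show that every generalized simple intersection on $\La_\ell$ below $A_{\frac12}$ has $\rho<\ell$, leaving $\mcp_\ell$ as the unique point with $\rho=\ell$ for small $\omega$; recall $A(\mcp_\ell)=1+(\ell-1)\omega+o(\omega)$ by (\ref{mcpl}). One would then let $\omega$ increase, using that $\rho$ is integer-valued and locally constant along the branches of $\Gamma_\ell$, and track how the intersection points of the sweeping line $\la=\frac1{4\omega^2}-\mu^2$ with $\Gamma_\ell$ are created or annihilated. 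The obstacle here is global: one must show that, as $\omega$ varies, no new point with $\rho=\ell$ can appear below $\mcp_\ell$, which requires control of the real topology of $\Gamma_\ell$ together with the distribution of $\rho$ on its branches — information not supplied by the algebraic description alone.
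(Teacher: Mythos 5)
You are attempting to prove a statement that the paper itself presents as an open \emph{conjecture}: Conjecture \ref{connconj2} is introduced precisely as the algebro-geometric reformulation of the Connectivity Conjecture of \cite[conjecture 1.14]{g18}, and the paper offers no proof of it in general --- only the partial result that it holds for $0<\omega<\frac1{|\ell|}$. Your proposal, as you yourself acknowledge twice, does not close the argument either: the Sturm-type separation step (that the winding index drops by exactly $2$ as one moves down the eigenvalue ladder of Heun polynomials) is asserted with no mechanism behind it, and the identification of $\rho$ with a winding index of $\Phi(e^{i\tau})$ presupposes that the rational solution built from the polynomial Heun solution restricts to a genuine periodic orbit on $\{|\Phi|=1\}$, which is not automatic and is not argued. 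The fallback deformation-in-$\omega$ argument has the global obstruction you name. So the honest verdict is: genuine gaps remain, and the statement stays a conjecture; your preliminary reductions (the count of at most $\ell$ points via the degree of the spectral determinant, the degree-$(\ell-1)$ computation for polynomial solutions of (\ref{heun2}), the reduction to emptiness of $L\La_\ell$ via Corollary \ref{cornp}) are all correct and consistent with the paper, but they are the easy part.

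It is worth comparing your strategies with the paper's partial proof for $\omega<\frac1{|\ell|}$, which is more elementary than either of your routes and uses neither oscillation theory nor slow-fast deformation. The paper simply \emph{exhibits} $\ell-1$ generalized simple intersections on $\La_\ell^+$ with $\rho<\ell$: for $\omega<\frac1\ell$ the growth point of each $L_r$, $0\le r<\ell$, has abscissa $\sqrt{r^2\omega^2+1}>\ell\omega$, while the constrictions of $L_r$ lie on $\La_r$, to the left of $\La_\ell$; hence each of the two boundary curves of $L_r$ must cross $\La_\ell^+$, and (by Theorem \ref{thal}) these crossings cannot coincide, giving two points per $r$ with $0<r<\ell$, $r\equiv\ell\ (\mathrm{mod}\ 2)$, plus one for $r=0$ when relevant. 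Since the B\'ezout bound caps the total at $\ell$, at most one point with $\rho=\ell$ survives. This is a counting argument pitting a lower bound (from topology of the phase-lock portrait) against an upper bound (from the algebraic degree of $\Gamma_\ell$), and it suggests that if you want to push past $\omega=\frac1{|\ell|}$, the productive question is how many of the $\le\ell$ spectral points are forced to be absorbed by lower phase-lock areas for larger $\omega$ --- rather than trying to prove a monotone distribution of $\rho$ along the eigenvalue ordering, for which no self-adjoint structure is available.
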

 \begin{theorem} For every $\ell\in\zz_{\neq0}$ and every positive $\omega<\frac1{|\ell|}$  
 the Connectivity Conjecture holds.
 \end{theorem}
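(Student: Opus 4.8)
Since the Connectivity Conjecture is equivalent to Conjecture~\ref{connconj2} (by Corollary~\ref{cornp} and the Remark preceding Conjecture~\ref{connconj2}), it suffices to prove Conjecture~\ref{connconj2} under the extra hypothesis $\omega<\frac1\ell$. By the symmetry $r\mapsto-r$ of the phase-lock areas with respect to the $A$-axis we may and will assume $\ell\in\nn$. Thus the plan is to show that, for $\omega<\frac1\ell$, among the points of the real spectral curve $\Gamma_\ell$ lying on the line $\{\la=\frac1{4\omega^2}-\mu^2\}$ with $\mu>0$, exactly one has rotation number $\rho=\ell$, namely the higher generalized simple intersection $\mcp_\ell$.

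The hypothesis $\omega<\frac1\ell$ is equivalent to $\ell\omega<1$ (that is, to $B<1$ on the axis $\La_\ell$) and to $\frac1{4\omega^2}>\frac{\ell^2}4$. Its dynamical content, which I would use repeatedly, is that the foot $(\ell\omega,0)$ of the axis lies in $L_0$: for $A=0$ equation (\ref{jn}) reduces to the autonomous field $\dot\theta=\eta\cos\theta+\ell$, which has equilibria because $\cos\theta=-\ell\omega\in(-1,0]$ is solvable, so $\rho(\ell\omega,0)=0$; moreover the whole segment $\{\ell\omega\}\times[0,1-\ell\omega]$ lies in $L_0$ by \cite[proposition 5.22]{bg2}. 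First I would dispose of the case $\ell=1$ at once: the curve $\Gamma_1$ is the zero locus of a polynomial of degree $1$ in $(\la,\mu^2)$, so it meets the line $\{\la+\mu^2=\frac1{4\omega^2}\}$ in a single point, which carries a single rotation number; hence there is at most one generalized simple intersection on $\La_1$, and with $\omega<1$ it is $\mcp_1$, so Conjecture~\ref{connconj2} is immediate.

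For $\ell\ge2$ the argument would run on the spectral curve. Substituting $\la=\frac1{4\omega^2}-\mu^2$ into the polynomial defining $\Gamma_\ell$, which has degree $\ell$ in $(\la,\mu^2)$, gives a polynomial of degree $\ell$ in $\mu^2$; hence there are at most $\ell$ generalized simple intersections on $\La_\ell$ for a given $\omega$, as recalled in the Remark before Conjecture~\ref{connconj2}. To identify their rotation numbers I would combine two inputs. First, the branch structure of $\Gamma_\ell$ at infinity computed in the proof of Lemma~\ref{quehigh}: as a point of $\Gamma_\ell$ escapes to infinity one has $A=1+m\omega+o(\omega)$ with $m\in\{\ell-1,\ell-3,\dots,-(\ell-1)\}$, the branch $m=\ell-1$ being $\mcp_\ell$ with $\rho=\ell$ and being the only branch on which $\rho$ attains the value $\ell$, the remaining branches yielding generalized simple intersections on $\partial L_{\ell-2},\partial L_{\ell-4},\dots$ and hence with $\rho<\ell$. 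This already settles all sufficiently small $\omega$. Second, to pass from small $\omega$ to the whole range $0<\omega<\frac1\ell$ I would let $c:=\frac1{4\omega^2}$ decrease from $+\infty$ to $\frac{\ell^2}4$ and track the at most $\ell$ intersection points, using that the rotation number is a continuous, integer-valued, $\ell$-parity-preserving invariant, hence locally constant along $\Gamma_\ell$ away from its finitely many singular points and away from collisions of intersection points, while the foot-in-$L_0$ property pins the bottom of the attained range of $\rho$ at $0$.

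The hard part will be exactly this last step: controlling the rotation numbers of the intersection points uniformly in $\omega$, rather than only in the asymptotic regime covered by Lemma~\ref{quehigh}. Concretely, I must exclude, as $c$ decreases toward $\frac{\ell^2}4$, any bifurcation in which a lower branch (say with $\rho=\ell-2$) merges with the top one and re-emerges carrying a second point with $\rho=\ell$; the threshold $c=\frac{\ell^2}4$, i.e. $B=1$, is precisely the value at which the foot of $\La_\ell$ leaves $L_0$, so such a phenomenon should become possible only for $\omega\ge\frac1\ell$ and be excluded for $\omega<\frac1\ell$. I expect this to require pairing the B\'ezout degree bound on $\Gamma_\ell$ (from \cite{bt0,gn19}) with a monotonicity statement for the rotation number along the axis, or equivalently with the correspondence between a generalized simple intersection's rotation number and the degree of the polynomial solution of the conjugate Heun equation (\ref{heun2}), under which $\rho=\ell$ should correspond to the maximal admissible degree and therefore be attained at a single spectral point. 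The Klimenko--Romaskevich boundary asymptotics \cite{RK} would serve as the anchor at the top of the axis, and Theorem~\ref{thpos} guarantees that the positive constrictions interspersed along $S\ell$ do not disconnect $L_\ell^+\cap\La_\ell$, so that a single $\rho=\ell$ generalized simple intersection indeed yields connectivity.
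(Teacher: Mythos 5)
Your reduction to Conjecture~\ref{connconj2} via Corollary~\ref{cornp}, your use of the B\'ezout bound of at most $\ell$ generalized simple intersections on $\La_\ell^+$, and your $\ell=1$ case are all sound and match the paper's framework. But the decisive step for $\ell\ge2$ --- showing that at most one of those $\le\ell$ points carries rotation number $\ell$ for \emph{every} $\omega<\frac1\ell$, not just asymptotically --- is exactly the part you leave open. Your proposed continuation argument (anchor at small $\omega$ via the branches of $\Gamma_\ell$ at infinity, then track the intersection points as $c=\frac1{4\omega^2}$ decreases while excluding collisions that could create a second $\rho=\ell$ point) is not carried out, and you acknowledge it as "the hard part"; the monotonicity statement or degree--rotation-number correspondence you invoke to close it is not established anywhere in the paper and would itself require proof. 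As written, the proposal is a plan with a genuine gap at its core.

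The paper closes this gap with a direct counting argument at each fixed $\omega<\frac1\ell$, with no deformation in $\omega$ at all. For each $r$ with $0<r<\ell$ and $r\equiv\ell\ (\operatorname{mod}2)$, each of the two boundary curves $\partial L_{r,0}$, $\partial L_{r,\pi}$ of $L_r$ contains constrictions, which by Theorem~\ref{thal} lie on $\La_r$, i.e.\ strictly to the left of $\La_\ell$; on the other hand the growth point of $L_r$ has abscissa $\sqrt{r^2\omega^2+1}>1>\ell\omega$, i.e.\ lies strictly to the right of $\La_\ell$ (this is where $\omega<\frac1\ell$ enters --- your "foot in $L_0$" observation is the same inequality but is not used this way). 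Hence each boundary curve crosses $\La_\ell^+$, and the two crossings are distinct because a common point would be a constriction of $L_r$, impossible since constrictions of $L_r$ sit on $\La_r\neq\La_\ell$. Together with one crossing of $\partial L_0$ when $\ell$ is even, this produces $\ell-1$ distinct generalized simple intersections on $\La_\ell^+$ all with $\rho<\ell$, so the B\'ezout bound leaves room for at most one with $\rho=\ell$, necessarily $\mcp_\ell$; Corollary~\ref{cornp} then forbids any point of $L_\ell^+\cap\La_\ell$ below $A(\mcp_\ell)$. You had all the ingredients (the bound of $\ell$, the alignment theorem, the growth-point formula) but did not assemble them into this pigeonhole count, which is what makes the proof work uniformly in $\omega$.
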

 \begin{proof} Let, say, $\ell>0$, and let $0<\omega<\frac1\ell$. 
 Then for every $r\in\nn$, $0<r<\ell$, the boundary $\partial L_r$ intersects $\La_\ell^+:=\La_\ell\cap\{ A>0\}$ 
 in at least two points. 
 Indeed, 
 the abscissa 
 $\sqrt{r^2\omega^2+1}$ of the growth point of the phase-lock area $L_r$ is greater than $\ell\omega<1$. 
 On the other hand, each boundary curve of the area $L_r$
  contains constrictions, which lie in the axis $\La_r$, and hence, 
 on the left from the axis $\La_\ell$.  Hence, 
 each boundary curve intersects $\La_\ell^+$ in at least one point (this statement is given by \cite[theorem 1.18]{gn19} 
 for all $\omega$ small enough). It cannot be a common 
 intersection point for both boundary curves, i.e., it cannot be a constriction, since $r=\rho<\ell$ and by Theorem \ref{thal}. 
 Therefore, the intersection $\partial L_r\cap\La_\ell^+$ contains at least two distinct points.
  Analogously, $\partial L_0$ intersects $\La_\ell^+$ in at least one point, since the point $(1,0)\in\partial L_0$ 
  lies on the right from the point $(\ell\omega,0)\in\La_\ell$. If $0\leq r<\ell$ and $r\equiv\ell(\operatorname{mod}2)$, then 
  each point of intersection $\partial L_r\cap\La_\ell^+$ is a generalized simple intersection. Taking 
  these intersections for all latter $r$  yields  $\ell-1$ distinct generalized simple intersections lying in $\La_\ell^+$. 
  But the total number of generalized simple intersections in $\La_\ell^+$ is no greater than $\ell$, see the above remark. 
   Therefore, 
  at most one of them may correspond to the rotation number $\ell$, and hence, is reduced to the known 
  generalized simple intersection $\mcp_\ell$ with $\rho=\ell$. In particular, there are no generalized simple intersections in 
  $\La_\ell\cap L_\ell$
   with $0<A<A(\mcp_\ell)$. This together with Corollary \ref{cornp} implies that $L_\ell^+\cap\La_\ell=S\ell$ 
  and proves the Connectivity Conjecture for $0<\omega<\frac1{|\ell|}$.
  \end{proof} 
  
\begin{problem} \label{omto0} \cite[subsection 5.8]{bg2} What is the asymptotic behavior of the phase-lock area portrait in family 
(\ref{josvec}),
 as 
$\omega\to0$?
\end{problem}
This problem is known and motivated by physics applications. 
V.M.Buchstaber, S.I.Tertychnyi and later by D.A.Filimonov, V.A.Kleptsyn, I.V.Schurov performed 
numerical experiences studying limit behavior of the phase-lock area portrait  
after appropriate rescaling of the variables $(B,A)$. Their experiences have shown that 
 the interiors of the phase-lock areas tend to open subsets (the 
so-called {\it limit rescaled phase-lock areas}) whose connected components form a partition of the plane. In 
some planar region, the latter partition looks like a chess table turned by $\frac{\pi}4$. It would be interesting 
to prove this mathematically and to find the boundaries of the limit phase-lock areas. 

Some results  on smallness of gaps between rescaled phase-lock areas for small $\omega$ 
were obtained in \cite{krs}.

To our opinion, methods elaborated in \cite{krs} and in the present paper  could be applied to study Problem \ref{omto0}.

\subsection{The dynamical isomonodromic foliation}

Let us consider family (\ref{josvec}) modeling overdamped Josephson junction 
as a three-dimensional family, with variable frequency $\omega$. Its {\it three-dimensional 
 phase-lock areas} in $\rr^3_{B,A,\omega}$ are defined in the same way, as in Definition \ref{defasl}. 
 Each three-dimensional phase-lock 
 area is fibered by two-dimensional phase-lock areas in $\rr^2_{B,A}$  corresponding to different  fixed values 
 of $\omega$.

Linear systems (\ref{tty}) corresponding to (\ref{josvec}) form a transversal hypersurface to the isomonodromic foliation of the 4-dimensional manifold $\bjr$ (Lemma \ref{lcross}).
It appears that there is another four-dimensional manifold with the latter property that has the following advantage: 
it consists of linear systems on $\oc$ coming from a family of dynamical systems on 2-torus. Namely, 
consider the following four-dimensional family of  dynamical systems on $\tt^2$ containing (\ref{josvec}): 
\begin{equation}  \frac{d\theta}{d\tau}=\nu+a\cos\theta+s\cos\tau+\psi\cos(\theta-\tau); \ \ \ 
\nu,a,\psi\in\rr, \ \ s>0, \ (a,\psi)\neq(0,0).\label{gen3}\end{equation}
The variable changes 
$\Phi=e^{i\theta}$, $z=e^{i\tau}$ 
transform (\ref{gen3}) to the Riccati equation
$$\frac{d\Phi}{dz}=\frac1{z^2}\left(\frac s2\Phi+\frac\psi2\Phi^2\right)+\frac1z\left(\nu\Phi+\frac a2(\Phi^2+1)\right) 
+\left(\frac s2\Phi+\frac\psi2\right).$$
A function $\Phi(z)$ is a solution of the latter Riccati equation, if and only if 
$\Phi(z)=\frac{Y_2(z)}{Y_1(z)}$, where $Y=(Y_1,Y_2)(z)$ is a solution of the linear system 
\begin{equation} Y'=\left(-s\frac{\mathbf K}{z^2}+\frac{\mathbf R}z+s\mathbf N\right)Y,\label{mchoy}\end{equation}
$$\mathbf K=\left(\begin{matrix}\frac12 & \chi \\ 0 & 0\end{matrix}\right), \ 
  \mathbf R=\left(\begin{matrix}-b & -\frac{a}2\\ \frac{a}2 & \chi a\end{matrix}\right),  \ 
  \mathbf N=\left(\begin{matrix}-\frac12 & 0 \\ \chi  & 0\end{matrix}\right);
  $$
  $$\chi=\frac{\psi}{2s}, \ b= \nu-\frac{\psi}{2s}a=\nu-\chi a.$$
  The residue matrix of the formal normal forms of system (\ref{mchoy}) at $0$ and at $\infty$ is the same and equal to 
  \begin{equation}\diag(-\ell,0), \ \ \ell:=b-\chi a=\nu-2\chi a=\nu-\frac{\psi a}{s}.\label{ellll}\end{equation}
  \begin{theorem} \label{tisom} The four-dimensional family of linear systems (\ref{mchoy}) is analytically foliated by 
  one-dimensional isomonodromic families defined by the following non-autonomous system of differential equations:
  \begin{equation}\begin{cases} \chi'_s=\frac{a-2\chi(\ell+2\chi a)}{2s}\\
  a'_s=-2s\chi+\frac as(\ell+2\chi a)\\
  \ell'_s=0\end{cases}.\label{isomnews}\end{equation} 
  For every $\ell\in\rr$ the function 
  \begin{equation}w(s):=\frac{a(s)}{2s\chi(s)}=\frac{a(s)}{\psi(s)}\label{wsnew}\end{equation}
  satisfies Painlev\'e 3 equation (\ref{p3}) along solutions of (\ref{isomnews}). 
  \end{theorem}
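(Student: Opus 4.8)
The plan is to reduce Theorem~\ref{tisom} to the machinery already built for the space $\bjr$ in Lemma~\ref{lemismon} and Theorem~\ref{tjp}, by presenting the family (\ref{mchoy}) as a (parameter-dependent) gauge normalization of the normalized $\rr_+$-Jimbo type systems. First I would check that every system (\ref{mchoy}) lies in $\mcho$: the main term matrices $\mathbf K$ and $\mathbf N$ have eigenvalues $\frac12,0$ and $-\frac12,0$ respectively, hence distinct, so $0$ and $\infty$ are irregular nonresonant of Poincar\'e rank~$1$. Next I would diagonalize $\mathbf N$ by the constant-in-$z$ gauge transformation $Y=P\tilde Y$ with $P=\left(\begin{smallmatrix}1&0\\-2\chi&1\end{smallmatrix}\right)$. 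This sends $\mathbf N$ to $\diag(-\frac12,0)$ and, using $\ell=\nu-2\chi a$ from (\ref{ellll}), sends $\mathbf R$ to a matrix with diagonal $(-\ell,0)$ and off-diagonal entry $\tilde R_{12}=-\frac a2$, while $\tilde K:=P^{-1}\mathbf K P$ retains eigenvalues $\frac12,0$ and has $\tilde K_{12}=\chi$. A further diagonal gauge $\Lambda=\diag(1,\lambda)$ normalizes $R_{21}=-R_{12}>0$, placing the transformed system in $\bjr$ (or its evident sign-variant); condition (\ref{maa}) holds because, by (\ref{ellll}), the residue of the formal normal form at $0$ is again $\diag(-\ell,0)$.

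The conceptual engine is that any gauge transformation --- even one depending on the parameters $\chi,a,s$ --- acts on the monodromy--Stokes data by $\psl_2(\cc)$-conjugation and therefore fixes its $\psl_2(\cc)$-orbit. By the definition of isomonodromy and Proposition~\ref{proconsm}, this means a family (\ref{mchoy}) is isomonodromic if and only if its gauge transform in $\bjr$ is; hence the isomonodromic foliation of (\ref{mchoy}) is the pullback, through $Q:=P\Lambda$, of the foliation $\mathcal F$ of Lemma~\ref{lemismon}. Because $w=-R_{12}/(\tau K_{12})$ is invariant under the diagonal normalization $\Lambda$ (exactly as in the proof of Lemma~\ref{lemismon}), its value is already visible after the first step: $w=-\tilde R_{12}/(s\tilde K_{12})=\frac{a}{2s\chi}=\frac a\psi$, which is (\ref{wsnew}), and by Lemma~\ref{lemismon} it satisfies Painlev\'e~3 (\ref{p3}). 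As a consistency check, the poles $\{\psi=0\}$ of $w$ are precisely the slice $\chi=0$, on which (\ref{mchoy}) degenerates to the Josephson family (\ref{tty}), in agreement with Lemma~\ref{lempol}.

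It remains to produce the explicit foliation equations (\ref{isomnews}). Let $K_J(s),R_J(s)$ denote the solution of (\ref{isonormal}) representing our leaf in $\bjr$, so that $\mathbf K=QK_JQ^{-1}$ and $\mathbf R=QR_JQ^{-1}$. I would then differentiate in $s$, obtaining $\mathbf K'=QK_J'Q^{-1}+[\dot QQ^{-1},\mathbf K]$ and the analogous formula for $\mathbf R$, and read off $\chi'_s$ and $a'_s$ from the requirement that the deformed matrices keep the shape prescribed in (\ref{mchoy}) (triangular $\mathbf K,\mathbf N$ carrying the single entry $\chi$, and $\mathbf R$ of the displayed form); the $(1,1)$-entry yields $\ell'_s=0$ and the two remaining off-diagonal entries yield the two displayed equations. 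Equivalently, one may verify (\ref{isomnews}) directly through the Frobenius criterion of Theorem~\ref{isom_thm}, seeking a deformation one-form $\Omega=U\,dz+V\,ds$ whose $V$ reflects the rescaling $z=s\zeta$ that relates (\ref{mchoy}) to the special-Jimbo normalization.

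The main obstacle is exactly this last computation. The naive special-Jimbo flow $\tilde K'=\frac2s[\tilde R,\tilde K]$, $\tilde R'=2s[\tilde K,\tilde N]$ coming from (\ref{isomatr}) with $t=s^2$ does \emph{not} preserve the one-parameter shape $\tilde K=\tilde K(\chi)$, so the gauge-correction commutators $[\dot QQ^{-1},\cdot]$ are indispensable, and they are precisely what produces the nontrivial summands proportional to $\ell+2\chi a=\nu$ in (\ref{isomnews}). Keeping exact track of the scalar $\dot QQ^{-1}$ --- the analogue of the factor $\nu$ appearing in the proof of Lemma~\ref{lemismon} --- is the one place where a slip would propagate; everything else is a transcription of statements already proved.
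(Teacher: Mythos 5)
Your overall strategy coincides with the paper's: apply the constant-in-$z$ gauge $P=\left(\begin{smallmatrix}1&0\\-2\chi&1\end{smallmatrix}\right)$ and a rescaling of $z$ (the paper uses $z=s^{-1}\zeta$, not $z=s\zeta$ --- a harmless slip in your last paragraph) to land in the space of real special Jimbo type systems with both formal residues equal to $\diag(-\ell,0)$, then project Jimbo's flow (\ref{isomatr}) onto a normalization slice and extract (\ref{isomnews}) from the gauge-correction term $\dot QQ^{-1}$. The identification $w=-R_{12}/(sK_{12})=a/(2s\chi)$ and the appeal to Theorem \ref{tjp} for the Painlev\'e property are exactly the paper's steps.

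The genuine weak point is your choice of slice. You normalize into $\bjr$, i.e.\ impose $R_{12}=-R_{21}$ with $R_{21}>0$ by a diagonal gauge $\diag(1,\la)$. Such a gauge preserves the product $R_{12}R_{21}$, so this normalization is possible only where $\tilde R_{12}\tilde R_{21}<0$. After the $P$-gauge one has $\tilde R_{12}=-\frac a2$ and $\tilde R_{21}=-2\chi(\ell+\chi a)+\frac a2$, whose product equals $a\chi\nu-a^2\chi^2-\frac{a^2}{4}$; this is negative near the Josephson slice $\chi=0$, but it is positive, e.g., for $a=1$, $\chi=1$, $\nu=10$. On that part of the four-dimensional family (\ref{gen3}) there is no ``sign-variant'' of $\bjr$ to fall back on, and moreover Lemma \ref{lemismon} is stated only on $\mchon$, i.e.\ where $R_{12},R_{21}\neq0$. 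So, as written, your argument establishes the foliation and the Painlev\'e property only on an open subset of the family. The paper avoids this by projecting (\ref{isomatr}) onto the slice $\{K_{22}=2K_{12}^2\}$ inside the space $\mathcal J$ of real special Jimbo type systems: that slice is precisely the image of family (\ref{mchoy}) under the $P$-gauge and rescaling, carries no sign condition, and the correction scalar comes out as $\xi=\la'(t_0)=-\frac{\ell+2\chi a}{2t_0}$, producing the $\ell+2\chi a$ summands you correctly anticipate. Your gap is repairable --- either switch to that slice, or note that (\ref{isomnews}) and the projected Jimbo flow are both rational vector fields, so their coincidence on an open set propagates to the whole family by analytic continuation --- but one of these repairs must be stated explicitly.
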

  \begin{proof} The composition of variable rescalings $z=s^{-1}\zeta$ and gauge transformations  
  \begin{equation}Y=\left(\begin{matrix} 1 & 0\\ -2\chi & 1\end{matrix}\right)\wt Y\label{gaugenew}\end{equation}
  sends family  (\ref{mchoy}) to the following family of linear systems: 
    \begin{equation}Y'_\zeta=\left(-\frac t{\zeta^2}K+\frac R{\zeta}+\diag(-\frac12,0)\right)Y,\label{mchoy2}\end{equation}
  $$t=s^2, \ K=\left(\begin{matrix} \frac12-2\chi^2 & \chi\\ \chi(1-4\chi^2) & 2\chi^2\end{matrix}\right), \ 
  R=\left(\begin{matrix} -\ell & -\frac a2\\ -2\chi(\ell+\chi a)+\frac a2 & 0\end{matrix}\right)$$
  Let  $\mathcal J$ denote the space of 
  special Jimbo type systems, see (\ref{djimbo}), (\ref{maa}), 
   with real matrices. Systems (\ref{mchoy2}) lie in 
  $\mathcal J$, since the formal normal forms of a system (\ref{mchoy}) 
  at $0$, $\infty$ have common residue matrix $\diag(-\ell,0)$. Every system $\mathcal L_0$ 
  of type  (\ref{mchoy2})  with $\chi\neq0$ has a neighborhood $W=W(\mcl)\subset\mathcal J$ where family (\ref{mchoy2}) 
  forms a hypersurface  
  $\mcx=\{ K_{22}=2K_{12}^2\}\cap W$ 
  so that each system $\mcl\in W$ can be projected to a system 
  $\mcl^*\in\mcx$  
  by  a diagonal gauge transformation $(Y_1,Y_2)\mapsto(Y_1,\la Y_2)$, $\la=\la(\mcl)$. 
  Projecting to $\mcx$ a Jimbo isomonodromic family given by (\ref{isomatr}) yields an isomonodromic family of systems (\ref{mchoy2}). 
  The differential equation satisfies by the projected isomonodromic families is found analogously to the proof of equation (\ref{isonormal}). 
  To do this, fix a $t_0>0$ and matrices $K(t_0)$, $R(t_0)$ as in (\ref{mchoy2}). 
  Let  $\wt K(t)$, $\wt R(t)$ be solutions of (\ref{isomatr}) with initial conditions $K(t_0)$ and $R(t_0)$ at $t_0$, and 
  let $(Y_1,Y_2)\mapsto(Y_1,\la(t)Y_2)$ be the family of the above normalizing  gauge transformations: 
  the matrices $K(t)=\diag(1,\la(t))\wt K(t)\diag(1,\la^{-1}(t))$, $R(t)=\diag(1,\la(t))\wt R(t)\diag(1,\la^{-1}(t))$ are the same, 
  as in (\ref{mchoy2}), that is $K_{22}(t)=2K_{12}^2(t)$; $\la(t_0)=1$. Set $\xi=\la'(t_0)$. 
  The equation on the matrix  function $\wt K(t)$ given by (\ref{isomatr}) yields 
   \begin{equation}K_{12}'(t_0)=-\xi K_{12}(t_0)+\frac1{t_0}[R(t_0),K(t_0)]_{12}, \ K_{22}'(t_0)=\frac1{t_0}[R(t_0),
  K(t_0)]_{22},\label{kr12t}\end{equation}
  Substituting $K_{22}=2\chi^2$, $K_{12}=\chi$ to the second equation in (\ref{kr12t}) and changing the time 
  parameter $t$ to $s=\sqrt t$ yields formula for the derivative $\chi'_t=(K_{12})'_t$ and 
  the first equation in (\ref{isomnews}). Substituting thus found 
  derivative $K_{12}'$ to the first formula in (\ref{kr12t}) yields a linear equation on $\xi$, 
  whose solution is $\xi=-\frac{\ell+2\chi a}{2t_0}$.  The differential equation on the matrix $\wt R(t)$ in (\ref{isomatr}) 
 yields the differential equation on $R_{12}$ analogous to the first equation in (\ref{kr12t}), which  
 also includes the above already found value $\xi$. Substituting $R_{12}=-\frac a2$ there yields 
 the second equation in (\ref{isomnews}). 
 Painlev\'e 3  equation (\ref{p3}) 
  on  $w(s)$ along isomonodromic families thus constructed follows from Theorem \ref{tjp},  
 since  diagonal gauge transformations do not change the ratio $\frac{R_{12}}{K_{12}}$. Equation (\ref{p3}) 
 can be also deduced directly from (\ref{isomnews}).
  \end{proof} 
  
  The  foliation from Theorem \ref{tisom} given by (\ref{isomnews})   induces a one-dimensional 
  foliation  in the 4-dimensional space of dynamical 
  systems (\ref{gen3}) given by the following non-autonomous system of 
  equations obtained from (\ref{isomnews}) by change of the variable $\chi$ to $\psi=2s\chi$: 
  \begin{equation}\begin{cases}\psi'_s=a+(1-\ell)\frac{\psi}s-\frac{a\psi^2}{s^2}\\
  a'_s=-\psi+\ell\frac as+\frac{\psi a^2}{s^2}.\end{cases}\label{isomn2}\end{equation}
 The latter foliation of family (\ref{gen3})  given by (\ref{isomn2}) will be denoted by $\mcg$ and called the
  {\it dynamical isomonodromic 
  foliation.} 
    \begin{lemma}  The conjugacy class of flow (\ref{gen3}) under diffeomorphisms $\tt^2\to\tt^2$ isotopic to identity, 
    its rotation number  and  $\ell$, see (\ref{ellll}), are constant on  leaves of the dynamical isomonodromic foliation $\mcg$. The hypersurface 
of  systems (\ref{josvec}) modeling Josephson junction is transversal to $\mathcal G$. The function 
$w(s)=\frac{a(s)}{\psi(s)}$, see (\ref{wsnew}), 
satisfies Painlev\'e 3 equation (\ref{p3}) along its leaves. A  point $(s,\psi,a,\ell)$ 
corresponds to a system (\ref{josvec}), if and only if $\psi=0$; this holds if and only if the function  $w$ 
has pole of order 1 at $s$ with residue 1.  
\end{lemma}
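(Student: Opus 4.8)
The plan is to deduce the Lemma from Theorem~\ref{tisom} together with the isomonodromic invariance principles of Remark~\ref{rkint} and the explicit form of the foliation equation (\ref{isomn2}); three of the four assertions are almost immediate. The Painlev\'e~3 statement for $w(s)=\frac{a(s)}{\psi(s)}$ is exactly the conclusion of Theorem~\ref{tisom}, since $w$ coincides with the function (\ref{wsnew}) and the foliation $\mcg$ is, by construction, the image of the isomonodromic foliation (\ref{isomnews}) under the substitution $\psi=2s\chi$. Constancy of $\ell$ along the leaves is the third equation $\ell'_s=0$ in (\ref{isomnews}); equivalently, $\ell$ fixes the common residue matrix $\diag(-\ell,0)$ of the formal normal forms at $0$ and $\infty$ (see (\ref{ellll})), which is a first integral of every Jimbo isomonodromic deformation by Remark~\ref{rkint}.

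Next I would treat the Josephson locus and transversality. Setting $\psi=0$ in (\ref{gen3}) kills the term $\psi\cos(\theta-\tau)$ and turns $\ell=\nu-\frac{\psi a}{s}$ into $\nu$, so the equation becomes $\frac{d\theta}{d\tau}=\ell+a\cos\theta+s\cos\tau$, which is precisely a system (\ref{jostor}) with $a=\omega^{-1}$, $s=2\mu$; thus the hypersurface of systems (\ref{josvec}) is exactly $\{\psi=0\}$. To see transversality of $\mcg$ to $\{\psi=0\}$, I would read off from (\ref{isomn2}) the derivative $\psi'_s=a$ at every point with $\psi=0$; since the standing constraint $(a,\psi)\neq(0,0)$ forces $a\neq0$ there, the leaves cross $\{\psi=0\}$ with nonzero speed, so the one-dimensional leaf tangent pairs nontrivially with $d\psi$ and the intersection is transversal. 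The same computation yields the pole statement: at a point $s_0$ with $\psi(s_0)=0$ one has $a(s_0)\neq0$ and $\psi(s)=a(s_0)(s-s_0)+o(s-s_0)$, whence $w(s)=\frac{a(s)}{\psi(s)}=\frac{1}{s-s_0}(1+o(1))$ has a first order pole with residue $1$; conversely a pole of $w$ forces $\psi(s_0)=0$, and then the constraint gives $a(s_0)\neq0$, so the pole is automatically simple with residue $1$. This matches the Josephson pole criterion of Lemma~\ref{lempol}, as it should, since $\psi=0$ is equivalent to $\chi=K_{12}=0$.

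The remaining, and most delicate, assertions concern the dynamics: constancy of the rotation number and of the conjugacy class of the flow. For the rotation number I would use that the time $2\pi$ flow map of (\ref{gen3}) on the cross-section $\{\tau=0\}$ is the restriction to $S^1=\{|\Phi|=1\}$ of the projectivized monodromy of the linear system (\ref{mchoy}), as in the Josephson case (cf. Proposition~\ref{propoinc}). Along a leaf of $\mcg$ the monodromy operator keeps a constant conjugacy class, by isomonodromy (Remark~\ref{rkint}), so the rotation number of this circle diffeomorphism, and hence of the flow, is constant modulo $\zz$. Continuity of the rotation number in the parameters then pins it down: a continuous function valued in a single coset $c+\zz$ is constant, so $\rho$ itself is constant along each connected leaf.

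For the full conjugacy class of the flow I would exploit that the isomonodromic deformation is generated by the flat connection underlying (\ref{lapair}) and (\ref{isomnews}) and is equivariant under gauge transformations (Proposition~\ref{pequiv}), whose real flow preserves the space of real systems (\ref{gen3}). Horizontal transport along a leaf then produces an analytic family of $z$-dependent gauge transformations which, projectivized and restricted to the real torus $\{|z|=1,\ |\Phi|=1\}$, conjugate the Riccati foliations and hence the flows of (\ref{gen3}); because the deformation stays inside the real family, these maps are genuine self-diffeomorphisms of $\tt^2$, and being built from a horizontal transport based at the identity they are isotopic to the identity. I expect the main obstacle to be precisely this last point: verifying that the projectivized horizontal transport descends to a well-defined real-analytic self-map of $\tt^2$ isotopic to the identity, rather than merely conjugating the abstract monodromy data, and it is here that preservation of the real structure by (\ref{isomn2}), the analogue of Proposition~\ref{pequiv}, does the essential work.
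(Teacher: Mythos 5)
Most of your proposal coincides with the paper's own argument. The Painlev\'e 3 statement and $\ell'_s=0$ are read off from Theorem \ref{tisom}; the identification of the Josephson locus with $\{\psi=0\}$ (the paper writes $\{\psi=0\}\cap\{a>0\}$), the transversality via $\psi'_s=a\neq0$ on that locus, and the pole computation $\psi(s)=a(s_0)(s-s_0)+o(s-s_0)$, $w(s)\simeq(s-s_0)^{-1}$, are exactly what the paper does; and for the rotation number the paper uses precisely your observation that the Poincar\'e map is the restriction to $\{|\Phi|=1\}$ of the projectivized monodromy of (\ref{mchoy}), whose conjugacy class is an isomonodromic invariant.

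Where you diverge is the constancy of the conjugacy class of the flow, and there your route has a genuine gap, which you flag but do not close. The paper deduces this claim from the same Poincar\'e-map statement: the field (\ref{gen3}) (with $\dot\tau=1$) is the suspension of its Poincar\'e map, and suspensions of circle diffeomorphisms that are conjugate by an orientation-preserving circle diffeomorphism are conjugate by a torus diffeomorphism isotopic to the identity; no new construction is needed. Your horizontal-transport map $T_z$ (the fundamental solution of $\partial Y/\partial t=V(z,t)Y$ along a leaf) does conjugate the linear systems fiberwise over $\cc^*$, but it cannot give the required torus conjugacy directly. First, the systems at different points of a leaf are not gauge equivalent over $\oc$: the eigenvalues of their main term matrices at $0$ and $\infty$ scale with $s$, so the irregular types vary along the leaf and $T_z$ is necessarily wildly singular at $0$ and $\infty$ (harmless on $\{|z|=1\}$, but it shows one is not conjugating the Riccati foliations globally). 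Second, and decisively, the projectivization of $T_z$ restricted to $\{|z|=1\}$ has no reason to preserve the circle $\{|\Phi|=1\}$: the real torus is the fixed locus of the antiholomorphic involution $(\Phi,z)\mapsto(\bar\Phi^{-1},\bar z^{-1})$, which combines reality of the matrices with the symmetry $z\mapsto z^{-1}$, and the deformation term $V(z,t)=\wt K(t)/(zt)$ is real but not invariant under $z\mapsto z^{-1}$ (e.g.\ at $z=1$, $\Phi=1$ the projectivized $t$-field is generically not tangent to $\{|\Phi|=1\}$). So reality of (\ref{isomn2}) alone, your proposed fix, does not make $T_z$ descend to a self-map of $\tt^2$. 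Replacing that paragraph by the suspension argument built on your own Poincar\'e-map observation completes the proof along the paper's lines.
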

\begin{proof} The projectivized monodromy of linear system (\ref{mchoy}) is the complexification of the Poincar\'e map 
of the corresponding dynamical system (\ref{gen3}). Therefore,  constance of its conjugacy class along leaves  implies 
constance of  conjugacy class of the Poincar\'e map and hence, of the flow and of its rotation number; 
$\ell=const$, by Theorem \ref{tisom}. Family of systems (\ref{josvec}) coincides with the hypersurface 
$\{ \psi=0\}\cap\{ a>0\}$ in the parameter space. It is  transversal to the vector field (\ref{isomn2}), since 
$\psi'=a>0$ at all its points. The  characterization of 
systems (\ref{josvec}) in terms of poles  follows from construction and Lemma \ref{lempol} and, 
on the other hand,  immediately from (\ref{isomn2}): if  $\psi(s_0)=0$, then $\psi(s)\simeq a(s_0)(s-s_0)$, 
$w(s)=\frac{a(s)}{\psi(s)}\simeq \frac1{s-s_0}$, 
as $s\to s_0$, and vice versa. 
\end{proof}

Recall that the growth point of a two-dimensional phase-lock area $L_r$ has abscissa 
$\sign(r)\sqrt{r^2\omega^2+1}$. For a given $r\in\zz_{\neq0}$ the latter growth points form a curve bijectively 
parametrized by $\omega>0$ in the three-dimensional parameter space, which will be called  the {\it $r$-th growth curve.} 
We already know that the family of constrictions in $\rr^2_{B,A}\times(\rr_+)_{\omega}$ is a one-dimensional 
submanifold, by Theorem \ref{famcons0}. Thus, it is a disjoint union of connected curves, which will be called the {\it constriction curves.} 

In what follows  the  three-dimensional phase-lock area in $\rr^2_{B,A}\times(\rr_+)_{\omega}$ with a rotation 
number $r\in\zz$ will be denoted by $\wh L_r$.

\begin{conjecture}\label{conj3d} Each constriction curve is bijectively projected onto $(\rr_+)_{\omega}$. 
Each three-dimensional phase-lock area  of family (\ref{josvec}) is a countable garland of 
domains, where any two adjacent domains are separated either by the corresponding growth curve, or by a 
constriction curve.
\end{conjecture}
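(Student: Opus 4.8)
The plan is to fix the rotation number $\ell\in\zz$ and to transport the whole question to the analytic manifold $Constr_\ell\subset(\rr_+^2)_{\mu,\eta}$ produced in Theorem \ref{famcons0}. Since $\ell\in\zz$ is locally constant on the set of all constrictions, each connected constriction curve in $\rr^2_{B,A}\times(\rr_+)_\omega$ carries a single value of $\ell$ and is the image of a connected component $\mcc$ of $Constr_\ell$ under the analytic diffeomorphism $(\mu,\eta)\mapsto(B,A,\omega)=(\ell\eta^{-1},2\mu\eta^{-1},\eta^{-1})$. Hence the first statement of the conjecture is equivalent to the assertion that the frequency $\omega=\eta^{-1}$ restricts to a bijection $\mcc\to(\rr_+)_\omega$ on every component. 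By Theorem \ref{famcons1} each $\mcc$ is diffeomorphic via $x=\mcr$ to an interval $I=(a,b)\subset\rr\setminus\{0,1\}$, and by Theorem \ref{famcons0} the function $\eta$ is unbounded from above on $\mcc$, so that $\inf_{\mcc}\omega=0$. It therefore suffices to prove two things: (i) $\omega$ has no critical point on $\mcc$, so that it is strictly monotone in $x$; and (ii) the supremum of $\omega$ on $\mcc$ is $+\infty$.

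For (ii) I would refine the endpoint analysis of Theorem \ref{famcons1}. That theorem produces at each endpoint $c\in\{a,b\}\setminus\{0\}$ a sequence along which $\eta\to\infty$, i.e. $\omega\to0$; combined with monotonicity this already forces the $\omega\to0$ end of $\mcc$ to sit at a nonzero endpoint of $I$. The opposite regime $\omega\to\infty$ (equivalently $\eta\to0$) can occur only at the endpoint $c=0$: indeed, as was observed in Case 2 of the proof of Theorem \ref{famcons1}, letting $\eta\to0$ sends the Josephson linear system (\ref{tty}) to a diagonal system and hence forces the transition cross-ratio $\mcr\to0$. The decisive point to establish is the converse behaviour at this endpoint, namely that as $x\to0$ along $\mcc$ one genuinely has $\omega\to\infty$ (so that $\sup_{\mcc}\omega=+\infty$); I would prove this by analysing the limiting degenerate system at $\mcr=0$ together with the linear-systems/monodromy picture in the large-$\omega$ regime, which is the tractable end (Theorem \ref{thal} is already known there for $\omega\geq1$).

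The genuinely hard part is (i), the monotonicity of $\omega$ along $\mcc$. Here $Constr_\ell=Jos\cap\Sigma_\ell$ is transversal to the isomonodromic foliation $\bbf$ (Lemma \ref{lcross}) and is a graph over the $\mcr$-axis (Theorem \ref{crossub}), but this says nothing about the sign of the derivative of $R_{21}=\eta/2$ along the curve. My plan is to extract that sign from the Painlev\'e 3 structure: along the leaves of $\bbf$ the function $w(\tau)$ solves (\ref{p3}) and the points of $Jos$ are exactly its order-one poles with residue $1$ (Lemma \ref{lempol}); differentiating the pole locus in the transverse ($\mcr$) direction should express $d\omega|_{\mcc}$ through the coefficients of (\ref{p3}) and the variation of the pole position $\tau=2\mu$. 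Showing that this quantity never vanishes is the main obstacle, and I expect it to require essentially new input — either monotonicity information about the relevant real Painlev\'e 3 transcendents, or a slow-fast comparison of Monotonicity-Lemma type (Lemma \ref{lemon}) valid uniformly for all $\omega$ rather than only for small $\omega$. The dynamical isomonodromic foliation $\mcg$ of Theorem \ref{tisom}, whose leaves carry a constant rotation number and meet the Josephson hypersurface $\{\psi=0\}$ precisely at the poles of $w$, is a natural vehicle for this step, since a single leaf connects the Josephson systems of the same rotation number across different frequencies.

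Granting the first statement, the second is largely structural. For each fixed $\omega$ the area $L_r(\omega)$ is, by the known two-dimensional results together with Theorems \ref{thal} and \ref{thpos}, a vertical garland of bounded domains whose separation points are the single growth point on $\{A=0\}$, tracing the explicit growth curve $B=\sign(r)\sqrt{r^2\omega^2+1}$, and the constrictions on $\La_r$, each tracing a constriction curve. Using the bijective projection of the first statement, the growth curve and every constriction curve meet each slice $\{\omega=\mathrm{const}\}$ in exactly one point; since the constriction curves are pairwise disjoint components of a manifold and each projects bijectively to $\omega$, their $A$-coordinates cannot cross (a crossing would force two of them to meet on $\La_r$), so the order by $A$ of the separation points of $L_r(\omega)$ is independent of $\omega$. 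Consequently no separation point is ever created, destroyed, or reordered, each two-dimensional domain sweeps out a three-dimensional domain $\wh D_i$, adjacent domains $\wh D_i,\wh D_{i+1}$ meet exactly along the intervening growth or constriction curve, and $\wh L_r=\bigcup_i\wh D_i$ is the asserted countable garland. The one place where the first statement is used essentially is precisely in ruling out that a constriction curve either fails to reach some frequency or crosses another separation curve; everything else follows by continuity and the positivity of constrictions (Theorem \ref{thpos}).
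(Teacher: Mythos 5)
The statement you are addressing is Conjecture \ref{conj3d}: the paper states it as an \emph{open conjecture}, offers no proof, and in the remark immediately following it explicitly warns that bijectivity of the projection of a constriction curve onto $(\rr_+)_{\omega}$ \emph{does not} follow from the known two-dimensional results --- a priori the projection could have a critical value $\omega_0$ that is a local extremum, at which two constrictions collide and disappear. The only evidence the authors cite is numerical, and the conjecture is listed among ``work in progress''. So there is no proof in the paper to compare against, and your proposal does not close the gap either. Your step (i), the absence of critical points of $\omega$ on a component $\mcc$ of $Constr_\ell$, is precisely the collision scenario the authors say they cannot exclude, and you yourself concede that it requires ``essentially new input'' (monotonicity of the relevant real Painlev\'e 3 transcendents, or a Monotonicity Lemma uniform in $\omega$). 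Nothing in Theorem \ref{famcons0}, Theorem \ref{famcons1}, Lemma \ref{lcross} or Theorem \ref{crossub} controls the sign of $d\eta/dx$ along $\mcc$; those results give only smoothness, transversality to the isomonodromic foliation, and unboundedness of $\eta$ from above.

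Step (ii) has the same status. Theorem \ref{famcons1} produces a sequence with $\eta\to\infty$ (i.e.\ $\omega\to0$) at each nonzero endpoint of $I=(a,b)$; it says nothing about $\sup_{\mcc}\omega$. Case 2 of its proof shows that $\eta\to0$ along a sequence forces $\mcr\to0$, but the converse implication you need --- that $x\to0$ along $\mcc$ forces $\omega\to\infty$, and indeed that $0$ is an endpoint of $I$ at all --- is asserted, not proved. Without (i) and (ii) the first sentence of the conjecture remains unproven, and your derivation of the garland structure, which leans on bijectivity to rule out creation, destruction and reordering of the separation points, collapses with it. In short: the proposal is a sensible research programme, consistent with the directions the paper itself suggests in Problems \ref{pb2}--\ref{pb5}, but it is not a proof, and the paper contains none.
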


\begin{remark} Conjecture \ref{conj3d} {\bf does not} follow from known results on two-dimensional phase-lock area. 
A priori, a constriction curve may be not bijectively projected to the $\omega$-axis, and the projection 
may have some critical value $\omega_0$ that is a local maximum (minimum). In this case  the corresponding two-dimensional phase-lock area with $\omega$ less (greater) than $\omega_0$ has two constrictions 
that collide for $\omega=\omega_0$ and disappear, when $\omega$ crosses the critical value $\omega_0$. 
Numerical experiences made by S.I.Tertychnyi, D.A.Filimonov, V.A.Kleptsyn, I.V.Schurov show that such a 
scenario does not arise. This can be viewed as a numerical confirmation of Conjecture \ref{conj3d}. 
\end{remark}
\begin{remark} In each two-dimensional phase-lock area $L_r$ the constrictions lying in the half-plane $\{ A>0\}$ 
are ordered by natural numbers $k$ 
corresponding to their heights: the lowest constriction is ordered by 1, the second one by two, etc. If Conjecture 
\ref{conj3d} is true, then along each constriction curve $\mcc$ lying in the upper quarter-space $\{ A>0\}$ the above 
height number is constant. In this case  each constriction curve $\mcc=\mcc_{\ell,k}\subset\{ A>0\}$ is numerated 
by two integer numbers 
 $$\ell=\rho=\frac B{\omega}, \ \ k:=\text{ the above height number.}$$
\end{remark}
Studying of the following two problems, which are of independent interest, 
would have important applications to Conjecture \ref{conj3d} and related problems. 

\begin{problem} \label{pb2} Study the Poincar\'e map of the dynamical isomonodromic 
foliation $\mathcal G$, see (\ref{isomn2}), acting on the transversal hypersurface given by family of 
systems (\ref{josvec}). The Poincar\'e map sends the intersection of its definition domain 
with each three-dimensional phase-lock area in family (\ref{josvec}) to the same phase-lock area, 
by constance of the rotation number along leaves. {\it Study the action of the Poincar\'e map of the foliation $\mcg$ 
given by (\ref{isomn2}) 
on  the three-dimensional phase-lock area portrait of family (\ref{josvec}).} 
\end{problem}

\begin{problem} Is it true that the above Poincar\'e map  
is well-defined on each constriction curve $\mcc_{\ell,k}$  and sends its diffeomorphically onto $\mcc_{\ell,k+1}$? 
\end{problem}
If Conjecture \ref{conj3d} is true, then for every $\ell\in\zz$ and $k\in\nn$ there is a unique connected component  
$\mathcal O_{\ell,k}$ of the interior of the three-dimensional phase-lock area $\wh L_\ell$ that is 
adjacent to the constriction curves $\mcc_{\ell,k}$ and $\mcc_{\ell,k+1}$. 
\begin{problem}
Is it true that the Poincar\'e map of the foliation $\mathcal G$ 
  is well-defined on each component $\mathcal O_{\ell,k}$ and sends it diffeomorphically 
onto $\mathcal O_{\ell,k+1}$? Is it  a well-defined  diffeomorphism on  a neighborhood of the 
closure $\overline{\mathcal O_{\ell,k}}$?
What is the intersection of its definition domain with the component of the phase-lock area $\wh L_\ell$ 
adjacent to $\mathcal O_{\ell,1}$ and to the corresponding growth curve? How does it act there?
\end{problem}

\begin{remark} The  Poincar\'e map of the foliation $\mathcal G$ (where it is defined) can be viewed as the suspension over the map sending 
a given simple pole $s_0>0$ with residue 1  of solution $w(s)$ of Painlev\'e 3 equation (\ref{p3}) to its next pole $s_1>s_0$ 
of the same type (if any). Many solutions of (\ref{p3}) have an infinite lattice of simple poles 
with residue 1 converging to $+\infty$. Our Painlev\'e 3 equations (\ref{p3}) admit a one-dimensional family of 
Bessel type solutions, see \cite{conte},  whose poles are zeros 
of  solutions of Bessel equation and are known to form an infinite lattice. 
Victor Novokshenov's recent numerical experience has shown 
that their small deformations also have an infinite lattice of poles. 
Few  solutions,  e.g., the  {\it tronqu\'ee solutions} \cite{lidati},  are bounded on some semi-interval $[C,+\infty)$, and hence, 
do not have poles there.  
\end{remark}
\begin{problem} \label{pb3} Describe those parameter values of family (\ref{josvec}) 
for which the corresponding solution $w(s)$ of (\ref{p3}) is tronqu\'ee.  Is it true that this holds 
for some special points of boundaries of the three-dimensional phase-lock areas?{\it Does this hold for the 
higher generalized simple intersections $\mcp_\ell$} discussed in  Subsection 6.1?
\end{problem}

\begin{problem} \label{pb4} Study geometry of phase-lock areas\footnote{Recently it was observed by 
V.M.Buchstaber and the second author (A.A.Glutsyuk) that the rotation number quantization effect holds 
in family (\ref{gen3}): phase-lock areas exist only for integer values of the rotation number. 
The proof is the same, as in \cite{buch2}.} in four-dimensional family (\ref{gen3}) of dynamical 
systems on $\tt^2$. Study  special points of boundaries of the phase-lock areas: analogues of growth points, constrictions and generalized simple 
intersections. 
\end{problem}

 Let $\Sigma$ denote the subfamily in (\ref{gen3}) consisting of dynamical systems with 
trivial Poincar\'e map. The value $\ell=\nu-\frac{\psi a}{s}$ 
corresponding to a system in $\Sigma$ 
 should be integer, as in Proposition \ref{protriv}, 
and  its rotation number $\rho$ is also integer. For every $\ell,\rho\in\zz$ let  $\Sigma_{\ell,\rho}\subset\Sigma$ 
denote the subset consisting of systems 
with given $\ell$ and $\rho$.  Those systems (\ref{josvec}) with given $\ell$ that correspond to constrictions 
are contained in $\Sigma_{\ell,\ell}$, by Theorem \ref{thal}. 
\begin{problem} \label{pb5} Is it true that systems  (\ref{josvec}) with given $\ell$ corresponding to constrictions 
lie in one connected component of the set $\Sigma_{\ell,\ell}$?
\end{problem}
\begin{remark} 
One can show that a 
 positive solution of Conjecture \ref{conj3d} would imply positive answer to Problem \ref{pb5}.
\end{remark}
To our opinion, a progress in studying the above problems would have applications to 
problems on geometry of phase-lock areas, for example, to problems discussed in the previous subsection.

Studying Conjectures \ref{connconj2}, \ref{conj3d} and  Problems \ref{pb2}, \ref{pb4}, \ref{pb5} is a work in progress.

 \section{Acknowledgements}
We are grateful to  V.M.Buchstaber and Yu.S.Ilyashenko for attracting 
our attention to problems on 
model of Josephson effect and helpful discussions. We are grateful to 
them and to M.Bertola, D.A.Filimonov, E.Ghys, V.I.Gromak, M.Mazzocco, V.Yu.Novokshenov, V.N.Roubtsov, I.V.Schurov, S.I.Tertychnyi, 
I.V.Vyugin for helpful discussions. We are grateful to S.I.Tertychnyi for the figure of different types of constrictions, for 
careful reading the paper and helpful remarks. 
We are grateful to the referees 
and the editors 
for  careful  reading the paper and helpful remarks and suggestions.


\begin{thebibliography}{}

\bibitem{ar} Anderson, P. W.; Rowell, J. M. {\it Probable observation of the Josephson tunnel effect.} Phys. Rev. Lett. \textbf{10 (6)} (1963): 230--232. 

\bibitem{arn} Arnold, V. I. {\it Geometrical Methods in the Theory of Ordinary Differential Equations.} Second edition. Grundlehren der Mathematischen Wissenschaften 
[Fundamental Principles of Mathematical 
Sciences], 250. Springer-Verlag, New York, 1988.

\bibitem{2} Arnold, V. I.; Ilyashenko, Yu. S. {\it Ordinary differential equations.}  In: Dynamical Systems I, Encyclopaedia Math. Sci. (1988), 1--148.


\bibitem{BUL}
    W.~Balser, W.~B.~Jurkat, D.~A.~Lutz, {\it A general theory of invariants for meromorphic differential equations. I. Formal invariants}. 
		Funkcialaj Ekvacioj, {\bfseries 22}:2 (1979), 197--221.
		

\bibitem{bjl} Balser, W.; Jurkat, W.B.; Lutz, D.A.  {\it Birkhoff  invariants
and  Stokes'  multipliers  for meromorphic linear  differential
equations.}  J.	Math. Anal. Appl. \textbf{71} (1979), No. 1, 48--94.

\bibitem{bar} Barone, A.; Paterno, G. {\it Physics and Applications of the Josephson Effect.} John Wiley and
Sons, New York--Chichester--Brisbane--Toronto--Singapore, 1982.

\bibitem{bibgl} Bibilo, Y.; Glutsyuk, A.A. {\it On family of constrictions in model of overdamped Josephson 
junction.} 
Russ. Math. Surveys. \textbf{76:2} (2021), 360--362.


\bibitem{bbb} Bizyaev, I.A.; Borisov, A.V.; Mamaev, I.S. {\it The Hess--Appelrot case and
quantization of the rotation number.} Reg. Chaot. Dyn., \textbf{22:2} (2017),
180--196.

\bibitem{Bol89} Bolibruch, A.A. {\it The Riemann--Hilbert problem on 
the  complex projective  line.} [In Russian.] 
 Mat.  Zametki,  \textbf{46} (1989), No. 3, 118--120.

\bibitem{Bol7} Bolibruch, A. {\it Inverse problems for linear differential equations with meromorphic coefficients}. Isomonodromic Deformations and 
Applications in Physics (Montr\'eal, 2000), 
		CRM Proceeding and Lecture Notes, {\bfseries 31} (2002), 3--25.
	
\bibitem{Bol18} Bolibruch, A.A. {\it Inverse monodromy problems in analytic 
theory of differential equations.} [In Russian.]  Moscow, MCCME, 2018. 


\bibitem{bg} Buchstaber, V.M.; Glutsyuk, A.A. {\it On determinants of modified Bessel functions and entire solutions of double confluent 
Heun equations.} Nonlinearity, \textbf{29} (2016),  3857--3870.

\bibitem{bg2} Buchstaber, V.M.; Glutsyuk, A.A. {\it On monodromy eigenfunctions of Heun equations
and boundaries of phase-lock areas in a model
of overdamped Josephson effect.} Proc. Steklov Inst. Math., \textbf{297} (2017), 50--89. 


\bibitem{bktje} Buchstaber, V.M.;  Karpov, O.V.; Tertychniy, S.I. {\it Electrodynamic properties of a Josephson junction biased with a sequence of $\delta$-function pulses.} J. Exper. Theoret.
Phys., \textbf{93} (2001),  No. 6, 1280--1287.

\bibitem{bkt1} Buchstaber, V.M.;  Karpov, O.V.; Tertychnyi, S.I. {\it On properties of the differential
equation describing the dynamics of an overdamped Josephson junction,}  Russ. Math. Surveys, \textbf{59:2} (2004), 377--378.

\bibitem{buch2006} Buchstaber, V.M.; Karpov, O.V.;  Tertychnyi, S.I. {\it Peculiarities of dynamics of a Josephson
junction shifted by a sinusoidal SHF current.} [In Russian.]  Radiotekhnika i Elektronika, \textbf{51:6} (2006), 757--762.

\bibitem{buch2} Buchstaber, V.M.; Karpov, O.V.;  Tertychnyi, S.I. {\it The rotation number quantization effect}. Theoret and Math. Phys., \textbf{162} 
 (2010), No. 2,  211--221.

\bibitem{buch1} Buchstaber, V.M.; Karpov, O.V.; Tertychnyi, S.I. {\it The system on torus modeling the dynamics of Josephson junction.} 
Russ. Math. Surveys, \textbf{67}  (2012), No. 1, 178--180.

\bibitem{bt0} Buchstaber, V.M.; Tertychnyi, S.I. {\it Explicit solution family for the equation of the resistively shunted Josephson junction model.} Theoret. and Math. Phys., \textbf{176} (2013), No. 2, 965--986. 

\bibitem{bt1} Buchstaber, V.M.; Tertychnyi, S.I. {\it Holomorphic solutions of the double confluent Heun equation associated with the RSJ model of the Josephson junction.} Theoret. and Math. Phys., \textbf{182:3} (2015), 329--355.

\bibitem{bt2} Buchstaber, V.M.; Tertychnyi, S.I. {\it A remarkable sequence of Bessel matrices.} Mathematical Notes, \textbf{98} (2015),
 No. 5,  714--724.
 
  \bibitem{bt3} Buchstaber, V.M.; Tertychnyi, S.I. {\it Automorphisms of solution space of special double confluent Heun equations.} 
 Funct. Anal. Appl., \textbf{50:3} (2016), 176--192.
 
 \bibitem{bt4} Buchstaber, V.M.; Tertychnyi, S.I. {\it Representations of the Klein group determined by quadruples of polynomials associated with the double confluent Heun equation.} Math. Notes, \textbf{103:3} (2018), 357--371.
 
 \bibitem{conte} Conte, R. (editor) {\it The Painlev\'e property: one century later.}  CRM Series in Mathematical 
 Physics. Springer,  1999.

\bibitem{FIKN}  Fokas, A.S.; Its, A.R.;  Kapaev, A.A.; Novokshenov, V.Yu. {\it Painlev\'e Transcendents: The Riemann-Hilbert Approach.} Amer. Math. Soc., 2006.


\bibitem{Foote} Foote, R.L., {\it Geometry of the Prytz Planimeter.} Reports on Math. Phys. \textbf{42:1/2} (1998), 249--271.

\bibitem{foott} Foote, R.L.; Levi, M.; Tabachnikov, S. {\it Tractrices, bicycle tire tracks, hatchet planimeters, and a 100-year-old conjecture.} 
Amer. Math. Monthly, \textbf{120} (2013), 199--216.


\bibitem{4} Glutsyuk, A.A.; Kleptsyn, V.A.; Filimonov, D.A.; Schurov, I.V. {\it On the adjacency quantization in an equation modeling the 
Josephson effect.} Funct. Analysis and Appl., \textbf{48} (2014), No. 4, 272--285.


\bibitem{g18} Glutsyuk A. {\it On constrictions of phase-lock areas in model
of overdamped Josephson effect and transition matrix
of the double-confluent Heun equation.} J. Dyn. Control Syst. \textbf{25}  (2019), 
 Issue 3,  323--349. 
 
 \bibitem{gn19} Glutsyuk, A.; Netay, I. {\it On spectral curves and complexified boundaries of phase-lock areas in a model of Josephson junction.}  -- J. Dyn. Control Systems, 26 
(2020), 785--820.

\bibitem{grauert} Grauert, H. {\it Ein Theorem der analytischen Garbentheorie und die Modulr\"aume komplexer Strukturen.} Inst. Hautes Etudes Sci., Publ. Math. 
\textbf{5} (1960), 5--64.

\bibitem{grh} Griffiths, Ph.; Harris, J., {\it Principles of algebraic geometry,} 
John Wiley $\&$ Sons, New York - Chichester - Brisbane - Toronto, 1978. 

\bibitem{GLS} Gromak, V.I.; Laine, I;  Shimomura, S. {\it Painlev\'e Differential Equations in the Complex Plane.} Walter de Gruyter, 
Berlin -- New York, 2002

\bibitem{LSh2009} Ilyashenko, Yu.S. {\it Lectures of the summer school ``Dynamical systems''.} Poprad, Slovak Republic, 2009.

\bibitem{ilguk} Ilyashenko, Yu.; Guckenheimer, J. {\it The duck and the devil: 
canards on the staircase.} Moscow Math. J., \textbf{1} (2001), No. 1, 27--47.

\bibitem{IRF} Ilyashenko, Yu.S.; Filimonov, D.A.; Ryzhov, D.A.  {\it Phase-lock effect for equations modeling resistively shunted
Josephson junctions and for their perturbations.}  Funct. Analysis and its Appl. \textbf{45} (2011), No. 3, 192--203.

\bibitem{12} Ilyashenko, Yu. S.;   Khovanskii, A. G. {\it Galois groups, Stokes operators, and a theorem of
Ramis.} Functional Anal. Appl., \textbf{24:4} (1990), 286--296.

\bibitem{J} Jimbo, M. {\it Monodromy Problem and the Boundary Condition for Some Painlev\'e Equations.} Publ. RIMS, Kyoto Univ. \textbf{18} (1982), Issue 3, 1137--1161.

\bibitem{JMU1} Jimbo, M.; Miwa, T.; Ueno, K. {\it Monodromy Preserving Deformations of Linear Ordinary Differential Equations with
Rational Coefficients (I).} Physica  D, {\bf 2} (1981), 306--352.

\bibitem{JMU2}  Jimbo, M.; Miwa, T.; Ueno, K. {\it Monodromy Preserving Deformation of Linear Ordinary Differential Equations
with Rational Coefficients II.} Physica D, {\bf 2} (1981), 407--448.

\bibitem{josephson} Josephson, B.D., {\it Possible new effects in superconductive tunnelling.} Phys. Lett., \textbf{1} (1962), No. 7, 
251--253. 

\bibitem{jlp} Jurkat, W.B.; Lutz, D.A.;   Peyerimhoff, A.   {\it Birkhoff
invariants and	effective  calculations	 for meromorphic linear
differential equations.} J. Math. Anal. Appl. \textbf{53} (1976), No. 2, 438--470.

\bibitem{krs} Kleptsyn, V.A.; Romaskevich, O.L.; Schurov, I.V. 
{\it Josephson effect and slow-fast systems.} [In Russian.]  Nanostuctures. Mathematical physics and Modelling, \textbf{8} (2013), 31--46. 

\bibitem{RK} Klimenko, A.V; Romaskevich, O.L. {\it Asymptotic properties of Arnold tongues
and Josephson effect.}  Mosc. Math. J., \textbf{14:2} (2014), 367--384.

\bibitem{lev} Levinson, Y. {\it Quantum noise in a current-biased Josephson junction.} 
Phys. Rev. B \textbf{67} (2003), 184504.

\bibitem{lich} Likharev, K.K. {\it Dynamics of Josephson junctions and circuits.} 
Gordon and Breach Science  Publishers, 1986.

\bibitem{lich-rus} Likharev, K.K. {\it Introduction to the dynamics of Josephson junctions.} 
[In Russian.]  Moscow, Nauka, 1985.

\bibitem{ls} Likharev, K.K.; Semenov, V.K. {\it Electrodynamical properties of superconducting point contacts.} [In Russian.] Radiotechnika i Electronika, 1971, 
Issue 11, p. 2167. 

\bibitem{likh-ulr} Likharev, K.K.; Ulrikh, B.T. {\it Systems with Josephson junctions: Basic Theory.} [In Russian.] Izdat.
MGU, Moscow, 1978.

\bibitem{lidati} Lin Y., Dai D., Tibboel P., {\it Existence and uniqueness of tronqu\'ee solutions of the third and fourth 
Painlev\'e equations,} Nonlinearity \textbf{27} (2014), 171--186.


\bibitem{lyap} Lyapunov, A.M. {\it On one property of the differential equations of the 
problem of movement of a heavy solid body with a fixed point.} 
 [In Russian]. Comm. Kharkov 
Math. Society, Serial II,  Vol. \textbf{IV} (1894), No. 3, 123--140.


\bibitem{malyutin} Malyutin, A.V. {\it The rotation number integer quantization effect in 
braid groups.}  Proc. Steklov Inst. Math. \textbf{305:1} (2019), 182--194. (The special 
volume dedicated to V.M.Buchstaber's 75-ths birthday.)

\bibitem{mcc} McCumber, D.E. {\it Effect of ac impedance on dc voltage-current characteristics of superconductor weak-link junctions,}  J. Appl. Phys., \textbf{39} (1968), No. 7,   3113--3118. 

\bibitem{mlodnek} Mlodzejewski, B.C.; Nekrassov, P.A. {\it On conditions of 
existence of asymptotic periodic movement in Hess' problem.} Proc. Physics Science 
Department of the Imperial Society  of Amateurs of Natural Sciences, \textbf{VI} (1893), 43--52. 


\bibitem{nekr} Nekrassov, P.A. {\it \'Etude analytique d'un cas de mouvement d'un corps pesant autour d'un point fixe.} 
Mat. Sb., \textbf{18} (1896), No. 2, 161--274.


\bibitem{rohrl} R\"ohrl, H. {\it On holomorphic families of fiber bundles over
the Riemannian sphere.} Mem. Coll. Sci. Univ. Kyoto, Ser. A, \textbf{XXXIII} 
(1961), No. 3, 435--477.

\bibitem{sasla} Salatich, A.A.; Slavyanov, S.Yu. {\it Antiquantization of the double confluent
Heun equations. The Teukolsky equation.} Rus. J. Non-lin. Dyn., \textbf{15:1} (2019),
79--85.

\bibitem{schmidt} Schmidt, V.V., {\it Introduction to physics of superconductors.} [In Russian.] MCCME, Moscow, 2000. 

\bibitem{shap} Shapiro, S.; Janus, A.; Holly, S. {\it Effect of microwaves on Josephson currents in superconducting
tunneling,}  Rev. Mod. Phys., \textbf{36} (1964), 223--225.

\bibitem{sib} Sibuya,  Y.  {\it Stokes phenomena.}  Bull.
Amer. Math. Soc., \textbf{83} (1977),  1075--1077. 

\bibitem{S} Slavyanov, S.Yu. {\it Painlev\'e equations as classical analogues of Heun equations.} J. Phys. A: Math. Gen., \textbf{29} (1996), 7329--7335.

\bibitem{slaste} Slavyanov, S.Yu.; Stesik, O.L. {\it Antiquantization of deformed 
Heun-class equations.} Theoret. and Math. Phys., \textbf{186:1} (2016), 118--125.


\bibitem{stewart} Stewart, W.C., {\it Current-voltage characteristics of Josephson junctions.} Appl. Phys. Lett., \textbf{12} (1968), No. 8, 277--280. 

\bibitem{tab08} Tabachnikov, S. {\it On algebraically integrable outer billiards.} Pacific J. of Math.
\textbf{235} (2008), No. 1, 101--104.

\bibitem{tert} Tertychnyi, S.I. {\it Long-term behavior of solutions of the equation $\dot\phi +\sin\phi = f$ with periodic $f$ and the modeling
of dynamics of overdamped Josephson junctions.} Preprint  https://arxiv.org/abs/math-ph/0512058. 

\bibitem{tert2} Tertychnyi, S.I. {\it The modeling of a Josephson junction and Heun polynomials.} 
Preprint https://arxiv.org/abs/math-ph/0601064.

\bibitem{zhuk} Zhukovsky, N.E. {\it Hess' Loxodromic Pendulum.} [In Russian].  
In Collected Works  
 \textbf{1} (1937), Moscow, Gostekhizdat,  332--348.

\end{thebibliography}
\end{document}